\documentclass[11pt]{amsart}

\usepackage{comment}
\usepackage{amsfonts}
\usepackage{amssymb}
\usepackage{amsthm}
\usepackage{mathtools}   %
\usepackage[alphabetic]{amsrefs}
\usepackage{csquotes}
\usepackage{footmisc}
\usepackage{xpatch}
\usepackage{longtable}
\usepackage[all,cmtip]{xy}
\usepackage{fancyhdr}
\usepackage[normalem]{ulem}

\usepackage{graphicx}
\usepackage{hyperref}

\usepackage{tikz-cd}
\usepackage{tikz}
\usepackage{pifont}
\usepackage{aurical}
\usepackage{stmaryrd}
\usepackage[T1]{fontenc}
\usepackage[outline]{contour}%
\usepackage{xcolor}%
\usepackage{etoolbox}
\usepackage{array}
\usepackage{lipsum}

\usepackage{enumitem}

\usepackage{parskip}

\setlist[enumerate,1]{label={(\alph*)}}

\makeatletter
\DeclareFontEncoding{LS2}{}{\@noaccents}
\makeatother
\DeclareFontSubstitution{LS2}{stix}{m}{n}
\DeclareSymbolFont{largesymbolsstix}{LS2}{stixex}{m}{n}
\DeclareMathDelimiter{\lbrbrak}{\mathopen}{largesymbolsstix}{"EE}{largesymbolsstix}{"14}
\DeclareMathDelimiter{\rbrbrak}{\mathclose}{largesymbolsstix}{"EF}{largesymbolsstix}{"15}

\newcommand{\genlegendre}[4]{%
  \genfrac{(}{)}{}{#1}{#3}{#4}%
  \if\relax\detokenize{#2}\relax\else_{\!#2}\fi
}

\newcommand{\WIP}[1]{\scalebox{#1}{\begin{tikzpicture}[limb/.style={line cap=round,line width=1.5mm,line join=bevel}]
\draw[line width=2mm,rounded corners,fill=yellow] (-2,0) -- (0,-2) -- (2,0) -- (0,2) -- cycle;
\fill (1.5mm,7mm) circle (1.5mm);
\fill(0,-7.5mm) -- ++(10mm,0mm) -- ++(120:2mm)--++(100:1mm)--++(150:2mm) arc (70:170:2.5mm and 1mm);
\draw[limb] (-7.5mm,-6.5mm)--++(70:4mm)--++(85:4mm) coordinate(a)--++(-45:5mm)--(-2.5mm,-6.5mm);
\fill[rotate around={45:(a)}] ([shift={(-0.5mm,0.55mm)}]a) --++(0mm,-3mm)--++
        (7mm,-0.5mm)coordinate(b)--++(0mm,4mm)coordinate(c)--cycle;
\draw[limb] ([shift={(-0.6mm,-0.4mm)}]b) --++(-120:5mm) ([shift={(-0.5mm,-0.5mm)}]c) --++
        (-3mm,0mm)--++(-100:3mm)coordinate (d);
\draw[ultra thick] (d) -- ++(-45:1.25cm);
\end{tikzpicture}}}

\robustify{\WIP}

\definecolor{cec1d24}{RGB}{236,29,36}
\definecolor{cffffff}{RGB}{255,255,255}

\renewcommand{\b}[1]{\textbf{\textit{#1}}}

\newcommand{\lra}{\longrightarrow}

\newcommand{\bA}{\mathbb{A}}

\newcommand{\bC}{\mathbb{C}}

\newcommand{\bF}{\mathbb{F}}

\newcommand{\bH}{\mathbb{H}}

\newcommand{\bP}{\mathbb{P}}
\newcommand{\bQ}{\mathbb{Q}}

\newcommand{\bX}{\mathbb{X}}

\newcommand{\bZ}{\mathbb{Z}}

\newcommand{\Qbar}{{\overline{\mathbb{Q}}}}

\newcommand{\cB}{\mathcal{B}}
\newcommand{\cC}{\mathcal{C}}
\newcommand{\cD}{\mathcal{D}}
\newcommand{\cE}{\mathcal{E}}

\newcommand{\cM}{\mathcal{M}}

\newcommand{\cO}{\mathcal{O}}

\newcommand{\cT}{\mathcal{T}}
\newcommand{\cU}{\mathcal{U}}
\newcommand{\cV}{\mathcal{V}}
\newcommand{\cW}{\mathcal{W}}
\newcommand{\cX}{\mathcal{X}}
\newcommand{\cY}{\mathcal{Y}}
\newcommand{\cZ}{\mathcal{Z}}

\newcommand{\fm}{\mathfrak{m}}

\newcommand{\wt}[1]{\widetilde{#1}}

\newcommand{\Aut}{\operatorname{Aut}}

\newcommand{\Out}{\operatorname{Out}}

\newcommand {\spmatrix}[4]{\left[\begin{smallmatrix}#1&#2\\#3&#4\end{smallmatrix}\right]}

\newcommand{\diag}{{\operatorname{diag}}}

\newcommand{\Jac}{\operatorname{Jac}}

\newcommand{\Gal}{{\operatorname{Gal}}}

\newcommand{\Epi}{\operatorname{Epi}}

\newcommand{\GL}{{\operatorname{GL}}}
\newcommand{\SL}{{\operatorname{SL}}}
\newcommand{\PSL}{\operatorname{PSL}}
\newcommand{\PGL}{\operatorname{PGL}}

\newcommand{\ab}{{\operatorname{ab}}}

\newcommand{\tr}{\operatorname{tr}}

\newcommand{\ord}{\operatorname{ord}}

\newcommand{\abs}{{\operatorname{abs}}}

\newcommand{\ol}[1]{{\overline{#1}}}

\newcommand{\Spec}{\operatorname{Spec}}

\newcommand{\ext}{{\mathrm{ext}}}

\newcommand{\Tate}{\operatorname{Tate}}

\usepackage{lmodern,mathrsfs,microtype}
\usepackage[margin=1.05in]{geometry}
\hypersetup{colorlinks=true,linkcolor=blue!45!black,citecolor=blue!45!black,urlcolor=blue!45!black,
  pdftitle={Bad reduction of three-point covers and a degree bound for non-Markoff components},
  pdfauthor={William Y. Chen}}
\newtheorem{theorem}{Theorem}[section]
\newtheorem{proposition}[theorem]{Proposition}
\newtheorem{conjecture}[theorem]{Conjecture}
\newtheorem{lemma}[theorem]{Lemma}
\newtheorem{corollary}[theorem]{Corollary}
\theoremstyle{definition}
\newtheorem{definition}[theorem]{Definition}
\theoremstyle{remark}
\newtheorem{remark}[theorem]{Remark}

\numberwithin{equation}{section}
\newcommand{\Gp}{G_p}

\newcommand{\Mabs}[1]{M(#1)^{\mathrm{abs}}}

\title[Stable reduction and large Nielsen orbits on generalized Markoff surfaces]{Stable reduction and large Nielsen orbits on generalized Markoff surfaces}
\date{September 16, 2026}
\author{William Y. Chen}
\begin{document}
\begin{abstract}
In this paper we establish lower bounds on the orbit sizes of the action of $\SL_2(\bZ)$ on the $\bF_p$-points of the generalized Markoff equations $x^2 + y^2 + z^2 - xyz = \tau+2$. For $\tau\ne 2$, these $\bF_p$-points correspond to absolutely irreducible representations of the rank 2 free group $\langle a,b\rangle$ in $\SL_2(\bF_p)$ up to $\GL_2(\bF_p)$-conjugation, such that the image of $[a,b] = aba^{-1}b^{-1}$ has trace $\tau$. We show that every orbit corresponding to \emph{surjective} representations has size at least $p$. Together with the recent work of Nathaniel Kingsbury-Neuschotz, this establishes the classification conjectures of McCullough--Wanderley on Nielsen equivalence classes of generating pairs of $\SL_2(\bF_p)$, and gives a complete classification of the components of the Hurwitz stack of 1-point $\SL_2(\bF_p)$-covers of elliptic curves, for all but finitely many primes $p$.

The case $\tau = -2$ was settled by the work of Bourgain, Gamburd, and Sarnak, combined with a result of the author that every orbit has size \emph{divisible} by $p$. While this divisibility by $p$ fails for general $\tau$, the present paper shows that the $\ge p$ lower bound does generalize.
Our method shows that the modular curve associated to any hypothetical orbit must have bad reduction, as a cover of the $j$-line, at some place above $p$. Good reduction results of Grothendieck for tamely ramified covers then force the modular curve to have degree $\ge p$ over the $j$-line. To establish this bad reduction, we note that the modular curve parametrizes $\PSL_2(\bF_p)$-covers of elliptic curves; we use a result of Bouw and Wewers to show that the parametrized covers have bad reduction at some place above $p$, and use the stable reduction theory of Henrio, Obus, Raynaud, and Wewers to show that this bad reduction implies bad reduction of the moduli space.

\end{abstract}
\maketitle
\markboth{\MakeUppercase{Stable reduction and large Nielsen orbits on generalized Markoff surfaces}}{\MakeUppercase{Stable reduction and large Nielsen orbits on generalized Markoff surfaces}}
\setcounter{tocdepth}{1}
\tableofcontents
\section{Introduction}
\label{sec:introduction}

\subsection{\texorpdfstring{$G$}{G}-structures}

We work over $\Qbar$ by default. Let $G$ be a finite group. A \emph{$G$-cover} of a curve
$C$ is a finite morphism $Y\to C$ together with an action of $G$ on
$Y$ over $C$ that makes $Y$ a $G$-torsor over a dense open subset of
$C$; the curve $Y$ may be disconnected, and a connected $G$-cover is a Galois cover with group $G$. Morphisms of $G$-covers are required to be $G$-equivariant.
If a cover is ramified above $\le n$ points in the base, we call it an $n$-point cover.

The isomorphism class of a connected $n$-point $G$-cover $Y\to C$ is called a $G$-structure on the $n$-pointed curve $C$. If we drop the requirement that isomorphisms of covers be $G$-equivariant, the resulting isomorphism class is an \emph{absolute $G$-structure} on $C$. If $C$ has genus $g$, the moduli space of $n$-pointed genus $g$ curves
 with $G$-structures (resp. absolute $G$-structures) is denoted $M_{g,n}(G)$ (resp. $M_{g,n}(G)^\abs$). The map $M_{g,n}(G)\to M_{g,n}(G)^\abs$ identifies the target with the quotient $M_{g,n}(G)/\Out(G)$. In this paper we will mainly consider $M_{g,n}(G)^\abs$. These spaces are typically disconnected, and it is a classical problem to describe their connected components in terms of discrete invariants of covers -- this has seen applications ranging from the irreducibility of the moduli spaces $M_g$ \cite{Fulton} to function field Cohen--Lenstra--Martinet heuristics for class groups \cite{EllenbergVenkateshWesterland}, \cite{LiuWoodZureickBrown}.

In the case $(g,n) = (1,1)$, the moduli spaces $M(G) := M_{1,1}(G)$ and $M(G)^\abs := M_{1,1}(G)^\abs$ were introduced in \cite{ChenModuli} to study noncongruence modular curves (see also \cite{ChenCRM}). Indeed, forgetting the cover and taking $j$-invariants, we obtain a finite ``forgetful'' map
\[
 M(G)^\abs\to M(1)\cong\bA^1_j,
\]
where $M(1)$ is the coarse moduli scheme of elliptic curves; it is an affine line with parameter $j$. Over $\bC$, $M(1)_\bC$ is the quotient of the Poincar\'{e} upper half plane $\bH$ by $\SL_2(\bZ)$, and the components of $M(G)^\abs$ are similarly quotients $\bH/\Gamma$ for some finite index subgroup $\Gamma\le\SL_2(\bZ)$, possibly noncongruence.  For more details on $G$-structures and their moduli spaces, see \cite{BBCL}*{\S2} and \cite{Chen}*{\S2}. 

While it is often straightforward to obtain estimates for the degree of the map $M(G)^\abs\to\bA^1_j$, to study the modular curve components, the first step is to count them, or better classify them in terms of discrete invariants of covers. For example, the picture would look very different if $M(G)$ consisted of a single component of degree 100 and genus 41, as compared to 100 components of genus 0, each mapping isomorphically to $\bA^1_j$. While the components are well understood in the regime where $G$ is fixed and $g$ or $n$ are allowed to be large\footnote{For large $g$ by Dunfield and Thurston \cite{DunfieldThurston}*{Theorem~6.20} in the unramified case and by Catanese, L\"onne and Perroni \cite{CataneseLonnePerroni}*{Theorem~1.5} in general; for large $n$ by the theorem of Conway and Parker \cite{FriedVolklein}*{Appendix} and by L\"onne \cite{Lonne}*{Theorem~1.1}.}, for fixed $(g,n)$, the problem seems to be much more mysterious, and in general there is not even a conjectural description of the components.

The one exception is for the components of $M(\SL_2(\bF_p))^\abs$, where the \emph{classification conjectures} of McCullough and Wanderley would imply that for $p\ge 13$, the components are completely classified by the trace $\tau$ of the local monodromy of the $\SL_2(\bF_p)$-cover around the origin. Moreover, they show that for $p\ge 13$, every trace $\tau\ne 2$ is possible, so $M(\SL_2(\bF_p))^\abs$ would have $p-1$ components, one for each $\tau\in\bF_p - \{2\}$.

In this paper we prove the classification conjectures for all but finitely many primes $p$ (see Corollaries \ref{cor_MW} and \ref{cor_connected} below). The argument uses the dictionary between components of $M(\SL_2(\bF_p))^\abs$ and the mapping class group orbits of $\bF_p$-points of the character variety of $\SL_2$-representations of the fundamental group of a punctured elliptic curve. There are two main ingredients that go into the argument. This paper supplies the following ingredient on the moduli space side:

\begin{theorem}[Component-degree bound]\label{thm:nm-degree}
For a prime $p$, let $X$ be a connected component
of $\Mabs{\Gp}_\Qbar$. Let $j_X$ denote the map given by taking $j$-invariants
$$j_X : X\to\bA^1_j.$$
Then $d_X := \deg j_X\ge p$.
\end{theorem}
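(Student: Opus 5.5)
The strategy follows the outline in the abstract: first show that $X$ has bad reduction at some place above $p$ as a cover of the $j$-line, and then use Grothendieck's tame specialization theory to turn this into $d_X\ge p$. Here $\Gp$ presumably denotes $\SL_2(\bF_p)$ or $\PSL_2(\bF_p)$; passing to $\PSL_2(\bF_p)$ changes degrees only by a bounded factor, and since it is the quotient group that matters for the Bouw--Wewers input, I will work with $\PSL_2(\bF_p)$-covers of elliptic curves. The component $X$ is a modular curve $\bH/\Gamma$ with $[\PSL_2(\bZ):\Gamma] = d_X$ up to a factor of $2$. Equivalently, $X\to\bA^1_j$ (or rather $\ol{X}\to\bP^1_j$) is a cover branched only over $j=0,1728,\infty$, with ramification indices dividing $2$, $3$, and the cusp widths respectively. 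Suppose for contradiction that $d_X<p$. Then the monodromy group of the Galois closure of $X\to\bP^1_j$ is a subgroup of $S_{d_X}$, so it has order prime to $p$. Grothendieck's tame specialization theorem (via Beckmann's form for three-point covers) then says that $\ol{X}\to\bP^1_j$, together with its Galois closure, has good reduction at every place above $p$. So it suffices to show that $X$ has bad reduction above $p$.

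Next I would relate reduction of $X$ to reduction of the parametrized covers. Take a $\Qbar$-point $x\in X$ lying over an elliptic curve $E$ which has good reduction at a place $v\mid p$; by choosing $j$ suitably, I would also make its reduction ordinary, or supersingular, whichever is more convenient. Let $Y_x\to E$ be the associated $\PSL_2(\bF_p)$-cover, branched over the origin. Bouw and Wewers show that $G$-covers of curves with $p$ exactly dividing $|G|$, and in particular such one-point $\PSL_2(\bF_p)$-covers of genus-one curves, must have bad reduction: they cannot extend to an admissible or étale-over-the-complement cover of the special fibre. Taking this as the input, $Y_x$ has bad reduction at $v$ for every such $x$. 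Its stable reduction, as described by Raynaud and refined by Henrio, Obus and Wewers, consists of the original component together with new tails. These tails carry wild monodromy and are characterized by discrete numerical invariants: conductors and ``depths'' that vary with $x$.

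The main step, and the one I expect to be hardest, is to show that bad reduction of every fibre forces bad reduction of $X$. Suppose instead that $\ol{X}$ extends to a tame cover over $\cO_v$. Then the family of covers over $X$ would extend, at least over a finite extension and after semistable modification, to a family over the formal neighbourhood of the special fibre of $X$. Following a path in $X(\bC_p)$ from a point of ordinary reduction toward a cusp, where $E$ degenerates to a Tate curve, the Henrio/Obus--Wewers invariants of the stable model would have to be locally constant, by rigidity of the stable-reduction data in the good-reduction, étale-locally-trivial situation. At the cusp, however, the cover degenerates to an admissible cover of a nodal curve, and there the tame-monodromy constraint (the cusp width is less than $p$) forces a tame local picture that contradicts the wild tails. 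In concrete terms, I would compute the stable reduction near the Tate-curve boundary using Raynaud's theorem on $p$-adic disc covers and show that the existence of wild tails requires the cusp-width ramification index to be divisible by $p$. That gives the contradiction and proves $d_X\ge p$.
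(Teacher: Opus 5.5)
Your opening reduction is correct, and it is exactly how the paper concludes. If $d_X<p$, the monodromy group of $j_X$ embeds in $S_{d_X}$, so Theorem~\ref{bg:grothendieck} gives good reduction of $j_X\colon\overline X\to\bP^1_j$ at every place above $p$, and it suffices to find one place of bad reduction. The gap is in how you produce that place.

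There is no theorem of Bouw and Wewers saying that one-point $\PSL_2(\bF_p)$-covers of elliptic curves have bad reduction. Their result (Theorem~\ref{nm:bw-theorem}) concerns three-point $\SL_2(\bF_p)$-covers of $\bP^1$, and even there bad reduction depends on the place. A cover with ramification indices prime to $p$ has bad reduction exactly at the places where it is ``indefinite'' and good reduction at the others (Remark~\ref{nm:definite-remark}, Theorem~\ref{app:corrected}). So ``$Y_x$ has bad reduction at $v$ for every $x$'' is not an available input, and the paper proves nothing of the kind.

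What the paper does instead is pass to a cusp:
\begin{itemize}
\item A group-theoretic argument (Lemmas~\ref{nm:large-primitive} and~\ref{nm:full-normalization}) produces a cusp whose normalized cuspidal cover $\nu_c$ is a \emph{connected} three-point $\Gp$-cover with ramification indices $(h,h,e)$.
\item If $h=p$, the cusp width is $p$ and the bound follows (Lemma~\ref{nm:width}).
\item Otherwise a Galois-conjugation and archimedean argument (Lemma~\ref{nm:indefinite-place}) gives a place $v$ at which $\nu_c$ is indefinite, hence has bad reduction.
\end{itemize}
The Markoff case $\tau=-2$, where $[a,b]$ has order $p$, and the primes $p\le7$ are handled separately.

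Your ``main step'' cannot work as described, because the local contradiction you aim for is false. At that cusp the width is $h$, prime to $p$, and yet $\nu_c$ has bad reduction with wild tails. So wild tails do not force $p$ to divide the cusp width, and local constancy of stable-reduction invariants along a path gives nothing to contradict.

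The paper argues in the opposite direction, starting from the assumption that $j_X$ has a good model at $v$:
\begin{itemize}
\item Pull back to a cover $V$ of degree at most four carrying an honest elliptic curve and $\Gp$-cover. The residue field $F$ at the generic point of the special fiber is then \emph{separable} over $k(\bar j)$.
\item The key input is the generic inseparability criterion (Proposition~\ref{nm:generic-criterion}). If the $\Gp$-cover of the elliptic curve over the completion at that generic point had bad reduction, the original component of the stable target would be a $p$-component. The quotient of a source component above it by its decomposition group would then map purely inseparably of degree $p$ onto the reduced elliptic curve, making $\bar j$ a $p$th power of a $j$-invariant.
\item Concluding $\bar j\in F^p$ requires descending the stable marked source from $\kappa$ to the imperfect field $F$. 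This uses tameness of its automorphism group (Proposition~\ref{gen:source-centralizer}) and the Bresciani--Vistoli criterion.
\item This contradicts separability, so the cover has good reduction generically.
\item A purity argument in explicit charts at the cusp (the Tate-curve model and the node chart $xy=s^\ell$) extends this to a good model of $\nu_c$ at $v$, contradicting the choice of $v$.
\end{itemize}

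Your proposal contains neither the choice of an irreducible cusp with an indefinite place nor this inseparability mechanism. Without them the argument does not close.
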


This generalizes the result of \cite{Chen}, which proved that $p\mid d_X$ for any component $X$ lying in the \emph{Markoff locus} $\tau = -2$. The other ingredient lives on the character variety side, and is supplied by a recent result of Nathaniel Kingsbury-Neuschotz \cite{KingsburyNeuschotz}, generalizing the work of Bourgain, Gamburd, and Sarnak from \cite{BGS}. The character variety is fibered by a function -- the \emph{local monodromy trace} $\tau$; each fiber is cut out by a \emph{generalized Markoff equation}.

\subsection{Generalized Markoff equations}
\label{intro:markoff}
Let $p$ be odd and let $(a,b)$ be a generating pair of $\Gp := \PSL_2(\bF_p)$. Choose lifts $A,B\in\SL_2(\bF_p)$ of $a,b$ and let
$x=\tr A$, $y=\tr B$, $z=\tr AB$. The commutator trace
$\tau:=\tr(ABA^{-1}B^{-1})$ does not depend on the choices of
lifts, and it satisfies the \emph{generalized Markoff equation}
\begin{equation}\label{eq_generalized_markoff}
 x^2+y^2+z^2-xyz = \tau+2.
\end{equation}
Let $\bX_\tau$ denote the affine cubic surface given by \eqref{eq_generalized_markoff}.
When $\tau = -2$ (the \emph{Markoff case}), for $p\ne3$, \eqref{eq_generalized_markoff} is equivalent, up to the change of variables $(x,y,z)\mapsto (3x,3y,3z)$, to the classical \emph{Markoff equation} $x^2 + y^2 + z^2 - 3xyz = 0$, which first arose in Markoff's work on Diophantine approximation and binary quadratic forms \cite{Markoff1879, Markoff1880}.

Let $E$ be an elliptic curve, and let $\Pi = \pi_1(E^\circ)$, with free generators $\alpha,\beta$. For $\tau\ne 2$, the $\bF_p$-points of $\bX_\tau$ are in bijection with the set $F(p)_\tau^{\operatorname{irr}}$ of $\GL_2(\bF_p)$-conjugacy classes of absolutely irreducible representations $\varphi : \Pi\to\SL_2(\bF_p)$ such that $\tr[\varphi(\alpha),\varphi(\beta)] = \tau$ \cite{Chen}*{Theorem~5.2.10}. As $[\alpha,\beta]$ is a generator of inertia around the puncture of $E^\circ$, $\tau$ is the trace of the local monodromy around $O$ in the $\SL_2(\bF_p)$-cover whose monodromy representation is $\varphi$.\footnote{It is useful to recall that in $\SL_2(\bF_p)$ with $p \ge 3$, for every trace $\ne\pm 2$, there is at most one conjugacy class per trace. For traces $\pm 2$, there are three classes: $\pm I$, $\spmatrix{\pm 1}{1}{0}{\pm 1}$, and $\spmatrix{\pm 1}{a}{0}{\pm 1}$ where $a\in\bF_p^\times$ is a nonsquare.} Every automorphism of $\Pi$ preserves the conjugacy class of $[\alpha,\beta]$ up to inversion, so $\Out(\Pi)$ acts on $\bX_\tau(\bF_p)\cong F(p)_\tau^{\operatorname{irr}}$.
 The subgroup $\Out^+(\Pi)$ which acts with determinant 1 on the abelianization $\Pi/[\Pi,\Pi]$ is isomorphic to $\SL_2(\bZ)$ and is the mapping class group of $E^\circ$.

Write $F(p)_\tau$ for the set of $\GL_2(\bF_p)$-conjugacy classes of surjective representations $\varphi\colon\Pi\to\SL_2(\bF_p)$ with $\tr[\varphi(\alpha),\varphi(\beta)]=\tau$; for $\tau\ne2$ it is a subset of $F(p)_\tau^{\operatorname{irr}}$, and for $\tau = 2$, $F(p)_2=\emptyset$ \cite{Chen}*{Proposition~5.2.16(a)}.
Write
\[
 F(p) := \bigsqcup_{\tau\in\bF_p} F(p)_\tau .
\]

McCullough-Wanderley's classification conjectures concern the $\Out(\Pi)$-action on $F(p)$:
\begin{conjecture}[{Classification conjecture, see \cite{McCulloughWanderley2013}}\footnote{In \cite{McCulloughWanderley2013}, McCullough and Wanderley make multiple variations of the classification conjecture. Martin proved that they are all equivalent \cite{MartinNielsen}*{Theorem~1.5}, so we'll simply refer to it as the classification conjecture.}] For all primes $p\ge 13$ and $\tau\in\bF_p$, $F(p)_\tau$ is nonempty if and only if $\tau\ne 2$, and in this case there is exactly one $\Out(\Pi)$-orbit.
\end{conjecture}

The state of progress on the conjecture is as follows. The case $\tau=-2$ is known, for all but finitely many primes, from results of Bourgain, Gamburd and Sarnak and of the author \cite{BGS, Chen}. Daniel Martin proved the general $\tau$ case for primes $p$ satisfying $24504480\nmid p^2-1$, leaving open a positive density set of primes \cite{MartinNielsen}*{Theorem~1.1}. Finally, a recent result of Nathaniel Kingsbury-Neuschotz, generalizing the work of \cite{BGS}, addresses all primes $p\ge13$ and all $\tau\ne2$, but falls short of transitivity:

\begin{theorem}[Kingsbury-Neuschotz \cite{KingsburyNeuschotz}*{Propositions~6.1, 6.4 and~8.1, and \S7}]\label{thm:NKN} For each prime $p\ge13$ and each $\tau\in\bF_p\setminus\{2\}$, there is a large $\Out(\Pi)$-orbit $\cC(p)_\tau\subset F(p)_\tau$ such that for any $\epsilon > 0$,
\[
 |F(p)_\tau \setminus \cC(p)_\tau| = O(p^\epsilon),
\]
where the implied constant depends only on $\epsilon$, and not on $\tau$.
\end{theorem}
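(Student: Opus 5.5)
The plan follows the Bourgain--Gamburd--Sarnak strategy for $\tau=-2$, now run on the fiber $\bX_\tau$ with $\tau\ne2$ treated as a parameter. Every estimate will be uniform in $\tau$: each one reduces to counting points of curves whose degree is bounded independently of $\tau$ and $p$. We transport the problem from $F(p)_\tau$ to $\bX_\tau(\bF_p)$ via \cite{Chen}*{Theorem~5.2.10}. Under this bijection $\Out(\Pi)$ acts through the group generated by coordinate permutations and the three Vieta involutions, for example $(x,y,z)\mapsto(x,y,xy-z)$.

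\textbf{Rotations.} Fix the first coordinate $x=\lambda+\lambda^{-1}$ with $\lambda\in\bF_{p^2}^\times$. The composite of two Vieta involutions fixing $x$ preserves the conic
\[
C_x:\; y^2+z^2-xyz=\tau+2-x^2 .
\]
It acts on $C_x$ as a linear rotation whose order is $\ord(\lambda)$, a divisor of $p-1$ or of $p+1$. Define the order of a point to be the maximum of $\ord(\lambda)$ over its three coordinates. The points are then sorted into three regimes.

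\textbf{End game.} Let $P,Q$ be points whose orders are at least $p^{1/2+\delta}$, with $\delta>0$ small. We show that $P$ and $Q$ lie in the same orbit. The rotation orbit of such a point fills a set of size at least $p^{1/2+\delta}$ on its conic. We then count incidences between two such large subsets of conics $C_x$ and $C_{y'}$, lying in different coordinate directions, together with a third conic that links them. This count reduces to complete character sums over curves. The Weil bound gives main term minus error of order $p^{1/2}$, so the incidence set is nonempty.

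\textbf{Middle game.} Start from a point of order $\ge p^{\epsilon}$. Moving along its rotation orbit, we must reach a point one of whose \emph{other} coordinates has strictly larger order, with quantitative growth. If this failed, many points of a fixed conic would have all coordinates of multiplicative order below a threshold $T$. Such points lie on a curve in $\bG_m^2$ whose coordinates are constrained to subgroups of size $\le T$. The Garcia--Voloch / Corvaja--Zannier bounds for points of curves lying in small subgroups (Heath-Brown--Konyagin, Stepanov's method) show there are far fewer such points than the orbit size. Iterating this growth finitely many times, depending only on $\epsilon$, lifts every such point to the end-game regime. Hence all points of order $\ge p^\epsilon$ lie in one orbit $\cC(p)_\tau$.

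\textbf{Opening.} It remains to bound the points all of whose coordinates have order $<p^\epsilon$. A coordinate $x=\lambda+\lambda^{-1}$ with $\ord(\lambda)=m\le p^\epsilon$ and $m\mid p\pm1$ takes at most $\sum_{m\mid p\pm1,\ m<p^\epsilon}m\le p^{\epsilon}d(p^2-1)=O(p^{2\epsilon})$ values. Given $x$ and $y$, the coordinate $z$ is a root of a monic quadratic, so the exceptional set has size $O(p^{4\epsilon})$. This bound is uniform in $\tau$. Renaming $\epsilon$ gives the stated $O(p^\epsilon)$ bound.

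\textbf{Completing the argument.} First, we must check that $\cC(p)_\tau$ lies in $F(p)_\tau$, not just in $\bX_\tau(\bF_p)$. The non-surjective absolutely irreducible representations have images inside the proper subgroups of $\SL_2(\bF_p)$ allowed by Dickson's classification. For $p\ge13$ these are dihedral-type images or have bounded order, and their traces have small multiplicative order or lie in a bounded set. So they fall in the exceptional set, and the large orbit consists of surjective classes. Second, we must verify nonemptiness of $\cC(p)_\tau$ for $p\ge 13$, i.e.\ exhibit a point of large order. For example, choose $x$ with $\ord(\lambda)=p\pm1$ and solve the conic $C_x$, which has $p\pm1$ points.

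The main obstacle is the middle game. It needs a subgroup-points-on-curves bound that is uniform in $\tau$ and strong enough to iterate from $p^\epsilon$ up to $p^{1/2+\delta}$. The degenerate conics where $\tau+2-x^2=0$ must also be handled separately.
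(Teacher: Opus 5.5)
The paper does not prove this statement. It quotes it from Kingsbury-Neuschotz, who extend to general $\tau$ the Bourgain--Gamburd--Sarnak scheme (end game, middle game, opening) that you outline. Measured against that scheme, your architecture is right, but the proposal has genuine gaps.

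First, a concrete false step: your conclusion that all points of order $\ge p^\epsilon$ lie in one orbit fails on $\bX_\tau(\bF_p)$. If $x^2=\tau+2$, the set $\{(\pm x,0,0),(0,\pm x,0),(0,0,\pm x)\}$ is preserved by all permutations and Vieta involutions, so it is a union of $\Out(\Pi)$-orbits with at most $6$ points. Yet $x=\lambda+\lambda^{-1}$ can have any order; it is $p\pm1$ for suitable $\tau$. Here the rotation fixes the vertex $(y,z)=(0,0)$ of the degenerate conic $C_x$, so the rotation orbit does not have size $\ord(\lambda)$. Your final paragraph is wrong for the same reason: these dihedral classes need not have traces of small order. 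The repair is easy. These points are not in $F(p)_\tau$, so add them to the exceptional set by hand. Moreover, $\cC(p)_\tau\subseteq F(p)_\tau$ needs no trace argument: $\Out(\Pi)$ preserves the image $\varphi(\Pi)$, so one surjective class in the orbit suffices.

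The more serious gap is the middle game. It is the substance of the theorem, and you only assert it. Write the $x$-rotation orbit as $y_n=r\lambda^n+s\lambda^{-n}$, where $rs(x^2-4)=x^2-\tau-2$. Suppose every $y_n=\mu_n+\mu_n^{-1}$ had $\ord(\mu_n)\le T$. Then you get $t=\ord(\lambda)$ pairs $(w,\mu)$ with $w\in\langle\lambda\rangle$ and $\ord(\mu)\le T$ on the curve $rw+sw^{-1}=\mu+\mu^{-1}$.

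The Garcia--Voloch, Heath-Brown--Konyagin and Corvaja--Zannier bounds you cite require both coordinates to lie in one subgroup. Here that subgroup is $\langle\lambda,\mu\rangle$, of order up to $t\cdot\ord(\mu)$. The resulting bound $(tT)^{2/3}$ beats $t$ only for $T\lesssim t^{1/2}$, which is a loss rather than growth. What you need is a Stepanov-type bound on $H_1\times H_2$ in terms of the two orders separately, of the shape $\max(|H_1|,|H_2|)^{2/3}$ or $(|H_1||H_2|)^{1/3}$. It must be uniform in $r,s$, valid for $|H_1|$ up to about $p^{1/2+\delta}$, and valid over $\bF_{p^2}$ in the nonsplit case. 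Its nondegeneracy hypothesis is that the curve has no component $\mu=cw^{\pm1}$. This fails exactly when $rs=1$, i.e.\ when $\tau=2$. That is where $\tau\ne2$ is really used, and your sketch never identifies it.

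The end game, as you describe it, also does not close. Rotation orbits in the planes $x=x_1$ and $y=y_2$ meet in at most two points. A linking count with main term of order $t_1t_2/p$ is swamped by the $\sqrt p$ Weil error when $t_i\approx p^{1/2+\delta}$. The working version routes through coordinates of maximal order, in two steps.
\begin{enumerate}
\item Inside a single rotation orbit of size $t\ge p^{1/2+\delta}$, find a point whose $y$-coordinate comes from a generator of $\bF_p^\times$ or of the norm-one subgroup of $\bF_{p^2}^\times$. The count has main term $\gg t\,\phi(p\pm1)/p$ and error $\ll 2^{\omega(p\pm1)}\sqrt p$.
\item Connect such maximal-order points to each other. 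Their rotation orbits fill essentially whole conics, so this linking count has enough room.
\end{enumerate}
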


If we could obtain a polynomial-in-$p$ lower bound on the sizes of \emph{all} orbits, this would establish the classification conjecture for all but finitely many primes $p$. This is the role played by Theorem \ref{thm:nm-degree}.

To explain how Theorem~\ref{thm:nm-degree} provides such a lower bound, note that the set of absolute $G$-structures on an elliptic curve $E$ is in bijection with $\Epi(\Pi,G)/\Aut(G)$ \cite{Chen}*{Theorem~2.5.2(4) and Definition~2.5.13}. When $G = \SL_2(\bF_p)$, every automorphism of $\SL_2(\bF_p)$ is realized by conjugation in $\GL_2(\bF_p)$ \cite{Chen}*{Proposition~5.2.13}.
 Thus, by taking the absolute $\SL_2(\bF_p)$-structure on $E$ associated to a representation $\varphi\in F(p)$, we find that $F(p)$ is in bijection with the fiber of the forgetful map
\[
 M(\SL_2(\bF_p))^\abs\lra M(1) := \bA^1_j
\]
above a point $j_0 \ne 0,1728$.\footnote{Here we use the fact that $-I = \spmatrix{-1}{0}{0}{-1}$ in the mapping class group $\SL_2(\bZ)$ acts trivially on trace triples. Otherwise the coarse fiber would be the quotient of $F(p)$ by the action of $\langle -I\rangle$; see \cite{BBCL}*{Proposition~2.4.2 and the discussion after Remark~2.3.3}.}
Then, the orbits of the subgroup $\Out^+(\Pi)$ are in one-to-one correspondence with the components of $M(\SL_2(\bF_p))^\abs$, and the cardinality of an orbit is equal to the degree of the corresponding component over $\bA^1_j$ (see \cite{BBCL}*{\S2-3} and \cite{Chen}*{\S2,\S5}).

As the natural surjection $\SL_2(\bF_p)\to G_p$ induces a surjective (branched) cover
\[
 M(\SL_2(\bF_p))^\abs\to M(G_p)^\abs,
\]
by the correspondence described above, Theorem \ref{thm:nm-degree} implies that for any $(x,y,z)\in F(p)$, we have
\[
 |\Out(\Pi)\cdot (x,y,z)| \ge p.
\]
This gives the desired polynomial lower bound, which combined with Theorem \ref{thm:NKN} gives the classification conjectures for almost all $p$:
\begin{corollary}[See \S\ref{nm:proof-corollaries}]\label{cor_MW} There exists an integer $N$ such that for all primes $p > N$ and all $\tau\in\bF_p$, the group $\Out(\Pi)$ acts transitively on $F(p)_\tau$ if $\tau\ne 2$, and $F(p)_\tau$ is empty if $\tau= 2$.
\end{corollary}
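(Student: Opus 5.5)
We deduce Corollary~\ref{cor_MW} by combining Theorem~\ref{thm:nm-degree} with Theorem~\ref{thm:NKN}. The emptiness of $F(p)_2$ holds for every odd $p$ by \cite{Chen}*{Proposition~5.2.16(a)}, so fix $\tau\ne 2$ and $p\ge 13$. By Theorem~\ref{thm:NKN} there is an $\Out(\Pi)$-orbit $\cC(p)_\tau\subset F(p)_\tau$ whose complement has size at most $C_\epsilon p^{\epsilon}$, with $C_\epsilon$ independent of $\tau$. Take $\epsilon=1/2$. We must show that for $p$ large, any other orbit $\cO\subset F(p)_\tau\setminus\cC(p)_\tau$ is impossible. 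Such an orbit would satisfy $|\cO|\le C_{1/2}p^{1/2}$. It therefore suffices to show $|\cO|\ge p$, since $p> C_{1/2}p^{1/2}$ once $p> C_{1/2}^2$. We then take $N=\max(13,C_{1/2}^2)$.

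For the lower bound, let $\varphi\in\cO$ and restrict to the subgroup $\Out^+(\Pi)\cong\SL_2(\bZ)$. The $\Out^+(\Pi)$-orbit of $[\varphi]$ in $F(p)$ corresponds to a connected component $Y$ of $M(\SL_2(\bF_p))^\abs$, and $\deg(Y\to\bA^1_j)$ equals the orbit size; this is the dictionary recalled above, from \cite{BBCL}*{\S2-3} and \cite{Chen}*{\S2,\S5}. The surjection $\SL_2(\bF_p)\to G_p$ induces a finite map $M(\SL_2(\bF_p))^\abs\to M(G_p)^\abs$ over $\bA^1_j$. This map sends $Y$ onto a component $X$ of $\Mabs{G_p}$, because composing a surjective $\varphi$ with the quotient map stays surjective. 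Hence $\deg j_Y=\deg(Y\to X)\cdot\deg j_X\ge d_X\ge p$ by Theorem~\ref{thm:nm-degree}. Thus $|\cO|\ge|\Out^+(\Pi)\cdot[\varphi]|\ge p$.

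The only delicate point is to check that the orbit–component dictionary applies with the conventions used here. One issue is that $-I\in\SL_2(\bZ)$ acts trivially on trace triples; this is addressed in the footnote, so the fiber over $j_0\ne 0,1728$ really is $F(p)$ and not a quotient of it. The other is that $\tau$ is constant on orbits, so the argument takes place entirely inside $F(p)_\tau$. Both are already recorded in the text before the corollary, so no genuine obstacle remains; the depth lies entirely in the two cited theorems.
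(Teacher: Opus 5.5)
Your proof is correct and is essentially the paper's argument. Every $\Out^+(\Pi)$-orbit on $F(p)_\tau$ has at least $d_X\ge p$ elements by pushing down to a component $X$ of $\Mabs{\Gp}$ and applying Theorem~\ref{thm:nm-degree}, and Theorem~\ref{thm:NKN} with $\epsilon=\frac12$ then rules out any orbit other than $\mathcal{C}(p)_\tau$ once $p>C^2$. Phrasing the lower bound as $\deg j_Y=\deg(Y\to X)\cdot d_X$ instead of via the equivariant surjection of orbits is only cosmetic, and your $N=\max(13,C^2)$ is slightly more careful than the paper's $N=C^2$, since Theorem~\ref{thm:NKN} is stated only for $p\ge 13$.
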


As we move around on $M(\SL_2(\bF_p))^\abs$, the trace of the local monodromy around $O\in E$ defines a locally constant function on $M(\SL_2(\bF_p))^\abs$. For $\tau\in\bF_p$ write $M(\SL_2(\bF_p))^\abs_\tau$ to mean the open and closed subscheme consisting of elliptic curves with absolute structures with local monodromy trace $\tau$; equivalently, this is the union of components coming from orbits of $F(p)_\tau$.

In terms of the components of the moduli schemes $M(\SL_2(\bF_p))^\abs$, Corollary \ref{cor_MW} gives:
\begin{corollary}[See \S\ref{nm:proof-corollaries}]\label{cor_connected} There exists an integer $N$ such that for all primes $p > N$, we have
\[
 M(\SL_2(\bF_p))^\abs = \bigsqcup_{\tau\in\bF_p \setminus \{2\}}M(\SL_2(\bF_p))^\abs_\tau ,
\]
where $M(\SL_2(\bF_p))^\abs_\tau$ is the open and closed subscheme of points with trace invariant $\tau$ \cite{BBCL}*{\S2.5}, and each $M(\SL_2(\bF_p))^\abs_\tau$ is nonempty and connected.
\end{corollary}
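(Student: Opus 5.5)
The plan is to deduce Corollary~\ref{cor_connected} from Corollary~\ref{cor_MW}, using the dictionary recalled above. The fiber of $M(\SL_2(\bF_p))^\abs\to\bA^1_j$ over $j_0\ne 0,1728$ is in bijection with $F(p)$. Connected components correspond to $\Out^+(\Pi)$-orbits on $F(p)$. The trace $\tau$ is constant on each orbit, so $M(\SL_2(\bF_p))^\abs_\tau$ is the union of the components attached to the $\Out^+(\Pi)$-orbits in $F(p)_\tau$. Since $F(p)_2=\emptyset$, the decomposition over $\tau\in\bF_p\setminus\{2\}$ is immediate. Nonemptiness of $M(\SL_2(\bF_p))^\abs_\tau$ for $\tau\ne 2$ follows from Corollary~\ref{cor_MW}, or directly from Theorem~\ref{thm:NKN}, since $\cC(p)_\tau\ne\emptyset$. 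So the real content is connectedness, which amounts to showing that $\Out^+(\Pi)$ is transitive on $F(p)_\tau$ for $p>N$.

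Corollary~\ref{cor_MW} gives transitivity only for $\Out(\Pi)$. Since $\Out^+(\Pi)$ is normal of index $2$, $F(p)_\tau$ is either a single $\Out^+(\Pi)$-orbit or a union of two orbits $O_1$ and $O_2=\sigma O_1$, for any $\sigma\in\Out(\Pi)\setminus\Out^+(\Pi)$. A counting argument cannot rule out the second case: both orbits have the same size, and each has size $\ge p$ by Theorem~\ref{thm:nm-degree}. Instead I will use a fixed point. Suppose $F(p)_\tau$ contains a point $P$ with $\sigma P=P$ for some orientation-reversing $\sigma$. If $P\in O_1$, then $\sigma O_1$ meets $O_1$, so $O_2=\sigma O_1=O_1$ and $F(p)_\tau$ is a single $\Out^+(\Pi)$-orbit.

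For the involution I will take $\sigma:\alpha\mapsto\alpha$, $\beta\mapsto\beta^{-1}$, which has determinant $-1$ on the abelianization. Using $\tr AB^{-1}=\tr A\,\tr B-\tr AB$, it acts on trace triples by $(x,y,z)\mapsto(x,y,xy-z)$. Its fixed points in $\bX_\tau$ are the points with $2z=xy$. Substituting into \eqref{eq_generalized_markoff} gives the curve
\[
 x^2+y^2-\tfrac14x^2y^2=\tau+2 .
\]
For $\tau\ne 2$ this should be a geometrically irreducible curve of bounded genus, or a union of boundedly many such curves, so by Weil it has $p+O(\sqrt p)$ points over $\bF_p$. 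The alternative swap $\alpha\leftrightarrow\beta$, with fixed locus $x=y$, could serve the same role if this curve turns out to be more convenient.

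The main obstacle is to show that at least one of these fixed points lies in $F(p)_\tau$, i.e.\ corresponds to a surjective representation rather than a merely absolutely irreducible one. I would bound the non-surjective points on the fixed curve using Dickson's classification of subgroups of $\SL_2(\bF_p)$. Points with exceptional image ($A_4,S_4,A_5$ types) have traces in a bounded set and contribute $O(1)$ points. Points with image in a Borel subgroup are reducible and do not occur. Dihedral (normalizer of a torus) images force one of $x,y,z$ to vanish. Intersecting $\{x=0\}$, $\{y=0\}$ or $\{z=0\}$ with the fixed curve gives $O(1)$ points. This leaves $\ge p-O(\sqrt p)$ surjective fixed points for $p$ large, which enlarges $N$ if necessary and completes the argument.
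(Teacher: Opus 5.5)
Your proposal is correct and is essentially the paper's argument. Both deduce $\Out(\Pi)$-transitivity from Corollary~\ref{cor_MW}, then produce a surjective class fixed by an orientation-reversing automorphism, using a Hasse--Weil count on the fixed curve minus the $O(1)$ nongenerating triples of Lemma~\ref{nm:nongenerating}. The only difference is the involution: the paper uses the swap $\alpha\leftrightarrow\beta$ with fixed locus $x=y$ (Lemma~\ref{nm:symmetric-pair}), whereas you use $\beta\mapsto\beta^{-1}$. Yours works equally well, because its fixed curve is $(x^2-4)(y^2-4)=4(2-\tau)$. For $\tau\ne\pm2$ this is a smooth genus-one curve with $\bF_p$-rational points at infinity, and for $\tau=-2$ it is an irreducible rational nodal curve. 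This checks geometric irreducibility directly, which your fallback ``or a union of boundedly many such curves'' would not have guaranteed.
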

The function $\tau$ remains well defined on $M(G_p)^\abs$ \cite{BBCL}*{Remark~2.5.8 and Definition~2.5.9}, so we get a similar decomposition of $M(G_p)^\abs$.

We also derive genus asymptotics for these components. See Proposition~\ref{nm:genus-fiber} for a more precise statement.
\begin{corollary}[Genus asymptotics, see \S\ref{nm:genus}]\label{cor_genus}
Let $N$ be as in Corollary~\ref{cor_connected}, let $p>N$ be prime and let $\tau\in\bF_p\setminus\{2\}$. Then $M(\SL_2(\bF_p))^\abs_\tau$ and $M(\Gp)^\abs_\tau$ are connected curves of degrees $p^2+O(p)$ and $p^2/4+O(p)$ over the $j$-line, and the genera of their smooth compactifications are
\[
 \frac{p^2}{12}+O_\epsilon\bigl(p^{1+\epsilon}\bigr)
 \qquad\text{and}\qquad
 \frac{p^2}{48}+O_\epsilon\bigl(p^{1+\epsilon}\bigr)
\]
for every $\epsilon>0$, where the implied constants depend only on $\epsilon$.
\end{corollary}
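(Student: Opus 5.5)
The plan is to apply Riemann--Hurwitz to the connected covers $M(\SL_2(\bF_p))^\abs_\tau\to\bP^1_j$ and $M(\Gp)^\abs_\tau\to\bP^1_j$. Connectedness for $p>N$ is Corollary~\ref{cor_connected}, and the analogous statement for $M(\Gp)^\abs_\tau$ follows because it is the image of $M(\SL_2(\bF_p))^\abs_\tau$. Each such cover is branched only over $j=0,1728,\infty$. Using the dictionary between fibres and $\Out^+(\Pi)\cong\SL_2(\bZ)$-orbits, the fibre over a generic $j_0$ is $F(p)_\tau$, viewed as a set of trace triples in $\bX_\tau(\bF_p)$. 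The ramification over $1728$, $0$ and $\infty$ is governed, respectively, by the cycle structures of the images of $S$ (order $4$), of $ST$ (order $6$, acting with order $3$ since $-I$ acts trivially), and of the Dehn twist $T$. These act explicitly on triples; for instance $T$ acts by $(x,y,z)\mapsto(x,z,xz-y)$, up to the paper's conventions.

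\textbf{Degrees.} A standard point count on the affine cubic surface gives $|\bX_\tau(\bF_p)|=p^2+O(p)$ uniformly in $\tau\neq2$; this is done by fibring over $x$ and counting points on conics. We then remove the points corresponding to absolutely irreducible but non-surjective representations. By Dickson's classification, their images lie in the normalizer of a torus or in one of the exceptional groups $2.A_4$, $2.S_4$, $2.A_5$. The exceptional images contribute $O(1)$ triples. Dihedral-type images force one of $x,y,z$ to vanish, so they lie on $O(1)$ curves and contribute $O(p)$. Hence $d_\tau:=|F(p)_\tau|=p^2+O(p)$.

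For $\Gp$, the fibre is the quotient of $F(p)_\tau$ by the sign-change group $\{(\epsilon_1x,\epsilon_2y,\epsilon_1\epsilon_2z)\}\cong(\bZ/2)^2$, which records the choice of lifts $A,B$ and preserves $\tau$. This group acts freely away from triples with some coordinate equal to $0$. There are $O(p)$ such triples, so the degree is $p^2/4+O(p)$.

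\textbf{Genus.} Riemann--Hurwitz gives
\[
2g-2=-2d+\frac{d-e_2}{2}+\frac{2(d-e_3)}{3}+(d-c),
\]
where $e_2,e_3$ are the numbers of fixed points of the elliptic elements and $c$ is the number of $T$-orbits, i.e.\ cusps. This simplifies to $2g-2=d/6-e_2/2-2e_3/3-c$. The fixed loci of $S$ and of the order-$3$ element are cut out by extra algebraic equations on $\bX_\tau$, for instance $x=y$ together with a relation on $z$. They are therefore curves or finite sets, so $e_2,e_3=O(p)$.

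\textbf{Main obstacle: the cusp count.} We need $c=O_\epsilon(p^{1+\epsilon})$. First fix $x$. Then $T$ acts on the conic $\{y^2+z^2-xyz=\tau+2-x^2\}$, which has $p+O(1)$ points, as the linear map $\spmatrix{0}{1}{-1}{x}$. Its orbits have length equal to the projective order $n(x)$ of an eigenvalue $\lambda$ with $\lambda+\lambda^{-1}=x$, apart from $O(1)$ exceptional points. This order divides $p-1$, $p+1$, or equals $p$ (when $x=\pm2$).

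For each divisor $n$ of $p\pm1$, there are at most $\varphi(n)$ values of $x$ with $n(x)=n$. Therefore
\[
c\le\sum_x\Bigl(\frac{p+O(1)}{n(x)}+O(1)\Bigr)\ll p\sum_{n\mid p^2-1}\frac{\varphi(n)}{n}+p\ll p\,\sigma_0(p^2-1)+p=O_\epsilon(p^{1+\epsilon}).
\]
The same bound holds for $\Gp$ after dividing by the sign action, which at most changes orbit counts by a factor of $4$.

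Plugging in gives $g=d/12+O_\epsilon(p^{1+\epsilon})$, which is $p^2/12+O_\epsilon(p^{1+\epsilon})$ for $\SL_2(\bF_p)$ and $p^2/48+O_\epsilon(p^{1+\epsilon})$ for $\Gp$. The delicate points are the exact orbit-length analysis for $T$, including the parabolic fibres $x=\pm2$ and the $-I$ issue, and the uniformity of the point counts in $\tau$.
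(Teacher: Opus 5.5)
Your proposal is correct and follows essentially the paper's route (Proposition~\ref{nm:genus-fiber} with Lemma~\ref{nm:genus-formula}). You use the same Riemann--Hurwitz genus formula and the conic fibration over $x$ for the degree, after discarding $O(p)$ non-generating triples, and you analyze $T$ row by row via $\spmatrix{0}{1}{-1}{x}$ before applying the divisor bound; the cusp count for $\Gp$ is then at most that for $\SL_2(\bF_p)$ because $T$-orbits map onto $T$-orbits, which is cleaner than your ``factor of $4$'' remark.

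Several of your estimates are cruder than the paper's: you get $e_2,e_3=O(p)$ rather than $O(1)$, and the degenerate rows $x=\pm2$ and $x^2=\tau+2$ can carry up to $2p$ points rather than $p+O(1)$, with $n(-2)=2p$. Since the row counts satisfy $N_x\le 2p$ uniformly, all of these still fit inside the $O_\epsilon(p^{1+\epsilon})$ error.
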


\subsection{Strong approximation and relation to the Markoff case}

The classification conjecture is related to strong approximation on the surfaces $\bX_\tau$. Recall that for $\tau\in\bZ$ we have a bijection $\bX_\tau(\bF_p)\cong F(p)_\tau^{\operatorname{irr}}$ whenever $\tau\not\equiv 2\bmod p$. Write $\bX_\tau(\bF_p) = F(p)_\tau\sqcup E(p)_\tau$, where $F(p)_\tau$ is the surjective locus and $E(p)_\tau$ is its complement, the \emph{exceptional locus}. For $\tau = -2$ the exceptional locus is the singleton $\{(0,0,0)\}$ \cite{CamposVargas}*{Theorem~6}, so transitivity on the surjective locus, as predicted by the classification conjecture, together with the existence of an integral lift of a single surjective point, such as $(3,3,3)$, implies that $\bX_{-2}$ satisfies \emph{strong approximation}, in the sense that the reduction map
\[
 \bX_{-2}(\bZ)\to \bX_{-2}(\bF_p)
\]
is surjective. For general $\tau$ strong approximation can fail. However, when transitivity holds (such as for $p > N$ as in Corollary~\ref{cor_MW}), and when some surjective point admits an integral lift, the failure is confined to the exceptional locus, which consists of reductions of the finite $\Out(\Pi)$-orbits over $\Qbar$ found by Dubrovin and Mazzocco \cite{DubrovinMazzocco}; these reductions are listed in \cite{CamposVargas}*{Theorem~6}, and the list gives $|E(p)_\tau|\le 114$ for all $p$ and $\tau$. In particular, for $p>N$, when there exists a non-exceptional integral point, strong approximation fails on at most $114$ points of $\bX_\tau(\bF_p)$.

The Markoff case of Theorem \ref{thm:nm-degree} was proven in \cite{Chen}*{Theorem~1.2.5} by studying certain line bundles on the moduli stack of elliptic curves with $\Gp$-structures.\footnote{A simpler proof was later found by Martin \cite{Martin}. Both the author's and Martin's proofs obtain not only the inequality $d_X\ge p$, but a divisibility $p\mid d_X$. The methods of this paper can also be used to give yet another proof of the divisibility result in the Markoff case, but this requires a little additional machinery that we will develop in a separate paper. We note that the divisibility statement fails for general $\tau$ -- see the tables in \cite{ChenModuli}*{Appendix~B.2}.} The resulting lower bound was then combined with Bourgain, Gamburd, and Sarnak's asymptotic upper bound \cite{BGS} to yield Corollary~\ref{cor_MW} for $\tau = -2$.

The proof of Theorem \ref{thm:nm-degree} in this case was special in that when $\tau = -2$, the corresponding $\SL_2(\bF_p)$-covers of elliptic curves have ramification index $2p$ above the origin. The ``$p$'' here is what leads to the $\ge p$ lower bound. For other values of $\tau$, the ramification indices can, in general, be any divisor of $p\pm 1$ greater than 2, and worse -- they will typically vary with $p$, even when $\tau$ is a fixed integer. The approach in this paper circumvents this issue by studying, instead, the reduction of the covering map $M(G_p)^\abs\to\bP^1_j$ at places above $p$.

As Theorem \ref{thm:nm-degree} can be verified directly for small primes, and the Markoff case $\tau = -2$ was done in \cite{Chen}, it suffices to prove the theorem for non-Markoff components and $p > 7$.
The proof occupies \S\ref{sec:arithmetic}--\ref{nm:full-cusp}
and is assembled in \S\ref{nm:proof-main}. In the rest of the introduction we explain some of the key elements that go into the proof.

\subsection{Bad reduction for three-point \texorpdfstring{$\PSL_2(\bF_p)$}{PSL2(Fp)}-covers}

The first major input concerns $G_p = \PSL_2(\bF_p)$-covers of the projective line and
is independent of the moduli problem.
For a finite group $G$, let $f\colon Y\to\bP^1_{\Qbar}$ be a
connected $G$-cover branched at $0,1,\infty$, and let $v$ be a place
of $\Qbar$ above $p$. We say that $f$ has \emph{good reduction} at $v$
if, for some finite extension $K_v$ of $\bQ_p$ inside the completion
of $\Qbar$ at $v$ over which $f$ is defined, the normalization of
$\bP^1_{R_v}$ in the cover over $K_v$ is \'etale over the complement
of the three sections $0$, $1$ and $\infty$, where $R_v$ is the
valuation ring of $K_v$. Otherwise $f$ has \emph{bad reduction} at $v$.
In \S\ref{bg:reduction} we define it for covers of marked curves in
general and show that, for the covers of this paper, it agrees with the
notions of good reduction used in the literature.
Here, and for every marking in the paper, a cover is marked
above the marked points of its base (\S\ref{bg:reduction}), including potentially unramified points.
For every three-point cover of $\bP^1$ in this paper, the base $\bP^1$ is marked at 3 points: either $0,1,\infty$ or $0,1728,\infty$ according to context.

\begin{theorem}\label{thm:nm-arithmetic}
Let $p>5$ be prime and let
$f\colon Y\to\bP^1_{\Qbar}$ be a connected
$\Gp$-cover branched at $0,1,\infty$. Then there is a place above $p$
at which $f$ has bad reduction. If one of the three ramification indices
of $f$, over $0$, $1$ and $\infty$, is divisible by $p$, then $f$ has bad
reduction at every place above $p$.
\end{theorem}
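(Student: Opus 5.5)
The plan splits according to whether $p$ divides one of the ramification indices of $f$ over $0$, $1$ and $\infty$. Recall that every element of $\Gp=\PSL_2(\bF_p)$ has order dividing one of $p$, $(p-1)/2$ or $(p+1)/2$.

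\emph{Case 1: some ramification index is divisible by $p$.} Then the cover is wildly ramified above that branch point. I will invoke Grothendieck's good reduction theorem for tamely ramified covers in its converse direction. Suppose $f$ had good reduction at a place $v$ above $p$, in the sense of the definition above. The normalization of $\bP^1_{R_v}$ would then be étale away from the three sections. Its special fiber would be a $\Gp$-cover of $\bP^1_{\ol{\bF}_p}$ branched at three points, and the inertia groups would be preserved under specialization. The structure of such a cover is what I must exploit, since wild ramification by itself does not prevent good reduction in this sense.

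The natural tool is the Bouw--Wewers / Raynaud result: a three-point $G$-cover with $p$ dividing $|G|$ exactly once, wildly ramified, has bad reduction, and its stable reduction has an explicit tree structure. Here $p\,\|\,|\Gp|=p(p^2-1)/2$ for $p>3$. I expect the paper's cited result of Bouw and Wewers to give this directly. Alternatively, I would argue as follows. Good reduction forces the reduction to be a three-point cover in characteristic $p$ with a $p$-Sylow inertia group. The ramification then yields a lifting obstruction via the different, or via Riemann--Hurwitz genus comparison. Because the genus is preserved under good reduction, Riemann--Hurwitz computed with the higher ramification filtration in characteristic $p$ (conductor $h\ge1$, prime to $p$) must match the characteristic $0$ computation. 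This is impossible, since wild ramification contributes strictly more to the different. This argument works at every place $v$, which gives the second assertion.

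\emph{Case 2: all ramification indices divide $(p\pm1)/2$.} The indices are then prime to $p$, so the cover is tame. Here I use a counting/rigidity argument. Suppose $f$ had good reduction at every place above $p$. By Grothendieck's theory, specialization gives a bijection between tame three-point $\Gp$-covers in characteristic $0$ and those in characteristic $p$ that lift. Therefore $f$ reduces to a tame $\Gp$-cover $\bar f$ of $\bP^1_{\ol{\bF}_p}$ branched at $0,1,\infty$, and hence to a generating triple of $\Gp$. Since $\bar f$ is defined over a finite field and is \emph{tame}, there is no obstruction at this stage.

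The contradiction should instead come from Raynaud's/Bouw--Wewers' criterion: a tame cover with $p\mid|G|$ has good reduction at $v$ if and only if a certain auxiliary condition holds. Bouw--Wewers show that for $\PSL_2(\bF_p)$ the relevant covers are defined via reduction of $\PSL_2(\bZ_p)$-type families, and the characteristic-$p$ fundamental group is not prime-to-$p$ free. Concretely, the maximal prime-to-$p$ quotient governs tame covers in characteristic $p$, but $\Gp$-covers of $\bP^1_{\ol{\bF}_p}-\{0,1,\infty\}$ that are tame must have Galois closure with a $p$-Sylow. Harbater/Raynaud-type results (``Raynaud's theorem on bad reduction when $p$ divides $|G|$ and the Sylow is cyclic of order $p$'') show that some place gives bad reduction whenever $G$ has no prime-to-$p$ normal complement situation.

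I would cite the Bouw--Wewers theorem stating that three-point $G$-covers with $p\,\|\,|G|$ have good reduction if and only if a numerical condition on the ramification indices holds, together with the fact that $\Gp$ being simple excludes good reduction. The main obstacle is Case 2: I would have to verify that the Bouw--Wewers hypotheses (strong $p$-divisibility, $p>5$) apply and yield bad reduction at \emph{some} place, possibly via Galois conjugation of the cover across places.
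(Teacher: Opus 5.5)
Your proposal has a genuine gap in Case~2, which is the real content of the theorem, and two claims you rely on there are false. First, simplicity of $\Gp$ does not exclude good reduction. A three-point $\Gp$-cover with ramification indices prime to $p$ has \emph{good} reduction at every place at which it is definite (Theorem~\ref{app:corrected}). Bad reduction therefore depends on the place, and any argument that would give bad reduction at every place, such as your appeal to simplicity, cannot be right.

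Second, the Bouw--Wewers criterion is not a numerical condition on the ramification indices. It is the inequality \eqref{nm:bw-hypothesis} on the integers $a_i=a(C_i)$. Their labels $l_i$ record which roots of unity in $\Qbar$ reduce at $v$ to the eigenvalues $\delta_i^{\pm1}$ of the local monodromies, so the criterion changes with $v$. It must also be tested on all four lifts of $f$ to $\SL_2(\bF_p)$, which differ by even sign changes, and at most one of which can satisfy it (Remark~\ref{nm:bw-scope}(b), Lemma~\ref{nm:dictionary}). Your phrase ``possibly via Galois conjugation across places'' points the right way, but you give no reason why any place satisfies the criterion.

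What is missing is a global argument that produces such a place. The paper attaches to each place $v$ angles $\theta_i(v)\in(0,\tfrac12)$ through $\iota$. It then shows (Lemma~\ref{nm:dictionary}) that the following are equivalent: some lift satisfies \eqref{nm:bw-hypothesis} at $v$; some lift has $\theta_1+\theta_2+\theta_3>1$; the real number $\iota(\widetilde\Delta_v)$ is positive. Here $\widetilde\Delta_v$ is the lifted Fricke quantity $\widetilde x_1^2+\widetilde x_2^2+\widetilde x_3^2-\widetilde x_1\widetilde x_2\widetilde x_3-4$, and the equivalence comes from a product-of-sines identity.

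To find such a $v$ (Lemma~\ref{nm:indefinite-place}), the paper proceeds as follows:
\begin{enumerate}
\item Lift $(M_1,M_2)$ to matrices in $\SL_2(\cO_L)$, $L=\bQ(\widetilde\zeta_{p^2-1})$, with the lifted traces.
\item These matrices generate an infinite group, because a finite subgroup of $\PGL_2(\bC)$ cannot surject onto $\Gp$ for $p>5$.
\item At each complex embedding the pair preserves a Hermitian form, which is definite exactly when $\widetilde\Delta<0$ there.
\item If the form were definite at every embedding, the group would be bounded in the product of the archimedean completions. Since $\cO_L$ is discrete there, the group would be finite, a contradiction.
\item So some embedding has $\widetilde\Delta>0$. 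A Galois element carrying $\iota$ to that embedding moves the fixed place to one where $f$ is indefinite.
\item At that place Theorem~\ref{nm:bw-theorem}(b) applies to the selected lift, and Lemma~\ref{nm:lifts} transfers bad reduction to $f$.
\end{enumerate}

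Your Case~1 reaches the right conclusion, but your remark that an index divisible by $p$ ``does not by itself prevent good reduction'' is false. Lemma~\ref{bg:good-model}(c) settles this case at once: near a marked section, a good model is \'etale locally a union of Kummer charts $y^n=s$ with $n$ invertible in $R$. Your Riemann--Hurwitz comparison of differents presupposes exactly this local structure, namely a smooth special fiber and preserved inertia groups.
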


If one of the ramification indices is divisible by $p$, then $f$ has bad reduction at every place above $p$ (see Lemma~\ref{bg:good-model}(c); cf. \cite{BouwWewers}*{Theorem~6.6(a)}).\footnote{As an example, consider the map $\bP^1_{t,\bZ_p}\to\bP^1_{y,\bZ_p}$ given by $y=t^p$ -- this induces an inseparable map of special fibers.}
The main content of the theorem is thus the case of ramification indices prime to $p$.
In this case, we use the classification of Bouw and Wewers \cite{BouwWewers}*{Theorem~6.6}, which describes the reduction types of $\SL_2(\bF_p)$-covers of $\bP^1$ at $p$ in terms of the triple of conjugacy classes in $\SL_2(\bF_p)$ given by local monodromy around the three branch points (see \S\ref{nm:bw-statement} below). Applying an element of $\Gal(\Qbar/\bQ)$ to the cover modifies the local monodromy classes by powering through the cyclotomic character.
In Lemma \ref{nm:indefinite-place} we show that there exists a Galois conjugate of $f$ that has bad reduction at some place above $p$; this proves Theorem \ref{thm:nm-arithmetic}.

\subsection{Strategy of the proof of Theorem~\ref{thm:nm-degree}}
\label{intro:strategy}

Degeneration is the link between Theorems~\ref{thm:nm-degree} and~\ref{thm:nm-arithmetic}.

Let $X\subset M(G)^\abs$ be a component with smooth compactification $\ol{X}$. Points in $\ol{X} - X$ are called \emph{cusps}.
Following \cite{Chen}*{Definition~2.5.11}, we call the
objects parametrized by cusps \emph{cuspidal}.
The cuspidal object attached to a cusp $c\in\ol{X} - X$ consists of a $G$-cover
$f_c\colon Y_c\to E_c$, the \emph{cuspidal cover}, where $E_c$ is a
nodal rational curve with origin $O_c$, a \emph{$1$-gon} in the
terminology of \cite{DeligneRapoport}*{II.1.1}, and
$f_c$ is the finite map into which nearby $G$-covers of
elliptic curves degenerate at $c$. The cuspidal cover is
admissible in the sense of \cite{Chen}*{Definition~2.1.4}: $Y_c$ is nodal, the map sends nodes to nodes and is
\'etale over the smooth locus of $E_c$ away from $O_c$, and the local
inertia characters at the two branches of each node are mutually
inverse (cf.\ Lemma~\ref{nm:cuspidal-degeneration}).

The normalization $\widetilde E_c$ of
$E_c$ is a projective line carrying three distinguished points: the origin
$O_c$ and the two branches $0_c,\infty_c$ of the node. Normalizing
$Y_c$ gives the \emph{normalized cuspidal cover}
$\widetilde Y_c\to\widetilde E_c$, a $G$-cover branched at these
three points. As a point $t$ of $X$ tends to $c$, a simple closed
curve on the elliptic curve $E_t$, the \emph{vanishing cycle}, is
pinched to the node of $E_c$. Let $(a,b)$ be the monodromies around an
oriented basis of $\pi_1(E_t^\circ)$ whose first element is the class
of the vanishing cycle; we say that $(a,b)$ is a \emph{Nielsen pair
representing $c$}, and we call $h:=\ord(a)$ the \emph{pinched order}
of $c$. Each connected
component of $\widetilde Y_c$ is a Galois cover with group
\[
 H_a=\langle a,\,ba^{-1}b^{-1}\rangle
\]
and local monodromies $a$, $ba^{-1}b^{-1}$, $[a,b]^{-1}$ at
$0_c$, $\infty_c$, $O_c$, whose product is one and whose orders are
$(h,h,e)$. We write
$\nu_c\colon\widetilde Y_c^{1}\to\widetilde E_c$ for one such
component, the \emph{connected normalized cuspidal cover}; this
notation will be used in \S\ref{nm:cuspidal-notation}. The group
$H_a$ can be a proper subgroup of $G$ even though $(a,b)$ generates
$G$. In our proof of Theorem \ref{thm:nm-degree}, we are especially interested in the case where $H_a = G_p$, equivalently $Y_c$ and $\wt{Y}_c$ are irreducible. Such a cusp $c$ will be called \emph{irreducible}.

The proof of Theorem~\ref{thm:nm-degree} has four steps.

\begin{enumerate}[label={(\arabic*)}]
\item \emph{Existence of an irreducible cusp}
(\S\ref{nm:full-cusp}). Every component under consideration has
a cusp $c$ with $H_a=\Gp$ (Lemmas~\ref{nm:large-primitive} and
\ref{nm:full-normalization}). If its pinched order $h$ equals $p$,
the ramification index of $j_X$ at $c$, the \emph{width} of $c$, is
$p$, and so we'd immediately have $d_X\ge p$ 
(Lemma~\ref{nm:width}). Otherwise
$\nu_c\colon\widetilde Y_c\to\widetilde E_c$ is a connected
three-point $\Gp$-cover whose ramification indices $(h,h,e)$ are prime to
$p$, and
Theorem~\ref{thm:nm-arithmetic} gives a place $v$ above $p$ at which
it has bad reduction.

\item \emph{Small degree gives good reduction of the $j$-map}
(\S\ref{nm:proof-main}). Suppose $d_X<p$. The monodromy group
of the $j$-map $j_X\colon\overline X\to\bP^1_j$ embeds in the symmetric
group $S_{d_X}$, so its order is prime to $p$, and Grothendieck's theory
of tame covers (Theorem~\ref{bg:grothendieck}) gives good reduction of
$j_X$ at every place above $p$: over the valuation ring $R_1$ of a
finite extension $K_1$ of $\bQ_p$ inside the completion of $\Qbar$ at
$v$, the curve $\overline X$ has a smooth model which is finite over
$\bP^1_{j,R_1}$ and \'etale outside the sections $0$, $1728$ and
$\infty$.

\item \emph{Good reduction over the generic elliptic curve}
(\S\ref{nm:sec-modular} and Step~1 of
\S\ref{nm:sec-proof-bad-cusp}). Using the moduli interpretation of
$X$, we construct a cover $V$ of $X$ in characteristic $0$ that carries
an elliptic curve and a $\Gp$-cover of it.
Since $V\to X$ has degree at most four, a good model of $j_X$ at $v$
yields a good model $\overline{\cV}$ of the $j$-map of the smooth
compactification $\overline V$ (Lemma~\ref{bg:ascent}). The special
fiber $\overline{\cV}_k$ is smooth and irreducible, and
the reduced $j$-map $\overline{\cV}_k\to\bP^1_k$ is separable. Its
function field $F$ is therefore a finite separable extension of
$k(\bar j)$, so the reduced $j$-invariant $\bar j$ is not a $p$th power
in $F$. Completing the function field of $V$ at the generic point of
$\overline{\cV}_k$ gives a discretely valued field of characteristic
$0$ with residue field $F$ over which the elliptic curve and its
$\Gp$-cover are defined. If this $\Gp$-cover had bad reduction, then an
analysis of the stable reduction would force $\bar j\in F^p$
(Proposition~\ref{nm:generic-criterion}) and hence $\overline{\cV}_k\to\bP^1_k$ must be inseparable. This contradicts our assumption, so it must have good reduction.

\item \emph{Extending good reduction to the cusp}
(\S\ref{nm:cusp-degree}). Let $\bar{c}'$ be a point of $\ol{\cV}$ lying above the specialization $\bar{c}$ of the irreducible cusp $c$ of width prime to $p$. The completed local ring of $\ol{\cV}$ at $\bar{c}'$ takes the form $R_1[[s]]$ for $R_1$ the valuation ring of some finite extension of $\bQ_p$ with uniformizer $\pi$. In \S\ref{nm:sec-modular} we use an explicit Weierstrass model of the elliptic base from the previous step to extend it across $\bar{c}'$, producing a $G_p$-cover $\cY_s\to\cE_s$ over $R_1[[s]]$ which is \'{e}tale above every generic point of every fiber above any codimension-1 point of $R_1[[s]]$. A purity argument implies that the same is true along the fiber above the special cusp $(\pi,s)$. This ultimately shows that the normalization of the restriction of $\cY_s\to\cE_s$ to the locus $s = 0$ is a 3-point $G_p$-cover $\wt{\cY}_c\to \bP^1_{R_1}$ which is a good model of $\nu_c$, contradicting step (1). Therefore $d_X\ge p$.

\end{enumerate}

Steps (3) and (4) are carried out in Proposition~\ref{nm:bad-cusp},
which says that good reduction of $j_X$ at $v$ gives good reduction of
$\nu_c$ at $v$.

\begin{remark}\label{intro:curve-vs-cover}
Our argument does not imply that the component $X\subset M(G_p)^\abs$ has bad reduction as a curve. For example, every component of $M(G_{11})^\abs$ has genus $0$ \cite{ChenModuli}*{Appendix~B.2}, and hence has good reduction as a curve. What our argument shows is that the $j$-map $j_X\colon\overline X\to\bP^1_j$ has bad reduction as a cover (Definition~\ref{bg:good-reduction}) at some place $v$ above $p$: there is no finite extension $K$ of $\bQ_p$ inside the completion of $\Qbar$ at $v$ over which $\overline X$ has a smooth model that is finite over $\bP^1_{j,R}$ and \'{e}tale outside the sections $j = 0,1728,\infty$ (Corollary~\ref{nm:bad-reduction-jmap}).

This bad reduction implies the following structural properties about $j_X : \ol{X}\to\bP^1_j$. First, by the theory of tame covers, its monodromy group must have order divisible by $p$. Second, let $R$ be the valuation ring of an extension $K/\bQ_p$ over which $j_X$ is defined, and let $\cX$ be the normalization of $\bP^1_{j,R}$ in $\ol{X}$. Then after base changing via a potentially ramified finite extension $R\to R'$, the normalization of $\cX_{R'}$ has reduced special fiber \cite{Stacks}*{09IJ}. Purity of the branch locus then forces this special fiber to be inseparable over the special $j$-line.
\end{remark}

In some sense, our argument exploits a tension between the ``subtle'' reduction of 3-point $G_p$-covers of $\bP^1$, where badness depends on the choice of a place above $p$, and the very strong good reduction criterion from the theory of tame covers.

The inseparability criterion used in step (3) is proved in \S\ref{sec:generic}.
Let $K$ be a complete discretely valued field of mixed characteristic
whose residue field $F$ is a one-variable function field over a
perfect field $k$. Let $E$ be an elliptic curve over $K$ with good
reduction whose reduced $j$-invariant $\bar j\in F$ is transcendental
over $k$, and let $Y\to E$ be a geometrically connected $\Gp$-cover,
$p>7$, branched only over the origin, with ramification index $e>1$
prime to $p$. The criterion states that bad reduction of
$Y\to E$ forces $\bar j\in F^p$
(Proposition~\ref{nm:generic-criterion}).

The main difficulty to proving this criterion is that $F$ is imperfect. The stable marked model
$\cY$ of $Y$ is in general only available after a finite extension of $K$,
whose residue extension may be inseparable, and the conclusion
$\bar j\in F^p$ may become invisible under such extensions. Let $\bar Y$
be the special fiber of $\cY$ over an algebraically closed field
containing $F$, the \emph{stable marked source}. We descend $\bar Y$,
with its $\Gp$-action and its unordered marking, to a curve
over $F$ itself. This descent is governed by the group of
automorphisms of $\bar Y$ commuting with $\Gp$ and preserving the
marking. Stable reduction theory bounds the number of tails of the
stable target $\bar X$, the special fiber of $\cY/\Gp$, and determines
their ramification invariants (Lemma~\ref{gen:tails}), and from this
we show that the group has order prime to $p$
(Proposition~\ref{gen:source-centralizer}). The closure of the
$K$-point defined by $Y$ in the moduli stack of stable marked curves
with a $\Gp$-action is then a tame proper stack, and the arithmetic
valuative criterion of Bresciani and Vistoli \cite{BrescianiVistoli}
supplies an $F$-point of its special fiber
(Lemma~\ref{gen:source-descent}). With this in hand, the conclusion $\bar j\in F^p$ ultimately derives from the existence of an inseparable cover in the stable reduction.

\subsection{Acknowledgements and AI assistance}
The author thanks Peter Sarnak for introducing him to this problem. This paper was produced with heavy assistance from ChatGPT 5.6 Sol and Astra via Codex, and Claude Opus 5 and Fable 5.1, using custom pipelines developed by the author in conjunction with components shared by Taylor Dupuy. These models were used in every part of the paper, including research, writing, editing, and adversarial review.

The connection between stable reduction theory and large orbits was first observed by the author in 2020 as a potential path to obtain the lower bounds in the Markoff case $\tau = -2$, for which he ultimately found a different path that produced a stronger result \cite{Chen}. The main pieces of the argument for Theorem \ref{thm:nm-degree} are due to the author, including the usage of the work of Bouw-Wewers \cite{BouwWewers}, the inseparability of the special fiber map over the field of definition of the stable reduction used in Step 3, and their relation via the cusps of the moduli space. The author has learned a great deal from his interactions with AI, which was very helpful in making the ideas precise and bridging gaps, perhaps most notably in applying the arithmetic valuative criterion of \cite{BrescianiVistoli} to descend the inseparability to $F$. The author takes full responsibility for the correctness and quality of this work.

\subsection{Organization and conventions}

In \S\ref{sec:background} we fix the trace-coordinate description
of absolute structures, the structure of subgroups of $\Gp$ used
throughout, and the notion of good reduction of covers. Theorem~\ref{thm:nm-arithmetic} is proved in
\S\ref{sec:arithmetic}, and the generic inseparability criterion in
\S\ref{sec:generic}. They are combined in \S\ref{nm:cusp-degree}:
good reduction of the $j$-map at a place above $p$ extends to good
reduction of the normalized cuspidal cover at a cusp
(Proposition~\ref{nm:bad-cusp}). In \S\ref{nm:full-cusp} we find a
cusp to which Theorem~\ref{thm:nm-arithmetic} applies, and
\S\ref{nm:collation} completes the proof of Theorem~\ref{thm:nm-degree}
and proves Corollaries~\ref{cor_MW} and~\ref{cor_connected}. In \S\ref{nm:genus} we compute
the genus of the curves $M(\SL_2(\bF_p))^\abs_\tau$ and
$M(\Gp)^\abs_\tau$ (Corollary~\ref{cor_genus}). In Appendix~\ref{app:converse}
we correct a minor error in Theorem~6.6(b) of \cite{BouwWewers}.

Throughout the paper all relevant rings and schemes will be excellent. In particular, normalizations are finite and commute with completion of the base ring \cite[07GG, 0C22]{Stacks}.

\section{Subgroups of $\PSL_2(\bF_p)$ and covers of curves}
\label{sec:background}

\subsection{Generating pairs and trace coordinates}
\label{bg:sheets}

Let $(E,O)$ be a pointed complex elliptic curve, and fix an oriented
basis of $\pi_1(E^\circ)$. As in \S\ref{sec:introduction}, an
absolute $G$-structure on $E$ is a generating pair $(a,b)$ of $G$ up to
$\Aut(G)$, and an \emph{inner} structure is a generating pair up to
simultaneous conjugation. The group $\SL_2(\bZ)\cong\Out^+(\pi_1(E^\circ))$ acts on structures
through changes of the oriented basis, and its
orbits on absolute structures correspond to the geometric connected
components of $\Mabs{G}$; this is the topological description of the
algebraic moduli problem in \cite{Chen}*{Theorem~2.5.2, Definition~2.5.13,
Situation~2.5.14}. The fiber of $\Mabs{G}$ over a point $j\ne0,1728$
of the $j$-line is the set of absolute structures on the elliptic
curve $E$ with that $j$-invariant, modulo
$\Aut(E,O)=\{\pm1\}$, and the involution $[-1]$ acts on structures
by $(a,b)\mapsto(a^{-1},b^{-1})$. We call the points of such a fiber of
$j_X$ the \emph{absolute sheets} of $X$. For $\Gp$ the action of $[-1]$
is trivial on absolute structures, and the count of sheets can be
made in trace coordinates.

\begin{lemma}[Absolute sheets in trace coordinates]
\label{bg:absolute-sheets}
Let $p$ be an odd prime, and let $a,b$ be a generating pair of $\Gp$. If $A,B$ are lifts of $a,b$ to $\SL_2(\bF_p)$, the other lifts are $\pm A,\pm B$. To a particular lift $A,B$, associate the trace triple $(x,y,z) = (\tr A, \tr B,\tr AB)$. Then the trace triples of the lifts $(\pm A, \pm B)$ form the set
\begin{equation}\label{bg:signs}
 \{(\epsilon_1 x, \epsilon_2 y, \epsilon_1\epsilon_2 z)\;|\; \epsilon_1,\epsilon_2\in\{1,-1\}\}.
\end{equation}
We call this set the \emph{sign class of trace triples} associated to $(a,b)$.
\begin{enumerate}
\item Every automorphism of $\Gp$ is conjugation by an element of
$\PGL_2(\bF_p)$.
\item Two generating pairs define the same absolute structure if and
only if their associated sign classes agree.
\item The pairs $(a,b)$ and $(a^{-1},b^{-1})$ define the same
absolute structure.
\end{enumerate}
Consequently, for a geometric connected component $X$ of
$\Mabs{\Gp}$, the degree $d_X$ of $X$ over $\bP^1_j$ is the number of sign classes of trace
triples in the $\SL_2(\bZ)$-orbit that defines $X$.
\end{lemma}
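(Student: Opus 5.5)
The plan is to prove the preliminary sign-class assertion first, then (a), (b), (c) in that order, and finally deduce the count of $d_X$ from the description of absolute sheets in \S\ref{bg:sheets}.

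For the sign classes: the kernel of $\SL_2(\bF_p)\to\Gp$ is $\{\pm I\}$, so the lifts of $(a,b)$ are exactly $(\epsilon_1A,\epsilon_2B)$. Since $\tr(\epsilon_1A)=\epsilon_1x$, $\tr(\epsilon_2B)=\epsilon_2y$ and $\tr(\epsilon_1\epsilon_2AB)=\epsilon_1\epsilon_2z$, these lifts give precisely the set \eqref{bg:signs}.

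For (a), I will quote the classical fact that $\Aut(\PSL_2(\bF_p))\cong\PGL_2(\bF_p)$ for $p$ prime and $p\ge5$. Since $\bF_p$ has no field automorphisms, $\mathrm{P\Gamma L}_2(\bF_p)=\PGL_2(\bF_p)$. For $p=3$, $\Gp\cong A_4$ and $\PGL_2(\bF_3)\cong S_4=\Aut(A_4)$. Alternatively, one can cite \cite{Chen}*{Proposition~5.2.13} for $\SL_2(\bF_p)$: every automorphism of $\Gp$ lifts to one of $\SL_2(\bF_p)$, because $\SL_2(\bF_p)$ is the universal central extension for $p\ge5$.

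The substance is (b).
\begin{itemize}
\item If $(a',b')=(gag^{-1},gbg^{-1})$ with $g\in\PGL_2(\bF_p)$, lift $g$ to $\GL_2(\bF_p)$. Conjugating lifts $A,B$ by this lift preserves traces, so the sign classes agree.
\item Conversely, suppose the sign classes agree. Choose lifts $A',B'$ of $(a',b')$ with the same trace triple $(x,y,z)$ as $(A,B)$. Since $(a,b)$ generates $\Gp$, $\langle A,B\rangle$ surjects onto $\Gp$. Hence $\langle A,B\rangle=\SL_2(\bF_p)$, since a proper subgroup surjecting onto $\Gp$ would be an index-$2$ complement, impossible as $\SL_2(\bF_p)$ is perfect for $p\ge5$ (handle $p=3$ directly). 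In particular the representation is absolutely irreducible.
\item The trace coordinates determine the character on the free group, by the Fricke relations $\tr(UV)+\tr(UV^{-1})=\tr U\tr V$. Absolutely irreducible representations with the same character are conjugate over $\overline{\bF}_p$. This is \cite{Chen}*{Theorem~5.2.10}, where they are conjugate over $\GL_2(\bF_p)$, not just over $\overline{\bF}_p$.
\item Therefore $(A',B')=(gAg^{-1},gBg^{-1})$ for some $g\in\GL_2(\bF_p)$, and projecting gives the same absolute structure.
\end{itemize}
The step needing the most care is this descent of conjugacy to $\bF_p$, which I will handle by citing the earlier theorem, or by Hilbert 90 applied to $\mathrm{PGL}_2$-torsors via Schur's lemma.

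For (c), it suffices by (b) to compare sign classes. The inverse lifts are $A^{-1},B^{-1}$, with traces $\tr A^{-1}=x$, $\tr B^{-1}=y$, and $\tr(A^{-1}B^{-1})=\tr((BA)^{-1})=\tr(BA)=z$. So the trace triple, and hence the sign class, is unchanged.

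For the consequence: by \S\ref{bg:sheets}, the fiber of $j_X$ over $j\ne0,1728$ is the set of absolute structures in the orbit modulo $[-1]$. By (c), $[-1]$ acts trivially on absolute structures, so the fiber is just the set of absolute structures in the orbit. By (b), these are in bijection with the sign classes of trace triples in the $\SL_2(\bZ)$-orbit. Since the fiber over a generic point has cardinality $\deg j_X$, this gives $d_X$ equal to the number of sign classes.
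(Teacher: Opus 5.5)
Your proposal is correct and follows the paper's proof essentially step for step: the classical description of $\Aut(\Gp)$ for (a), lifting the conjugating element and then citing \cite{Chen}*{Theorem~5.2.10} for the two directions of (b), invariance of traces under inversion for (c), and counting the absolute structures modulo the trivially acting $[-1]$ for the degree. The only difference is how you get absolute irreducibility: you use $\langle A,B\rangle=\SL_2(\bF_p)$ via perfectness and need a separate check at $p=3$, whereas the paper observes that a common eigenline would put $\Gp$ inside a point stabilizer of $\bP^1(\overline{\bF}_p)$, whose finite subgroups have a normal Sylow $p$-subgroup, which covers every odd $p$ at once.
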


\begin{proof}
(a) Conjugation gives a faithful action of $\PGL_2(\bF_p)$ on $\Gp$ whose
image in $\Out(\Gp)$ has order two \cite{Wilson}*{\S3.3.4}, and for odd $p$
the group $\Out(\Gp)$ itself has order two \cite{Wilson}*{Theorem~3.2}.

(b) By (a), if two pairs define the same absolute structure, their lifts
can be written as $(A,B)$ and $(\epsilon_1gAg^{-1},\epsilon_2gBg^{-1})$
with $g\in\GL_2(\bF_p)$, so their trace triples lie in one sign class.
Conversely, if the sign classes agree, choose lifts with equal trace
triples. These lifts are absolutely irreducible, since a common eigenline
over $\overline{\bF}_p$ would place $\Gp$ in the stabilizer of a point of
$\bP^1(\overline{\bF}_p)$, whose finite subgroups have a normal Sylow
$p$-subgroup. Hence they are conjugate under $\GL_2(\bF_p)$
\cite{Chen}*{Theorem~5.2.10}, and the pairs differ by an element of
$\PGL_2(\bF_p)=\Aut(\Gp)$.

(c) Inversion preserves traces in $\SL_2(\bF_p)$, and
$\tr A^{-1}B^{-1}=\tr(BA)^{-1}=\tr AB$; so $(a^{-1},b^{-1})$ has the sign
class of $(a,b)$, and (b) applies.

Finally, $d_X$ is the number of absolute structures on a general elliptic
curve in the component modulo $\Aut(E,O)=\{[\pm1]\}$, which acts
trivially by (c); by (b) these structures are the sign classes of trace
triples in the orbit.
\end{proof}

With $\tau=\tr[A,B]$ as in \S\ref{sec:introduction} and $(x,y,z) = (\tr A, \tr B, \tr AB)$, the trace identity
\begin{equation}\label{bg:fricke}
 \tau=x^2+y^2+z^2-xyz-2
\end{equation}
turns the condition $\tau=-2$ into the Markoff equation. Since
$\tau\ne2$ for a generating pair, a pair off the Markoff locus has
$\tau\ne\pm2$. Its commutator is then a semisimple matrix with
eigenvalues $\mu,\mu^{-1}\ne\pm1$ in $\bF_{p^2}$, and the order $e$ of
$[a,b]$ is a divisor greater than one of $(p-1)/2$ or of $(p+1)/2$; in
particular $p\nmid e$.

\subsection{Subgroups of \texorpdfstring{$\Gp=\PSL_2(\bF_p)$}{Gp = PSL2(Fp)}}
\label{bg:groups}

Three facts about finite groups recur. The first is the generation
of $\SL_2(\bF_p)$ by unipotent matrices, used to lift generating
triples from $\Gp$ to $\SL_2(\bF_p)$.

\begin{lemma}\label{bg:sl2-generation}
Let $p\ge5$. The group $\SL_2(\bF_p)$ is generated by its upper and
lower unipotent subgroups and is perfect. Consequently a subgroup of
$\SL_2(\bF_p)$ that maps onto $\Gp$ is all of
$\SL_2(\bF_p)$.
\end{lemma}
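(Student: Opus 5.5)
The plan is to prove the three assertions in order: generation by unipotents, then perfectness, then the lifting consequence.

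For generation, let $U^+$ be the upper unipotent subgroup $\{\spmatrix{1}{t}{0}{1}\}$ and $U^-$ the lower one, and let $H=\langle U^+,U^-\rangle$. Given $g=\spmatrix{\alpha}{\beta}{\gamma}{\delta}\in\SL_2(\bF_p)$, I will row-reduce it using elementary operations. If $\gamma\ne0$, left multiplication by a suitable element of $U^+$ makes the upper-left entry equal to $1$. If $\gamma=0$, I first multiply by $\spmatrix{1}{0}{1}{1}$ to reach the case $\gamma\ne0$. Once the upper-left entry is $1$, multiplying on the left by an element of $U^-$ kills the lower-left entry. Multiplying on the right by an element of $U^+$ then kills the upper-right entry, leaving $\diag(1,1)=I$ because the determinant is $1$. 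Hence $g\in H$. This is the standard elementary-matrix argument, valid for any field.

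For perfectness, I use the identity
\[
\diag(\lambda,\lambda^{-1})\spmatrix{1}{t}{0}{1}\diag(\lambda,\lambda^{-1})^{-1}\spmatrix{1}{t}{0}{1}^{-1}=\spmatrix{1}{(\lambda^2-1)t}{0}{1}.
\]
Since $p\ge5$, there is $\lambda\in\bF_p^\times$ with $\lambda^2\ne1$ (for example $\lambda=2$). Letting $t$ vary then shows that every element of $U^+$ is a commutator. The same identity with transposes handles $U^-$. So the commutator subgroup contains $U^\pm$, and by the first part it is everything. The only subtle point in the whole lemma is the need for $\lambda^2\ne1$, which is exactly where $p\ge5$ enters; it fails for $p=3$, where $\SL_2(\bF_3)$ is not perfect.

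For the consequence, let $S\le\SL_2(\bF_p)$ surject onto $\Gp$. Then $S\cdot\{\pm I\}=\SL_2(\bF_p)$, so $S$ has index at most $2$. If $S$ were proper, it would be normal of index $2$ with abelian quotient, contradicting perfectness. Alternatively, $S\cap\{\pm I\}$ trivial would force $S\cong\Gp$ and $\SL_2(\bF_p)\cong S\times\{\pm I\}$, which has a $\bZ/2$ quotient. Either way $S=\SL_2(\bF_p)$.

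I expect no real obstacle in this argument.
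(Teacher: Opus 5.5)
Your proposal is correct and follows the paper's proof: row reduction gives generation by the two unipotent subgroups, unipotent elements are commutators for $p\ge5$, and a subgroup mapping onto $\Gp$ has index at most two, which perfectness rules out unless it is all of $\SL_2(\bF_p)$. The one difference is that you verify the commutator step with the explicit identity $[\diag(\lambda,\lambda^{-1}),u]$, where $u$ is upper unipotent, and the choice $\lambda^2\ne1$, whereas the paper cites Wilson for it.
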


\begin{proof}
Generation by the two unipotent subgroups is row reduction, and for
$p\ge5$ every unipotent matrix is a commutator \cite{Wilson}*{\S3.3.2}.
A subgroup that maps onto $\Gp$ has index at most two in $\SL_2(\bF_p)$,
and a perfect group has no subgroup of index two.
\end{proof}

The second is the classification of finite subgroups of
$\PGL_2$ of order prime to the characteristic, which is classical.

\begin{lemma}[Tame finite subgroups of $\PGL_2$]
\label{bg:finite-subgroups}
Let $\kappa$ be an algebraically closed field and let $H$ be a
finite subgroup of $\PGL_2(\kappa)$ whose order is prime
to the characteristic of $\kappa$. Then $H$ is cyclic, dihedral, or
isomorphic to $A_4$, $S_4$ or $A_5$
\cite{Beauville}*{Introduction and \S1}.
In particular, for $p>5$ no such $H$ has $\Gp$ as a
quotient: cyclic and dihedral groups are solvable, and the other three
groups have order less than $|\Gp|$.
\end{lemma}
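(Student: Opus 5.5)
The statement to prove is Lemma~\ref{bg:finite-subgroups}. It has two parts: the classification of tame finite subgroups of $\PGL_2(\kappa)$, and the consequence that none of them has $\Gp$ as a quotient for $p>5$.

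For the classification I would follow the classical route, as in the cited source \cite{Beauville}. Let $H\le\PGL_2(\kappa)$ be finite with $|H|$ prime to $\operatorname{char}\kappa$, and consider its action on $\bP^1_\kappa$.

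\emph{Stabilizers are cyclic.} Every nontrivial element $h\in H$ has order prime to the characteristic, so it is semisimple. It therefore fixes exactly two points of $\bP^1(\kappa)$. Fix a point $P$ with nontrivial stabilizer $H_P$. Since $H_P$ has order prime to the characteristic, its elements are semisimple and have no unipotent part. Hence $H_P$ embeds in the stabilizer of $P$ and of a second fixed point, which is $\mathbb{G}_m(\kappa)$. Finite subgroups of $\kappa^\times$ are cyclic, so each $H_P$ is cyclic.

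\emph{Counting.} Count pairs $(h,P)$ with $h\ne1$ and $hP=P$. Each nontrivial $h$ contributes exactly $2$, so there are $2(|H|-1)$ such pairs. Grouping them by $H$-orbits of points with nontrivial stabilizer gives
\[
2\Bigl(1-\frac1{|H|}\Bigr)=\sum_{i}\Bigl(1-\frac1{e_i}\Bigr),
\]
where $e_i$ is the stabilizer order on the $i$-th orbit. The standard case analysis applies: there are either $2$ orbits, which gives a cyclic group, or $3$ orbits, with $(e_1,e_2,e_3)$ equal to $(2,2,n)$, $(2,3,3)$, $(2,3,4)$ or $(2,3,5)$. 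These last four cases must be identified with the dihedral group, $A_4$, $S_4$ and $A_5$.

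\emph{Main obstacle: identifying the group from its signature.} I would do this in the usual way. In the $(2,2,n)$ case, the orbit with stabilizer order $n$ has two points, and the index-two subgroup stabilizing them is cyclic, so $H$ is dihedral. In the other cases, determine $|H|$ from the formula; it is $12$, $24$ or $60$ respectively. Then use the action on the small orbits, of sizes $4$, $6$ or $12$, to recognize the group.

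Alternatively, one can avoid that case analysis using tameness. Because $|H|$ is prime to the characteristic, $H$ lifts to characteristic zero, via Witt vectors or by lifting the tame cover $\bP^1\to\bP^1/H$. This reduces the identification to the classical complex case.

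\emph{The consequence.} Suppose $H$ surjects onto $\Gp$ with $p>5$. Cyclic and dihedral groups are solvable, so their quotients are solvable, while $\Gp$ is nonabelian simple. The remaining groups have order $12$, $24$ or $60$, whereas $|\Gp|=p(p^2-1)/2\ge 7\cdot48/2=168>60$. A quotient cannot be larger than the group, so $\Gp$ is not a quotient of any such $H$.
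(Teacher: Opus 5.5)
Your proposal is correct and follows essentially the same route as the paper, which cites Beauville for the classical orbit-counting classification you sketch and then gives the same solvability and order argument ($|\Gp|\ge168>60$ for $p\ge7$). Two steps each need one more line. First, cyclicity of a stabilizer $H_P$ follows from the multiplier homomorphism $H_P\to\kappa^\times$, whose kernel consists of translations fixing $P$ and is trivial by tameness. Second, in the $(2,2,n)$ case you should note that the elements outside the cyclic index-two subgroup swap its two fixed points, hence are involutions inverting it; a cyclic subgroup of index two does not by itself make $H$ dihedral.
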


The third fact locates the proper subgroups of $\Gp$ that
can occur as decomposition groups or as groups of normalized cuspidal
covers. A \emph{Borel subgroup} is the stabilizer of a point of
$\bP^1(\bF_p)$, of order $p(p-1)/2$; a \emph{torus normalizer}
is the stabilizer of a Frobenius-stable pair of points of
$\bP^1(\bF_{p^2})$.

\begin{lemma}\label{nm:proper-subgroup}
Let $p\ge5$ be prime and let $H$ be a proper subgroup of
$\Gp$. If $p\mid|H|$, then $H$ lies in a Borel subgroup.
If $p\nmid|H|$ and $H$ contains an element of order greater than
five, then $H$ lies in a torus normalizer. In particular, a proper
subgroup containing an element of order greater than five lies in a
Borel subgroup or in a torus normalizer.
\end{lemma}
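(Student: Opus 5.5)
The plan is to read both assertions off Dickson's classification of subgroups of $\Gp=\PSL_2(\bF_p)$, cited e.g.\ from \cite{Wilson}*{\S3.3.5}. For $p\ge5$, every proper subgroup of $\Gp$ is of one of the following kinds: contained in a Borel subgroup $\bF_p\rtimes C_{(p-1)/2}$; contained in a dihedral group $D_{p\pm1}$ of order $p\pm1$, which is the normalizer of a split or nonsplit torus; or isomorphic to $A_4$, $S_4$ or $A_5$. I would take this list as the starting point and sort it by whether $p$ divides $|H|$.

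Suppose first that $p\mid|H|$. Then $H$ contains an element $u$ of order $p$, and $u$ is unipotent, so it fixes a unique point $P\in\bP^1(\bF_p)$. The dihedral groups $D_{p\pm1}$ have order prime to $p$ once $p\ge5$, so they cannot contain $u$. The groups $A_4$, $S_4$, $A_5$ have orders $12,24,60$, so $p\mid|H|$ forces $p=5$ and $H\cong A_5$. But $|\PSL_2(\bF_5)|=60$, so this $H$ would be all of $G_5$, contradicting properness. Hence $H$ lies in a Borel subgroup.

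For a more self-contained argument here, I would try to show directly that the Sylow $p$-subgroup of $H$ is normal, and then use that its unique fixed point $P$ is preserved by $H$. The normality step is exactly where Dickson's list is needed, since a proper subgroup could a priori contain several unipotent subgroups. This is the main obstacle, and I would simply quote the classification rather than reprove it.

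Now suppose $p\nmid|H|$ and $H$ contains an element $g$ of order $n>5$. The groups $A_4$, $S_4$, $A_5$ have maximal element orders $3,4,5$, so they are excluded, and $H$ lies in some $D_{p\pm1}$. Alternatively, one can argue via Lemma~\ref{bg:finite-subgroups}: $H$ is a tame finite subgroup of $\PGL_2(\overline{\bF}_p)$, hence cyclic, dihedral, or one of the three exceptional groups, and the last are excluded by element orders. I still need to check that a cyclic or dihedral $H$ of this kind sits in a torus normalizer. The element $g$ is semisimple, because its order is prime to $p$ and greater than $2$, so it fixes exactly two points of $\bP^1(\bF_{p^2})$, and this pair is Frobenius-stable. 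Next I would verify that $H$ preserves this pair. If $H$ is cyclic it commutes with $g$ and so permutes the fixed points of $g$. If $H$ is dihedral of order $2m$ with $m\ge3$, then its rotation subgroup is characteristic and contains $g$, or $g^2$ when $n$ is even. Since $n>5$, the element $g^2$ also has order at least $3$ and the same fixed pair as $g$. So $H$ normalizes a cyclic subgroup whose fixed pair is this one, and therefore $H$ preserves the pair.

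The final assertion then follows by combining the two cases.
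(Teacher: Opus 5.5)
Your proof is correct, but for the first assertion it takes a heavier route than the paper. You quote Dickson's classification and eliminate the dihedral and exceptional cases by their orders. The paper avoids the classification entirely and proves directly that the Sylow $p$-subgroup $P$ of $H$ is normal, so your remark that this normality step is ``exactly where Dickson's list is needed'' is not right.

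The paper's argument runs as follows. Conjugate so that $P$ is the upper unipotent group, fixing $\infty$. A subgroup of order $p$ is determined by its fixed point in $\bP^1(\bF_p)$, so a conjugate $P'\ne P$ inside $H$ fixes some $c\in\bF_p$. Conjugating $P'$ by the translation $x\mapsto x-c$, which lies in $P\subset H$, produces the lower unipotent group inside $H$. Then $H=\Gp$ by Lemma~\ref{bg:sl2-generation}, contradicting properness. The paper's version therefore needs only generation by unipotents, while yours rests on the full subgroup list, which in return makes the case analysis immediate.

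For the second assertion, your alternative argument via Lemma~\ref{bg:finite-subgroups} is exactly the paper's. You exclude $A_4,S_4,A_5$ by element orders, place $g$ in the normal cyclic subgroup, and observe that $H$ preserves its Frobenius-stable fixed pair. Two small remarks on this part:
\begin{enumerate}
\item The element $g$ itself is a rotation, since every element outside the rotation subgroup of a dihedral group is an involution. The detour through $g^2$ is therefore unnecessary.
\item Your Dickson-based version of this case jumps from excluding $A_4,S_4,A_5$ straight to $H\subset D_{p\pm1}$. It never treats $p'$-subgroups of a Borel subgroup, which are conjugate into the split torus by Schur--Zassenhaus. The direct argument makes this omission harmless.
\end{enumerate}
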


\begin{proof}
Suppose first that $p\mid|H|$, and choose a Sylow $p$-subgroup $P$
of $H$. A subgroup of order $p$ in $\Gp$ fixes a unique
point of $\bP^1(\bF_p)$, and that point determines the subgroup.
Conjugate so that $P$ is the group of upper unipotent matrices modulo
scalars. If $H$ does not normalize $P$, it contains a conjugate $P'$
fixing a different point of $\bP^1(\bF_p)$; a translation in
$P$ moves that point to $0$, and it conjugates $P'$ to a subgroup of
$H$ of order $p$ fixing $0$, which is the group of lower unipotent
matrices. So $H$ contains the full upper and lower
unipotent groups, which generate $\Gp$ by
Lemma~\ref{bg:sl2-generation}. This contradicts properness. Hence
$H$ normalizes $P$ and lies in the Borel subgroup fixing the point
fixed by $P$.

Now suppose $p\nmid|H|$ and that $H$ contains an element $h$ of order
greater than five. By Lemma~\ref{bg:finite-subgroups}, applied to $H$
as a subgroup of $\PGL_2(\overline{\bF}_p)$, the group $H$ is cyclic or
dihedral, because the elements of $A_4$, $S_4$ and $A_5$ have order at
most five. So $h$ lies in a cyclic normal subgroup $C$ of $H$: the
group $H$ itself, or its cyclic subgroup of index two, which contains
every element of order greater than two. A nontrivial element of
$\PGL_2(\overline{\bF}_p)$ of order prime to $p$ fixes exactly two
points of $\bP^1(\overline{\bF}_p)$, and the nontrivial elements of $C$
fix the same two points as a generator of $C$. Since $C$ is normal, $H$
preserves this pair. The pair is Frobenius-stable, because $h$ is
defined over $\bF_p$; so $H$ lies in a torus normalizer.
\end{proof}

\subsection{Good reduction of covers}
\label{bg:reduction}
In this section we let $K$ denote a complete discretely valued field of mixed characteristic $(0,p)$, with valuation ring $R$ and residue field $k$.
For smooth projective curves over $K$, good reduction is measured in terms of the smoothness of the stable model over $R$, or stable marked model if a marking is present, possibly after passing to a finite extension $K'/K$. For covers of curves, there are multiple potential models relative to which we can define ``good reduction''. In this section we give an overview of the relations between these models and the associated notions of good reduction.

\subsubsection*{Marked curves, covers and modular curves}
\label{bg:marked}

Every curve that serves as the base of a cover in this paper carries a
marking, which is assumed to contain the branch locus.
A \emph{marked curve} $(X,B)$ over a field $L$ is a smooth
proper geometrically connected curve $X$ over $L$ together with a closed
subscheme $B\subset X$, finite \'etale over $L$, such that $2g(X)-2+\deg B>0$.

\begin{definition}[Covers of marked curves]\label{bg:cover} A \emph{cover} of $(X,B)$
is a finite morphism $f\colon Y\to X$ from a smooth proper curve $Y$
which is \'etale over $X\setminus B$; the \emph{marked points} of $Y$
are the points above $B$, whether or not $f$ is ramified at them, and
the \emph{marking} of $Y$ is $B_Y:=f^{-1}(B)_{\mathrm{red}}$, which is
finite \'etale over $L$ when $L$ is perfect, the only case in which we use it.\footnote{Over an imperfect field a point above $B$ may have residue field inseparable over $L$, even with ramification index one, and then $B_Y$ is not \'etale.}
\end{definition}

To remind the reader of the marking, we will sometimes describe a cover as an $n$-point cover if $B$ has degree $n$.
An elliptic curve is marked at its origin. The target projective line of a
three-point cover is marked at $0$, $1$ and $\infty$; the $j$-line
$\bP^1_j$ is marked at $0$, $1728$ and $\infty$; and the normalization
$\widetilde E_c$ of the $1$-gon at a cusp $c$ (\S\ref{intro:strategy}) is
marked at the two branches $0_c$, $\infty_c$ of the node and at the
origin $O_c$, which makes it isomorphic to $(\bP^1;0,1,\infty)$ by the
unique isomorphism carrying $0_c$, $\infty_c$, $O_c$ to $0$, $\infty$, $1$.

A \emph{modular curve} is a
cover of $(\bP^1_j;0,1728,\infty)$, that is, a finite morphism
$j\colon\overline X\to\bP^1_j$ from a smooth proper curve which is
\'etale over $U_j:=\bP^1_j\setminus\{0,1728,\infty\}$. In characteristics $\ne 2,3$, this is the same as a Belyi map. Our definition does not impose any ramification conditions above $j=0,1728$, and so is a slight generalization of the classical notion. Our intention is to view a modular curve as a curve that carries a family of elliptic curves, and $j$ is the induced map to the $j$-line.
The compactified components $\overline X$ of
$\Mabs{G}$, with their $j$-maps, are modular curves in the classical
sense: quotients of the upper half plane by finite index subgroups of
$\SL_2(\bZ)$; the curve $\overline V$ of \S\ref{nm:sec-modular} need not
be.

\subsubsection*{Stable marked models} Recall the notion of a stable marked curve:
\begin{definition}[after \cite{Knudsen}*{Definition~1.1}] A \emph{stable marked curve} over $R$  is
a proper flat $R$-scheme $\cX$ with a closed subscheme $\cB\subset\cX$, finite \'etale over $R$, such that
\begin{enumerate}
	\item the geometric fibers of $\cX/R$ are reduced and connected with at most nodes as singularities,
	\item $\cB$ lies in the smooth locus of $\cX/R$, and
	\item every smooth rational component of a geometric fiber contains at least three points that are nodes or marked points, and every smooth genus 1 component contains at least one node or marked point.
\end{enumerate}
Given a marked curve $(X,B)$, a \emph{stable marked model} of $(X,B)$ over $R$ is a stable marked curve over $R$ with generic fiber $(X,B)$. In particular the geometric points of $B$ specialize to pairwise
distinct smooth points of the special fiber, which is itself a stable
marked curve over $k$.
\end{definition}

By the properness of the moduli stack of stable marked curves
\cite{Knudsen}*{Theorem~2.7}, applied after a finite extension over which $B$ is a union of rational points, there is a
finite extension $K'$ of $K$ such that
$(X_{K'}, B_{K'})$
has a stable marked model over the valuation ring of $K'$.
The stable marked model is unique, its formation commutes with finite extensions of $K$, and automorphisms of the generic fiber $(X,B)$ extend uniquely to the stable marked model: over a finite unramified Galois extension of $K$ the markings of stable marked curves over $R$ become disjoint unions of sections, so these are Knudsen's statements, and the isomorphisms they provide, being unique, descend.

We say that $(X,B)$ has \emph{good reduction} if its stable
marked model, over some finite extension of $K$, is smooth; we then call the stable marked model
\emph{the smooth model of $(X,B)$}.
For $p\ge5$ the marked projective lines $(\bP^1;0,1,\infty)$ and
$(\bP^1_j;0,1728,\infty)$ have good reduction, with smooth model
$\bP^1_R$ and the constant sections, and an elliptic curve marked at its
origin has good (marked) reduction if and only if the curve acquires good
reduction over a finite extension of $K$, the smooth model being the
smooth Weierstrass model.

\begin{definition}[The normal model of a cover]\label{bg:good-reduction}
Let $(X,B)$ be a marked curve over $K$
with good reduction and smooth model $\cX$ over $R$, and let
$f\colon Y\to X$ be a connected cover of $(X,B)$.
Let $\cY$ be the normalization of $\cX$ in $Y$. Since $R$ is complete, the normalization is finite,
and its generic fiber is $f$. The cover $\cY\to\cX$ is the \emph{normal model} of $f$ over $R$. It is a \emph{good model} of $f$ if it is \'etale over the complement of the marking $\cB$ of $\cX$, the closure of $B$. Thus a good model, when it exists, is uniquely
determined by $f$ and $\cX$. The cover $f$ has \emph{good reduction} if
it has a good model after a finite extension of $K$, and \emph{bad reduction} otherwise.

Let $(X,B)$ be a marked curve over $\Qbar$ and let $v$ be a place of
$\Qbar$ above $p$ such that $(X_K,B_K)$ has good reduction, where
$K$ is a finite extension of $\bQ_p$ inside the completion of $\Qbar$
at $v$ over which $X$ and $B$ are defined. A cover $f$ of
$(X,B)$ has good (resp. bad) reduction \emph{at $v$} if
$f\otimes K$ has good (resp. bad) reduction, for any such $K$ over which $f$
is defined.
\end{definition}

\begin{remark} The finite extension is necessary: for $p\ge 5$, let $X/K$ be given by $y^2 = px(x-1)(x-2)$. This equation defines the normal model of the double cover to $\bP^1_{\bZ_p,x}$, but it is not a good model since it ramifies along the entire special fiber. The good model is only available after the ramified base change to $\bZ_p\to \bZ_p[\sqrt{p}]$, over which the change of variables $y\mapsto \sqrt{p}y$ gives a good model of the form $y^2 = x(x-1)(x-2)$.
\end{remark}

\begin{lemma}[Structure of a good model]\label{bg:good-model}
Let $\cY\to\cX$ be a good model over $R$ of a cover $f\colon Y\to X$ of
$(X,B)$.
\begin{enumerate}
\item Locally for the \'etale topology on $\cX$ near $\cB$, with local
equation $s$, the scheme $\cY$ is a disjoint
union of Kummer covers $y^n=s$ with $n$ invertible in $R$.
\item $\cY$ is smooth and proper over $R$, and the reduced preimage
$\cB_Y$ of $\cB$ in $\cY$ is finite \'etale over $R$, with generic fiber
the marking $B_Y$.
\item The ramification indices of $f$ are prime to $p$.
\item The special fiber $\cY_k\to\cX_k$ is finite and separable on each
component.
\item For every extension $K'/K$ of complete discretely valued fields,
with valuation ring $R'$, the
base change $\cY\otimes_RR'\to\cX\otimes_RR'$ is the good model of
$f\otimes K'$.
\end{enumerate}
\end{lemma}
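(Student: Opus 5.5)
The plan is to work locally, using the fact that $\cX$ is smooth over $R$, so that near $\cB$ it is étale-locally $\Spec R[s]$ (or its henselization) with $\cB=\{s=0\}$. Over $\cX\setminus\cB$ the map $\cY\to\cX$ is finite étale by definition of a good model. Near $\cB$ we use Abhyankar's lemma in the form of Grothendieck--Murre for tame covers. The first task is to show that the ramification along the divisor $\cB$ (a smooth divisor with residue field $k$ at its generic point) is tame, and this is where $p$ enters.

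The key steps, in order, are as follows.

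First, prove (c) together with tameness. Let $\eta$ be the generic point of the special fiber $\cB_k$ of a component of $\cB$. The local ring $\cO_{\cX,\eta}$ is two-dimensional regular, with parameters $\pi,s$. Since $\cY$ is étale over the generic point of $\cX_k$ (which lies off $\cB$), the divisor $\{\pi=0\}$ is unramified. Purity (Zariski--Nagata) and Abhyankar's lemma applied to the normal crossings divisor $\{\pi s=0\}$ then show the following: if the ramification index $n$ along $\{s=0\}$ were divisible by $p$, the extension of residue fields at the generic point of the special fiber could not be separable. More directly, the wild part of the ramification along $s=0$ in characteristic $0$ would have to persist, and the local extension $K(\!(s)\!)$ would have a $p$-power Kummer part $y^{p}=us$. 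Its normalization ramifies along $\pi=0$, contradicting étaleness off $\cB$.

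I expect this to be the main obstacle. The cleanest route I would try first is to take the inertia group $I$ at a point of $\cY$ above $\eta$ and note that it is cyclic of order $n$. If $p\mid n$, pick the subgroup of order $p$ and its fixed-point quotient, reducing to a degree $p$ cyclic cover totally ramified along $s=0$ and étale along $\pi=0$ generically. Over a base containing $\mu_p$ this is $y^p=us$, which by a discriminant computation is ramified along $\pi=0$ (the discriminant contains $p^p$). This gives the contradiction and proves (c).

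Second, with $n$ prime to $p$, apply Abhyankar's lemma étale-locally on $\cX$ near $\cB$. After the étale base change adjoining $s^{1/n}$, the cover becomes étale by Zariski--Nagata purity, because it is étale in codimension one on a regular scheme. Hence $\cY$ is étale-locally a disjoint union of Kummer covers $y^n=s$, which is (a). Part (b) follows: the schemes $R[y]$ are smooth over $R$, the reduced preimage is $\{y=0\}\cong\cB$ étale-locally, and properness is inherited from finiteness over $\cX$. Part (d) follows from (a) and the étale locus: on special fibers the map is étale off $\cB_k$ and locally $y\mapsto y^n$ with $n$ prime to $p$, so it is separable. Part (e) also follows from (a). The base change of a disjoint union of Kummer covers and of a finite étale cover is again of that form, hence normal (even smooth) and étale off $\cB$. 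Since it is finite over $\cX_{R'}$ with generic fiber $f\otimes K'$, it coincides with the normalization, that is, the good model.
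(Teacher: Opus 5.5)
\paragraph{A gap in your proof of $p\nmid n$.} Your deductions of (a), (b), (d) and (e) from the Kummer picture are fine and essentially match the paper. The gap is the step you flag as the main obstacle, $p\nmid n$: it is not established, and your sketch would not work as written.

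(i) The cover $f$ is not assumed Galois. Even after passing to a Galois closure, the inertia group at a closed point $y$ of $\cY$ over $x\in\cB_k$ is not known to be cyclic of order $n$. Only the inertia group at the generic point of the horizontal curve through $y$ is known to be cyclic (of order the ramification index), because that point has residue characteristic $0$. The inertia at $y$ can a priori be larger, and controlling it is exactly the content of (c).

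(ii) Taking the fixed field of a subgroup of order $p$ replaces $\cX$ by its normalization in that field. That normalization need not be regular at the point over $x$, and over it the Kummer generator is no longer of the form $us$ with $u$ a unit.

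(iii) Most seriously, the discriminant of $\cO[y]/(y^p-g)$ contains $p^p$ for \emph{every} $g$. Take $g=1+(1-\zeta_p)^pw$ with $w$ integral along $\pi=0$: the substitution $y=1+(1-\zeta_p)t$ reduces the equation to the Artin--Schreier equation $t^p-t=\bar w$, so the normalization is \'etale along $\pi=0$. The discriminant of one particular order therefore says nothing about the normalization. For $g=us$ with $u$ a unit at $x$, the correct reason is different: $\bar u\bar s$ has a simple zero at $x$ on $\cX_k$, so it is not a $p$th power in $k(\cX_k)$, and the residue extension at the generic point of $\cX_k$ is inseparable. Reducing the general case to that shape is the missing part.

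\paragraph{How the paper avoids this.} The paper does not prove (c) first. You describe $\cB$ as having residue field $k$ at its generic point, but $\cB$ is finite \'etale over $R$, so it is horizontal: its generic points lie on the generic fibre and have residue fields of characteristic $0$. Hence $\cY\to\cX$, which is finite \'etale over $\cX\setminus\cB$, is automatically tamely ramified at the generic points of the regular divisor $\cB$. The paper applies Abhyankar's lemma \cite{Stacks}*{0EYG} along $\cB$ to get the \'etale-local Kummer form $y^n=s$ with $n$ invertible in $R$.

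It then shows these charts are the normalization because $s$ is a relative coordinate. The chart $y^n=s$ is the base change of an \'etale map to $\bA^1_R$ along $y\mapsto y^n$, so it is \'etale over the $y$-line, hence smooth over $R$ and normal. Part (c) is then read off from (a).

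If you want a self-contained argument for the invertibility of $n$ rather than the citation, it must use the special fibre in an essential way. One route is to compare two discriminants at $x$: that of the finite flat cover $\cY\to\cX$ along $\cB$, computed at the characteristic-$0$ generic point, and that of the normalization of $\cY_k\to\cX_k$.
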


\begin{remark} Part (b) implies that good reduction of $f$ implies good reduction of $Y$. In the other direction, part (c) implies that a cover $\bP^1_K\to\bP^1_K$ with a ramification index divisible by $p$ has bad reduction even though both the source and target have good reduction. Thus, good reduction of covers is strictly stronger than good reduction of the source curve.
\end{remark}

\begin{proof}
Properness follows from finiteness over $\cX$. The marking $\cB$ is
finite \'etale over $R$, hence a regular effective Cartier divisor on
the regular scheme $\cX$, and $\cY\to\cX$ is finite \'etale over
$\cX\setminus\cB$ and tamely ramified at the generic points of $\cB$,
whose residue fields have characteristic zero. Abhyankar's lemma
\cite{Stacks}*{0EYG} gives the Kummer description in (a) over
$\cX\setminus\cB$. Near $\cB$, a local equation $s$ of $\cB$ is a relative coordinate: its zero scheme is
\'etale over $R$, so $s$ defines an \'etale morphism from a neighborhood
of $\cB$ to $\bA^1_R$, and the Kummer chart $y^n=s$, the base
change of this morphism along $y\mapsto y^n$, is \'etale over the affine
line with coordinate $y$, hence smooth over $R$ and normal. These charts
therefore give the normalization across $\cB$, proving (a).

Away from
$\cB$ the map is \'etale over a smooth $R$-curve, so $\cY$ is smooth
everywhere, as asserted in (b); (c) follows since $n$ is the ramification
index. In the chart the reduced preimage of $\cB$ is the divisor $y=0$,
\'etale over $R$; so $\cB_Y$ is finite \'etale over $R$, with generic
fiber $B_Y$. This proves (b). The special fiber
$\cY_k\to\cX_k$ is finite and \'etale over the complement of the
finitely many points of $\cB\cap\cX_k$, hence separable on each component,
which is (d). Finally, $\cY\otimes_RR'$ is smooth over $R'$, hence
normal, and is finite over $\cX\otimes_RR'$ with generic fiber
$f\otimes K'$. It is therefore the normalization in that cover, and
remains \'etale off the marking. This proves (e).
\end{proof}

\begin{definition}[Stable models of $G$-covers]\label{bg:stable-model} The cover $f : Y\to (X,B)$ is a \emph{connected $G$-cover} if it is Galois with group $G$.
Assume further that $Y$ is geometrically connected, and as usual marked by $B_Y$.
Possibly passing to a finite extension of $K$, we may
assume $Y$ admits a stable marked model $\cY/R$. By uniqueness, the
action of $G$ extends to $\cY$, preserving its marking. The \emph{stable model} of the marked cover $f$ is the finite quotient map
$$\cY\to\cY/G.$$
The map on special fibers $\cY_k\to(\cY/G)_k$ is the \emph{stable reduction} of $f$.\footnote{Beware that the formation of quotients does not in general commute with reduction when $|G|$ is not invertible in the residue field, so $(\cY/G)_k, \cY_k/G$, and the stable reduction of $X$ can all be different. Section \ref{gen:sec-centralizer} compares
the two curves $(\cY/G)_k$ and $\cY_k/G$ for $G=\Gp := \PSL_2(\bF_p)$.}
\end{definition}

\begin{remark} The stable model of a smooth curve is normal, since its singularities are in codimension 2, and the quotient map $\cY\to\cY/G$ is finite of degree $|G|$, with $\cY/G$ again normal. On the other hand, even when $(X,B)$ admits a smooth model $\cX/R$, $\cY$ is not in general the normal model, which in general can have nonreduced special fiber. These are related by a canonical birational map $\cY/G\to\cX$ which is an isomorphism on the generic fiber but can contract components of the special fiber. This also implies that $\cY\to\cX$ is in general not finite.
\end{remark}

Bad reduction is studied via the stable model in \cite{WewersBad, Wewers2005, ObusVanishing}. The lemma below describes the relation between the stable and normal models of the cover:

\begin{lemma}[Good reduction and the stable marked model]
\label{bg:comparison}
Let $f\colon Y\to X$ be a connected $G$-cover of $(X,B)$ over $K$ with
$Y$ geometrically connected of genus $\ge 2$, and assume that $(X,B)$ has good reduction.
\begin{enumerate}
\item $f$ has good reduction if and only if $Y$, marked by $B_Y$, has good (marked) reduction. In that case, after a finite extension of $K$, the good model
of $f$ is the stable marked model of $Y$, and its quotient by $G$ is the
smooth model of $(X,B)$.
\item Suppose that $k$ is algebraically closed, that $(X,B)$ is
$(\bP^1;0,1,\infty)$ and that $p$ divides $|G|$ but $p^2\nmid |G|$. Write
$\bar Y\to\bar X$ for the map on special fibers of $\cY\to\cY/G$,
the stable reduction of $f$. Then $f$
has good reduction if and only if $\bar Y\to\bar X$
is separable, if and only if $\bar X$ is smooth.
\end{enumerate}
\end{lemma}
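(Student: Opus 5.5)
For (a) I will compare the good model with the stable marked model directly. Suppose first that $f$ has a good model $\cY\to\cX$, after a finite extension of $K$. By Lemma~\ref{bg:good-model}(b), $\cY$ is smooth and proper over $R$, and the marking $\cB_Y$ is finite \'etale with generic fiber $B_Y$. Since $g(Y)\ge2$, $\cY$ with $\cB_Y$ is a stable marked curve. By uniqueness it is the stable marked model, so $Y$ with $B_Y$ has good reduction.

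For the quotient statement in (a), note that $\cY/G$ is normal and finite over $\cX$, and the map is birational with normal target $\cX$. Because $\cY\to\cX$ is finite and $G$-invariant, it factors through $\cY/G$. The induced map $\cY/G\to\cX$ is then finite and birational between normal schemes, hence an isomorphism.

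Conversely, suppose the stable marked model $\cY$ of $(Y,B_Y)$ is smooth. The group $G$ acts on $\cY$, and I will show that $\cY/G\to\cX$ is an isomorphism. Then $\cY\to\cX$ is finite, $\cY$ is normal, and so $\cY$ is the normal model. It remains to show that $\cY\to\cX$ is \'etale off $\cB$, and I expect this to be the main obstacle. Since $\cY_k$ is smooth and irreducible, $(\cY/G)_k$ is irreducible. The birational map $\cY/G\to\cX$ between normal proper models with irreducible special fibers is therefore an isomorphism: a proper birational morphism contracting nothing is finite, and hence an isomorphism by normality. One still has to check that the map is a morphism rather than just a rational map. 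I would obtain this by noting that $\cY_k/G$ is a smooth curve, so $\cY/G$ is smooth in codimension one with special fiber $\cY_k/G$ up to purely inseparable issues. The marked points of $\cX$ pull back into the marking of $\cY$, so $\cY/G$ with the image of $\cB_Y$ is a stable marked genus $g(X)$ model, which equals $\cX$ by uniqueness.

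Étaleness then follows by purity of the branch locus (Zariski–Nagata) on the regular scheme $\cX$. The generic fiber is étale off $B$, so the ramification locus off $\cB$ is either empty or the whole special fiber, which is a divisor. I would rule out ramification along the special fiber by showing that the inertia group of the generic point of $\cY_k$ is trivial. That inertia group acts trivially on $\cY_k$, but the action of $G$ on $\cY_k$ is faithful, because automorphisms of a smooth proper genus $\ge2$ curve reduce injectively (Deligne–Mumford, for stable curves). Hence $\cY\to\cX$ is generically étale along $\cY_k$, and therefore étale off $\cB$.

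For (b), the implications from good reduction are clear. Good reduction gives a smooth target $\bar X=\cX_k$, and the map is separable by Lemma~\ref{bg:good-model}(d).

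For "$\bar X$ smooth implies good reduction," note that if $\bar X$ is smooth then it is $\cX_k$, as shown above. The components of $\bar Y$ map to $\bar X$, and inseparability along a component would have to come from wild inertia of order $p$. With $p\|\,|G|$, the Raynaud/Wewers theory says that the only inseparable components lie in the interior of the tree $\bar X$, so a smooth $\bar X$ forces separability.

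For "separable implies good," the plan is to use the structure of stable reduction when $p\|\,|G|$ and $k=\bar k$ (Raynaud, Wewers). Components of $\bar X$ other than the original one are tails or interior components. Above the original component, the inertia is of order $p$ and the map is inseparable. So if there were more than one component, the original component would give an inseparable piece. Thus a separable $\bar Y\to\bar X$ forces $\bar X$ to be irreducible, hence smooth, hence equal to $\cX_k$ marked at $0,1,\infty$. With $\bar X$ smooth, the map is generically étale over $\bar X$, and purity as in (a) gives a good model.
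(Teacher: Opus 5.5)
Your plan follows the paper's. In (a) you identify $\cY/G$ with $\cX$ by uniqueness of the smooth marked model, get trivial generic inertia from faithfulness of $G$ on $\cY_k$, and conclude by purity. In (b) you appeal to Wewers' structure theorem. The forward direction of (a) and the purity step are fine. However, the step you flag in the converse of (a) is not actually carried out.

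\textbf{The converse of (a).} To invoke uniqueness, $\cY/G$ with the closure of $B$ must be a stable marked curve. This requires (i) that $\cY/G$ be smooth over $R$, and (ii) that distinct geometric points of $B$ have distinct specializations.

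For (i), smoothness in codimension one says nothing about the closed points of the special fiber, which is where smoothness is in question. Knowing the special fiber only ``up to purely inseparable issues'' does not help either: a finite universal homeomorphism from a smooth curve can land on a singular curve, e.g.\ the normalization of a cuspidal cubic. The paper gets (i) from the theorem of Katz and Mazur that the quotient of a smooth relative curve by a finite group is smooth.

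Alternatively, with the faithfulness you already have, you can argue directly over an algebraically closed residue field, to which one reduces by flat base change. At a closed point $y$ with stabilizer $I$ one has $\widehat\cO_{\cY,y}\cong R[[t]]$. The norm $s=\prod_{g\in I}g^*t$ reduces to a series of order exactly $|I|$, because $I$ acts faithfully on $k[[t]]$. So $R[[t]]$ is finite free of rank $|I|$ over $R[[s]]$ by Weierstrass preparation. Comparing degrees of fraction fields and using normality of $R[[s]]$ then gives $R[[t]]^I=R[[s]]$.

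For (ii), the fact that the marking of $\cX$ pulls back into $B_Y$ does not suffice. You need that $G$ acts transitively on the fibers of $\cY\to\cY/G$ (SGA1, Exp.~V, Prop.~1.1). Then two points of $B$ colliding in $\cY/G$ would force two distinct points of $B_Y$ to collide in $\cY$, which is impossible.

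\textbf{Part (b).} The principle you attribute to Raynaud and Wewers, that inseparable components lie only in the interior of the tree, is not what the theory says and is false. The theory says tails are \'etale; it does not say that $p$-components are interior. For instance, take a $\bZ/p$-cover $y^p=x(x-1)$, whose three branch points are all wild. The vanishing cycle formula on the original component forces exactly one (new) tail, so the original component is terminal yet inseparable.

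More to the point, when $\bar X$ is irreducible your formulation says nothing at all, and that is exactly the case that must be excluded. The needed input is Wewers' Theorem~2 for three-point covers with $p\,\|\,|G|$, as cited in the paper. It says that bad reduction forces $\bar X$ to consist of the original component plus at least one tail, with $\bar Y\to\bar X$ inseparable over the original component. Combined with your easy direction, this gives all the equivalences at once. Your chain ``separable $\Rightarrow$ irreducible $\Rightarrow$ smooth $\Rightarrow$ good'' is then fine, provided its last step uses the completed identification $\cY/G=\cX$ from (a).
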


\begin{proof}
For (a), let $\cY\to\cX$ be a good model of $f$. By Lemma~\ref{bg:good-model}(b),
$\cY$ is smooth with marking $\cB_Y$ finite \'etale over $R$, so it is a
stable marked curve, hence the stable marked model of $Y$; and $\cY/G$,
being normal and finite over $\cX$ with the same function field, is
$\cX$. Conversely, suppose that the stable marked model $\cY$ of $Y$ is
smooth. Its quotient $\cY/G$ is smooth over $R$ by the theorem of Katz
and Mazur \cite{KatzMazur}*{p.~508, Theorem}, applied to $G$-stable affine
open subschemes, and $G$ acts transitively on the fibers of
$\cY\to\cY/G$ \cite{SGA1}*{Expos\'e~V, Proposition~1.1}, so distinct
geometric points of $B$ have distinct specializations in $\cY/G$, because distinct
geometric points of $B_Y$ have distinct specializations in $\cY$. Hence
the closure of $B$ in $\cY/G$ is finite \'etale over $R$, and $\cY/G$ is
a smooth model of $(X,B)$, namely the smooth model $\cX$. The special fiber of
$\cY$ is smooth and geometrically connected, hence irreducible,
and $G$ acts faithfully on it \cite{Liu}*{Proposition~10.3.38}; so the
inertia group of its generic point, which acts trivially on the residue
field of that point, is trivial, and $\cY\to\cX$ is unramified at the
generic point of the special fiber. On the generic fiber $f$ is
unramified outside the marked points. By purity of the branch locus
\cite{Stacks}*{0BMB}, $\cY\to\cX$ is \'etale over the complement of
$\cB$, whose reduced preimage is $\cB_Y$ by the transitivity on fibers. Since $\cY$ is normal, finite over $\cX$
and flat over $R$, it is the normalization of $\cX$ in $Y$, hence a good model.

For (b), the branch locus is all of $B$: by Riemann--Hurwitz, a connected
Galois cover of the line with at most two branch points has genus zero.
If $f$ has good reduction, then by (a) the stable reduction
$\bar Y\to\bar X$ is the special fiber of the good model, which is
separable by Lemma~\ref{bg:good-model}(d), with $\bar X=\bP^1_k$
smooth. If $f$ has bad reduction, it has no good model over any finite
extension of $K$, and Theorem~2 of \cite{WewersBad}, which applies
because $p$ divides $|G|$ exactly once and $f$ has three branch points,
shows that $\bar X$ consists of the original component and at least one
tail, so that $\bar X$ is not smooth, and that $\bar Y\to\bar X$ is
inseparable over the original component.
\end{proof}

Definition~\ref{bg:good-reduction} is the good reduction of Wewers
\cite{WewersBad}*{\S1.1} and of Raynaud \cite{Raynaud1999}*{Th\'eor\`eme~0},
who consider Galois covers. By Lemma~\ref{bg:comparison} it agrees, for a
connected $G$-cover with branch locus $B$ whose source is geometrically
connected of genus at least two, with the potentially good reduction of
Obus \cite{ObusVanishing}*{\S2.3}, and, for three-point covers with
$p\,\|\,|G|$ over a field with algebraically closed residue field, with the
good reduction of Bouw and Wewers \cite{BouwWewers}*{\S2}.

The \emph{(geometric) monodromy group} of a geometrically connected cover $f : Y\to X$ is the Galois group of the Galois closure after base changing to an
algebraic closure of the base field. We use the following cover-theoretic
form of Grothendieck's specialization theorem
\cite{SGA1}*{Expos\'e~XIII, Corollaire~2.9 and \S2.10}. For
three-point covers of the line, the good-reduction conclusion is stated
explicitly in \cite{Wewers2005}*{Theorem~1.2(1)}.

\begin{theorem}[Grothendieck]\label{bg:grothendieck}
Let $K$ be a complete discretely valued field of mixed characteristic
$(0,p)$, let $(X,B)$ be a marked curve over $K$ with good
reduction, and let $f\colon Y\to X$ be a geometrically connected cover of $(X,B)$ whose
monodromy group has order prime to $p$. Then $f$ has good reduction:
after a finite extension of $K$, it admits a good model.
\end{theorem}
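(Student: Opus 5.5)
The plan is to reduce to the classical theory of tame covers in SGA1 by passing to the Galois closure. First I will arrange the setup. After a finite extension of $K$, I will assume that $f$, its Galois closure $\tilde f\colon \tilde Y\to X$ (defined over $K$ after extension, geometrically connected, with group the monodromy group $M$ of order prime to $p$), and the marking $B$ are all defined over $K$. I will also assume that $B$ is a disjoint union of $K$-points, and that $(X,B)$ has its smooth model $\cX$ over $R$, with $\cB$ a disjoint union of sections. Then $\cB$ is a relative effective Cartier divisor with normal crossings relative to $R$ (in fact smooth over $R$). Moreover, $\tilde f$ is a finite étale $M$-torsor over $X\setminus B$.

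The key step is to show that this torsor extends to a finite étale cover of $\cX\setminus\cB$, tamely ramified along $\cB$, possibly after a further finite extension of $K$. I will invoke Grothendieck's theorem (SGA1, Exposé XIII, Corollaire 2.9 and §2.10) in the following form: for $\cX/R$ smooth proper with $\cB$ étale over $R$, the specialization map from the tame fundamental group of the geometric special fiber to the tame fundamental group of the geometric generic fiber, $\pi_1^t(\cX_{\bar k}\setminus\cB_{\bar k})\to\pi_1(X_{\bar K}\setminus B_{\bar K})$, induces an isomorphism on maximal prime-to-$p$ quotients. Equivalently, every geometric prime-to-$p$ cover of the generic fiber extends, after finite extension of $K$, to a cover of $\cX$ étale off $\cB$ and tame along $\cB$.

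I expect the main obstacle to be the precise application of the specialization theorem over the non-strictly-henselian base. First, the extension exists over $R^{\mathrm{sh}}$, or over the ring of integers of $\bar K$. Second, by finiteness of the data, this extension descends to the valuation ring of some finite extension of $K$. Third, the ramification along $\cB$ on the generic fiber has indices prime to $p$, since they divide $|M|$. This is what Abhyankar's lemma needs to make the extension tame along the divisor; after adjoining the relevant roots of unity and roots of a uniformizer, it also ensures that the local structure is Kummer.

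Once the Galois closure $\tilde{\cY}\to\cX$ is obtained as a finite cover étale off $\cB$ with normal total space, I will pass to $f$ itself. The cover $Y$ is $\tilde Y/H$ for the subgroup $H$ corresponding to $Y$. I will set $\cY:=\tilde\cY/H$. This quotient is normal and finite over $\cX$, and its generic fiber is $f$, so it is the normalization of $\cX$ in $Y$, i.e. the normal model. The map $\cY\to\cX$ is étale off $\cB$, because $\tilde\cY\to\cX$ is an étale $M$-torsor there, and the quotient of an étale torsor by a subgroup is étale. Hence $\cY\to\cX$ is a good model in the sense of Definition~\ref{bg:good-reduction}. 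Finally, Lemma~\ref{bg:good-model}(e) shows that this good model persists under further extensions, which finishes the proof.
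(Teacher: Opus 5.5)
Your proposal is correct and follows the paper's route. The paper gives no argument beyond citing Grothendieck's prime-to-$p$ specialization theorem for tame fundamental groups (SGA1, Expos\'e XIII, 2.9--2.10), and your passage to the Galois closure, descent to a finite extension of $K$, and quotient by $H$ (exactly Lemma~\ref{bg:ascent}(a)) simply make that cover-theoretic translation explicit.

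One small slip: the specialization map goes from the generic to the special fiber, $\pi_1(X_{\bar K}\setminus B_{\bar K})\to\pi_1^t(\cX_{\bar k}\setminus\cB_{\bar k})$, not the other way. This does not affect your argument, since you only use the isomorphism on prime-to-$p$ quotients, equivalently that prime-to-$p$ covers of the generic fiber extend.
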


For a cover over $\Qbar$ whose monodromy group has order prime to $p$,
this gives good reduction at every place above $p$ at which the marked
base has good reduction.
We end the section with a lemma we will need later.

\begin{lemma}[Quotients, ascent, and descent]\label{bg:ascent}
Let $f\colon Y\to X$ be a cover of a marked curve $(X,B)$ over $K$ with
good reduction.
\begin{enumerate}
\item If $\cY\to\cX$ is a good model of $f$ and $H$ is a group of
automorphisms of $Y$ over $X$, then $\cY/H\to\cX$ is a good model of
$Y/H\to X$.
\item Suppose that $Y$ is geometrically connected, and let $g\colon Z\to Y$
be a finite morphism from a geometrically connected smooth proper curve,
\'etale over $Y\setminus f^{-1}(B)$, whose monodromy group has order
prime to $p$; for instance, $g$ may be Galois of degree prime to $p$, or
of degree less than $p$. If $f$ has good reduction, then so has the
cover $f\circ g$ of $(X,B)$.
\item Let $f\colon Y\to X$ be a connected $G$-cover of a marked curve
$(X,B)$ over $\Qbar$ with $Y$ geometrically connected of genus at least
two, let $v$ be a place of $\Qbar$ above $p$ at which $(X,B)$ has good
reduction, and let $K'$ be a complete discretely valued field with
algebraically closed residue field, inside the completion of $\Qbar$ at
$v$, over which $f$ is defined. Then $f$ has good reduction at $v$ if and
only if $f\otimes K'$ has good reduction.
\end{enumerate}
\end{lemma}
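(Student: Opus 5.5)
Parts (a) and (b) reduce to Lemma~\ref{bg:good-model} and Theorem~\ref{bg:grothendieck}. Part (c) compares extensions with algebraically closed residue field against finite extensions of $\bQ_p$, using Lemma~\ref{bg:comparison}(a).

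For (a), let $\cY\to\cX$ be a good model and set $\cZ:=\cY/H$. Since $\cY$ is finite over $\cX$, the quotient $\cZ$ exists, is finite over $\cX$ and is normal. Its generic fiber is $Y/H$, because forming the quotient commutes with flat base change to the generic fiber (invariants are a kernel). Hence $\cZ$ is the normalization of $\cX$ in $Y/H$, i.e.\ the normal model. For étaleness off $\cB$: over $\cX\setminus\cB$ the map $\cY\to\cX$ is finite étale. Étale locally it is a disjoint union of copies of the base, permuted by $H$ (after passing to a Galois closure), so the quotient is again a disjoint union of copies and is étale. More directly, the invariants of a finite étale algebra under a finite group are finite étale, since this is étale local and invariants commute with flat base change. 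So $\cZ\to\cX$ is a good model.

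For (b), after a finite extension take the good model $\cY\to\cX$ of $f$. By Lemma~\ref{bg:good-model}(b), $(Y,B_Y)$ has good reduction with smooth model $\cY$ and marking $\cB_Y$ finite étale. Apply Theorem~\ref{bg:grothendieck} to $g$ over the marked curve $(Y,B_Y)$; we may enlarge $K$ so that $Y$ is geometrically connected and $B_Y$ is étale. This gives a good model $\cZ\to\cY$, étale off $\cB_Y$. The composite $\cZ\to\cX$ is finite, and $\cZ$ is normal, so it is the normalization of $\cX$ in $Z$. Over $\cX\setminus\cB$ the map $\cY\to\cX$ is étale and lands in $\cY\setminus\cB_Y$, since $\cB_Y$ is the reduced preimage of $\cB$; there $\cZ\to\cY$ is étale, so the composite is étale. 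For the two parenthetical cases: a Galois cover of degree prime to $p$ has monodromy group of order prime to $p$. If $\deg g<p$, the monodromy group embeds in $S_{\deg g}$, whose order is prime to $p$.

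For (c), if $f$ has a good model over some finite $K/\bQ_p$ inside the completion, then Lemma~\ref{bg:good-model}(e) base changes it to $K'$; we may take a compositum, noting that $K$ embeds into $K'$ up to passing to a finite extension of $K'$, which is harmless. The converse is the main point. Suppose $f\otimes K'$ has good reduction. By Lemma~\ref{bg:comparison}(a), the stable marked model of $Y$ over a finite extension of $K'$ is smooth. Choose a finite extension $K$ of $\bQ_p$ over which $Y$ has a stable marked model; such a $K$ exists by Knudsen properness. Stable marked models commute with base change, so base changing to the completed compositum of $K$ and $K'$ gives the stable model there. Smoothness of the special fiber can be tested after extending the residue field. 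Hence the stable marked model over $K$ is already smooth, and Lemma~\ref{bg:comparison}(a) again gives good reduction of $f\otimes K$. The obstacle I expect is bookkeeping the composite field: the completion of $KK'$ is a complete discretely valued field with algebraically closed residue field and finite over $K'$. I will check that uniqueness of stable models makes the comparison legitimate there.
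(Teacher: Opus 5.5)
Your proposal is correct and follows essentially the same route as the paper in all three parts: taking invariants with étale-local triviality off $\cB$ for (a), Theorem~\ref{bg:grothendieck} applied to $g$ over the smooth model $\cY$ plus transitivity of normalization for (b), and comparing stable marked models over a compositum and then descending smoothness for (c). The one step you leave implicit is that $(Y,B_Y)$ is a marked curve, which is needed before Theorem~\ref{bg:grothendieck} can be applied to $g$. This follows from Riemann--Hurwitz, $2g(Y)-2+\deg B_Y=\deg f\cdot(2g(X)-2+\deg B)>0$, and the paper records it explicitly.
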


\begin{proof}
(a) The action of $H$ extends to $\cY$ by normalization. The quotient
$\cY/H$ is the normalization of $\cX$ in $Y/H$: on affine opens, taking
$H$-invariants in the integral closure gives the integral closure in the
invariant algebra. Over the complement of $\cB$ the
finite \'etale morphism $\cY\to\cX$ is, locally for the \'etale
topology, a disjoint union of copies of the base permuted by $H$, and the
quotient is again a disjoint union of copies of the base.

(b) After a finite extension of $K$, choose a good model $\cY\to\cX$ of
$f$. By Lemma~\ref{bg:good-model}(b), $\cY$ is smooth with finite \'etale
marking $\cB_Y$, and the marking of $Y$ is stable by Riemann--Hurwitz,
so $Y$ has good (marked) reduction with smooth model $\cY$.
Theorem~\ref{bg:grothendieck} gives, after a further finite extension
of $K$, a good model $\cZ\to\cY$ of $g$. Its composite with $\cY\to\cX$
is finite and \'etale away from $\cB$; by
transitivity of normalization, it is a good model of $f\circ g$.

(c) By Lemma~\ref{bg:good-model}(e), a good model over a finite extension
of a field $K$ as in Definition~\ref{bg:good-reduction} base changes to a
good model over a finite extension of $K'$. Conversely, suppose that
$f\otimes K'$ has a good model over a finite extension $K''$ of $K'$, so
that the stable marked model of $Y\otimes K''$ is smooth by
Lemma~\ref{bg:comparison}(a). Let $K_1$ be a finite extension of $K$ over
which the stable marked model of $Y$ exists. Over the compositum $K_1K''$,
taken inside the completion of $\Qbar$ at $v$, the base changes of the two
stable marked models coincide and are smooth, and smoothness descends along
the faithfully flat extension of valuation rings of $K_1\subset K_1K''$;
so the stable marked model over $K_1$ is smooth, and $f\otimes K_1$ has
good reduction by Lemma~\ref{bg:comparison}(a).
\end{proof}

\section{Three-point covers and the theorem of Bouw and Wewers}
\label{sec:arithmetic}

Throughout this section $p>5$ and
$f\colon Y\to\bP^1$ is a connected $\Gp$-cover over
$\Qbar$, branched at $0,1,\infty$. We will show that
Theorem~\ref{thm:nm-arithmetic} is a consequence of the theorem of
Bouw and Wewers \cite{BouwWewers}*{Theorem~6.6} on the reduction of
three-point covers with Galois group $\SL_2(\bF_p)$.
Over a complete discretely valued field with valuation ring of mixed characteristic $(0,p)$, they give a criterion for bad reduction of $\SL_2(\bF_p)$-covers, in terms of the conjugacy classes in $\SL_2(\bF_p)$ of the local monodromies around the three branch points in $\bP^1$.
In Lemmas~\ref{nm:dictionary} and~\ref{nm:indefinite-place}, together with Theorem~\ref{nm:bw-theorem}, we show that for \emph{any} connected three-point $\SL_2(\bF_p)$-cover $\tilde{f}$ defined over a number field there is a place $v\mid p$ at which their criterion applies to $\tilde{f}$ or to a twist of $\tilde{f}$ by a quadratic character with values in the center $\{\pm I\}$, so that $\tilde{f}$ has bad reduction at $v$. Twisting by such a character will be called an \emph{even sign change} below. In Lemma~\ref{nm:lifts} we show that reduction type is invariant under even sign changes and is preserved when passing to the induced $G_p$-cover.

\subsection{Lifts, angles and class vectors at a place}
\label{nm:class-vectors}

Fix an embedding $\iota\colon\Qbar\hookrightarrow\bC$. Topological
fundamental groups and monodromy refer to the covers of Riemann
surfaces obtained through $\iota$. For an integer $n\ge1$ let
$\widetilde\zeta_n\in\Qbar$ be the root of unity with
$\iota(\widetilde\zeta_n)=e^{2\pi i/n}$.

\emph{Lifts.} Let $g_1,g_2,g_3\in\Gp$ be the images of standard loops
around $0,1,\infty$; they are the local monodromies of $f$, with
$g_1g_2g_3=1$. A
\emph{lift} of $(g_1,g_2,g_3)$ is a triple $M=(M_1,M_2,M_3)$ of
matrices in $\SL_2(\bF_p)$ with $M_1M_2M_3=I$ such that $M_i$ maps to
$g_i$ for every $i$. Lifts exist: choose $M_1$ and $M_2$ above $g_1$
and $g_2$ and put $M_3=(M_1M_2)^{-1}$. There are four lifts, and any two
of them differ by changing the signs of two of the three matrices; we
call this operation an \emph{even sign change}. The matrices of a lift
generate $\SL_2(\bF_p)$ by Lemma~\ref{bg:sl2-generation}, since their
images generate $\Gp$. By Riemann's existence theorem, a lift $M$ is
the triple of local monodromies of a connected $\SL_2(\bF_p)$-cover
\[
 f_M\colon Y_M\longrightarrow\bP^1
\]
over $\Qbar$, branched at $0,1,\infty$, and $Y_M/\{\pm I\}$ is
isomorphic to $Y$ as a $\Gp$-cover of $\bP^1$. We call $f_M$ a
\emph{lift of $f$}. Its ramification points are the points of $Y_M$
above the marked points of $Y$.

\emph{Eigenvalues.} Suppose that the ramification indices of $f$ are prime
to $p$. Each $M_i$ is then a noncentral semisimple matrix. Let
$\delta_i^{\pm1}\in\bF_{p^2}^\times$ be its eigenvalues, so that
$\delta_i\ne\pm1$, and put $x_i:=\tr M_i=\delta_i+\delta_i^{-1}$. We
call $M_i$ \emph{split} if $\delta_i\in\bF_p$ and \emph{nonsplit}
otherwise; in the second case $\delta_i^{\,p}=\delta_i^{-1}$. By the
Fricke identity \eqref{bg:fricke},
\begin{equation}\label{nm:fricke-delta}
 \Delta:=x_1^2+x_2^2+x_3^2-x_1x_2x_3-4=\tr[M_1,M_2]-2,
\end{equation}
and $\Delta\ne0$, because the trace of the commutator of a generating
pair of $\SL_2(\bF_p)$ is not $2$
\cite{BrumfielHilden}*{Proposition~4.1}. An even sign change leaves
$\Delta$ unchanged.

\emph{Angles at a place.} Let $v$ be a place of $\Qbar$ above $p$ and
let $k_v$ be its residue field, an algebraic closure of $\bF_p$.
Reduction at $v$ is an isomorphism from the group of roots of unity of
order prime to $p$ in $\Qbar$ onto $k_v^\times$. Choose an embedding
$\bF_{p^2}\subset k_v$; the other embedding differs from it by the
Frobenius, which maps each of the sets $\{\delta_i,\delta_i^{-1}\}$ to
itself. Let $\widetilde\delta_i\in\Qbar$ be the root of unity of order
prime to $p$ that reduces to $\delta_i$ at $v$. The \emph{angle of
$M_i$ at $v$} is the number
\[
 \theta_i=\theta_i(v)\in\bigl(0,\tfrac12\bigr)
 \qquad\text{with}\qquad
 \iota(\widetilde\delta_i)=e^{\pm2\pi i\theta_i};
\]
the values $0$ and $\tfrac12$ are excluded because $\delta_i\ne\pm1$.
The angle only depends on $\iota$, $v$ and $M_i$. Replacing $M_i$
by $-M_i$ replaces $\theta_i$ by $\tfrac12-\theta_i$. Put
$\widetilde x_i:=\widetilde\delta_i+\widetilde\delta_i^{-1}$, so that
$\iota(\widetilde x_i)=2\cos2\pi\theta_i$, and
\begin{equation}\label{nm:lifted-delta}
 \widetilde\Delta_v:=
 \widetilde x_1^2+\widetilde x_2^2+\widetilde x_3^2
 -\widetilde x_1\widetilde x_2\widetilde x_3-4.
\end{equation}
This element of a cyclotomic field reduces to $\Delta$ at $v$, so it is
not zero. Its image $\iota(\widetilde\Delta_v)$ is a nonzero real
number and is independent of the choice of lifts $M_i$.

\begin{definition}\label{nm:indefinite}
Suppose that the ramification indices of $f$ are prime to $p$, and let $v$
be a place of $\Qbar$ above $p$. We say that $f$ is \emph{indefinite
at $v$} if $\iota(\widetilde\Delta_v)>0$, and \emph{definite at $v$} if
$\iota(\widetilde\Delta_v)<0$.
\end{definition}

The names refer to the pair of matrices in $\SL_2(\bC)$ with traces
$\iota(\widetilde x_1)$ and $\iota(\widetilde x_2)$ whose product has
trace $\iota(\widetilde x_3)$. This pair preserves a Hermitian form,
which is definite or indefinite according to the sign of
$\iota(\widetilde\Delta_v)$; see the proof of
Lemma~\ref{nm:indefinite-place}.

\emph{Class vectors.} Let $\zeta_{p-1}\in\bF_p^\times$ and
$\zeta_{p+1}\in\bF_{p^2}^\times$ be the reductions at $v$ of
$\widetilde\zeta_{p-1}$ and $\widetilde\zeta_{p+1}$. As in
\cite{BouwWewers}*{\S4}, the noncentral semisimple conjugacy classes of
$\SL_2(\bF_p)$ are
\begin{align*}
 C_{\rm sp}(l)&:=\{g\in\SL_2(\bF_p):
   \tr g=\zeta_{p-1}^{\,l}+\zeta_{p-1}^{-l}\},
   &&0<l<\tfrac{p-1}2,\\
 C_{\rm nsp}(l)&:=\{g\in\SL_2(\bF_p):
   \tr g=\zeta_{p+1}^{\,l}+\zeta_{p+1}^{-l}\},
   &&0<l<\tfrac{p+1}2,
\end{align*}
the \emph{split} and the \emph{nonsplit} classes; they are denoted by
$C(l)$ and $\widetilde C(l)$ in \cite{BouwWewers}. To such a class $C$
we attach the even integer
\begin{equation}\label{nm:bw-a}
 a(C):=\begin{cases}
 p-1-2l&\text{if }C=C_{\rm sp}(l),\\
 p+1-2l&\text{if }C=C_{\rm nsp}(l).
 \end{cases}
\end{equation}
By the definition of the angle, $M_i$ lies in $C_{\rm sp}(l_i)$ with
$l_i=(p-1)\theta_i$ if it is split, and in $C_{\rm nsp}(l_i)$ with
$l_i=(p+1)\theta_i$ if it is nonsplit. Hence
\begin{equation}\label{nm:a-and-theta}
 a_i:=a(\text{class of }M_i)=
 \begin{cases}
 (p-1)(1-2\theta_i)&\text{if $M_i$ is split,}\\
 (p+1)(1-2\theta_i)&\text{if $M_i$ is nonsplit.}
 \end{cases}
\end{equation}
The labels $l_i$, and with them the integers $a_i$, depend on the place
$v$ through the angles. The integers $a(C)$ come from the analysis of
the stable reduction of three-point $\SL_2(\bF_p)$-covers in
\cite{BouwWewers}.\footnote{In bad reduction, the branch point with class $C$
specializes to a primitive tail of the stable reduction, and the
ramification invariant $\sigma_x$ of \eqref{eq_ramification_invariants}
at the point $x$ of the original component where this tail is attached
(\S\ref{gen:sec-tails}) is $a(C)/(p-1)$ or $a(-C)/(p-1)$, according to
the canonical generator of the tail cover at that point, where $-C$
denotes the class of the negatives of the elements of $C$
(Lemma~\ref{app:pointed}).}

The \emph{class vector} of a connected $\SL_2(\bF_p)$-cover of $\bP^1$
branched at $0,1,\infty$ is the triple of the conjugacy classes of its
canonical generators at $0$, $1$, $\infty$ \cite{BouwWewers}*{\S4}, the
elements of the inertia groups that act on the cotangent spaces through
the chosen roots of unity. A noncentral semisimple class of
$\SL_2(\bF_p)$ is determined by its trace, so for such classes these are
the classes of the local monodromies; in particular the class vector of
$f_M$ at $v$ is the triple of the classes of $M_1$, $M_2$, $M_3$.

\subsection{The theorem of Bouw and Wewers}
\label{nm:bw-statement}

Let $K$ be a complete discretely valued field with valuation ring $R$ of mixed characteristic
whose residue field $k$ is algebraically closed of characteristic $p > 5$. In this subsection, $h\colon Z\to\bP^1_K$ denotes a connected $\SL_2(\bF_p)$-cover branched above $0,1,\infty$ with $Z$ geometrically connected; the theorem below will be applied to the base changes to $K$ of the lifts $f_M$ of \S\ref{nm:class-vectors}. In \cite{BouwWewers}, Bouw and Wewers say that $h$ has good reduction if its stable reduction (in the sense of Definition \ref{bg:stable-model}) is separable. By Lemma \ref{bg:comparison}(b), this agrees with our notion of good reduction in Definition \ref{bg:good-reduction}.\footnote{Indeed, the hypothesis $g(Z)\ge 2$ of that lemma holds, since a Galois cover of $\bP^1$ with $g(Z)\le1$ has group a finite subgroup of $\PGL_2$ (Lemma~\ref{bg:finite-subgroups}) or a group of automorphisms of a genus 1 curve; $\SL_2(\bF_p)$ is neither for $p > 5$. We also need Lemma~\ref{bg:ascent}(c) to descend from our $K$ to a finite extension of the field of definition of the cover.}

\begin{theorem}[Bouw--Wewers]\label{nm:bw-theorem}
Let $p\ge5$, suppose that $R$ contains a square root of
$(-1)^{(p-1)/2}p$, and fix primitive roots of unity
$\zeta_{p-1}\in\bF_p$ and $\zeta_{p+1}\in\bF_{p^2}$ in $k$. Let
$h\colon Z\to\bP^1_K$ be a connected $\SL_2(\bF_p)$-cover branched
at $0,1,\infty$ whose class vector $(C_1,C_2,C_3)$ consists of noncentral
classes.
\begin{enumerate}
\item If one of the classes $C_i$ consists of elements of order $p$,
then $h$ has bad reduction.
\item Suppose that the elements of every $C_i$ have order prime to $p$, and put
$a_i:=a(C_i)$ as in \eqref{nm:bw-a}. If
\begin{equation}\label{nm:bw-hypothesis}
\begin{array}{ll}
a_1+a_2+a_3<p-1, & \text{or}\qquad \\
a_1+a_2+a_3=p-1 & \text{and some $C_i$ is nonsplit,}
\end{array}
\end{equation}
then $h$ has bad reduction.
\end{enumerate}
\end{theorem}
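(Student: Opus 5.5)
This theorem is quoted from \cite{BouwWewers}*{Theorem~6.6}, so the plan is to check that their hypotheses and conventions match ours rather than to reprove the stable reduction analysis. The work has three parts: matching the notion of good reduction, matching the class labels, and matching the integers $a_i$ with their invariants.

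\emph{Good reduction.} Bouw and Wewers call $h$ good if its stable reduction is separable. By Lemma~\ref{bg:comparison}(b) this agrees with Definition~\ref{bg:good-reduction}. That lemma needs three inputs, all available here:
\begin{enumerate}[label={(\roman*)}]
\item $p\,\|\,|\SL_2(\bF_p)|$, which holds since $|\SL_2(\bF_p)|=p(p^2-1)$;
\item the residue field $k$ is algebraically closed;
\item $g(Z)\ge2$, which is the footnote argument via Lemma~\ref{bg:finite-subgroups}.
\end{enumerate}
Lemma~\ref{bg:ascent}(c) then lets us pass between $K$ and finite extensions of the field of definition.

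\emph{Part (a).} If $C_i$ consists of elements of order $p$, the corresponding ramification index is divisible by $p$. Lemma~\ref{bg:good-model}(c) then shows that no good model exists over any extension, so $h$ has bad reduction. This part needs nothing from \cite{BouwWewers}.

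\emph{Part (b).} I would first align the labelling of classes. Bouw and Wewers label conjugacy classes by canonical generators, which act on cotangent spaces through chosen roots of unity; the hypothesis on $\sqrt{(-1)^{(p-1)/2}p}$ is their running assumption that fixes the $\bF_p$-structure of the tails. I would check that their $C(l)$ and $\widetilde C(l)$ are our $C_{\rm sp}(l)$ and $C_{\rm nsp}(l)$ for the fixed $\zeta_{p\pm1}$.

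Next, recall the shape of their argument. In bad reduction the stable reduction has an inseparable original component together with primitive tails, and a vanishing-cycle or Riemann--Hurwitz count on the $\mu_p$-torsor over the original component forces the ramification invariants $\sigma_i$ at the tails to satisfy $\sum\sigma_i=1+$(correction). Their Theorem~6.6 turns this into a criterion in the $a_i$: when $\sum a_i<p-1$, or $\sum a_i=p-1$ with a nonsplit class, the configuration compatible with good reduction, and its lifting obstruction, cannot occur. I would translate their criterion into \eqref{nm:bw-hypothesis} using \eqref{nm:bw-a}.

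The main obstacle is exactly this translation. The risk is a sign or normalization mismatch: $C$ versus $-C$, and $a(C)$ versus $p-1-a(C)$. Lemma~\ref{app:pointed} and the Appendix correcting part (b) of their theorem suggest the conventions are delicate there. I would resolve this by testing a known example, such as a cover with a class of small $l$ on all three branch points, against the explicit $\sigma$-values of their \S4--6. I would also verify that only the bad-reduction direction, their part (a), is used, since that direction is unaffected by the error corrected in the appendix.
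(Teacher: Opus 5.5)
Your comparison of good-reduction notions (Lemma~\ref{bg:comparison}(b), the genus bound, Lemma~\ref{bg:ascent}(c)) matches the paper. Your direct proof of (a) via Lemma~\ref{bg:good-model}(c) is also correct; the paper notes this route itself, so citing \cite{BouwWewers}*{Theorem~6.6(a)} is not even needed. The gap is in part (b).

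\cite{BouwWewers}*{Theorem~6.6(b)} is not a criterion in the $a_i$. It is a formula for the \emph{number} of covers with class vector $(C_1,C_2,C_3)$ that have bad reduction. In the first two cases that number is $2$ because Bouw and Wewers \emph{construct} two bad-reduction covers: they glue a hypergeometric deformation datum to tail covers and lift. They do not rule out a ``configuration compatible with good reduction.'' To conclude that your given $h$ has bad reduction you need a step your plan never mentions. By rigidity \cite{BouwWewers}*{Proposition~4.1} there are exactly two covers with this class vector, so every such cover is among the bad ones (Remark~\ref{nm:bw-scope}(a)). Without this, translating their statement does not yield the implication.

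Second, the sign issue $C$ versus $-C$ is not a labelling problem. The paper defines $C_{\rm sp}(l)$, $C_{\rm nsp}(l)$ and $a(C)$ exactly as in \cite{BouwWewers}*{\S4}, so there is nothing to translate. Testing one numerical example could not certify a general convention in any case. Signs matter inside the existence proof, at the gluing step. The lifting theorem \cite{BouwWewers}*{Proposition~2.3} compares only inertia groups. Patching also needs the canonical generator of each tail at its wild point to be inverse to that of the deformation datum (Lemma~\ref{app:lifting}(a)), and this oversight is what breaks the third case.

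So ``the bad-reduction direction is unaffected by the error'' must be checked, not assumed. That direction is the first two cases of their Theorem~6.6(b), not their part~(a). The paper checks it in Remark~\ref{app:unaffected}(b):
\begin{itemize}
\item The cyclic cover $z^{p-1}=x^{b_1}(x-1)^{b_2}\prod_j(x-t_j')$, with $b_i=a_i/2+(p-1)/2$, has canonical generators $\zeta_{p-1}^{b_i}$ at $0,1,\infty$.
\item The primitive tails, taken from covers with class vectors $(pA,pA,C_i)$ or $(pA,pB,C_i)$, have canonical generator $-\zeta_{p-1}^{-a_i/2}=\zeta_{p-1}^{-b_i}$ at their wild points, by the product rule and the vanishing cycle formula. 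These are the required inverses.
\item The generation hypothesis holds through a new tail when $N\ge1$, and through the nonsplit class when $N=0$. This is exactly why the second alternative of \eqref{nm:bw-hypothesis} requires a nonsplit $C_i$.
\end{itemize}
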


This is \cite{BouwWewers}*{Theorem~6.6}, part (a) and the first two
cases of part (b).

\begin{remark}[The scope of the citation]\label{nm:bw-scope}
(a) The result is stated in \cite{BouwWewers} as a formula for the
number of covers with class vector $(C_1,C_2,C_3)$ that have bad
reduction. Under the hypothesis \eqref{nm:bw-hypothesis} this number is
two, which is the number of all covers with this class vector by
rigidity \cite{BouwWewers}*{Proposition~4.1}. This is the implication
stated above.

(b) The hypothesis \eqref{nm:bw-hypothesis} depends on the lift. An
even sign change replaces $l_i$ by $\frac{p-1}2-l_i$ or by
$\frac{p+1}2-l_i$ for two of the three indices, and hence $a_i$ by
$p-1-a_i$ or by $p+1-a_i$, whereas the four lifts of a $\Gp$-cover
have good reduction or bad reduction together
(Lemma~\ref{nm:lifts}). Thus \eqref{nm:bw-hypothesis} is a
sufficient condition that has to be tested on the four lifts. By
Lemma~\ref{nm:dictionary}, the cover $f$ is indefinite at $v$ exactly
when one of its four lifts satisfies \eqref{nm:bw-hypothesis}, and the
theorem is applied to that lift. The last case of
\cite{BouwWewers}*{Theorem~6.6(b)} states that the covers with all
other class vectors have good reduction. It does not take the sign
changes into account; Appendix~\ref{app:converse} gives a
counterexample and proves the corrected converse.
\end{remark}

\subsection{A place of bad reduction}
\label{nm:bw-application}

\begin{lemma}[Reduction of $f$ and of its lifts]\label{nm:lifts}
Let $v$ be a place of $\Qbar$ above $p$ and let $f_M$ be a lift of
$f$. Then $f$ has good reduction at $v$ if and only if $f_M$ has good
reduction at $v$. In particular, $f$ and its four lifts either all have
good reduction or all have bad reduction at $v$.
\end{lemma}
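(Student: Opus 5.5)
We have $Y = Y_M/\{\pm I\}$, so $f$ is the quotient of $f_M$ by the central subgroup $\{\pm I\}$. The plan is to prove each direction separately, using the earlier lemmas on good models. Both covers are defined over a number field, and good reduction at $v$ may be tested over any finite extension of $\bQ_p$ inside the completion at $v$ (Lemma~\ref{bg:good-model}(e)). We therefore work over a sufficiently large $K$ over which $f$, $f_M$ and the map $Y_M\to Y$ are all defined.

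\emph{From $f_M$ to $f$.} Suppose $f_M$ has a good model $\cY_M\to\bP^1_R$ after a finite extension. The group $H=\{\pm I\}$ acts on $Y_M$ by automorphisms over $\bP^1$. By Lemma~\ref{bg:ascent}(a), $\cY_M/H\to\bP^1_R$ is a good model of $Y_M/H\to\bP^1$, and $Y_M/H\to\bP^1$ is $f$. Hence $f$ has good reduction at $v$.

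\emph{From $f$ to $f_M$.} Suppose $f$ has good reduction at $v$. We apply Lemma~\ref{bg:ascent}(b) with $g\colon Y_M\to Y$ the quotient by $\{\pm I\}$. The required hypotheses are as follows.
\begin{itemize}
\item $Y$ and $Y_M$ are geometrically connected. This holds because the lift $M$ generates $\SL_2(\bF_p)$ (Lemma~\ref{bg:sl2-generation}), so $Y_M$ is connected over $\Qbar$.
\item $g$ is Galois of degree $2$, which is prime to $p$.
\item $g$ is \'etale over $Y\setminus f^{-1}(B)$. This holds because $f_M$ is \'etale over $\bP^1\setminus\{0,1,\infty\}$.
\end{itemize}
Lemma~\ref{bg:ascent}(b) then shows that $f\circ g=f_M$ has good reduction. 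Since this argument works for every lift, and the four lifts all have the same quotient $f$, the final sentence of the lemma follows.

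The only delicate point is ensuring that "good reduction at $v$" in Definition~\ref{bg:good-reduction} is used consistently for both covers over a common field. Lemma~\ref{bg:good-model}(e) handles this, since good models are stable under extension of the base field. I do not expect any genuine obstacle here. The central quotient is tame because $p$ is odd, and that is all the argument needs.
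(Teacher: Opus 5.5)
Your proposal is correct and follows the paper's proof: the direction from $f_M$ to $f$ is Lemma~\ref{bg:ascent}(a) applied to the quotient by $\{\pm I\}$, and the direction from $f$ to $f_M$ is Lemma~\ref{bg:ascent}(b) applied to the degree-two Galois map $Y_M\to Y$. Your check of the hypotheses matches what the paper checks: geometric connectedness via generation of $\SL_2(\bF_p)$, degree prime to $p$, and \'etaleness of $Y_M\to Y$ away from the points above the marking.
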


\begin{proof}
Both $f$ and $f_M$ are covers of $(\bP^1;0,1,\infty)$, and $f_M$ is the
composite of $f$ with the quotient map $Y_M\to Y=Y_M/\{\pm I\}$, a
Galois cover of degree two. This map is branched only at the points of
$Y_M$ whose stabilizer contains $-I$, which are ramification points of
$f_M$ and hence lie above the marked points of $Y$. If $f$ has good
reduction at $v$, then so has $f_M$, by Lemma~\ref{bg:ascent}(b) (recall that we assume $p > 5$ in this section). If
$f_M$ has a good model $\cY_M$, then $\cY_M/\{\pm I\}$ is a good model
of $f$, by Lemma~\ref{bg:ascent}(a).
\end{proof}

\begin{lemma}[The lift selected by an indefinite place]
\label{nm:dictionary}
Suppose that the ramification indices of $f$ are prime to $p$, let $v$ be a
place of $\Qbar$ above $p$, and let $\theta_1,\theta_2,\theta_3$ be the
angles at $v$ of a lift $M$.
\begin{enumerate}
\item Let $T_0:=\theta_1+\theta_2+\theta_3$, and for $i=1,2,3$ let $T_i$ be the same sum with $\theta_i$ negated: $T_1=-\theta_1+\theta_2+\theta_3$, $T_2=\theta_1-\theta_2+\theta_3$, $T_3=\theta_1+\theta_2-\theta_3$. Then
\[
 \iota(\widetilde\Delta_v)=-16\sin(\pi T_0)\sin(\pi T_1)\sin(\pi T_2)\sin(\pi T_3).
\]
\item The cover $f$ is indefinite at $v$ if and only if one of the four
lifts satisfies
\begin{equation}\label{eq_angles}
\theta_1+\theta_2+\theta_3>1.
\end{equation}
In this case, the lift is uniquely determined by this property \eqref{eq_angles}.
\item The angles of $M$ satisfy \eqref{eq_angles} if and
only if the integers $a_1,a_2,a_3$ of \eqref{nm:a-and-theta} satisfy
the hypothesis \eqref{nm:bw-hypothesis} of
Theorem~\ref{nm:bw-theorem}.
\end{enumerate}
Consequently $f$ is indefinite at $v$ if and only if one of its four
lifts satisfies \eqref{nm:bw-hypothesis} at $v$, and this lift has bad reduction at $v$.
\end{lemma}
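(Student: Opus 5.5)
The plan is to prove the three parts in order: (a) by a trigonometric identity, (b) by a sign count on the four sines, and (c) by rewriting the hypothesis of Theorem~\ref{nm:bw-theorem} in terms of the labels $l_i$ and using integrality. The final sentence of the lemma then follows from (b), (c) and Theorem~\ref{nm:bw-theorem}, applied to the selected lift and combined with Lemma~\ref{nm:lifts}.

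\emph{Part (a).} Put $\varphi_i=2\pi\theta_i$ and $c_i=\cos\varphi_i$, so that $\iota(\widetilde x_i)=2c_i$. Then
\[
\iota(\widetilde\Delta_v)=4\bigl(c_1^2+c_2^2+c_3^2-2c_1c_2c_3-1\bigr).
\]
I will use the classical identity
\[
1-c_1^2-c_2^2-c_3^2+2c_1c_2c_3=4\sin s\,\sin(s-\varphi_1)\sin(s-\varphi_2)\sin(s-\varphi_3),\qquad s=\tfrac12(\varphi_1+\varphi_2+\varphi_3).
\]
It is proved by expanding the product of sines pairwise with product-to-sum formulas. Since $s=\pi T_0$ and $s-\varphi_i=\pi T_i$, this gives (a).

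\emph{Part (b).} Since $\theta_i\in(0,\tfrac12)$, we have $T_0\in(0,\tfrac32)$ and $T_i\in(-\tfrac12,1)$. Hence $\sin(\pi T_0)>0$ iff $T_0<1$, and $\sin(\pi T_i)>0$ iff $T_i>0$. None of the four sines vanishes, because $\Delta\ne0$. By (a), $f$ is indefinite iff the product of the four sines is negative. There are two cases.
\begin{itemize}
\item If $T_0>1$, then $T_i=T_0-2\theta_i>0$ for every $i$, so the product is negative and $f$ is indefinite.
\item If $T_0<1$, at most one $T_i$ can be negative, since $T_i+T_j=2\theta_k>0$. So $f$ is indefinite iff exactly one $T_i<0$, say $\theta_1>\theta_2+\theta_3$.
\end{itemize}
In the second case, the even sign change at indices $2,3$ turns the angles into $\theta_1,\tfrac12-\theta_2,\tfrac12-\theta_3$, whose sum is $1+T_1'$ with $T_1'=\theta_1-\theta_2-\theta_3>0$. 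The same computation run backwards gives the converse. Since $\iota(\widetilde\Delta_v)$ does not depend on the lift, indefiniteness is equivalent to some lift satisfying \eqref{eq_angles}.

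Uniqueness also follows from this computation. Suppose a lift has $T_0>1$ and its sign change at $\{j,k\}$, with complementary index $i$, also has angle sum $>1$. Then $\theta_i>\theta_j+\theta_k$, so $T_0<2\theta_i<1$, a contradiction.

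\emph{Part (c).} This is where I expect the main obstacle: it is a delicate integrality comparison, with the boundary case $a_1+a_2+a_3=p-1$ handled separately. Write $n_i=p-1$ or $p+1$ according as $M_i$ is split or nonsplit. Then $l_i=n_i\theta_i$ is an integer and $a_i=n_i-2l_i$.

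The hypothesis \eqref{nm:bw-hypothesis} becomes an inequality
\[
2(l_1+l_2+l_3)\ \ge\ 2p+1+\#\{\text{nonsplit}\}-\#\{\text{split}\},
\]
which is strict exactly when every class is split. Meanwhile \eqref{eq_angles} says $\sum l_i/n_i>1$.

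I will compare the two conditions by writing $l_i/n_i=l_i/p\pm l_i/(pn_i)$. The correction terms have total size less than $3/(2p)$, because $l_i<n_i/2$. So \eqref{eq_angles} pins $\sum l_i$ to lie above $p$ up to an error of less than $3/2$. The integrality of $\sum l_i$, together with the parity bookkeeping of the $\pm1$'s, should then show that the two conditions coincide.

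To make this rigorous I will split into cases by the number of nonsplit $M_i$ (zero to three). In each case I will check the borderline values $\sum l_i\in\{p-1,p,p+1\}$ directly. The all-split case is immediate, since there $\sum a_i<p-1$ is equivalent to $3-2T_0<1$. The borderline equality case $\sum a_i=p-1$ can only occur when some class is nonsplit, and I will verify that it corresponds exactly to $T_0>1$ with small excess.
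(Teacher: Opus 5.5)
Your parts (a) and (b) follow the paper's argument: the same product-of-sines identity, the same sign count using $0<\theta_i<\tfrac12$ and $\widetilde\Delta_v\ne0$, and the observation that the even sign change at $\{j,k\}$ replaces $T_0$ by $1-T_i$.

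For (c) you take a different route, and it works, but the case analysis is only sketched.

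\emph{The paper's route.} Let $J$ be the set of nonsplit indices and $N:=(p^2-1)(T_0-1)\in\bZ$. The paper proves the exact identity $p-1-(a_1+a_2+a_3)=\frac{2}{p+1}\bigl(N-\sum_{i\in J}a_i\bigr)$ and notes that $N\equiv\sum_{i\in J}a_i\pmod{p+1}$. It then shows that $T_0>1$, together with $\theta_i<\tfrac12$ for $i\notin J$, forces $0<\sum_{i\in J}a_i<p+1$. Hence a positive $N$ is at least $\sum_{i\in J}a_i$. The converse falls out of the same identity, with no case split on $|J|$.

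\emph{Your route.} You write $T_0=L/p+\epsilon$ with $L=\sum l_i$ and $|p\epsilon|<\tfrac32$, then check $L\in\{p-1,p,p+1\}$ against the hypothesis. With $u$ the number of nonsplit classes, the hypothesis is $L\ge p$ if $u=0$ and $L\ge p-1+u$ if $u\ge1$. I checked that all borderline cases do go through.

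One caution when you carry this out: the aggregate error bound does not settle $L=p$ with $u=1$ or $u=2$. There, $T_0>1$ is equivalent to $l_i<\tfrac{p+1}2$ for the lone nonsplit index (when $u=1$), respectively $l_i>\tfrac{p-1}2$ for the lone split index (when $u=2$). So you must use the individual bounds $\theta_i<\tfrac12$ a second time, which is exactly the input the paper uses. Your version makes the borderline behaviour explicit; the paper's congruence argument is shorter and needs no enumeration.

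For the last sentence of the lemma, Lemma~\ref{nm:lifts} is not the input you need. To get bad reduction of the selected lift at $v$, you must:
\begin{itemize}
\item apply Theorem~\ref{nm:bw-theorem}(b) over a field with algebraically closed residue field containing a square root of $(-1)^{(p-1)/2}p$;
\item identify the class vector there with the classes of the $M_i$, via the reductions of the chosen roots of unity;
\item return to reduction at $v$ by Lemma~\ref{bg:ascent}(c).
\end{itemize}
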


\begin{proof}
(a) With $\alpha_i:=2\pi\theta_i$ we have
$\iota(\widetilde x_i)=2\cos\alpha_i$, and the assertion is the
identity
\[
 4\sum_i\cos^2\alpha_i-8\prod_i\cos\alpha_i-4
 =-16\,\sin\tfrac{\alpha_1+\alpha_2+\alpha_3}2\,
 \prod_{i=1}^3\sin\tfrac{\alpha_1+\alpha_2+\alpha_3-2\alpha_i}2,
\]
which follows from the product formulas for sines and cosines.

(b) Since $0<\theta_i<\frac12$, we have
$0<T_0<\frac32$ and $-\frac12<T_i<1$, so the first sine in (a)
is negative exactly when $T_0>1$ and the sine of $\pi T_i$ is
negative exactly when $T_i<0$. At most one of these four
conditions holds. Indeed, if $T_0>1$, then
$\theta_j+\theta_k>1-\theta_i>\theta_i$ for
$\{i,j,k\}=\{1,2,3\}$, so every $T_i$ is positive; and if
$T_1<0$, then $T_2=\theta_1-\theta_2+\theta_3>2\theta_3>0$,
likewise $T_3>0$, and $T_0<2\theta_1<1$. No sine vanishes,
because $\widetilde\Delta_v\ne0$. Hence $\iota(\widetilde\Delta_v)>0$
if and only if exactly one of the conditions $T_0>1$, $T_1<0$,
$T_2<0$, $T_3<0$ holds. The even sign change at the indices
$j$ and $k$ replaces $T_0$ by
$\theta_i+(\frac12-\theta_j)+(\frac12-\theta_k)=1-T_i$. So the
four conditions are the condition $T_0>1$ for the four lifts, and
(b) follows.

(c) Let $J$ be the set of indices $i$ for which $M_i$ is nonsplit, and
write $l_i$ for the labels of \S\ref{nm:class-vectors}. The number
\[
 N:=(p^2-1)(T_0-1)
 =(p+1)\sum_{i\notin J}l_i+(p-1)\sum_{i\in J}l_i-(p^2-1)
\]
is an integer, and \eqref{nm:a-and-theta} gives
\begin{equation}\label{nm:dictionary-key}
 p-1-(a_1+a_2+a_3)=\frac2{p+1}\Bigl(N-\sum_{i\in J}a_i\Bigr).
\end{equation}
Suppose that $T_0>1$, so that $N>0$. If $J$ is empty, the right side
is positive and the first alternative of \eqref{nm:bw-hypothesis}
holds. Suppose that $J$ is not empty. Both $N$ and
$\sum_{i\in J}a_i=\sum_{i\in J}(p+1-2l_i)$ are congruent to
$-2\sum_{i\in J}l_i$ modulo $p+1$. Moreover $\theta_i<\frac12$ for
$i\notin J$ and $T_0>1$ give
$\sum_{i\in J}\theta_i>1-\frac12(3-|J|)$, that is,
$\sum_{i\in J}(1-2\theta_i)<1$, so that
$0<\sum_{i\in J}a_i<p+1$. A positive integer congruent to
$\sum_{i\in J}a_i$ modulo $p+1$ is therefore at least
$\sum_{i\in J}a_i$. Thus the right side of \eqref{nm:dictionary-key} is
nonnegative. If it is positive, the first alternative of
\eqref{nm:bw-hypothesis} holds; if it is zero, the second alternative
holds, because $J$ is not empty.

Conversely, suppose that \eqref{nm:bw-hypothesis} holds. If $J$ is
empty, the left side of \eqref{nm:dictionary-key} is positive, so
$N>0$. If $J$ is not empty, the left side is nonnegative, so
$N\ge\sum_{i\in J}a_i>0$. In both cases $T_0>1$.

The first part of the last assertion follows from (b) and (c). The
lift that satisfies \eqref{nm:bw-hypothesis} has bad reduction at
$v$ by Theorem~\ref{nm:bw-theorem}(b), applied over a field $K$ as in
\S\ref{nm:bw-statement} that contains a square root of
$(-1)^{(p-1)/2}p$, together with Lemma~\ref{bg:ascent}(c); the class
vector of the lift over $K$ is the one described in
\S\ref{nm:class-vectors}.
\end{proof}

The existence of a place at which $f$ is indefinite uses the following
fact, which goes back to Fricke.

\begin{lemma}[Trace rigidity]\label{nm:trace-rigidity}
Let $F$ be a finite field or $F=\bC$, and let $(A_1,A_2,A_3)$ and
$(A_1',A_2',A_3')$ be triples of matrices in $\SL_2(F)$ with product $I$
and with the same traces $x_i=\tr A_i=\tr A_i'$. If
$x_1^2+x_2^2+x_3^2-x_1x_2x_3-4\ne0$, then the two triples are
simultaneously conjugate in $\GL_2(F)$.
\end{lemma}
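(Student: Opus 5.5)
The plan is to reduce to the statement that a pair $(A_1,A_2)$ with $\tr[A_1,A_2]\ne2$ is determined up to conjugacy by the three traces $x_1=\tr A_1$, $x_2=\tr A_2$, $x_3=\tr (A_1A_2)^{-1}=\tr A_1A_2$. Since $A_3=(A_1A_2)^{-1}$, it suffices to conjugate $(A_1,A_2)$ to $(A_1',A_2')$. By the Fricke identity \eqref{bg:fricke}, the hypothesis is exactly $\tr[A_1,A_2]\ne 2$. I would first show that this forces the pair to be absolutely irreducible. Over $\ol F$, a common eigenvector makes $A_1,A_2$ simultaneously upper triangular, so the commutator is unipotent and has trace $2$, a contradiction.

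Next I would produce an explicit normal form. Absolute irreducibility implies $I,A_1,A_2,A_1A_2$ span $M_2(\ol F)$, by Burnside, and in fact $M_2(F)$, since the span is defined over $F$. Hence the $F$-algebra generated by the pair is all of $M_2(F)$. Choose a vector $w\in F^2$ that is not an eigenvector of $A_1$; such a $w$ exists because $A_1$ is not scalar, as otherwise the pair would be reducible. In the basis $(w,A_1w)$, $A_1$ becomes the companion matrix $\spmatrix{0}{-1}{1}{x_1}$. Writing $A_2=\spmatrix{\alpha}{\beta}{\gamma}{\delta}$ in this basis, the three conditions $\alpha+\delta=x_2$, $\tr A_1A_2=x_3$ and $\det A_2=1$ give two linear equations and one quadratic. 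This cuts down to a one-parameter family, and the residual freedom should be exactly the centralizer of the companion matrix, namely $F[A_1]^\times$, acting by conjugation.

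The cleaner route I would actually take avoids solving for $w$. Consider the trace form $(X,Y)\mapsto\tr(XY)$ on $M_2(F)$, which is nondegenerate. The Gram matrix of the basis $I,A_1,A_2,A_1A_2$ has entries that are polynomials in $x_1,x_2,x_3$, because $\tr(A_1^2)=x_1^2-2$ and $\tr(A_1A_2A_1A_2)=x_3^2-2$, and similarly for the other entries. Its determinant is a nonzero multiple of $\Delta=\tr[A_1,A_2]-2$, and in particular is nonzero under the hypothesis, which reproves the spanning. The $F$-linear map $\phi\colon M_2(F)\to M_2(F)$ sending $I,A_1,A_2,A_1A_2$ to $I,A_1',A_2',A_1'A_2'$ therefore preserves the trace form. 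One then checks that $\phi$ is multiplicative. All products of basis elements reduce, via Cayley--Hamilton, to linear combinations of the basis whose coefficients are polynomials in the traces. For example, $A_2A_1=(x_3-x_1x_2)I+x_2A_1+x_1A_2-A_1A_2$. Because these reduction formulas are identical for both triples, $\phi$ is an $F$-algebra automorphism of $M_2(F)$. By Skolem--Noether, $\phi$ is conjugation by some $g\in\GL_2(F)$, which gives the claim. This works uniformly for $F$ finite and $F=\bC$.

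I expect the main obstacle to be verifying that every product of basis elements has a universal expression in the traces; in particular $A_2A_1$ and $A_1A_2A_1$ each need a Cayley--Hamilton manipulation. The second delicate point is computing the Gram determinant. I would first check the determinant, which I expect to equal $\pm\Delta$ up to a nonzero constant. If that computation is messy, I would fall back on the absolute-irreducibility-plus-Burnside argument for spanning, and use the trace form only to see that $\phi$ is well defined and injective.
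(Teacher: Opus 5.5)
Your proposal is correct, but it proves the key step itself rather than citing it. Your first two steps match the paper's. The third matrix is determined by the first two, with $\tr A_3=\tr A_1A_2$. The hypothesis says $\tr[A_1,A_2]\ne 2$, which forces absolute irreducibility. At that point the paper cites the rigidity of absolutely irreducible pairs with equal trace triples: \cite{Chen}*{Theorem~5.2.10} for finite fields and \cite{CullerShalen}*{Proposition~1.5.2} for $\bC$.

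You instead prove that rigidity directly. The structure constants of $M_2(F)$ in the basis $I,A_1,A_2,A_1A_2$ are universal integer polynomials in $x_1,x_2,x_3$. Apart from $A_1\cdot A_2$, the products are $A_i^2=x_iA_i-I$, $(A_1A_2)^2=x_3A_1A_2-I$, $A_1^2A_2=x_1A_1A_2-A_2$, $A_1A_2^2=x_2A_1A_2-A_1$, your formula for $A_2A_1$, $A_1A_2A_1=A_2+x_3A_1-x_2I$ and $A_2A_1A_2=A_1+x_3A_2-x_1I$. So the basis-matching map is a unital algebra automorphism, and Skolem--Noether makes it conjugation by some $g\in\GL_2(F)$.

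Your route is self-contained and uniform, and it works over an arbitrary field. The restriction to finite fields and $\bC$ in the statement only reflects the scope of the two references. The paper's route buys brevity.

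One small correction. The Gram determinant of the trace form on $I,A_1,A_2,A_1A_2$ is $-\Delta^2$, where $\Delta=x_1^2+x_2^2+x_3^2-x_1x_2x_3-4$. It is not a constant multiple of $\Delta$; what equals $\pm\Delta$ is the change-of-basis determinant from the matrix units. Only its nonvanishing matters, so the argument is unaffected. You can also skip it: the primed pair has the same commutator trace, so it is absolutely irreducible too. Burnside, together with your reduction formulas showing that the four products span the generated algebra, then makes $I,A_1',A_2',A_1'A_2'$ a basis as well.
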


\begin{proof}
The condition says that the pairs $(A_1,A_2)$ and $(A_1',A_2')$ are
absolutely irreducible \cite{Chen}*{Lemma~5.2.9}, and
$\tr A_3=\tr(A_1A_2)$. For a finite field the assertion is
\cite{Chen}*{Theorem~5.2.10}, and over $\bC$ it is
\cite{CullerShalen}*{Proposition~1.5.2}.
\end{proof}

\begin{lemma}[A place of indefinite type]\label{nm:indefinite-place}
Suppose that the ramification indices of $f$ are prime to $p$. Then there is
a place of $\Qbar$ above $p$ at which $f$ is indefinite.
\end{lemma}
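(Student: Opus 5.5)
The plan is to argue by contradiction: if $f$ were definite at every place above $p$, the lifted traces would define a representation over a ring of algebraic integers whose image is compact at every archimedean place, hence finite. Reducing that finite group at a place above $p$ would then give a tame finite subgroup of $\PGL_2$ surjecting onto $\Gp$, which Lemma~\ref{bg:finite-subgroups} forbids.

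\emph{Step 1: places correspond to embeddings.} Fix a lift $M$ and a place $v_0$ above $p$. Let $\widetilde\delta_i$ be the roots of unity of order prime to $p$ reducing to $\delta_i$ at $v_0$, and set $\widetilde x_i=\widetilde\delta_i+\widetilde\delta_i^{-1}$. All of these lie in a cyclotomic field $L=\bQ(\widetilde\zeta_m)$ with $p\nmid m$. Every other place above $p$ has the form $\sigma v_0$ with $\sigma\in\Gal(\Qbar/\bQ)$. The root of unity reducing to $\delta_i$ at $\sigma v_0$ is $\sigma(\widetilde\delta_i)$, so $\widetilde\Delta_{\sigma v_0}=\sigma(\widetilde\Delta_{v_0})$. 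As $\sigma$ varies, $\iota\circ\sigma|_L$ runs over all complex embeddings of $L$. So if $f$ is definite at every place above $p$, then $\tau(\widetilde\Delta_{v_0})<0$ for every embedding $\tau\colon L\to\bC$.

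\emph{Step 2: an integral representation.} I would write down an explicit triple over the ring of integers $\cO$ of $L(s)$ with traces $\widetilde x_i$:
\[
A_1=\spmatrix{\widetilde x_1}{1}{-1}{0},\qquad A_2=\spmatrix{0}{s}{-s^{-1}}{\widetilde x_2},\qquad A_3=(A_1A_2)^{-1}.
\]
Here $s$ is chosen so that $\tr A_1A_2=\widetilde x_3$, after adjusting signs or entries as needed. This means $s$ is a root of a monic quadratic with unit constant term, so $s$ is a unit and every entry lies in $\cO$. Let $\Gamma=\langle A_1,A_2\rangle\subset\SL_2(\cO)$.

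For each complex embedding $\tau$, the traces $\tau(\widetilde x_i)$ are real and $\tau(\widetilde\Delta_{v_0})<0$. By the Hermitian-form discussion after Definition~\ref{nm:indefinite}, the pair $\tau(A_1),\tau(A_2)$ then preserves a definite Hermitian form. Concretely, one can realize a triple in $\SU(2)$ with the traces $2\cos 2\pi\theta_i$, since the triangle inequalities for the angles are exactly the negativity of the product of sines in Lemma~\ref{nm:dictionary}(a). Lemma~\ref{nm:trace-rigidity} over $\bC$ shows $\tau(\Gamma)$ is conjugate to that triple, hence lies in a compact subgroup. Thus $\Gamma$ is a discrete subgroup of $\prod_\tau\SL_2(\bC)$ (it sits inside $\SL_2(\cO)$) contained in a compact set, and so $\Gamma$ is finite.

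\emph{Step 3: reduction and contradiction.} Reduce $\Gamma$ modulo the prime of $\cO$ under $v_0$, with residue field $\kappa\subset k_{v_0}$. The reduced matrices have traces $x_i$, product $I$, and satisfy $\Delta\neq0$. By Lemma~\ref{nm:trace-rigidity} over a finite field, enlarged to include $\kappa$, they are conjugate to $(M_1,M_2,M_3)$, so the image of $\Gamma$ is conjugate to $\SL_2(\bF_p)$. Hence the finite group $\Gamma/\{\pm I\}\subset\PGL_2(\bC)$, which is tame in characteristic $0$, surjects onto $\Gp$. Since $p>5$, this contradicts Lemma~\ref{bg:finite-subgroups}.

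The main obstacle is the definite $\Rightarrow$ compact step in Step 2. It requires checking that a negative value of $\iota(\widetilde\Delta)$ with real traces in $(-2,2)$ really forces a unitary realization, rather than for instance an $\SL_2(\bR)$ one. I would handle this through the triangle-inequality description of $\SU(2)$ triples via Lemma~\ref{nm:dictionary}(a). A secondary point is the bookkeeping in Step 1, namely that $\widetilde\Delta_{\sigma v_0}=\sigma(\widetilde\Delta_{v_0})$ and that $\sigma$ reaches every embedding of $L$.
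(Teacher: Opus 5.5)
Your proof is correct and follows the paper's route. Galois conjugation turns places above $p$ into complex embeddings of the cyclotomic field; an integral pair with the lifted traces would be bounded at every embedding if $f$ were definite everywhere, hence finite by discreteness of $\cO_L$; and reduction at $v_0$ then yields a finite subgroup of $\PGL_2(\bC)$ with quotient $\Gp$, contradicting Lemma~\ref{bg:finite-subgroups}. The only difference is the compactness step: the paper conjugates the pair to an explicit $(A_0,B_0)$ preserving the Hermitian form $\diag(-b,1)$ with $b=\widetilde\Delta/(4-\widetilde x_1^2)$, while you use the classical $\mathrm{SU}(2)$ triangle-inequality criterion together with trace rigidity over $\bC$. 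That criterion corresponds to all four sines in Lemma~\ref{nm:dictionary}(a) being positive, so definiteness means their product is positive, not negative as you wrote; this sign slip does not affect the argument.
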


\begin{proof}
Fix a lift $M$ and a place $v_0$ of $\Qbar$ above $p$, and let
$\widetilde\delta_i$, $\widetilde x_i$ and
$\widetilde\Delta:=\widetilde\Delta_{v_0}$ be the elements of
\S\ref{nm:class-vectors} for the place $v_0$. They lie in the
cyclotomic field $L:=\bQ(\widetilde\zeta_{p^2-1})$, and the orders of
the $\widetilde\delta_i$ exceed two. The matrices
\[
 \widetilde M_1=\begin{pmatrix}0&-1\\1&\widetilde x_1\end{pmatrix},
 \qquad
 \widetilde M_2=\begin{pmatrix}
 \widetilde\delta_2&
 \widetilde x_3-\widetilde x_1\widetilde\delta_2^{-1}\\
 0&\widetilde\delta_2^{-1}
 \end{pmatrix}
\]
lie in $\SL_2(\cO_L)$ and have
$\tr\widetilde M_1=\widetilde x_1$,
$\tr\widetilde M_2=\widetilde x_2$ and
$\tr(\widetilde M_1\widetilde M_2)=\widetilde x_3$. By
Lemma~\ref{nm:trace-rigidity}, applied over the residue field of $L$
at $v_0$, their reductions at $v_0$ are simultaneously conjugate to
$(M_1,M_2)$, so they generate a conjugate of $\SL_2(\bF_p)$.

The group $\Gamma=\langle\widetilde M_1,\widetilde M_2\rangle$ is
infinite. Otherwise its image in $\PGL_2(\bC)$ would be a
finite group with $\PSL_2(\bF_p)$ as a quotient, by reduction, which
Lemma~\ref{bg:finite-subgroups} excludes for $p>5$.

At every complex embedding of $L$, each $\widetilde x_i$ is real and
lies strictly between $-2$ and $2$, because $\widetilde\delta_i$ is a
root of unity different from $\pm1$. At such an embedding the pair
$(\widetilde M_1,\widetilde M_2)$ is conjugate, by
Lemma~\ref{nm:trace-rigidity} over $\bC$, to
\[
 A_0=\diag(\widetilde\delta_1,\widetilde\delta_1^{-1}),
 \qquad
 B_0=\begin{pmatrix}a&1\\b&\bar a\end{pmatrix},
 \quad
 a=\frac{\widetilde x_3-\widetilde\delta_1^{-1}\widetilde x_2}
          {\widetilde\delta_1-\widetilde\delta_1^{-1}},\quad
 b=\frac{\widetilde\Delta}{4-\widetilde x_1^2},
\]
where $\bar a$ is the complex conjugate of $a$. Direct multiplication
shows that $A_0$ and $B_0$ preserve the Hermitian form
$\diag(-b,1)$, which is definite if $\widetilde\Delta<0$ and
indefinite if $\widetilde\Delta>0$. If $\widetilde\Delta$ were negative
at every embedding, then after a fixed change of basis at each
embedding the matrices of $\Gamma$ would be uniformly bounded. But
$\operatorname{Mat}_2(\cO_L)$ is a discrete lattice in the product of
the archimedean matrix spaces, so its intersection with a bounded set
is finite, contradicting the infinitude of $\Gamma$. Hence there is an
embedding $\iota'\colon L\hookrightarrow\bC$ with
$\iota'(\widetilde\Delta)>0$.

Choose $\varsigma\in\Gal(\Qbar/\bQ)$ with
$\iota\circ\varsigma=\iota'$ on $L$, and let
$v:=v_0\circ\varsigma^{-1}$ be the place whose valuation ring is the
image under $\varsigma$ of the valuation ring of $v_0$. Then
$\varsigma$ induces an isomorphism of the residue field of $v_0$ onto
that of $v$, and the root of unity $\varsigma(\widetilde\delta_i)$
reduces at $v$ to the image of $\delta_i$. Hence
$\widetilde\Delta_v=\varsigma(\widetilde\Delta)$, and
$\iota(\widetilde\Delta_v)=\iota'(\widetilde\Delta)>0$: the cover $f$
is indefinite at $v$.
\end{proof}

\begin{proof}[Proof of Theorem~\ref{thm:nm-arithmetic}]
Suppose first that an ramification index of $f$ is divisible by $p$. A good
model has ramification indices prime to $p$
(Lemma~\ref{bg:good-model}(c)), so $f$ has bad reduction at every place
of $\Qbar$ above $p$.

Suppose now that the ramification indices of $f$ are prime to $p$. By
Lemma~\ref{nm:indefinite-place} there is a place $v$ at which $f$ is
indefinite, and by Lemma~\ref{nm:dictionary} there is a lift $f_M$ with
bad reduction at $v$. Then $f$ has bad reduction at $v$ by
Lemma~\ref{nm:lifts}.
\end{proof}

\begin{remark}\label{nm:definite-remark}
The proof gives the following more precise statement: a connected
three-point $\Gp$-cover with ramification indices prime to $p$ has bad
reduction at every place at which it is indefinite. The converse also
holds: such a cover has good reduction at every place at which
it is definite (Theorem~\ref{app:corrected}). The hypothesis $p>5$ is
used in Lemma~\ref{nm:indefinite-place}.
\end{remark}

\section{Generic inseparability criterion}\label{sec:generic}
\label{nm:generic-stable-reduction}

This section proves the generic inseparability criterion used in step (3) of
\S\ref{intro:strategy}: for a $\Gp$-cover of an elliptic curve
with good reduction over a discretely valued field whose residue field
$F$ is a function field, bad reduction of the cover
(Definition~\ref{bg:good-reduction}) forces the reduced
$j$-invariant to lie in $F^p$ (Proposition~\ref{nm:generic-criterion}).

\noindent\emph{Standing hypotheses.}
Throughout this section $p>7$ and $m=(p-1)/2$.
Let $k$ be a perfect field of characteristic $p$, let $F/k$ be a
one-variable function field, and let $K$ be a complete discretely
valued field of mixed characteristic with valuation ring $R$ and
residue field $F$. Let $E/K$ be an elliptic curve with good
reduction, with special fiber $E_0/F$, and assume that
$\bar j:=j(E_0)$ is transcendental over $k$. Let $Y\to E$ be a
geometrically connected $\Gp$-cover, branched only over the origin, with ramification index
$e$ there nontrivial and prime to $p$. The ramification points of
$Y$ form its marking (Definition~\ref{bg:cover}). Since $e>1$, tame
Riemann--Hurwitz gives $g(Y)\ge2$.

After a finite extension of $K$, the curve $Y$ marked by its
ramification points has a stable marked model, a stable marked
curve over the valuation ring with generic fiber $Y$
(\S\ref{bg:reduction}). It is unique, and its formation commutes
with further extension. The elliptic curve marked at its origin has
good reduction, with smooth model $\cE/R$.
By Lemma~\ref{bg:comparison}(a), the cover $Y\to E$ has good reduction
if and only if its marked source $Y$ has good (marked) reduction.
Thus the cover has bad reduction precisely when the geometric special
fiber of this stable marked model is not smooth. We use this
equivalence throughout the section.

\subsection{The stable target and its ramification invariants}
\label{gen:sec-tails}

In this subsection we recall the structure of the stable reduction of
a Galois cover whose group has order divisible by $p$ but not by
$p^2$, as is the case for $\Gp$. The theory is due to Raynaud
\cite{Raynaud1999}; we use it in the form given by Wewers
\cite{WewersBad}, Bouw and Wewers \cite{BouwWewers} and Obus
\cite{ObusVanishing}, who treats a base of arbitrary genus.

Let $K'/K$ be a finite Galois extension containing a $p$th root of
unity over which the stable marked model
$\cY$ of $Y$ with its ramification marking exists, and let $R'$ be its
valuation ring.
By uniqueness of the stable marked model, the group
$\Gp$ acts on $\cY$, and the quotient $\cX:=\cY/\Gp$ is a semistable
model of $E$ \cite{ObusVanishing}*{\S2.3}.

The sources cited above assume an algebraically closed residue field,
whereas the residue field of $R'$ is a finite, possibly inseparable,
extension of the imperfect field $F$. Let $\kappa$ be an algebraic
closure of the residue field of $R'$. There is a flat local
homomorphism from $R'$ to a local ring whose maximal ideal is generated
by a uniformizer $\pi'$ of $R'$ and whose residue field is $\kappa$
\cite{Stacks}*{03C3}.
Flatness implies that $\pi'$ is a nonzerodivisor in this extension,
and therefore its $\pi'$-adic completion $R''$ is a complete discrete valuation ring
with uniformizer $\pi'$ and residue field $\kappa$. Write
$\cY_{R''}:=\cY\otimes_{R'}R''$ and $\cX_{R''}:=\cX\otimes_{R'}R''$.
The curve $\cY_{R''}$ is the stable marked model of $Y$ over the
fraction field of $R''$, because stable marked curves remain stable
after base change, and $\cX_{R''}=\cY_{R''}/\Gp$, because the
formation of invariants under a finite group commutes with flat base
change. We apply the theory over $R''$ and write
\[
 \bar Y:=\cY\otimes_{R'}\kappa,\qquad
 \bar X:=\cX\otimes_{R'}\kappa
\]
for the special fibers of $\cY_{R''}$ and $\cX_{R''}$, the
\emph{stable marked source} and the \emph{stable target}; the inertia
groups, ramification invariants and differentials below are those of
$\cY_{R''}$.

The dual graph of $\bar X$ is a tree with a distinguished
\emph{original component} $\bar X_0$, which maps birationally to the
special fiber $(E_0)_\kappa$ of the smooth model $\cE$,
and rational
components attached to it.\footnote{Indeed, the minimal desingularization of
$\cX_{R''}$ replaces each singular point by a chain of projective lines
\cite{Liu}*{Corollary~10.3.25}, so its dual graph is a subdivision of
that of $\bar X$. It is a regular model of $E$, so it dominates the
smooth model, which is the minimal regular model, through a sequence of
blow-ups of closed points \cite{Liu}*{Theorems~9.2.2 and~9.3.21}, each of
which adds a leaf to the dual graph or subdivides an edge.}
A component of $\bar X$ is
\emph{terminal} if it meets the rest of $\bar X$ in exactly one node. A
\emph{tail} is a terminal component other than $\bar X_0$; it is
\emph{primitive} if it contains the
specialization of the branch point and \emph{new} otherwise. We write
$\bar X_v$ for the component of $\bar X$ at a vertex $v$ of the tree.
The stable target $\bar X$ is in general different from the quotient
$\bar Y/\Gp$; we will compare them in \S\ref{gen:sec-centralizer}.

Let $\bar Y_v$ be a component of $\bar Y$ above a component $\bar X_v$,
with decomposition group $D$. The inertia group of its generic point is
either trivial or a group $P\cong C_p$ of order $p$, because
$p^2\nmid|\Gp|$ \cite{ObusVanishing}*{Proposition~2.10}; we call
$\bar X_v$ \emph{\'etale} or a \emph{$p$-component} accordingly. The
group $D$ normalizes $P$, so $D\le N_{\Gp}(P)=P\rtimes C_m$, and $D/P$
has order prime to $p$.

For a $p$-component, the map $\bar Y_v\to\bar X_v$ has inseparable degree exactly $p$.
Indeed, both special fibers are reduced, as $\cY_{R''}$ is stable and
$\cX_{R''}$ is semistable (\cite{WewersBad}*{\S1.1},
\cite{ObusVanishing}*{\S2.3}), so $\pi'$ is a uniformizer at the generic
point of $\bar X_v$ and at the generic point of every component of
$\bar Y$ above it. Let $A$ be the local ring of $\cX_{R''}$ at the
generic point of $\bar X_v$, a discrete valuation ring, and let $B$ be
its integral closure in the function field of $\cY_{R''}$, a Galois
extension with group $\Gp$. The maximal ideals of $B$ are the generic
points of the components of $\bar Y$ above $\bar X_v$, one
$\Gp$-orbit with stabilizer $D$; their ramification indices are one
and their residue degrees have a common value $f$, so the degree
formula \cite{Stacks}*{09E8} gives $|\Gp|=[\Gp:D]f$, that is,
$f=|D|$. The residue extension $\kappa(\bar Y_v)/\kappa(\bar X_v)$ is
normal, and $D$ maps onto its automorphism group with kernel the
inertia group $P$ \cite{Stacks}*{09ED}; so its separable degree is
$|D/P|$ and its inseparable degree is $p$. Thus the map factors as
\begin{equation}\label{gen:factorization}
 \bar Y_v\stackrel{D/P}{\lra}\bar Y_v/D\lra\bar X_v,
\end{equation}
in which the first map is Galois with group $D/P$
and the second is purely inseparable of degree $p$. For an \'etale component
the same argument with $P=1$ shows that $\bar Y_v/D\to\bar X_v$ is
birational.

We recall the deformation datum of a $p$-component $\bar X_v$, which
is used to count tails \cite{WewersBad}*{\S1.3}. Let $\bar Y_v$ be a
component of $\bar Y$ above $\bar X_v$, with decomposition group $D$
and inertia group $P$, and let $C$ be the smooth projective curve
whose function field is the maximal separable subextension of
$\kappa(\bar Y_v)/\kappa(\bar X_v)$. Then $C\to\bar X_v$ is a Galois
cover with group $D/P$, of order prime to $p$, and $\kappa(\bar Y_v)$
is purely inseparable of degree $p$ over $\kappa(C)$; the curve $C$ is
the normalization of the corresponding component of the special fiber
of $\cY_{R''}/P$. The deformation datum uses the
factorization
$$\bar Y_v\lra C \stackrel{D/P}{\lra}\bar X_v,$$
as an inseparable degree $p$ map followed by a Galois $D/P$-cover.

The datum records how $P$ acts near the generic point of $\bar Y_v$.
Let $\cO$ be the local ring of
$\cY_{R''}/P$ at the generic point of $C$, a discrete
valuation ring with residue field $\kappa(C)$ in which $\pi'$ is a
uniformizer. By Kummer theory the function field of $\cY_{R''}$ is generated
over the fraction field of $\cO$ by an element $y$ with $y^p=u$; as
$\pi'$ remains a uniformizer at the generic point of $\bar Y_v$, we may
take $u\in\cO^\times$, and $u$ is then determined up to $p$th powers
of units and up to $u\mapsto u^i$ with $p\nmid i$. There are two
cases \cite{WewersBad}*{\S1.3.2}.
\begin{itemize}
\item \emph{Multiplicative case.} The residue $\bar u\in\kappa(C)$ is
not a $p$th power. Put $\omega:=d\bar u/\bar u$.
\item \emph{Additive case.} Otherwise, after a finite extension of
$K'$, the unit $u$ can be chosen of the form
$u=1+\varpi^pf$, where $\varpi\in R''$ is a power of $\pi'$ with
$0<\ord_{\pi'}(\varpi)<\ord_{\pi'}(p)/(p-1)$ and the residue
$\bar f\in\kappa(C)$ of $f\in\cO$ is not a $p$th power.\footnote{The inequalities for $\ord_{\pi'}(\varpi)$ implies that the additive case is impossible if $\ord_{\pi'}(p) = p-1$.} Put
$\omega:=d\bar f$.
\end{itemize}
In both cases $\omega$ is a nonzero meromorphic differential on $C$,
well defined up to a nonzero constant, so its orders of vanishing are
well defined.
The differential is an eigenvector for the group $D/P$:
$\gamma^*\omega=\chi(\gamma)\omega$ for $\gamma\in D/P$, where
$\chi\colon D/P\to\bF_p^\times$ is the character by which $D/P$ acts on
$P$ by conjugation \cite{WewersBad}*{\S1.3.2}; it is faithful, because
$P$ is its own centralizer in $\Gp$. For a closed point
$x\in\bar X_v$ and a point $z\in C$ above it, let $e_x$ be the order of
the stabilizer of $z$ in $D/P$ and put
\begin{equation}\label{eq_ramification_invariants}
 h_x=\ord_z(\omega)+1,\qquad \sigma_x=h_x/e_x.
\end{equation}
These quantities do not depend on the choice of $z$, because $D/P$
permutes the points above $x$ transitively and $\omega$ is an
eigenvector. The point $x$ is
\emph{critical} if $(e_x,h_x)\ne(1,1)$. Note that the quantity $\sigma_x$ is attached to a branch of the node, not just the point $x$.
For a tail $\bar X_b$, attached
to a $p$-component at the point $x_b$, put $\sigma_b:=\sigma_{x_b}$. When $\bar{X}_b$ is the primitive tail, which attaches to the special fiber at $x$, we write $\sigma_{\rm prim} := \sigma_x$ and call it the \emph{ramification invariant} of the primitive tail.

The terminology is justified by
Proposition~\ref{gen:stable-inputs}(b) below, which identifies it with
the unique positive upper jump of the inertia group at $x$. To recall
the convention, let $L/M$ be a finite totally ramified Galois
extension of complete discretely valued fields, with group $G$ and
valuation ring $\cO_L$. Its lower ramification groups are
\[
 G_i:=\{g\in G: gx\equiv x\pmod{\fm_L^{i+1}}
                 \text{ for every }x\in\cO_L\}\qquad(i\ge0).
\]
Thus $G=G_0\supseteq G_1\supseteq\cdots$. For real $s\ge0$, put
$G_s:=G_{\lceil s\rceil}$ and define Herbrand's function and its
inverse by
\[
 \varphi(s):=\int_0^s\frac{dt}{[G_0:G_t]},\qquad
 \psi:=\varphi^{-1}.
\]
The upper numbering is $G^u:=G_{\psi(u)}$, or equivalently
$G^{\varphi(s)}=G_s$. A lower or upper jump is a value at which the
corresponding filtration drops. See
\cite{SerreLocalFields}*{Chapter~IV, \S\S1,3}.

We will use the following inputs from stable reduction theory.

\begin{proposition}[Stable-reduction inputs]\label{gen:stable-inputs}
For the stable marked quotient of the $\PSL_2(\bF_p)$-cover of the
good elliptic base under consideration, the following hold over an
algebraically closed residue field.
\begin{enumerate}
\item Source nodes lie over target nodes. If $Y\to E$ has bad reduction,
every \'etale target component is terminal. A nonoriginal rational
terminal component has smaller $p$-power generic inertia than its
neighbor. A branch point of index $p^an$ with $p\nmid n$ specializes
to a $p^a$-component.
\item Critical points on a $p$-component are nodes or specializations
of branch points of index divisible by $p$. At the latter the datum is
multiplicative with a simple logarithmic pole. At a node between two
$p$-components, the two oriented invariants $\sigma$ are negatives of each other. At
a node directed from a $p$-component toward an \'etale component,
$\sigma$ is the unique positive upper jump of the local inertia.
\item On every $p$-component $\bar X_v$,
\[
 \sum_{x\ {\rm critical}}(\sigma_x-1)=2g(\bar X_v)-2.
\]
If the original component is inseparable, then
\[
 2g(E_0)-2+|B_{\rm wild}|
   =\sum_{\text{\'etale tails }b}(\sigma_b-1),
\]
where $B_{\rm wild}$ is the set of original branch values of index divisible
by $p$. Every new tail has $\sigma_b\ge1+1/m$.
\end{enumerate}
\end{proposition}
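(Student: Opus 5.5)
This proposition collects results from the literature rather than proving anything new, so I will prove it by citation. The work is to check that each cited hypothesis holds in our setting: the group is $\Gp$ with $p\,\|\,|\Gp|$, the base is a good elliptic curve marked at one point, and the residue field is algebraically closed after the base change to $R''$ of \S\ref{gen:sec-tails}. I would rely mainly on Obus \cite{ObusVanishing}, who treats a base of arbitrary genus, and on Wewers \cite{WewersBad} and Raynaud \cite{Raynaud1999} for the local computations, which are local on the stable model and so insensitive to the genus of the base. The order follows the three parts.

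For (a), I would argue as follows. Source nodes lie over target nodes because $\cY_{R''}\to\cX_{R''}$ is finite and a smooth point of $\bar Y$ has a smooth image, by the Katz--Mazur quotient result on $G$-stable affines. Next, \'etale components are terminal under bad reduction by Raynaud's argument \cite{Raynaud1999}*{\S3}. The idea is that the inseparable locus is connected and contains $\bar X_0$: the original component is inseparable, since a separable original component would force good reduction by the purity argument of Lemma~\ref{bg:comparison}(a). An \'etale component can then only be a leaf, by stability of the marking. The monotonicity of generic inertia toward tails is \cite{ObusVanishing}*{Proposition~2.10 and Lemma~2.12}. That a branch point of index $p^an$ lands on a $p^a$-component follows from comparing inertia at the specialized point, as in \cite{ObusVanishing}*{Lemma~2.13}.

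For (b), I would verify the local statements directly from the deformation datum. At a smooth noncritical point the Kummer generator $u$ extends as a unit with $\omega$ regular and nonvanishing, so a critical point must be a node or a branch point. At a wild branch point the local cover is a Kummer $\mu_p$-torsor with $u$ a uniformizer, giving $\omega=d\bar u/\bar u$ with a simple pole. Wewers' compatibility at nodes between $p$-components \cite{WewersBad}*{\S1.3} gives $\sigma$ and $-\sigma$ on the two branches. The identification of $\sigma$ with the upper jump at a node toward an \'etale tail is \cite{ObusVanishing}*{Proposition~2.16} (equivalently \cite{WewersBad}*{Proposition~1.5}). I expect this to be the main obstacle: the normalization conventions, $h_x=\ord\omega+1$ divided by $e_x$ versus Herbrand's upper numbering, must match exactly, so I would check the conventions on the model Artin--Schreier extension $y^p-y=t^{-h}$ at the node.

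For (c), the first formula is Riemann--Hurwitz for a meromorphic differential on $C$, namely $\deg(\omega)=2g(C)-2$. Pushing this down the tame $D/P$-cover via tame Riemann--Hurwitz and the eigenvector property gives $\sum(\sigma_x-1)=2g(\bar X_v)-2$. The global formula is the vanishing cycle formula \cite{ObusVanishing}*{Theorem~3.14}, specialized to the case where the inertia order is exactly $p$. The bound $\sigma_b\ge1+1/m$ for new tails comes from applying the first formula on the new tail. There the only critical point is the attaching node, with $g=0$, so $\sigma_b-1=-2+\ldots$ must be compensated, giving $\sigma_b>1$. Moreover $\sigma_b\in\frac1{e}\bZ$ with $e\mid m$, since $D/P\subset C_m$ acts faithfully, hence $\sigma_b\ge1+1/m$ \cite{ObusVanishing}*{Lemma~2.20}.
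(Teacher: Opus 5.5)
Your outline matches the paper's: (a)--(c) are imported from \cite{Raynaud1999}, \cite{WewersBad} and \cite{ObusVanishing}. Some of what you add is sound. The derivation of the degree identity in (c) from $\deg\omega=2g(C)-2$ and tame Riemann--Hurwitz is correct. Your purity argument is also valid here: an \'etale original component would make the normalization of $\cE\otimes_RR''$ in $Y$ \'etale away from $O$, because the local ring at the generic point of its special fiber is that of $\cX_{R''}$ at the generic point of $\bar X_0$. It gives the inseparability of $\bar X_0$ more directly than Lemma~\ref{gen:tails} does.

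However, two steps that you present as proofs do not prove the claims, and the smooth-point rule in (b) is affected as well.

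\emph{The opening assertion of (a).} The Katz--Mazur argument shows that a smooth point of $\bar Y$ has smooth image. Equivalently, every point of $\bar Y$ above a node of $\bar X$ is a node. The assertion, and the direction used in Lemma~\ref{gen:tails} (if $\bar X$ has no node, neither has $\bar Y$), is the converse: no node of $\bar Y$ lies over a smooth point of $\bar X$. This direction is not formal. On $R''[[s_1,s_2]]/(s_1s_2-\varpi)$, with $\varpi$ in the maximal ideal of $R''$, the involution $s_1\leftrightarrow s_2$ has invariant ring $R''[[s_1+s_2]]$. So a node whose stabilizer swaps its two branches lies over a smooth point.

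What excludes this is the marking. After dividing by the branch-preserving part of the stabilizer, a swap induces an involution of the open annulus over the node that exchanges its two ends. Such an involution has two fixed points. They come from ramification points of $Y\to E$ specializing to the node, whereas marked points specialize to smooth points. You need this argument, or the citation of \cite{ObusVanishing}*{\S2.3} that the paper uses.

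\emph{The smooth-point rule in (b).} That $u$ is a unit at $\bar x$ does not make $d\bar u/\bar u$ nonvanishing there; take $\bar u=1+t^2$. The standard derivation uses that $\bar Y$ is smooth above such a point, which is the direction just discussed.

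\emph{The new-tail bound in (c).} A new tail is an \'etale component, by the monotonicity assertion of (a). So it carries no deformation datum, and the degree identity does not apply to it; $\sigma_b$ is by definition the invariant at the attaching point of the neighboring $p$-component. Even formally, a rational component whose only critical point is the node would give $\sigma-1=-2$, not $\sigma_b>1$.

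The bound comes instead from Riemann--Hurwitz on the tail cover together with stability, or from \cite{ObusVanishing}*{Lemma~4.2(i)} as in the paper. Suppose the inertia at a point over the attaching point has order $pn$ and, by (b), lower jump $n\sigma_b$, and there is no other branching. Then a component $T_i$ of the preimage of the tail, with decomposition group $D_T$, satisfies
\[
 2g(T_i)-2=\frac{|D_T|}{pn}\bigl((p-1)n\sigma_b-pn-1\bigr).
\]
If $\sigma_b\le1$, the right side is at most $-2|D_T|/(pn)$. This forces $g(T_i)=0$, $n=1$, $D_T=P$ and a single point of $T_i$ over the node. That is a rational source component with one special point, contradicting stability.

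Your final step, $\sigma_b\in\frac1e\bZ$ with $e\mid m$, then correctly upgrades $\sigma_b>1$ to $\sigma_b\ge1+1/m$.
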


\begin{proof}[Source]
The construction of the stable target and part (a) are the
arbitrary-base statements of Obus
\cite{ObusVanishing}*{\S2.3, Proposition~2.13, Lemmas~2.15--2.16};
the rationality hypothesis in the terminal-component assertion is
essential to the proof of his Lemma~2.16. The global formula in (c)
is his Theorem~3.14 and the new-tail bound is his Lemma~4.2(i); we
establish inseparability of the original component before using the
formula. The local datum and its degree identity in (c) are Wewers's
Definition~1.5 and equation~(2) in \cite{WewersBad}, where the base
of the datum has arbitrary genus; his rules for smooth points in
\S1.3.2 apply once the order-$p$ inertia and the tame quotient are
supplied by Obus's Construction~3.4.

The assertions in (b) concerning nodes follow from Wewers's
compatibility theorem for deformation data. The tame stabilizer orders
on the two branches agree, while the corresponding signed conductors
are opposite \cite{WewersBad}*{Proposition~1.8}. Equivalently, the
oriented invariants satisfy $\sigma_e=-\sigma_{\bar e}$
\cite{WewersBad}*{Proposition~1.10(4)}. When $e$ is oriented from a
$p$-component toward an \'etale tail, $\sigma_e$ is the positive upper
ramification jump at the attaching point; see the paragraph following
Proposition~1.10 in loc.~cit. These results occur in \S1 of that paper,
where the base is an arbitrary smooth stably marked curve. Also see
\cite{ObusVanishing}*{Lemmas~3.6 and~3.11(i),(iii)} for the same local
comparison in the language of effective edge invariants.
\end{proof}

\begin{lemma}[Original component and tails]\label{gen:tails}
Assume that $Y\to E$ has bad reduction. Then the tails of
$\bar X$ are \'etale, and every other component, in particular
$\bar X_0$, is a $p$-component. There are exactly one primitive tail
$\bar X_{\rm prim}$ and $N$ new tails
$\bar X_b$, $b\in B_{\rm new}$, where $m = \frac{p-1}{2}$,
\[
 0\le N<m,\qquad
 \sigma_{\rm prim}=1-\frac Nm,\qquad
 \sigma_b=1+\frac1m\quad\text{for every new tail }b.
\]
For $b\in B_{\rm new}$ the inverse image $T_b\subset\bar Y$ of
$\bar X_b$ is irreducible, and the inertia group of a point of $T_b$
above the node of $\bar X_b$ is a Borel subgroup of $\Gp$, of order
$pm$.
\end{lemma}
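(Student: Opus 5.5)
We combine the combinatorial inputs of Proposition~\ref{gen:stable-inputs} with the fact that the only branch point has index $e$ prime to $p$, so $B_{\rm wild}=\emptyset$. First we show that every non-tail component is a $p$-component. By (a), every \'etale target component is terminal, so a non-terminal component is a $p$-component. For $\bar X_0$: if it were \'etale it would be terminal and the rest of the tree would hang off a single node. The neighbor of $\bar X_0$ is a rational component. We plan to descend the tree from the neighbor toward leaves, using that a nonoriginal rational terminal component has smaller generic inertia than its neighbor. This forces every leaf other than $\bar X_0$ to be \'etale with a $p$-component neighbor. If $\bar X_0$ is \'etale too, $\bar X$ would have an étale original component. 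We then rule this out: then $\bar Y$ over $\bar X_0$ would be separable, and applying the formula in (c) to the $p$-components yields a contradiction with $g(E_0)=1$. We expect this to be done by a sum over the tree of the local formulas, telescoping via $\sigma_e=-\sigma_{\bar e}$. Alternatively, we cite Obus's result that bad reduction forces the original component to be inseparable. Tails are then \'etale: a tail is a nonoriginal rational terminal component, so its inertia is strictly smaller than its neighbor's, hence trivial.

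Next we count tails. Since $\bar X_0$ is inseparable, the global formula in (c) with $g(E_0)=1$ and $B_{\rm wild}=\emptyset$ gives
\[
 0=\sum_{\text{\'etale tails }b}(\sigma_b-1)=(\sigma_{\rm prim}-1)+\sum_{b\in B_{\rm new}}(\sigma_b-1).
\]
The branch point specializes to a $p^0$-component, i.e.\ an \'etale one, by (a). Hence it lies on a tail, and exactly one tail is primitive. Its invariant $\sigma_{\rm prim}$ is a positive upper jump by (b), so $\sigma_{\rm prim}>0$. Each new tail has $\sigma_b\ge 1+1/m$. So $N/m\le\sum(\sigma_b-1)=1-\sigma_{\rm prim}<1$, giving $N<m$. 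To get equality $\sigma_b=1+1/m$, we use that the upper jumps lie in $\frac1{m'}\bZ$ where $m'=|D/P|$ divides $m$, since the inertia at the node is $P\rtimes C_{m'}$. Then $\sigma_{\rm prim}\in\frac1m\bZ_{>0}$, and
\[
 \sum_b(\sigma_b-1-\tfrac1m)=1-\sigma_{\rm prim}-\tfrac Nm .
\]
This step is the main obstacle. We need an extra input: on a new tail the cover is branched only at the node, so $T_b\to\bar X_b\cong\bP^1$ is a one-point cover with inertia $P\rtimes C_{m'}$. Its ramification invariant is $h/m'$ with $h$ prime to $p$ and, by the Riemann--Hurwitz/genus considerations for such $\bP^1$-covers (Raynaud), $\sigma_b = h/m'$. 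We will try to use the vanishing-cycle formula (c) on the $p$-component adjacent to all tails, which in the tree bounds the total, together with an integrality argument mod $1/m$. This should force each $\sigma_b-1-1/m$ to be zero once we show $\sigma_{\rm prim}\ge 1-N/m$. If that fails, we will instead appeal to Obus's Lemma~4.2 refined form, where minimal new-tail invariants occur exactly when the inertia is full Borel.

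Finally, for irreducibility of $T_b$ and the Borel inertia, we argue as follows. The tail cover of $\bar X_b\cong\bP^1$ is \'etale away from the node, so each component of $T_b$ is a one-point cover of $\bP^1$ with decomposition group equal to the inertia $I\le P\rtimes C_m$ at the node. A one-point cover of $\bA^1$ has group generated by inertia conjugates, hence equals $I$. The invariant $\sigma_b=h/|I/P|$ with $h$ prime to $p$ and $\gcd$ conditions ($h$ coprime to $|I/P|$ for the jump of $P\rtimes C_{m'}$) gives $h/m'=(m+1)/m$. This forces $m'=m$, so $I$ is the full Borel of order $pm$. Irreducibility then comes from the count of components of $T_b$, namely $[\text{stabilizer of }\bar X_b\text{'s preimage}:I]$. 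We will check this equals one using the decomposition group of the adjacent $p$-component, which lies in the same normalizer $P\rtimes C_m$.
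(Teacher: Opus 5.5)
Your steps up to the tail count follow the paper's route. Non-terminal components are $p$-components by Proposition~\ref{gen:stable-inputs}(a), the telescoping tree sum rules out an \'etale $\bar X_0$, and the global formula gives $\sum_{\rm new}(\sigma_b-1)=1-\sigma_{\rm prim}<1$, hence $N<m$. Two remarks on the tree sum. The contradiction there comes from $\sigma_E,\sigma_{\rm prim}>0$ and $\sigma_b-1\ge\frac1m$ on a subtree of \emph{rational} $p$-components, not from $g(E_0)=1$. You also have to treat separately the case where the origin specializes to $\bar X_0$ and there is no primitive tail. Inseparability of $\bar X_0$ is not among the cited inputs, so the tree sum is needed, not a citation. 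The genuine gaps are in your last two steps.

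First, you never prove $\sigma_b=1+\frac1m$. Integrality in $\frac1m\bZ$ only gives $\sum_b(\sigma_b-1-\frac1m)=1-\sigma_{\rm prim}-\frac Nm\in\frac1m\bZ_{\ge0}$. That does not exclude, say, a single new tail with $\sigma_b=1+\frac2m$. Your deduction of $m'=m$ from $h/m'=(m+1)/m$ assumes the very value it is meant to establish. A classification of new tail covers is unavoidable here. The paper uses Proposition~6.3 of \cite{BouwWewers}: a \emph{connected} new $\Gp$-tail cover with $1<\sigma<2$ has $\sigma=(p+1)/(p-1)$ and inertia of order $pm$ above the node. To invoke it you need $1<\sigma_b<2$, which your global identity gives. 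You also need $T_b$ to be connected with group $\Gp$, and that must be proved first.

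Second, your argument for the structure of $T_b$ is wrong. In characteristic $p$, simple connectivity of $\bP^1$ only shows that a connected cover branched at one point has group equal to the \emph{normal closure} of the inertia group $I$, not $I$ itself. For example, the new tail cover $s^{p+1}=x^p-x$ of Lemma~\ref{app:local}(d) has group $\SL_2(\bF_p)$ and inertia $B$. In the present situation the decomposition group is all of $\Gp$, $I$ is a Borel subgroup, and there are $p+1$ points above the node (Lemma~\ref{gen:new-tail-centralizer}). Your count by an index over $I$ would therefore produce $p+1$ components rather than one. The number of components is $[\Gp:D_T]$, and the paper shows $D_T=\Gp$ as follows:
\begin{enumerate}
\item $D_T$ contains the inertia at the attaching point, hence a subgroup $P$ of order $p$.
\item If $D_T\ne\Gp$, Lemma~\ref{nm:proper-subgroup} places $D_T$ in a Borel subgroup with $P$ normal.
\item Then $T/P\to\bP^1$ is a connected tame cover branched at most at one point, hence trivial, so $D_T=P$.
\item Then $\sigma_b$ is an integer $\ge2$, contradicting $\sum_{\rm new}(\sigma_b-1)<1$.
\end{enumerate}
Only once $T_b$ is known to be irreducible does the Bouw--Wewers classification give $\sigma_b=1+\frac1m$, hence $\sigma_{\rm prim}=1-\frac Nm$, and the Borel inertia of order $pm$.
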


\begin{proof}
If $\bar X$ had no component besides $\bar X_0$, it would have
no node, and neither would $\bar Y$, contrary to the bad reduction
assumption and Lemma~\ref{bg:comparison}(a). Thus there is a
tail, which by
Proposition~\ref{gen:stable-inputs}(a) is \'etale with a $p$-component
neighbor.

Suppose that $\bar X_0$ were \'etale. By
Proposition~\ref{gen:stable-inputs}(a) every \'etale component is
terminal, so the $p$-components are rational and form a subtree, to
which $\bar X_0$, the new tails and at most one primitive tail are
attached, each at one node. Sum the local degree identities of
Proposition~\ref{gen:stable-inputs}(c) over the tree. On each internal
edge the two oriented $\sigma$ terms cancel, so that edge contributes
$-2$ to the sum of $\sigma-1$. Let $\sigma_E$ be the value of
$\sigma$ at the point where $\bar X_0$ is attached. According as the
primitive tail is absent or present, the sum is
\[
 \sigma_E+\sum_{\rm new}(\sigma_b-1)=-1,\qquad
 \sigma_E+\sigma_{\rm prim}+\sum_{\rm new}(\sigma_b-1)=0.
\]
Both are impossible: $\sigma_E,\sigma_{\rm prim}>0$ by
Proposition~\ref{gen:stable-inputs}(b), and $\sigma_b-1\ge1/m$ by
Proposition~\ref{gen:stable-inputs}(c), so the left sides are positive.
Hence $\bar X_0$ is
a $p$-component. The origin of $E$ specializes to an \'etale component,
hence on the unique primitive tail; all other tails are new, and all
other components are $p$-components. Since $B_{\rm wild}$ is empty, the global
formula in Proposition~\ref{gen:stable-inputs}(c) now reads
\begin{equation}\label{gen:vc}
 \sum_{\rm new}(\sigma_b-1)=1-\sigma_{\rm prim}<1.
\end{equation}

Let $T$ be a component of $T_b$, for a new tail $\bar X_b$, with
decomposition group $D_T$. Its attaching inertia contains $P$. If $D_T\ne \Gp$, then
$p$ divides $|D_T|$, so Lemma~\ref{nm:proper-subgroup} places $D_T$
in a Borel subgroup and its Sylow $p$-subgroup $P$ is normal. The
tame quotient $T/P\to\bP^1$ is branched at most at the
attaching point, and a nontrivial connected tame cover with one branch
point is impossible by Riemann--Hurwitz. Thus $D_T=P$, so
$\sigma_b$ is an integer, greater than one by
Proposition~\ref{gen:stable-inputs}(c), and it contributes at least one to the left
side of \eqref{gen:vc}, a contradiction. Hence $D_T=\Gp$.

Thus $T_b$ is irreducible, $T_b\to\bar X_b$ is a connected new
$\Gp$-tail cover in the sense of \cite{BouwWewers}*{\S2}, and
$1<\sigma_b<2$ by Proposition~\ref{gen:stable-inputs}(c) and
\eqref{gen:vc}. By \cite{BouwWewers}*{Proposition~6.3}, which treats
$\SL_2(\bF_p)$ and $\PSL_2(\bF_p)$, such a tail cover has
$\sigma_b=(p+1)/(p-1)=1+1/m$ and an inertia group of order
$p(p-1)/2=pm$ above the node. This group has a normal Sylow
$p$-subgroup, so it lies in a Borel subgroup, and it has the order of
one. Substituting $\sigma_b=1+1/m$ in
\eqref{gen:vc} gives $N/m=1-\sigma_{\rm prim}$, which is the displayed
formula for $\sigma_{\rm prim}$, and $\sigma_{\rm prim}>0$ gives
$N<m$.
\end{proof}

\subsection{Automorphisms of the stable source}
\label{gen:sec-centralizer}

In this and the next subsection we assume that $Y\to E$ has bad reduction. The
aim is to descend the whole stable marked source to $F$, and the
obstruction to that descent is the group of its automorphisms that
commute with the $\Gp$-action and preserve the marking. We
show that this finite group has order prime to $p$.

\begin{lemma}[The centralizer on a new tail]\label{gen:new-tail-centralizer}
Let $\bP^1_\kappa$ be one of the new tails of
Lemma~\ref{gen:tails}, and let $T\to\bP^1_\kappa$ be its connected preimage in $\bar{Y}$.
The group of automorphisms of $T$ commuting
with the $\Gp$-action has order prime to $p$. We do \emph{not} require these automorphisms to be deck transformations.
\end{lemma}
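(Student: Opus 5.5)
Let $Z$ denote the group of automorphisms of $T$ commuting with the $\Gp$-action. The plan is to show that $Z$ embeds into a group of automorphisms of the tail $\bP^1_\kappa=T/\Gp$ fixing the node, modulo a kernel of order prime to $p$, and then to show that the image in $\PGL_2(\kappa)$ has order prime to $p$ using the ramification structure from Lemma~\ref{gen:tails}.

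First, since each $\phi\in Z$ commutes with $\Gp$, it descends to an automorphism $\bar\phi$ of $T/\Gp=\bP^1_\kappa$. (The quotient $T\to\bar X_b$ is the quotient by $\Gp$ up to the purely inseparable factor; on a new tail the inertia at the generic point is trivial, since the tails are étale by Lemma~\ref{gen:tails}, so $T/\Gp=\bar X_b$.) The kernel of $\phi\mapsto\bar\phi$ consists of automorphisms over $\bar X_b$ commuting with $\Gp$. Since $T$ is irreducible and $T\to\bar X_b$ is Galois with group $\Gp$, this kernel is the center of $\Gp$, which is trivial. So $Z$ injects into $\Aut(\bP^1_\kappa)$.

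Next I will determine which points $\bar\phi$ must preserve. The cover $T\to\bP^1_\kappa$ is a new tail cover. By Lemma~\ref{gen:tails} and \cite{BouwWewers}*{Proposition~6.3}, it is branched at the node $\infty$, with inertia a Borel subgroup of order $pm$, and at most at one further point, with tame inertia. I expect this from the Bouw--Wewers description: the tail cover of conductor $\sigma=1+1/m$ is, up to isomorphism, a specific $\Gp$-cover branched at $\infty$ (wild) and at $0$ (tame, of order $2$ or similar). Since $\bar\phi$ preserves the ramification data, it fixes $\infty$, the unique wild branch point. It also fixes the tame branch point if one exists. Hence the image of $Z$ lies in the affine group $\{t\mapsto at+b\}$, and in fact in $\mathbb{G}_m$ if there is a second branch point. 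In that case we are done, because finite subgroups of $\kappa^\times$ have order prime to $p$.

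The main obstacle is excluding translations $t\mapsto t+b$ of order $p$ if only $\infty$ is branched. I would handle this by using the upper jump: the conductor $\sigma=1+1/m$ is not an integer, so the cover is not Artin--Schreier-like in a way that is translation invariant. More concretely, a translation $\tau$ of order $p$ lifting to $\tilde\tau\in Z$ would produce, together with $\Gp$, a group $\Gp\times\langle\tilde\tau\rangle$ acting on $T$ with quotient $\bP^1/\langle\tau\rangle\cong\bP^1$. This group would be branched only at the image of $\infty$, with inertia there containing a group of order $p^2$. On the other hand, Riemann--Hurwitz with the lower ramification filtration shows this is incompatible with the genus of $T$ computed from $\sigma=1+1/m$. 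Alternatively, $\tilde\tau$ fixes the unique point above $\infty$ whose stabilizer is normalized, so it normalizes the inertia group $I\cong P\rtimes C_m$. Then $\langle I,\tilde\tau\rangle$ is the inertia at a wild point of a cover of $\bP^1$ with conductor $\sigma/p$ for the $p$-part. I would first try the Riemann--Hurwitz genus comparison, and conclude that $Z$ has order prime to $p$.
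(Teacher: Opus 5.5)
\paragraph{A gap: the translation case.} Your reduction is sound. Since $T\to T/\Gp\cong\bP^1_\kappa$ is Galois with group $\Gp$ and $Z(\Gp)=1$, the centralizer injects into the automorphisms of the tail fixing the attaching point. So an element of order $p$ would lie over a translation. Two small corrections: a new tail cover has no tame branch point, so your ``easy'' case is vacuous and the translation case is the whole problem; and the fiber over the attaching point is $\Gp/B$, with $p+1$ points, not a unique point.

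The gap is that you never exclude the translation, and the tool you propose cannot do it. Riemann--Hurwitz for $\Gp\times\langle\tilde\tau\rangle$, with quotient $\bP^1$ and inertia $I'\cong B\times C_p$ of order $p^2m$ at $p+1$ points, agrees with the true genus. Take the filtration $I'_1=\dots=I'_m\cong C_p^2$, $I'_{m+1}=P$, $I'_{m+2}=1$. It restricts to the correct filtration of $B$ (lower jump $m+1$), is compatible with the action of the tame quotient, and gives
\[
 -2p^2m(p+1)+(p+1)\bigl(2p^2m-m+p-2\bigr)=(p+1)(m-1)=2m^2-2 .
\]
So that computation yields no contradiction. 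Ruling out this filtration needs an extra input, for example Hasse--Arf: the abelian group $I'_1$ would have upper jumps $m$ and $m+1/p$. You do not invoke anything of this kind. Your alternative (a unique fixed point over $\infty$, ``conductor $\sigma/p$'') does not make sense as stated.

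\paragraph{The missing idea.} The paper's argument works with $\langle\tilde\tau\rangle$ alone.
\begin{itemize}
\item Because $\tilde\tau$ commutes with $\Gp$, it permutes the fiber $\Gp/B$ over the attaching point $\Gp$-equivariantly. Since $\Aut_{\Gp}(\Gp/B)=N_{\Gp}(B)/B=1$, it fixes all $p+1$ points.
\item The lower jump $m+1$ gives different exponent $(pm-1)+(m+1)(p-1)$ at each point over the node, and Riemann--Hurwitz then gives $g(T)=m^2$.
\item Each fixed point of $\tilde\tau$ contributes at least $2(p-1)$ to the different of $T\to T/\langle\tilde\tau\rangle$. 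Hence $2g(T)-2\ge -2p+2(p+1)(p-1)$, that is, $g(T)\ge p(p-1)>m^2$, a contradiction.
\end{itemize}
The contradiction lives in the quotient by $\langle\tilde\tau\rangle$, not by $\Gp\times\langle\tilde\tau\rangle$. With this argument the descent to $\bP^1_\kappa$ is not needed at all.
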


\begin{proof} We will show that no such automorphism has order $p$.
The unique branch point has inertia a Borel subgroup $B$ of order
$pm$, so its fiber in $T$ is the $\Gp$-set $\Gp/B$ of size $p+1$. An
automorphism commuting with $\Gp$ preserves this fiber and permutes it
$\Gp$-equivariantly; since
\[
 \Aut_{\Gp}(\Gp/B)=N_{\Gp}(B)/B=1,
\]
it fixes all $p+1$ points individually.

The upper jump is $(m+1)/m$, so the lower jump is $m+1$, and the
different exponent at a point over the branch point is
\[
 \delta=(pm-1)+(m+1)(p-1).
\]
Riemann--Hurwitz with $|\Gp|=pm(p+1)$ gives
\[
 2g(T)-2=(p+1)(m-1),\qquad g(T)=m^2.
\]
In particular $g(T)>1$, so the automorphism group of $T$ is finite.
If a commuting automorphism had order $p$, its different exponent at
each of the $p+1$ fixed points found above would be at least
$2(p-1)$, because both its zeroth and first ramification groups have
order $p$. Riemann--Hurwitz for the quotient by this automorphism
would give
\[
 2g(T)-2\ge -2p+2(p+1)(p-1),\qquad g(T)\ge p(p-1),
\]
contradicting $g(T)=m^2$.
\end{proof}

Two quotient curves live in the special fiber: the stable target
$\bar X=(\cY/\Gp)\otimes_{R'}\kappa$ of
\S\ref{gen:sec-tails}, and the quotient $Q:=\bar Y/\Gp$ of the
geometric stable marked source by $\Gp$, with its reduced structure. There is a canonical map $Q\to\bar X$.

\begin{lemma}[The quotient of the stable source]\label{gen:quotient-comparison}
The map $Q\to\bar X$ is a finite universal homeomorphism, and on the
normalization of a component it is purely inseparable of degree $p$
over a $p$-component and an isomorphism over an \'etale component.
Consequently $Q$ and $\bar X$ have the same component graph $\Gamma_Q$: a rooted
tree with one root vertex of genus one, rational vertices otherwise, and
$N+1<p$ nonroot terminal vertices.
\end{lemma}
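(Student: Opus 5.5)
The plan is to compare $Q=\bar Y/\Gp$ and $\bar X$ generically on each component, and then to use the tree structure from Lemma~\ref{gen:tails} to transfer combinatorial information from $\bar X$ to $Q$.

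\emph{Step 1: finiteness and the universal homeomorphism.} The map $\bar Y\to\bar X$ is finite, being the special fiber of the finite quotient map $\cY_{R''}\to\cX_{R''}$. It is $\Gp$-invariant, so it factors through $Q$, and $Q\to\bar X$ is finite and surjective. To see that it is a universal homeomorphism, it suffices to show it is injective on geometric points, since $\kappa$ is algebraically closed. The fibers of $\cY_{R''}\to\cX_{R''}$ are single $\Gp$-orbits \cite{SGA1}*{Expos\'e~V, Proposition~1.1}, so the fibers of $\bar Y\to\bar X$ over closed points are single $\Gp$-orbits. These orbits are exactly the points of $Q$, which gives injectivity.

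\emph{Step 2: degrees on components.} Fix a component $\bar X_v$ and a component $\bar Y_v$ above it, with decomposition group $D$. The $\Gp$-orbit of $\bar Y_v$ consists of the components above $\bar X_v$, so the component of $Q$ over $\bar X_v$ has normalization $\bar Y_v/D$. By the factorization \eqref{gen:factorization}, $\bar Y_v/D\to\bar X_v$ is purely inseparable of degree $p$ when $\bar X_v$ is a $p$-component. In the \'etale case the same argument shows it is birational, so its normalization is an isomorphism onto the normalization of $\bar X_v$.

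\emph{Step 3: component graphs.} A universal homeomorphism induces a bijection on irreducible components and on their intersection points, hence on nodes. Here I expect the main subtlety: one must check that two components of $Q$ meet at a point exactly when their images in $\bar X$ meet there, and that no extra self-intersections appear in $Q$. Because the map is a bijection on points, preimages of nodes are the unique intersection points, so the incidence graphs coincide. Nodes of $\bar X$ are the images of nodes of $\bar Y$ by Proposition~\ref{gen:stable-inputs}(a), which ensures the branch structure matches. By the footnote in \S\ref{gen:sec-tails}, $\bar X$ is a tree rooted at $\bar X_0$ with rational components elsewhere, and hence so is $Q$. The genus of the root is one because $\bar X_0$ maps birationally to $(E_0)_\kappa$, and a purely inseparable map preserves genus. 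Finally, by Lemma~\ref{gen:tails}, the nonroot terminal vertices are the primitive tail and the $N$ new tails, so there are $N+1\le m<p$ of them.
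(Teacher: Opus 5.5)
Your proof is correct. Steps~2 and~3 are essentially the paper's argument: the normalization of the component of $Q$ over $\bar X_v$ is that of $\bar Y_v/D$, the degrees come from \eqref{gen:factorization}, and the graph and genera are transported along the homeomorphism, with genus unchanged under Frobenius.

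The real difference is in how you obtain the universal homeomorphism. The paper works on an affine chart $\Spec A$ of $\cY_{R''}$. It shows that $A^{\Gp}/\pi'A^{\Gp}\to(A/\pi'A)^{\Gp}$ is injective, and that every invariant residue class $v$ has $v^p$ in the image, by reducing the norm polynomial $\prod_g(S-ga)$ to $(S^p-v^p)^n$. This gives integrality, surjectivity and radiciality at once, purely on the level of rings. It also shows the extension has exponent $p$, which is where $p\,\|\,|\Gp|$ is used.

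You instead use that $\Gp$ acts transitively on the fibers of both $\cY_{R''}\to\cX_{R''}$ and $\bar Y\to Q$ \cite{SGA1}*{Expos\'e~V, Proposition~1.1}, the same mechanism the paper uses in Lemma~\ref{bg:comparison}(a). This gives bijectivity on points with no computation and for any finite group. The cost is that you invoke the standard fact that a finite surjective morphism of curves over an algebraically closed field that is injective on $\kappa$-points is a universal homeomorphism. The content of that fact at the generic points is that the function field extensions are purely inseparable. Your Step~2 shows this directly, so you could cite it there rather than leave it implicit.

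A minor point: the appeal to Proposition~\ref{gen:stable-inputs}(a) in Step~3 is unnecessary, since the homeomorphism already transports the incidence structure. Also, that proposition says source nodes lie over target nodes, not the converse you quote.
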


\begin{proof}
On an affine chart $\Spec A$ of $\cY_{R''}$, with $\pi'$ the uniformizer of $R''$, the map
\[
 A^{\Gp}/\pi'A^{\Gp}\longrightarrow(A/\pi'A)^{\Gp}
\]
is injective: if $\pi'a$ is invariant then so is $a$.
For an invariant
residue class $v$, lift it to $a\in A$ and reduce the invariant
polynomial $\prod_{g\in \Gp}(S-ga)$; the result is $(S-v)^{|\Gp|}$.
Writing $|\Gp|=pn$ with $p\nmid n$, this equals $(S^p-v^p)^n$, whose
coefficient of $S^{|\Gp|-p}$ is $-nv^p$; hence $v^p\in A^{\Gp}/\pi'A^{\Gp}$. Thus
$Q\to\bar X$ is finite and a universal homeomorphism, so the two curves
have the same component graph. The component of $Q$ over a vertex $v$
is the image of a component $\bar Y_v$ of $\bar Y$, so its normalization
is that of $\bar Y_v/D$, and the factorization \eqref{gen:factorization}
gives the degrees on normalizations.
Over the algebraically closed field $\kappa$, a purely inseparable map
of degree $p$ between smooth curves is Frobenius followed by an
isomorphism, so the normalized components of $Q$ have the same genera
as those of $\bar X$, and the tree is as described in
\S\ref{gen:sec-tails} and Lemma~\ref{gen:tails}.
\end{proof}

\begin{proposition}[Tameness of the automorphism group of the stable source]
\label{gen:source-centralizer}
Suppose that $Y\to E$ has bad reduction. The
group of automorphisms of $\bar Y$ that preserve the unordered ramification
marking and commute with the $\Gp$-action has order prime to
$p$.
\end{proposition}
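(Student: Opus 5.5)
Let $A$ be the group of automorphisms of $\bar Y$ that commute with $\Gp$ and preserve the unordered marking. Since $g(Y)\ge2$ and $\bar Y$ is stable marked, $A$ is finite, so it suffices to show that no element $\alpha\in A$ has order exactly $p$. Because $\alpha$ commutes with $\Gp$, it descends to an automorphism $\bar\alpha$ of $Q=\bar Y/\Gp$ with the marked point fixed, hence to an automorphism of the rooted tree $\Gamma_Q$ of Lemma~\ref{gen:quotient-comparison}. The root is the unique genus one vertex, and there are $N+1<p$ nonroot terminal vertices. An automorphism of order $p$ of a finite tree that fixes the root and acts on fewer than $p$ leaves must fix every leaf, and then every vertex, since each vertex lies on a path from the root to some leaf. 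So $\bar\alpha$ fixes every vertex of $\Gamma_Q$. In particular, $\alpha$ maps the preimage of each component of $Q$ to itself.

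Next I restrict $\alpha$ to these preimages. For each new tail $\bar X_b$, the preimage $T_b$ is irreducible by Lemma~\ref{gen:tails} and is preserved by $\alpha$. Lemma~\ref{gen:new-tail-centralizer} then shows that $\alpha|_{T_b}$ has order prime to $p$; since $\alpha^p=1$, we get $\alpha|_{T_b}=\mathrm{id}$. The same argument should apply to the primitive tail. Its preimage is a $\Gp$-cover of $\bP^1$ with two branch points: the marked point, with tame inertia of order $e$, and the node, with wild inertia. A commuting automorphism of order $p$ would have to fix the fiber over the node pointwise, by the normalizer argument ($\Aut_{\Gp}(\Gp/I)=N(I)/I$, which has order prime to $p$ for Borel-type $I$). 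A Riemann--Hurwitz count as in Lemma~\ref{gen:new-tail-centralizer}, using $\sigma_{\rm prim}=1-N/m$, then rules out order $p$. If the preimage of the primitive tail is disconnected, I would reduce to one connected component and its decomposition group, and run the same count there.

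The remaining step, which I expect to be the main obstacle, is to propagate triviality from the tails inward through the $p$-components, including $\bar X_0$. Here the function field extension $\kappa(\bar Y_v)/\kappa(\bar X_v)$ is inseparable of degree $p$, so one cannot simply read off $\alpha$ from its action on the quotient. My plan is to use that $\alpha$ commutes with $\Gp$ and fixes each component. On a component $\bar Y_v$ with decomposition group $D$, the restriction $\alpha|_{\bar Y_v}$ commutes with $D$ and induces an automorphism of $C=\bar Y_v/P$ over $\bar X_v$, which is the Frobenius twist of $\bar Y_v/D$. Because $\bar\alpha$ is the identity on each component of $Q$, the induced map on $\bar Y_v/D$ is trivial, so $\alpha|_{\bar Y_v}$ lies in the deck group of $\bar Y_v\to\bar Y_v/D$ and commutes with $D$. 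It therefore lies in the center of $D/P$ modulo the inseparable part, and the center of $D/P\cong C_m$ has order prime to $p$.

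The one delicate point is that $\bar Y_v\to\bar Y_v/D$ is Galois with group $D/P$ only after the inseparable map is accounted for, as in \eqref{gen:factorization}. So I need to check that an automorphism of $\bar Y_v$ which is trivial on $\bar Y_v/D$ really lies in $D/P$. This holds because $\kappa(\bar Y_v)/\kappa(\bar Y_v/D)$ is Galois with group $D/P$ by that factorization. Hence $\alpha$ has order prime to $p$ on every component. An automorphism of order dividing $p$ that has order prime to $p$ on every component is trivial on every component, so $\alpha=1$. This contradicts the assumption that $\alpha$ has order $p$, and shows $|A|$ is prime to $p$.
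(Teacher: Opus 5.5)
\textbf{Main gap: triviality on the components of $Q$.} Your tree argument shows only that $\bar\alpha$ maps each component of $Q$ to itself. In the $p$-component step, however, you use the stronger claim ``$\bar\alpha$ is the identity on each component of $Q$'', and you never prove it. It is not automatic, and it is where the real work lies. On the genus-one root, $\bar\alpha$ could a priori be translation by a $p$-torsion point of an ordinary curve. On a rational component it could be a unipotent element of $\PGL_2(\kappa)$. In both cases $\bar\alpha$ preserves the component without being trivial on it.

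The missing ingredient is a fixed-point argument on $Q$. Every vertex of the tree $\Gamma_Q$ is fixed, so every node is fixed. Hence $\bar\alpha$ fixes at least one attaching point on the normalization of the root. An automorphism of a smooth genus-one curve fixing a point has order dividing $4$ or $6$, which is prime to $p$, so $\bar\alpha$ is trivial on the root. Each rational nonterminal component has at least two fixed attaching points, while a nontrivial element of order $p$ in $\PGL_2(\kappa)$ has exactly one fixed point, so $\bar\alpha$ is trivial there too. Only after this does your deck-transformation argument via \eqref{gen:factorization} go through. That argument is the easy part, not the obstacle you expected.

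\textbf{Second gap: preserving individual components.} You restrict $\alpha$ to a single component $\bar Y_v$, and for the primitive tail to one connected component of the preimage. You never show that $\alpha(\bar Y_v)=\bar Y_v$. Knowing that the full preimage of a component of $Q$ is preserved is not enough when that preimage is reducible. The fix: $\alpha$ permutes the components above $v$ as a $\Gp$-equivariant automorphism of $\Gp/D$, hence through $N_{\Gp}(D)/D$. This group has order prime to $p$, because $p\mid|D|$ and $p^2\nmid|\Gp|$.

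\textbf{The primitive tail.} Your Riemann--Hurwitz count uses only the fixed points over the node, and that does not close in general. Suppose $D=\Gp$ and the tame inertia at the node has order $m$. Then there are only $p+1$ such fixed points, while with $\sigma_{\rm prim}$ near $1$ the genus of the tail cover is of order $p^3$. You would also need the fiber over the tame branch point to be fixed pointwise.

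A simpler route uses the same fixed-point idea as above. $\bar\alpha$ fixes both the attaching point and the marked point of the rational primitive tail of $Q$, so it is trivial there. Hence $\alpha$ restricts to a deck transformation commuting with $D$, that is, an element of $Z(D)$. This center is trivial both for $D=\Gp$ and for $D=P\rtimes C_d$ with $d>1$ acting faithfully on $P$.
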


\begin{proof}
Again it suffices to show that there are no such automorphisms of order $p$. Suppose $\alpha$ were an automorphism of order $p$. It acts on
the quotient $Q$ and on its component tree $\Gamma_Q$ (Lemma~\ref{gen:quotient-comparison}).
The root is fixed because
it is the unique vertex of positive genus.  Since there are fewer than $p$
nonroot terminal vertices, each of them is fixed, and hence $\alpha$ fixes $\Gamma_Q$ pointwise.

Over a vertex $v$ of $Q$, let $D\le \Gp$ be the decomposition group of a
source component. The group $D$ always contains a Sylow $p$-subgroup of $\Gp$: if $v$ is a $p$-component, $D$ contains the generic inertia of order $p$; if $v$ is a tail, $D$ contains the inertia group of the attaching node.
The set of source components over $v$ is
the $\Gp$-set $\Gp/D$, whose $\Gp$-equivariant automorphism group is
$N_{\Gp}(D)/D$, which has order prime to $p$ because $p^2\nmid|\Gp|$,
and hence $\alpha$ fixes every individual source component.

On the normalization of the root component of $Q$, a smooth curve of
genus one, $\alpha$ fixes an attaching point, since there are $N+1<p$ tails.
An automorphism of a smooth genus-one curve fixing a point has order
dividing $4$ or $6$ \cite{SilvermanAEC}*{Theorem~III.10.1}, hence prime to $p>3$,
so $\alpha$ is trivial on the root
normalization. Every rational nonterminal component of $Q$ has at
least two distinct attaching points, which must be fixed as $\alpha$ fixes the tree $\Gamma_Q$.
Since a nontrivial element of order $p$ in $\PGL_2(\kappa)$ has exactly one
fixed point, $\alpha$ is trivial on these normalizations too.

Above any of these vertices, the map from a source component
normalization to its quotient normalization in $Q$ has Galois group
$D/P$, of order prime to $p$. The restriction of $\alpha$ acts
trivially on the quotient, so it is a deck transformation of order
dividing $p$, hence the identity.

The primitive-tail component of $Q$ has two distinct distinguished
points, the attaching point and the marked branch point. Both are
fixed, so $\alpha$ is the identity on its rational normalization, and
its restriction to a primitive source component is a deck
transformation of that component commuting with $D$, that is, an
element of the center $Z(D)$. If $D=\Gp$ this center is trivial. If $D$
is a proper subgroup, then $D=P\rtimes C_d$ inside a Borel subgroup; the
nontrivial tame branch inertia forces $d>1$, and the faithful
conjugation action of $C_d$ on $P$ then gives $Z(D)=1$. Finally,
Lemma~\ref{gen:new-tail-centralizer} makes $\alpha$ trivial on every
new-tail source component.

We have shown that $\alpha$ is the identity on every component normalization in the source, and hence $\alpha = 1$, contradicting our assumption $|\alpha| = p$.
\end{proof}

\subsection{Inseparable descent of the stable marked source $\bar{Y}$}
\label{gen:sec-descent}

Proposition~\ref{gen:source-centralizer} concerns the curve $\bar Y$
over $\kappa$. Our goal is to show that if $Y\to E$ has bad reduction,
then $\bar j\in F^p$. For this we need a model of
$\bar Y$ over $F$ itself: the residue field of $R'$ can be inseparable
over $F$, and the property $\bar j\in F^p$ is not preserved by
inseparable extensions. This model is supplied by the arithmetic
valuative criterion of Bresciani and Vistoli, applied to a moduli stack
of stable marked curves with $\Gp$-action.

An algebraic stack with finite inertia is \emph{tame} if its geometric
stabilizers are linearly reductive. The stabilizers of a
Deligne--Mumford stack are unramified, hence \'etale over a field, and
a finite \'etale group scheme is linearly
reductive exactly when its order is invertible in the base field.
For the stable marked source this order is shown to be prime-to-$p$ by
Proposition~\ref{gen:source-centralizer}.

\begin{proposition}[Arithmetic valuative criterion {\cite{BrescianiVistoli}}]
\label{gen:BV-import}
Let $\cZ\to\Spec R$ be a proper tame algebraic stack, where
$R$ is a discrete valuation ring with fraction field $K$ and residue
field $F$. If $\cZ$ has a $K$-object, then its special fiber
has an $F$-object.
\end{proposition}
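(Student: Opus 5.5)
This is the main theorem of Bresciani and Vistoli, and in the paper it would simply be cited. If one wanted to reconstruct the argument, the plan is to extend the given $K$-object not over $\Spec R$ itself, which is impossible in general, but over a ``twisted'' discrete valuation ring: a root stack $\mathcal R_n=[\Spec R[\pi^{1/n}]/\mu_n]$ for a suitable $n$ invertible in $F$. Its closed substack is the classifying stack $B\mu_{n,F}$. Since $B\mu_{n,F}$ always has an $F$-object, namely the trivial torsor, composing an extension $\mathcal R_n\to\cZ$ with that object gives the desired $F$-object of the special fiber. This avoids the problem seen in \S\ref{gen:sec-descent}, that extending after an arbitrary finite extension $K'/K$ only produces objects over a residue field that may be inseparable over $F$.

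The steps would be as follows.
\begin{enumerate}
\item Replace $R$ by its completion, or by its henselization. This is harmless because both have residue field $F$.
\item Use properness of $\cZ$ and the ordinary valuative criterion for stacks. This extends the $K$-object to an $R'$-object after some finite extension $K'/K$, and so singles out a closed point $z$ of the special fiber.
\item Invoke the local structure theorem of Alper--Hall--Rydh for tame stacks. Near $z$, étale locally on the coarse space, $\cZ\cong[\Spec A/G_z]$, where $G_z$ is the linearly reductive stabilizer.
\item Reduce via this chart to an equivariant problem. One must find a $G_z$-torsor over a root stack of $\Spec R$ together with an equivariant map to $\Spec A$ extending the given data over $K$. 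Linear reductivity gives exactness of invariants, which makes the coarse-space map commute with base change. The coarse space $M$ is proper over $R$, so the $K$-point of $M$ extends to an $R$-point by the valuative criterion for schemes, with no residue extension needed.
\item Lift this $R$-point of $M$ back to $\cZ$. The fiber of $\cZ\to M$ over it is a gerbe-like object with tame stabilizers. Over the strict henselization, the obstruction to lifting lies in the tame part of inertia, which is killed by passing to the root stack $\mathcal R_n$ with $n$ prime to $p$. Because everything is tame, the lift can be chosen Galois-equivariantly for the unramified part, and so it descends to $\mathcal R_n$ over $R$ itself.
\end{enumerate}

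The hard step is step 5: showing that the lift over the twisted ring exists over $R$ rather than over an inseparable extension. Here the imperfection of $F$ really matters. The approach I would try first is to interpret the set of lifts as a torsor under a linearly reductive group scheme over the residual gerbe, and to use that such torsors, together with the relevant $H^1$ of smooth tame group schemes over henselian rings, are controlled by their reductions. Step 2 would be used only to guarantee that the image point lies in the tame locus.
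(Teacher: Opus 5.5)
Your proposal is the paper's argument. The paper's proof is only a citation of Bresciani--Vistoli (Theorem~3.1 and Corollary~3.2): the $K$-object extends to a representable morphism from a root stack of $\Spec R$ to $\cZ$, and the reduced closed substack $B\mu_{n,F}$ of that root stack has an $F$-object, exactly as in your first paragraph, so your steps 1--5 are not needed.

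One small correction to the opening: $n$ prime to $p$ is guaranteed only when the stabilizers are \'etale, as for the Deligne--Mumford stack to which the paper applies the result. For a general tame stack such as $B\mu_{p,R}$ with the torsor defined by $\pi$, one needs $p\mid n$. This is harmless, because $B\mu_{n,F}$ has the trivial torsor as an $F$-object for every $n$.
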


\begin{proof}[Source]
This is \cite{BrescianiVistoli}*{Theorem~3.1 and Corollary~3.2}: the
given $K$-object extends to a representable morphism from a root stack
of $\Spec R$ to $\cZ$, and the special fiber of a root stack has an
$F$-object.
\end{proof}

\begin{lemma}[Descent of the stable source]\label{gen:source-descent}
If $Y\to E$ has bad reduction, there is a stable marked curve $\bar Y_F$ over $F$,
with a $\Gp$-action, whose base change to $\kappa$ is
isomorphic to $\bar Y$ together with its $\Gp$-action and its unordered
ramification marking.
\end{lemma}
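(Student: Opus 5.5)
We apply the arithmetic valuative criterion, Proposition~\ref{gen:BV-import}, to a moduli stack of stable marked curves with $\Gp$-action, over the original ring $R$ with residue field $F$.

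\emph{The stack.} Let $g=g(Y)$ and let $n$ be the number of ramification points of $Y$; the marking is unordered, so it is a divisor of degree $n$. Let $\overline{\cM}$ be the moduli stack over $R$ of stable curves of genus $g$ with an unordered marking of degree $n$. It is the quotient $\overline{\cM}_{g,n}/S_n$ of Knudsen's stack, hence a proper Deligne--Mumford stack. Let $\cH$ be the stack of pairs consisting of such a curve and a faithful action of $\Gp$ preserving the marking. Since automorphism group schemes of stable marked curves are finite and unramified, $\cH\to\overline{\cM}$ is finite and representable: locally it is a disjoint union of closed and open pieces of $\operatorname{Hom}(\Gp,\underline{\Aut})$. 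Hence $\cH$ is a proper Deligne--Mumford stack over $R$.

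\emph{The substack.} The pair $(Y,B_Y)$ with its $\Gp$-action is a $K$-point $\xi$ of $\cH$. Let $\cZ\subset\cH$ be the closure of $\xi$, the reduced closed substack whose underlying space is the closure of the image of $\xi$. It is proper over $R$, and its generic fiber contains the $K$-object $\xi$.

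\emph{Tameness.} We must check that every geometric point of $\cZ$ has stabilizer of order prime to $p$. The stabilizer of a geometric point is the group of automorphisms of the marked curve commuting with $\Gp$ and preserving the unordered marking.
\begin{itemize}
\item Points in characteristic $0$ are automatically tame.
\item Points of the special fiber $\cZ_F$ need more care. The main obstacle is to show that $\cZ_F$ consists only of $\bar Y$, up to isomorphism over algebraically closed fields, with its $\Gp$-action. This uses uniqueness of the stable marked model. Over $R'$, the model $\cY$ gives an extension of $\xi$ to $\Spec R'\to\cH$. Since $\overline{\cM}$ is separated and $\cH\to\overline{\cM}$ is finite, any specialization of $\xi$ arises, after a finite extension of $R$ and by the valuative criterion, from such an extension of $\xi_{K''}$. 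By uniqueness of stable models and of the extended $\Gp$-action, that extension is $\cY_{R''}$. So the closed fiber of $\cZ$ is set-theoretically the single geometric point $\bar Y$.
\item Its stabilizer is the group in Proposition~\ref{gen:source-centralizer}, which has order prime to $p$. So $\cZ_F$ is a gerbe with tame stabilizer.
\end{itemize}
If needed, we shrink $\cZ$ to an open neighborhood of the closed point, which is still proper because its special fiber is the whole closed fiber. Then $\cZ$ is tame; tameness is open, and it holds at the closed point and on the characteristic-zero fiber.

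\emph{Conclusion.} Proposition~\ref{gen:BV-import} gives an $F$-object of $\cZ_F$. This is a stable marked curve $\bar Y_F$ over $F$ with $\Gp$-action and unordered marking. Its base change to $\kappa$ is a geometric point of $\cZ_F$, hence isomorphic to $\bar Y$ together with its action and marking.
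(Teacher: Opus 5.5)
Your route is the paper's: the stack of stable curves with unordered marking and a $\Gp$-action, which is finite over $[\overline{\cM}_{g,r}/\mathfrak S_r]$; the closure $\cZ$ of the $K$-point; tameness of its special fiber from Proposition~\ref{gen:source-centralizer}; and the Bresciani--Vistoli criterion. Two steps in your middle bullet need tightening.

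First, the fact that the special fiber of $\cZ$ is a single point is best obtained as in the paper. The map $\Spec R'\to\mathcal{H}_R$ defined by $\cY$ is proper, because $R'$ is finite over $R$ and $\mathcal{H}_R$ is separated. Its image is therefore closed and contains the generic point, so it is all of $|\cZ|$. Your alternative, realizing specializations by DVRs "after a finite extension of $R$", is not justified as stated: the fraction field of a DVR realizing a specialization need not contain $K$. (Since $|\cZ|$ then has only two points, your shrinking step is also unnecessary.)

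Second, and more substantively, a one-point $|\cZ_F|$ does not by itself give your final ``hence isomorphic to $\bar Y$''. A $\kappa$-point of $\cZ_F$ is isomorphic to $\cY\otimes_{R',\tau}\kappa$ for \emph{some} $F$-embedding $\tau$ of the residue field $k'$ of $R'$ into $\kappa$, not necessarily the embedding that defines $\bar Y$. You need the step the paper supplies. The extension $k'/F$ is normal, so two such embeddings differ by an $F$-automorphism of $k'$. Every such automorphism is induced by some $s\in\Gal(K'/K)$. Finally, $\cY$ is isomorphic to its $s$-conjugate, compatibly with the marking and the $\Gp$-action, by uniqueness of the stable marked model, since $Y$ and its action are defined over $K$. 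With this added, every $\kappa$-point of $\cZ_F$ is isomorphic to $\bar Y$, and your conclusion follows.
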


\begin{proof}
Let $g=g(Y)\ge2$ and let $r$ be the number of points of $Y$ over the
branch value. Consider the proper Deligne--Mumford stack
\[
 \cU=[\overline{\cM}_{g,r}/\mathfrak S_r]
\]
of stable curves of genus $g$ with an unordered marking of $r$ points.
The ramification points of $Y$ form a reduced divisor of degree $r$
defined over $K$, while the individual points are in general defined
only over an extension; this is why the marking is unordered. The
marked curve $Y$ thus defines a $K$-object of $\cU$.
As $\ol{\cM}_{g,r}$ is proper and Deligne--Mumford \cite{Knudsen}*{Theorem~2.7, p.~179}, the quotient $\cU$ is as well.

Let $\cU^{G_p}$ be the stack of objects of $\cU$ equipped
with a homomorphism from the constant abstract group $\Gp$ to their
automorphism group. In \cite{Romagny}, $\cU^{G_p}$ is the stack of $G_p$-fixed points, where $G_p$ acts trivially on $\cU$.
By \cite{Romagny}*{Theorem~3.3}, the forgetful map
$\cU^{G_p}\to\cU$ is representable and finite, so $\cU^{G_p}$ is proper and Deligne--Mumford.
For a curve $C$ with $G_p$-action, the automorphism group of $C$ in $\cU^{G_p}$
is the centralizer of that action in the automorphism group of the unordered
marked curve.

Let $\cU^{G_p}_R$ be the base change of $\cU^{G_p}$ to $R$, and let $\cZ$ be the
schematic closure of the
image of the map $\Spec K\to\cU^{G_p}_R$ defined by $Y$ with its
$\Gp$-action \cite{Stacks}*{0CPU}. It is proper over
$R$, and we claim that each $\kappa$-point of $\cZ$ represents
$\bar Y$ as a stable marked curve with $G_p$-action. Let $R'$ and $\cY$ be as in
\S\ref{gen:sec-tails}. The map
$\Spec R'\to\cU^{G_p}_R$ given by $\cY$ is proper, because $R'/R$ is finite and
$\cU^{G_p}_R$ is separated, so its image is closed and contains the
generic point, hence contains $\cZ$. Conversely the map factors
through $\cZ$: the pullback of the ideal defining $\cZ$
vanishes generically on the integral scheme $\Spec R'$, hence
everywhere. The image is thus the underlying space of $\cZ$.
The scheme $\Spec R'$ has one closed point, but its residue field $k'$
can have several $F$-embeddings into $\kappa$, and the $\kappa$-points of $\cZ$ are the curves $\cY\otimes_{R',\tau}\kappa$
for these embeddings $\tau$. They are all isomorphic to $\bar Y$, with
marking and $\Gp$-action. Indeed, $k'/F$ is normal and two embeddings
differ by an $F$-automorphism of $k'$, which is induced by an element
$s$ of $\Gal(K'/K)$; and $\cY$ is isomorphic to its conjugate by $s$,
compatibly with the marking and the action, because the stable marked
model is unique.

The stabilizer of a stable marked curve is finite unramified, hence
finite \'etale over an algebraically closed field, so
Proposition~\ref{gen:source-centralizer} makes every
stabilizer of $\cZ$ over the special fiber linearly reductive, and in
characteristic zero every finite \'etale group scheme is. Thus $\cZ$ is
tame, even though the ambient stack $\cU^{G_p}$ need not be, and
Proposition~\ref{gen:BV-import} supplies an $F$-object $\bar Y_F$ of
its special fiber. The base change of $\bar Y_F$ to $\kappa$ gives a $\kappa$-point of $\cZ$,
hence it is isomorphic to $\bar Y$ with
its marking and $\Gp$-action.
\end{proof}

\subsection{Bad reduction implies inseparability of \texorpdfstring{$\bar j$}{j}}
\label{gen:sec-frobenius}
We maintain the standing hypotheses and notation of \S\ref{sec:generic}.

\begin{proposition}[Generic inseparability criterion]
\label{nm:generic-criterion}
If $Y\to E$ has bad reduction, then $\bar j\in F^p$.
\end{proposition}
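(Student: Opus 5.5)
We argue by contradiction from the descended source. Assume $Y\to E$ has bad reduction, and let $\bar Y_F$ be the stable marked curve over $F$ with $\Gp$-action supplied by Lemma~\ref{gen:source-descent}. The plan is to read off the original component of the stable target from $\bar Y_F$, and to compare it with the reduction $E_0$ of the elliptic curve. We then extract $\bar j\in F^p$ from the inseparability of the original $p$-component (Lemma~\ref{gen:tails}).

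\emph{Step 1: the root component over $F$.} Form the quotient $Q_F:=\bar Y_F/\Gp$ (reduced). Its base change to $\kappa$ is $Q$, since the formation of invariants by a finite group commutes with flat base change up to nilpotents, and we take reduced structures. By Lemma~\ref{gen:quotient-comparison}, the component graph of $Q$ has a unique root vertex of genus one. This vertex is therefore $\Gal$-stable and descends to a component $Q_{0,F}$ of $Q_F$. Its normalization $\widetilde Q_{0,F}$ is a regular genus-one curve over $F$. It carries a marked point: among the attaching points on the root, the one leading toward the primitive tail is distinguished, because the primitive tail contains the image of the marking. Hence this point is $F$-rational after descent, and $\widetilde Q_{0,F}$ is an elliptic curve over $F$ with geometric genus one. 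Here one has to check smoothness, not just regularity, over the imperfect field $F$. I would try to deduce it from the fact that its base change to $\kappa$ is the smooth normalized root of $Q$; if normalization does not commute with the inseparable base change, I would instead normalize after a separable extension and use Galois descent.

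\emph{Step 2: relate to $E_0$ through Frobenius.} Over $\kappa$, Lemma~\ref{gen:quotient-comparison} says the normalized root of $Q$ maps purely inseparably of degree $p$ onto $\bar X_0$. The component $\bar X_0$ is birational to $(E_0)_\kappa$, since it is the original component. Thus $j(\widetilde Q_{0})=j(E_0)^{1/p}$ or $j(E_0)^p$ in $\kappa$. A degree-$p$ purely inseparable map between smooth curves over $\kappa$ is Frobenius followed by an isomorphism, so the target has $j$-invariant $j(\text{source})^p$. This gives $\bar j=j(\widetilde Q_{0,F})^p$, where we use that $j$-invariants are insensitive to base change to $\kappa$. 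Since $j(\widetilde Q_{0,F})\in F$, we conclude $\bar j\in F^p$.

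\emph{Main obstacle.} The delicate point is the direction of the inseparable map together with rationality over $F$. We need the map $Q\to\bar X$ to exhibit $E_0$ as the \emph{Frobenius target} of a curve defined over $F$, rather than its source. Lemma~\ref{gen:quotient-comparison} provides this direction: $Q\to\bar X$ goes from source to target, as in the factorization \eqref{gen:factorization}. The other point is ensuring that $Q_{0,F}$ and its marked point genuinely descend. I expect the descent of the root component, with its distinguished point and its smoothness over the imperfect field $F$, to be the main technical step. I would handle it by using the uniqueness arguments already used in Proposition~\ref{gen:source-centralizer}: the root is the unique positive-genus vertex, and the primitive direction is unique.
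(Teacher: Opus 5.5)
Your outline matches the paper's: descend the stable source with Lemma~\ref{gen:source-descent}, produce a genus-one curve over $F$ whose base change to $\kappa$ maps purely inseparably of degree $p$ onto $\bar X_0\cong(E_0)_\kappa$, and read off $\bar j=j(\cdot)^p$. But the step you flag as the main obstacle is left open, and it is the heart of the proof.

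\textbf{The missing step: smoothness over $F$.} You need the genus-one curve over $F$ to be smooth, i.e.\ geometrically normal, so that its base change to $\kappa$ is the normalized root of $Q$. A regular curve over the imperfect field $F$ need not be smooth. Its geometric normalization can then have smaller genus, and there is no $j$-invariant over $F$ to compare with $\bar j$. Your first suggestion assumes that normalization commutes with $-\otimes_F\kappa$, which is exactly what is in doubt. Your fallback does not help either. Normalization already commutes with separable (\'etale) base change, so normalizing after a separable extension and descending returns the same $F$-curve. It says nothing about the purely inseparable part of $\kappa/F$.

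\textbf{How the paper closes it.} The paper gets geometric normality from the nodal structure of the source. After a finite separable extension $F'/F$, the components of $\bar Y_{F'}$ are geometrically irreducible and the nodes and their branches are rational. The normalization $W$ of a source component over $\bar X_0$ is then smooth, because the completed local ring at a node is $F'[[x,y]]/(xy)$. Next, $W/D$ is smooth, because taking $D$-invariants commutes with the flat base change to $\kappa$, where $W_\kappa/D$ is a quotient of a smooth curve.

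\textbf{How you could close it in your framework.} Since $Q=Q_F\otimes_F\kappa$ is reduced, the prime ideal of the root component base-changes to a radical ideal, so $Q_{0,F}\otimes_F\kappa$ is exactly the reduced root $Q_0$. Moreover $Q_0$ is already smooth. Away from nodes, $Q$ is locally a quotient of a smooth curve. At a node $\eta$ on a root source component, the stabilizer $I_\eta$ preserves both branches, since they lie over different components of $\bar X$. The image of the root branch therefore has completed local ring $\kappa[[x]]^{I_\eta}$. This ring is regular, because the generic inertia acts trivially on that branch and the remaining quotient is tame. With this, no normalization is needed at all.

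\textbf{A smaller issue: the rational point.} Your claim that the attaching point toward the primitive tail is $F$-rational because it is unique is not valid over an imperfect field. A closed point with a single geometric point above it only has residue field purely inseparable over $F$. You do not need a rational point: as the paper does, use $j$ of the Jacobian. Alternatively, note that this residue field embeds into that of a node of $\bar Y_F$, which is separable over $F$.
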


\begin{proof}
Lemma~\ref{gen:source-descent} applies. Let $\bar Y_F$ be the model
with $G_p$-action over $F$ that it supplies.
After passing to a finite separable extension $F'/F$, we may assume that every irreducible
component of $\bar Y_{F'}$ is geometrically irreducible
\cite{Stacks}*{054R}, every node is rational, and the two branches at
every node are rational  \cite{Stacks}*{0C4D, 0CBU}.
Fix an $F$-embedding of $F'$ into $\kappa$, so that the base change of
$\bar Y_{F'}$ to $\kappa$ is $\bar Y$
(Lemma~\ref{gen:source-descent}).
Let $W/F'$ be the normalization of the component of $\bar Y_{F'}$ corresponding to a component of $\bar Y$ above
$\bar X_0$.
The curve $W$ is smooth
over $F'$, because $\bar Y_{F'}$ is smooth away from its nodes and its
completed local ring at a node is $F'[[x,y]]/(xy)$.

Let $D\le\Gp$ be the decomposition group of this component; it acts on
$W$. The inertia group $P$ of the generic point of $W$ has order $p$,
because $\bar X_0$ is a $p$-component (Lemma~\ref{gen:tails}). It acts
trivially on $W$, so $D$ acts on $W$ through $D/P$, a group of order
prime to $p$. The quotient $W/D$ is a smooth proper curve over $F'$:
the formation of invariants commutes with the flat base change to
$\kappa$, where $W_\kappa/D$ is a quotient of a smooth curve and hence
smooth, so $W/D$ is geometrically normal. By the factorization
\eqref{gen:factorization}, the map $(W/D)_\kappa\to\bar X_0$ is purely
inseparable of degree $p$, and $\bar X_0$ is isomorphic to
$(E_0)_\kappa$. Over the algebraically closed field $\kappa$, a purely
inseparable subfield of degree $p$ of a one-variable function field is
its subfield of $p$th powers, so this map is Frobenius followed by an
isomorphism. Therefore $W/D$ has genus one and
\[
 \bar j=j\bigl(\Jac(W/D)\bigr)^p.
\]
Both sides of this identity lie in $F'$, so it holds in $F'$. An
element of $F$ cannot acquire a $p$th root in a separable extension,
so $\bar j\in F^p$.
\end{proof}

\begin{corollary}[Separability over $k(\bar j)$ gives good reduction]
\label{gen:generic-good}
If $F/k(\bar j)$ is finite separable, then $Y\to E$ has good
reduction.
\end{corollary}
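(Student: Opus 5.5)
The plan is to argue by contraposition, reducing directly to Proposition~\ref{nm:generic-criterion}. Suppose that $Y\to E$ has bad reduction. Proposition~\ref{nm:generic-criterion} then gives $\bar j\in F^p$, and it remains to show that this is incompatible with $F/k(\bar j)$ being finite separable.

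So assume $\bar j=g^p$ for some $g\in F$. Since $\bar j$ is transcendental over $k$, so is $g$; otherwise $\bar j$ would be algebraic. The key step is to consider the intermediate field $k(\bar j)\subseteq k(g)\subseteq F$. The extension $k(g)/k(\bar j)$ has degree exactly $p$: the polynomial $T^p-\bar j$ is irreducible over $k(\bar j)$, because $\bar j$ is a uniformizer at the place $\bar j=0$ of the rational function field and Eisenstein's criterion applies. This extension is purely inseparable.

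A subextension of a separable algebraic extension is separable, so the finite separable extension $F/k(\bar j)$ cannot contain a nontrivial purely inseparable subextension $k(g)/k(\bar j)$. This contradicts our assumption, so $Y\to E$ has good reduction.

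I expect no real obstacle here; all the content lies in Proposition~\ref{nm:generic-criterion}. The only points to check are that $g\notin k(\bar j)$, which the degree computation settles, and that the standing hypotheses of \S\ref{sec:generic} (including the transcendence of $\bar j$) hold, which are used verbatim.
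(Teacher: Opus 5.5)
Your proof is correct and follows the paper's own argument: Proposition~\ref{nm:generic-criterion} gives $\bar j\in F^p$ under bad reduction, and a $p$th root of the transcendental element $\bar j$ would generate a purely inseparable subextension of degree $p$ inside the separable extension $F/k(\bar j)$, which is impossible. The only difference is that you use Eisenstein's criterion to spell out the fact, stated in one line in the paper, that $\bar j$ has no $p$th root in $k(\bar j)$.
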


\begin{proof}
The transcendental element $\bar j$ has no $p$th root in $k(\bar j)$
and cannot acquire one in the separable extension $F$.
Proposition~\ref{nm:generic-criterion} therefore excludes bad
reduction.
\end{proof}

\section{Extending good reduction to the cusp}
\label{nm:cusp-degree}

Theorem~\ref{thm:nm-arithmetic} concerns the normalized cuspidal
cover at a cusp, whereas Proposition~\ref{nm:generic-criterion}
concerns the cover of a varying elliptic curve. This section connects
the two and proves Proposition~\ref{nm:bad-cusp}: if the $j$-map of $X$
has good reduction at a place $v$ above $p$, then the normalization of an irreducible cuspidal cover also has good reduction at $v$.

\subsection{Cusps and cuspidal covers}\label{nm:cuspidal-notation}

In this subsection $G$ is any finite group, $X$ is a geometric
connected component of $\Mabs{G}$, and $c$ is a cusp of $X$, that is,
a point of $\overline X\setminus X$; from
\S\ref{nm:sec-modular} on we return to considering $\Gp$.
Topological notions refer to the fixed embedding $\iota\colon\Qbar\hookrightarrow\bC$ of
\S\ref{nm:class-vectors}.

First we explain how to describe cusps via a generating pair of $G$ (cf. \cite{Chen}*{\S4.8}).
Choose a point $t$ of $X$ close to $c$ and let $E_t$ be the elliptic
curve it parametrizes. As $t$ tends to $c$, the curve $E_t$
degenerates to a 1-gon (a nodal rational curve), and a simple closed curve on
the torus $E_t$, called the \emph{vanishing cycle}, is pinched to form the node
\cite{ACGII}*{Chapter~X, \S9}.
Let $(\alpha,\beta)$ be an oriented basis of the free group
$\pi_1(E_t^\circ)$ whose first element is the class of the vanishing
cycle, let $\varphi\colon\pi_1(E_t^\circ)\to G$ be the monodromy
homomorphism of the cover parametrized by $t$, and put
\[
 a:=\varphi(\alpha),\qquad b:=\varphi(\beta).
\]
We call $(a,b)$ a \emph{Nielsen pair representing $c$}. It depends on
the choices made: the point $t$, the base point, the orientation of the
vanishing cycle, the second basis element $\beta$ and, since $X$
parametrizes absolute structures, the identification of the Galois
group with $G$. A different choice replaces $(a,b)$ by a pair obtained
from it by a sequence of the following operations: simultaneous
conjugation; an automorphism of $G$; simultaneous inversion
$(a,b)\mapsto(a^{-1},b^{-1})$, which corresponds to reversing the
vanishing cycle; and the Dehn twist $(a,b)\mapsto(a,ab)$ about the
vanishing cycle or its inverse, which corresponds to replacing $\beta$
by $\alpha^{\pm1}\beta$. The twist is the monodromy of a loop around
$j=\infty$ in the $j$-line; a loop around $c$ in $X$ acts by its $w$th
power, where $w$ is the width of $c$, equivalently the ramification index of the $j$-map $j_X : X\to\bA^1_j$ at $c$ (\S\ref{nm:full-cusp}).
Put
\begin{equation}\label{eq_heHa}
 h:=\ord(a),\qquad e:=\ord([a,b]),\qquad
 H_a:=\langle a,\,ba^{-1}b^{-1}\rangle
     =\langle a,\,[a,b]\rangle.
\end{equation}
These four operations preserve $h$ and $e$, and they replace $H_a$ by
its image under an automorphism of $G$; for the Dehn twist this holds
because $[a,ab]=a[a,b]a^{-1}$. We call $h$ the \emph{pinched order} of
$c$; the cusp width $w$ always divides the pinched order $h$.
The cusps of $X$ correspond in this way to the cycles of
the Dehn twist on the absolute sheets of $X$, which is the
description used in \S\ref{nm:full-cusp}.

Let $\widetilde E_c$ be the projective line with the three marked
points $0_c:=0$, $O_c:=1$ and $\infty_c:=\infty$, and let $E_c$ be
the nodal rational curve obtained from $\widetilde E_c$ by
identifying $0_c$ with $\infty_c$; thus $\widetilde E_c\to E_c$ is
the normalization, $0_c$ and $\infty_c$ are the two branches of the
node, and $O_c$ is the origin. We call $E_c$ the \emph{cuspidal
curve}. Let
\[
 \widetilde Y_c\longrightarrow\widetilde E_c
\]
be the $G$-cover, \'etale outside the three marked points, whose
local monodromies are
\begin{equation}\label{nm:cusp-triple}
 \bigl(a,\ ba^{-1}b^{-1},\ [a,b]^{-1}\bigr)
 \quad\text{at}\quad(0_c,\infty_c,O_c);
\end{equation}
their product is one and their orders are $(h,h,e)$. By Riemann's
existence theorem $\widetilde Y_c$ exists and is unique up to
isomorphism once standard generators of
$\pi_1(\widetilde E_c\setminus\{0_c,O_c,\infty_c\})$ are fixed. We
call it the \emph{normalized cuspidal cover}. Its connected components
are $H_a$-Galois covers; we fix one component and write
\[
 \nu_c\colon\widetilde Y_c^{1}\longrightarrow\widetilde E_c
\]
for it, the \emph{connected normalized cuspidal cover}. Its group is
$H_a$, and $\widetilde Y_c^{1}=\widetilde Y_c$ exactly when
$H_a=G$. Finally, the fibers of $\widetilde Y_c$ over $0_c$ and
$\infty_c$ are the $G$-sets $G/\langle a\rangle$ and
$G/\langle ba^{-1}b^{-1}\rangle$, and right multiplication by
$b^{-1}$ is a $G$-equivariant bijection $\gamma_b$ between them. Let
$Y_c$ be the nodal curve obtained from $\widetilde Y_c$ by identifying
each point $y$ over $0_c$ with $\gamma_b(y)$; the $G$-action descends,
and $\widetilde Y_c\to\widetilde E_c$ descends to a finite
$G$-equivariant map
\[
 Y_c\longrightarrow E_c,
\]
the \emph{cuspidal cover}. It is a connected nodal curve with
$|G|/h$ nodes, and $\widetilde Y_c\to Y_c$ is its normalization.
Lemma~\ref{nm:cuspidal-degeneration} shows that $Y_c\to E_c$ is the
degeneration of the one-point covers of the elliptic curves near
$c$; \cite{Chen}*{\S4} describes the cuspidal objects of the
compactified moduli stack in the same way.
In particular, as $a,b$ run over all generating pairs of $G$, the associated $\wt{Y}_c\to\wt{E}_c$ and $Y_c\to E_c$ defined using \eqref{nm:cusp-triple} give all the cusps of $M(G)^\abs$.

\begin{proposition}[Good reduction of the $j$-map descends to the cusp]
\label{nm:bad-cusp}
Let $p>7$, let $X$ be a non-Markoff component of $\Mabs{\Gp}$, let
$c$ be a cusp of $X$ whose pinched order $h$ is greater than two and
prime to $p$ and whose normalized cuspidal cover
$\nu_c: \widetilde Y_c\to\widetilde E_c$ is connected, that is, $H_a=\Gp$. Let
$v$ be a place of $\Qbar$ above $p$. If the modular curve
$j_X\colon\overline X\to\bP^1_j$ has good reduction at $v$, then
$\nu_c$ has good reduction at $v$.
\end{proposition}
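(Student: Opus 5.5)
We follow steps (3) and (4) of \S\ref{intro:strategy}. Fix a finite extension $K_1/\bQ_p$ inside the completion of $\Qbar$ at $v$, with valuation ring $R_1$ and residue field $k$, over which $j_X$ has a good model $\overline{\cX}\to\bP^1_{j,R_1}$. The first step is to build, from the moduli interpretation, a cover $V\to X$ of degree at most four that carries a genuine elliptic curve $\cE_V$ together with a $\Gp$-cover $Y_V\to\cE_V$ branched only at the origin. Concretely, we take an explicit Weierstrass family over the $j$-line twisted to kill the $\pm1$ and $j=0,1728$ automorphism ambiguities, and possibly adjoin a square root. By Lemma~\ref{bg:ascent}(b), after enlarging $K_1$, $\overline{V}\to\bP^1_j$ also has a good model $\overline{\cV}$. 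Its special fiber $\overline{\cV}_k$ is smooth and irreducible, and by Lemma~\ref{bg:good-model}(d) it is separable over $\bP^1_k$.

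Let $F=k(\overline{\cV}_k)$. Let $K$ be the completion of $\Qbar(V)$ at the generic point of $\overline{\cV}_k$, a complete discretely valued field with residue field $F$. Then $F/k(\bar j)$ is finite separable. Away from $j=0,1728$ the Weierstrass family has good reduction at this generic point. Since $X$ is non-Markoff, the ramification index $e$ over the origin is $>1$ and prime to $p$ (\S\ref{bg:sheets}). Corollary~\ref{gen:generic-good} therefore shows that $Y_V\otimes K\to\cE_V\otimes K$ has good reduction. Let $\cY$ be the normalization of $\cE$ in $Y_V$, taken over a neighbourhood of the generic point of $\overline{\cV}_k$. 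It is then \'etale over the generic point of the special fiber away from the origin, after a further finite extension of $R_1$ (Lemma~\ref{bg:good-model}(e)).

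Next we work near the cusp. Let $\bar c'\in\overline{\cV}$ lie over the specialization of $c$, so that $\widehat{\cO}_{\overline{\cV},\bar c'}\cong R_1[[s]]$ with $s$ a local parameter at the cusp of $V$. Over $R_1[[s]]$ we use a Tate-type Weierstrass model $\cE_s$, with $j^{-1}=s^w\cdot(\text{unit})$, whose fiber at $s=0$ is the $1$-gon. Let $\cY_s$ be the normalization of $\cE_s$ in the $\Gp$-cover. At the height-one primes of $R_1[[s]]$ the cover is \'etale at the generic points of the fibers outside the origin. For $(\pi)$ this comes from the previous paragraph. For the primes over the generic fiber it holds in characteristic $0$. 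For $(s)$ it holds because of admissibility of the cuspidal degeneration (Lemma~\ref{nm:cuspidal-degeneration}); we use that the width divides $h$ and $h$ is prime to $p$, so after the tame base change $s\mapsto s^{h/w}$ the cover becomes \'etale generically along $s=0$ in Kummer form.

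The main obstacle is to pass from codimension one to the closed point $(\pi,s)$ and to deduce good reduction of $\nu_c$. We plan to apply purity of the branch locus (Zariski--Nagata, \cite{Stacks}*{0BMB}) on the regular two-dimensional scheme $\cE_s$ minus the origin section, away from the node. This shows $\cY_s\to\cE_s$ is \'etale there. Near the node $xy=s^{w}$ we would instead use the local Kummer description given by the admissible structure: the inertia is cyclic of order $h$ prime to $p$ with inverse characters on the two branches. Abhyankar's lemma then describes the cover as $u^h=x$, $v^h=y$. Restricting to $s=0$ and normalizing gives a cover $\wt{\cY}_c\to\bP^1_{R_1}$. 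It is \'etale off the three sections $0,1,\infty$, it is Kummer along them, and its generic fiber is $\nu_c$ (connected since $H_a=\Gp$). It is therefore a good model of $\nu_c$. The delicate points are checking normality of this restriction, which needs $h$ prime to $p$ and $h>2$, and controlling the origin section via the tame index $e$ as in Lemma~\ref{bg:good-model}(a).
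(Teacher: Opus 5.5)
Your outline follows the paper's strategy: generic good reduction via Corollary~\ref{gen:generic-good}, transport to the formal neighbourhood $R_1[[s]]$ of the cusp, then purity chart by chart. However, the decisive step at the node is not proved.

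The completed local ring of $\cE_s$ at the node of the cuspidal fiber is $A_{\rm node}=R_1[[s,x,y]]/(xy-s^{\ell})$, where $\ell$ is the ramification index of $j_{\overline V}$ at $c'$. Since $h\mid\ell$ and $h>2$, this ring is not regular. So neither Zariski--Nagata purity nor Abhyankar's lemma applies, since both need a regular base (Abhyankar also needs a normal crossings divisor). Worse, the cover is unramified at every height-one prime of $A_{\rm node}$, including the branch primes $(s,x)$ and $(s,y)$ by Lemma~\ref{nm:cuspidal-degeneration}(b). There is therefore no divisor for Abhyankar's lemma to act on: all the ramification sits at the singular closed point. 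There the correct answer, copies of $R_1[[U,V,s]]/(UV-s^{\ell/h})$ with $x=U^h$ and $y=V^h$, is \'etale in codimension one over $A_{\rm node}$ but not \'etale. The admissible structure you invoke (cyclic inertia of order $h$, inverse characters on the branches) is a fact about the characteristic-zero family over $K_1[[s]]$. It says nothing at the mixed-characteristic point $(\pi,s,x,y)$, which is exactly where control is needed.

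The missing idea is to pull back along $A_{\rm node}\to B_{\rm node}:=R_1[[u,v]]$, with $x=u^{\ell}$, $y=v^{\ell}$, $s=uv$. Since $p\nmid\ell$, this presents $A_{\rm node}$ as the $\mu_\ell$-invariants of a regular ring under $u\mapsto\zeta u$, $v\mapsto\zeta^{-1}v$. Upstairs the normalized pullback is unramified at all height-one primes. By purity it is finite \'etale, hence split after an unramified extension of $K_1$. Taking $\mu_\ell$-invariants and counting the $|\Gp|/h$ points over the node yields the local form above. Its fiber at $s=0$ normalizes to the two smooth branches $R_1[[U]]$ and $R_1[[V]]$.

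Your generic step also has a gap. Corollary~\ref{gen:generic-good} provides a good model only over some finite extension $K'$ of the completed function field $K_\eta$. A priori $K'$ need not come from a finite extension of $K_1$: it can be ramified over $K_\eta$ in a non-constant way, or have residue field inseparable over $F$. Lemma~\ref{bg:good-model}(e) goes in the wrong direction: it base-changes an existing good model and does not descend one. For unramifiedness at the height-one primes of $\cE_s$ over $(\pi)$, you need the normalization of $\cE_\eta$ over $R_\eta$ itself, or over a constant extension of it, to be \'etale off $O$. This requires a descent statement for potentially good tame reduction; the paper takes it from Raynaud \cite{Raynaud1999} and then base-changes along $R_\eta\to R_s$, the completion of $R_1[[s]]_{(\pi)}$.

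Finally, your base change $s\mapsto s^{h/w}$ conflates two quantities:
\begin{enumerate}
\item the width of $c$ on $X$, which equals $h$ by Lemma~\ref{nm:width} since $h>2$;
\item the exponent of $s$ in $j^{-1}$ on $\overline V$, which is $\ell$, a multiple of $h$.
\end{enumerate}
So $h\mid\ell$ holds from the start. This divisibility is exactly what makes the monodromy around $s=0$ trivial, and no base change is needed. Two minor points: $\cE_s$ is three-dimensional, not two-dimensional, and $h>2$ enters through Lemma~\ref{nm:width}, not through normality of the restriction.
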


Equivalently, bad reduction of $\nu_c$ at $v$ forces bad reduction of
$j_X$ at $v$. The proof occupies the rest of the section. Assuming good
reduction of $j_X$ at $v$, we
construct in \S\ref{nm:sec-modular} a modular curve $V$ over $X$
carrying an elliptic curve and a $\Gp$-cover of it,
together with a good model over a valuation ring above
$v$, and we extend the elliptic curve across the cusp $c$ by its
Weierstrass equation. In \S\ref{nm:sec-degeneration} we
prove that in characteristic zero the cover degenerates at $c$
to the cuspidal cover $Y_c\to E_c$.
In \S\ref{nm:sec-proof-bad-cusp} we prove good reduction of the
cover at the generic point of the special fiber and extend it across
the cuspidal fiber by purity, producing a good model of
$\nu_c$ at $v$.

\subsection{A modular family}\label{nm:sec-modular}

Suppose that $j_X$ has good reduction at $v$. Let $K_1$ be a finite
extension of $\bQ_p$ inside the completion of
$\Qbar$ at $v$, large enough for the objects below to
be defined and containing a square root of $-3$; we extend $K_1$
finitely whenever necessary. Let $R_1$ be
its valuation ring, $\pi$ a uniformizer and $k$ the residue field.

Recall that $U_j=\bP^1_j\setminus\{0,1728,\infty\}$, and let $E_j\to U_j$
be the elliptic curve
\[
 E_j\colon\quad y^2=x^3-3\vartheta x-2\vartheta,\qquad
 \vartheta:=\frac{j}{j-1728}.
\]
Let $X^{\circ}:=j_X^{-1}(U_j)$. The restriction
$j_X\colon X^{\circ}\to U_j$ is finite \'etale of degree $d_X$
\cite{Chen}*{Theorem~2.5.2(2)}.

Let $T_1\to U_j$ be the scheme of $\Gp$-structures on $E_j$ whose
absolute class lies in $X^{\circ}$; it is finite \'etale over $U_j$
\cite{Chen}*{Theorem~2.5.2(1),(3),(4)} and maps onto $X^{\circ}$.
Over a point of $X^{\circ}$ its geometric points are the inner
structures in the given absolute class, at most two since $\Out(\Gp)$
has order two, each with the $\Gp$-structures on the curve $E_j$ itself
in that inner class, at most two since $\Aut(E_j,O)=\{\pm1\}$. So
$T_1\to X^{\circ}$ has fibers of at most four points. Let $V$ be a
connected component of $T_1$ and let $j_V\colon V\to U_j$ be its
$j$-map. The map $V\to X^{\circ}$ is finite \'etale and
$X^{\circ}$ is connected, so $V$ maps onto $X^{\circ}$. Put
$E_V:=E_j\times_{U_j}V$, with origin
section $O$. Since $Z(\Gp)=1$, the universal $\Gp$-structure on $E_V$ is
represented by a connected $\Gp$-torsor over $E_V^\circ$
\cite{Chen}*{Remark~2.5.1}; let
\[
 \cY_V\longrightarrow E_V
\]
be the normalization of $E_V$ in this torsor. Its fiber over a point
$t$ of $V$ is the one-point $\Gp$-cover of $E_{j(t)}$ whose absolute class is the image of $t$ in $X^{\circ}$.

Let $\overline V$ be the smooth compactification of $V$ and
$j_{\overline V}\colon\overline V\to\bP^1_j$ the extension of
$j_V$, a modular curve in the sense of \S\ref{bg:reduction}, and extend
$K_1$ so that $V$ is geometrically connected and its cusps are defined
over $K_1$. The map $\overline V\to\overline X$ is finite of degree at
most four, hence less than $p$, and it is \'etale over $X^\circ$, so it
is branched only above the marked points of $\overline X$, the points
over $0$, $1728$ and $\infty$. By hypothesis $j_X$ has a good model at
$v$, so by Lemma~\ref{bg:ascent}(b), after a further finite extension of
$K_1$, the $j$-map $j_{\overline V}=j_X\circ(\overline V\to\overline X)$
has a good model
\[
 j_{\overline{\cV}}\colon
 \overline{\cV}\longrightarrow\bP^1_{j,R_1},
\]
finite and \'etale over the complement of the sections $0$, $1728$ and
$\infty$, which are disjoint because $p\ge5$. By
Lemma~\ref{bg:good-model}, $\overline{\cV}$ is a smooth proper
$R_1$-curve, the reduced $j$-map $\overline{\cV}_k\to\bP^1_{j,k}$ is finite
and separable, and the ramification indices of $j_{\overline V}$ are
prime to $p$. The special fiber $\overline{\cV}_k$ is connected by
Zariski's connectedness principle \cite{Liu}*{\S5.3.2}, hence irreducible.

Let $X$ be a non-Markoff component of $M(G_p)^\abs$, and let $c$ be a cusp of $X$ of the type considered in Proposition \ref{nm:bad-cusp}. Thus, it is given by a generating pair $(a,b)$ with pinched order $h = \ord(a) > 2$ and $p\nmid h$. Let $c'\in\overline V(K_1)$ be a cusp above $c$,
let $\bar c'\in\overline{\cV}_k$ be the closed point
of the section of $\overline{\cV}$ through $c'$, and let
$\ell$ be the ramification index of $j_{\overline V}$ at $c'$, which is
prime to $p$. Moreover $h$ divides
$\ell$: the map $j_{\overline V}$ factors through $j_X$, whose
ramification index at $c$ is $h$ by Lemma~\ref{nm:width}.

The completed local ring of $\overline{\cV}$ at $\bar c'$ is
$R_1[[s]]$ for a coordinate $s$ with $c'=\{s=0\}$, and the completed
local ring of $\bP^1_{j,R_1}$
at the image of $\bar c'$ is
$R_1[[j^{-1}]]$. The map $j_{\overline{\cV}}$ is finite, \'etale
outside the sections $0$, $1728$ and $\infty$, and tamely ramified
along them. By Abhyankar's lemma \cite{Stacks}*{0EYG}, \'etale locally
on $\bP^1_{j,R_1}$ near the image of $\bar c'$ the cover
$\overline{\cV}$ is a disjoint union of Kummer covers $z^{e}=j^{-1}$.
Hence $c'$ is the only cusp that specializes to $\bar c'$, the ideal
generated by $j^{-1}$ in $R_1[[s]]$ is a power of $(s)$, and the
exponent is $\ell$ by comparison on the generic fiber:
\begin{equation}\label{nm:j-and-s}
 j^{-1}=s^{\ell}\epsilon_0,\qquad\epsilon_0\in R_1[[s]]^{\times}.
\end{equation}

We extend $E_V$ across the cusps by the Weierstrass equation of $E_j$.
Its coefficient
\begin{equation}\label{nm:weierstrass-at-infinity}
 \vartheta=\frac{1}{1-1728j^{-1}}
\end{equation}
is regular away from $j=1728$, so the same equation defines a flat
projective curve over the complement of the sections $0$ and $1728$ in
$\bP^1_{j,R_1}$. Let
$\cE_{\overline{\cV}}$ be the pullback of this curve to the preimage of
that complement in $\overline{\cV}$, an open subscheme which contains
$V$ and the sections through the cusps; the restriction of
$\cE_{\overline{\cV}}$ to $V$ is $E_V$. Let
\[
 \cE_s\longrightarrow\Spec R_1[[s]]
\]
be the base change of $\cE_{\overline{\cV}}$ to the completed local
ring of $\overline{\cV}$ at $\bar c'$. By \eqref{nm:j-and-s} it is the
curve defined by the Weierstrass equation of $E_j$ with
$j^{-1}=s^{\ell}\epsilon_0$, and its base change to $K_1((s))$ is the
elliptic curve $E_V\times_V\Spec K_1((s))$. Part (d) of the following
lemma is an explicit form of
\cite{DeligneRapoport}*{VII, Th\'eor\`eme~2.1}, which describes the
compactified moduli stack of elliptic curves near infinity by the Tate
curve.

\begin{lemma}[The Weierstrass family at the cusp]
\label{nm:weierstrass-infinity}
Let $O$ be the origin of $\cE_s$, the point at infinity of the
Weierstrass equation.
\begin{enumerate}
\item The curve $\cE_s$ is flat and projective over $R_1[[s]]$, and it
is smooth outside the node $(x,y)=(-1,0)$ of its fiber over $s=0$.
\item The fiber of $\cE_s$ over $s=0$ is the nodal cubic
$y^2=(x+1)^2(x-2)$ over $R_1$. The function $m:=y/(x+1)$ identifies
the normalization of this cubic with $\bP^1_{R_1}$. The two points over
the node are $m=\pm\sqrt{-3}$, the origin is $m=\infty$, and the
coordinate $(m-\sqrt{-3})/(m+\sqrt{-3})$ carries these three sections
to $0$, $\infty$ and $1$. Thus this fiber is the $1$-gon over
$R_1$, with origin $O_c$; we denote it by $E_c\otimes R_1$.
\item The completed local ring of $\cE_s$ at the node of its closed
fiber is isomorphic, as an $R_1[[s]]$-algebra, to
$R_1[[s,x',y']]/(x'y'-s^{\ell})$.
\item Let $\Tate(q)$ be the Tate curve
$$y^2+xy=x^3+a_4(q)x+a_6(q)$$
over $\bZ[[q]]$, whose $j$-invariant is
$j(q)=q^{-1}+744+\cdots$
\cite{SilvermanAdvanced}*{Theorem~V.1.1}. The substitution
$j^{-1}=1/j(q)$ identifies $\bZ[[j^{-1}]]$ with $\bZ[[q]]$, and over
$\bZ[1/6,\sqrt{-3}][[q]]$ it identifies the curve defined by the
Weierstrass equation of $E_j$ with $\Tate(q)$. The isomorphism is an
affine change of the coordinates $(x,y)$ whose coefficients converge on
a disc around $q=0$ in $\bC$, and at $q=0$ it carries the coordinate
of (b) to the parameter $u$ of the uniformization $x=u/(1-u)^2$,
$y=u^2/(1-u)^3$ of the fiber $y^2+xy=x^3$ of $\Tate(q)$.
\end{enumerate}
\end{lemma}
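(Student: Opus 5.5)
The plan is to compute directly with the Weierstrass equation $y^2=x^3-3\vartheta x-2\vartheta$, where $\vartheta=1/(1-1728j^{-1})$ and $j^{-1}=s^{\ell}\epsilon_0$ by \eqref{nm:j-and-s}. Thus $\vartheta\in R_1[[s]]$ and $\vartheta\equiv1\bmod s^{\ell}$.

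For (a), projectivity and flatness are immediate: the equation is a Weierstrass cubic in $\bP^2_{R_1[[s]]}$, a relative hypersurface with monic leading term. The discriminant of $x^3-3\vartheta x-2\vartheta$ is $-4(-3\vartheta)^3-27(4\vartheta^2)=108\vartheta^2(\vartheta-1)$. Using $\vartheta-1=1728j^{-1}\vartheta$, this is a unit times $s^{\ell}$, because $6$ is invertible in $R_1$. The standard Jacobian criterion then shows $\cE_s$ is smooth wherever $s\ne0$. At $s=0$ it shows the only singular point of the closed fiber is the double root of the cubic. Setting $\vartheta=1$ gives $x^3-3x-2=(x+1)^2(x-2)$, so the singular point is $(-1,0)$. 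This also proves the first claim of (b).

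For the rest of (b), parametrize the fiber at $s=0$ with $m=y/(x+1)$. Then $m^2=x-2$, so $x=m^2+2$ and $y=m(m^2+3)$. This is an isomorphism from $\bA^1_m$ onto the nodal cubic minus $O$, with $m=\infty$ going to $O$. The node is hit by $x=-1$, that is, $m^2=-3$, giving $m=\pm\sqrt{-3}$; these are distinct sections because $6\sqrt{-3}$ is a unit. The Möbius coordinate sends $\sqrt{-3}\mapsto0$, $-\sqrt{-3}\mapsto\infty$ and $\infty\mapsto1$.

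For (c), complete at the node and translate by $x=-1+t$. Hensel's lemma factors the cubic in $R_1[[s]][[t]]$ as $(t^2-\lambda)\cdot(\text{unit})$ for some $\lambda\in R_1[[s]]$. Here $\lambda$ is, up to a unit, the discriminant, hence $s^{\ell}$ times a unit. One can move the unit onto $s$ only after extracting an $\ell$-th root, so instead absorb it into the coordinates. The equation becomes $y^2u=t^2-\lambda$, where $u\equiv-3$ is a unit with a square root because $\sqrt{-3}\in R_1$. Set $x'=t-\sqrt{u}\,y$ and $y'=(t+\sqrt{u}\,y)\cdot(\text{unit})^{-1}$, where the unit is the one with $\lambda=s^{\ell}\cdot(\text{unit})$. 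This gives $x'y'=s^{\ell}$. The subtle point is to complete the square in $t$ first, by Weierstrass preparation, so that $\lambda$ is honestly a function of $s$ alone. That is routine.

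For (d), both curves have $j$-invariant $j$ as an element of $\bZ[1/6][[q]]$, since $j(E_j)=j$ by design. Two elliptic curves with equal $j\ne0,1728$ are quadratic twists, so the task is to show the twist is trivial over $\bZ[1/6,\sqrt{-3}][[q]]$. Compare $c_4,c_6$: the twist class is $c_6(E_j)/c_6(\Tate)$ modulo squares, given equal $c_4^3/\Delta$. I will compute this ratio at $q=0$. The Tate curve reduces to $y^2+xy=x^3$, whose tangent slopes at the node lie in $\bZ$. The curve $E_j$ reduces to the cubic of (b), whose slopes are $\pm\sqrt{-3}$. So the ratio is $-3$ times a unit square at $q=0$, and Hensel's lemma gives a square root in $\bZ[1/6,\sqrt{-3}][[q]]$. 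The explicit isomorphism is then $x\mapsto\mu^2x+r$ and $y\mapsto\mu^3y+\ldots$ with $\mu$ a power series. Convergence on a disc follows because $j(q)$ and the Tate coefficients converge for $|q|<1$, and the square root is analytic near $q=0$. Finally, check at $q=0$ that the composite carries $(m-\sqrt{-3})/(m+\sqrt{-3})$ to $u$ by matching the three special points (node branches $u=0,\infty$ and origin $u=1$); a Möbius map is determined by three points. The main obstacle is bookkeeping in (d), namely fixing signs so that $0$ and $\infty$ match the correct branches; if needed, replace $\sqrt{-3}$ by $-\sqrt{-3}$ in the convention.
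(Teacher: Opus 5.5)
Your proposal is correct and follows essentially the paper's route: the singular locus is forced to $x=-1$, $\vartheta=1$ (you via the discriminant $108\vartheta^2(\vartheta-1)$, the paper via the Jacobian equations directly); the fiber is parametrized by $x=m^2+2$, $y=m(m^2+3)$; the quadratic part at the node is split with $\sqrt{-3}$ and a unit is absorbed into a coordinate (the paper absorbs a unit of the full complete local ring into $x'$, so your Weierstrass-preparation step making $\lambda$ depend on $s$ alone is harmless but unnecessary); and $(c_4,c_6)$ are matched through a square root $\varrho^2=-E_6/(12E_4)$ with $\varrho(0)^2=-1/12$. Two small corrections: $\bA^1_m\to$ cubic is the normalization (two-to-one over the node), not an isomorphism onto the cubic minus $O$; and in (d) the branches should be matched by choosing the sign of $\varrho$ (the paper takes $\varrho(0)=-\sqrt{-3}/6$, and the opposite sign composes with $[-1]$, which is $u\mapsto 1/u$ on the nodal fiber) rather than by changing the convention for $\sqrt{-3}$, which is already fixed in (b).
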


\begin{proof}
In (a)--(c) we have $j^{-1}=s^{\ell}\epsilon_0$ in
\eqref{nm:weierstrass-at-infinity}. A Weierstrass equation defines a
flat projective curve which is smooth along its point at infinity.
Since $6$ is a unit, the locus where $\cE_s$ is not smooth over
$R_1[[s]]$ is defined by $y=0$, $x^2=\vartheta$ and
$x^3-3\vartheta x-2\vartheta=0$. On this locus
$x^3-3\vartheta x-2\vartheta=-2\vartheta(x+1)$, so $x=-1$ and $\vartheta=1$, that
is, $s^{\ell}=0$, because $1728$ and $\epsilon_0$ are units. This
proves (a). At $s=0$ the equation is $y^2=(x+1)^2(x-2)$, and
$x=m^2+2$, $y=m(m^2+3)$ define a finite birational morphism from
$\bP^1_{R_1}$ onto this cubic, which is an isomorphism outside the
node. The node is the image of the zeros $m=\pm\sqrt{-3}$ of
$x+1=m^2+3$. Since $2\sqrt{-3}$ is a unit of $R_1$, these two sections
are disjoint and the coordinate in (b) is an automorphism of
$\bP^1_{R_1}$. This proves (b).

For (c) put $x_1:=x+1$ and write the equation as
\[
 y^2+3x_1^2(1-x_1/3)=(\vartheta-1)(1-3x_1),\qquad
 \vartheta-1=1728\,\vartheta\,\epsilon_0s^{\ell}.
\]
In the completed local ring the left side is the product of
$\varphi_1:=y+\sqrt{-3}\,x_2$ and $\varphi_2:=y-\sqrt{-3}\,x_2$, where
$x_2:=x_1(1-x_1/3)^{1/2}$, and the elements $s$, $\varphi_1$,
$\varphi_2$ form a system of coordinates because $2\sqrt{-3}$ is a
unit. The right side is $s^{\ell}\epsilon_1$ for the unit
$\epsilon_1:=1728\,\vartheta\,\epsilon_0(1-3x_1)$. Hence
$x':=\epsilon_1^{-1}\varphi_1$ and $y':=\varphi_2$ are coordinates at
the node with $x'y'=s^{\ell}$; compare \cite{Stacks}*{0CBX}.

For (d), put $E_4:=1+240\sum_{n\ge1}\sigma_3(n)q^n$ and
$E_6:=1-504\sum_{n\ge1}\sigma_5(n)q^n$, where $\sigma_r(n)$ is the sum
of the $r$th powers of the divisors of $n$. The coefficients of
$\Tate(q)$ are $a_4=(1-E_4)/48$ and $a_6=(1-3E_4+2E_6)/1728$, so the
standard formulas \cite{SilvermanAEC}*{\S III.1} give its invariants
\[
 c_4=1-48a_4=E_4,\qquad c_6=-1+72a_4-864a_6=-E_6,
\]
and the series $1/j(q)=q-744q^2+\cdots$ has integer coefficients and
leading coefficient one, which identifies the two power series rings.
The Weierstrass equation of $E_j$ has the invariants
$(144\vartheta,1728\vartheta)$, and $\vartheta$ equals $E_4^3/E_6^2$ at
$j=j(q)$. When $6$ is invertible, two Weierstrass curves with
invariants $(c_4,c_6)$ and $(\varrho^4c_4,\varrho^6c_6)$, for a unit
$\varrho$, are isomorphic by an affine change of coordinates
\cite{SilvermanAEC}*{\S III.1}. Here
\[
 (E_4,\,-E_6)=(\varrho^4\cdot144\vartheta,\ \varrho^6\cdot1728\vartheta)
 \qquad\text{for}\qquad\varrho^2:=-\frac{E_6}{12E_4},
\]
and $\varrho$ exists in $\bZ[1/6,\sqrt{-3}][[q]]$ because
$E_6/E_4\in1+q\bZ[[q]]$ and $-1/12=(\sqrt{-3}/6)^2$. We take the root
with $\varrho(0)=-\sqrt{-3}/6$. The isomorphism sends $(x,y)$ to
$(\varrho^2x-1/12,\ \varrho^3y-(\varrho^2x-1/12)/2)$, and at $q=0$ a
direct substitution gives
$m=\sqrt{-3}\,(1+u)/(1-u)$, that is,
$u=(m-\sqrt{-3})/(m+\sqrt{-3})$. The series $E_4$, $E_6$ and $\varrho$
converge on a disc around $q=0$ on which $E_4E_6\ne0$.
\end{proof}

From now on we write $x$ and $y$ for the coordinates $x'$ and $y'$ of
Lemma~\ref{nm:weierstrass-infinity}(c), so that the completed local
ring of $\cE_s$ at the node of its closed fiber is
$R_1[[s,x,y]]/(xy-s^{\ell})$. The Weierstrass coordinates are not used again.

\subsection{The cuspidal degeneration}\label{nm:sec-degeneration}

Put $\cD:=K_1[[s]]$, let $\cE_{\cD}:=\cE_s\otimes_{R_1[[s]]}\cD$ be
the base change of $\cE_s$ to $\cD$, and let
\[
 Y_s\longrightarrow E_s:=E_V\otimes_{K_1(V)}K_1((s))
\]
be the base change of $\cY_V\to E_V$ along the map
$\Spec K_1((s))\to V$ given by the punctured formal disc
at $c'$. Let $\cY_{\cD}$ be the normalization of
$\cE_{\cD}$ in the function field of $Y_s$.

\begin{lemma}[Cuspidal degeneration]\label{nm:cuspidal-degeneration}
Recall that $h\mid\ell$ (\S\ref{nm:sec-modular}). After a finite
extension of $K_1$, the following hold.
\begin{enumerate}
\item $\cY_{\cD}\to\cE_{\cD}$ is \'etale
over the complement of the origin section $O$ in the smooth locus of
$\cE_{\cD}$, and along $O$ it is tamely ramified of
index $e$: at each point over $O_c$ the completed local rings are
$K_1[[s,z]]\to K_1[[s,v]]$, $z\mapsto v^e$, for a coordinate $z$ with
$O=\{z=0\}$.
\item There are $|\Gp|/h$ points of $\cY_{\cD}$ over the node of
$E_c$, and $\cY_{\cD}\to\cE_{\cD}$ is unramified at the generic points
of the two branches of the cuspidal fiber through the node.
\item The fiber of $\cY_{\cD}$ over $s=0$, with its
$\Gp$-action, is the cuspidal cover $Y_c\to E_c$; in particular its
normalization is the normalized cuspidal cover
$\widetilde Y_c\to\widetilde E_c$.
\end{enumerate}
\end{lemma}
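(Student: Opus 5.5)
The plan is to work analytically through the fixed embedding $\iota$ and the Tate uniformization of Lemma~\ref{nm:weierstrass-infinity}(d), and then return to the formal algebraic setting over $\cD=K_1[[s]]$. All objects are defined over a number field inside $K_1$. The cover $\cY_V\to E_V$ and the coordinate $s$ near $c'$ are algebraic, so it suffices to determine $\cY_{\cD}$ after base change to a convergent disc. Over $\bC$, put $q=1/j(q)$, which is a power series in $s$ of the form $s^\ell\cdot$unit by \eqref{nm:j-and-s}. Lemma~\ref{nm:weierstrass-infinity}(d) identifies $\cE_s$ near $s=0$ with the pullback of $\Tate(q)$. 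The Tate curve over the punctured disc is $\bC^\times/q^\bZ$, degenerating to the $1$-gon. The vanishing cycle is the unit circle $|u|=1$, and the loop $s\mapsto e^{2\pi i}s$ acts on $\pi_1(E_t^\circ)$ by the $\ell$th power of the Dehn twist.

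Step 1 is to build the candidate degeneration explicitly. The monodromy along the vanishing cycle is $a$, and $a^\ell=a^{h\cdot(\ell/h)}=1$ because $h\mid\ell$. Hence the monodromy of the family around $s=0$ fixes the pair $(a,b)$ up to conjugation, and after the Dehn twist $b\mapsto a^\ell b=b$ it is trivial. I will then glue $\widetilde Y_c\times\Delta$ by the standard plumbing construction. At each point of $\widetilde Y_c$ over $0_c$, with local coordinate $x_1$ satisfying $x_1^h=x$, and at the matching point $\gamma_b(y)$ over $\infty_c$, with coordinate $y_1^h=y$, I glue along $x_1y_1=s^{\ell/h}$. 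This produces a nodal family over the disc which is Galois with group $\Gp$ over the local model $xy=s^\ell$ of Lemma~\ref{nm:weierstrass-infinity}(c). Its general fiber is a one-point $\Gp$-cover of $E_t$ with monodromy $(a,b)$ in the basis whose first element is the vanishing cycle. By Riemann's existence theorem, and because the monodromy around $s=0$ is trivial, this family agrees over the punctured disc with $Y_s$. The normalization in the function field is determined by the generic fiber. The glued family is normal: each chart is normal, since $x_1y_1=s^{\ell/h}$ is a normal surface singularity and everything else is smooth. Therefore the glued family is $\cY_{\cD}$ analytically, and hence also formally.

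Step 2 reads off (a)--(c) from the local charts. Away from $O$ and from the node, the family is étale. Along $O$ it is the product of the tame Kummer chart $z=v^e$ with the disc, which gives (a). There are $|\Gp|/h$ plumbing charts over the node, since the fiber over $0_c$ is $\Gp/\langle a\rangle$. In each chart the map $(x_1,y_1)\mapsto(x_1^h,y_1^h)$ is unramified at the generic points of $x_1=0$ and $y_1=0$, which gives (b). The fiber at $s=0$ is $\widetilde Y_c$ with $y$ identified to $\gamma_b(y)$, which is precisely $Y_c$; this is (c).

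The hard step is the comparison between the analytic plumbing and the formal normalization over $K_1[[s]]$, and in particular the claim that the node monodromy is $(a,ba^{-1}b^{-1})$ with the gluing given by $\gamma_b$. I would handle this by checking the monodromy of the Tate annulus $|q|<|u|<1$ directly. Then I would invoke GAGA for formal/analytic normalization, using that normalization commutes with completion, as in the standing conventions. A finite extension of $K_1$ is needed to define the roots $s^{\ell/h}$ coordinates and $e$th roots, and also to make all the points over the node rational.
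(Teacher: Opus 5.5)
Your argument is correct in outline, but it is organized differently from the paper's. You \emph{construct} the degeneration by $\Gp$-equivariantly plumbing $\widetilde Y_c\times\Delta$ along $x_1y_1=s^{\ell/h}$. You then identify the result with $\cY_{\cD}$ by uniqueness of normalization. The paper builds no candidate. It starts from the given torsor and splits the Tate family into the thick part $\Delta\times A$, with $A=\{\delta\le|u|\le\delta^{-1}\}$, and the node neighborhood $N=\{xy=s^{\ell}\}$. Over $\Delta^*\times(A\setminus O)$ it shows the cover is the pullback of $\widetilde Y_c|_{A\setminus O}$, so the normalization there is $\Delta\times\widetilde Y_c|_A$. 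Over $N$ the monodromy group is $\langle a\rangle$, so the normalization there is $|\Gp|/h$ copies of $\{UV=s^{\ell/h}\}$. The identification $\gamma_b$ at $s=0$ is then read off from the path $\beta$.

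Your route displays the admissible-cover structure directly and makes (c) immediate. It does carry bookkeeping that the paper avoids.
\begin{itemize}
\item You must check that the gluing is equivariant. At $P=g\langle a\rangle$ and at $\gamma_b(P)$ the stabilizer generators are $gag^{-1}$ and $ga^{-1}g^{-1}$, so the characters on $x_1$ and $y_1$ are inverse and $x_1y_1$ is invariant.
\item You must choose the $h$th roots in the gluing so that the fiber over the base point has monodromy exactly $(a,b)$. The other choices amount to rotating $s$ by an $\ell$th root of unity and produce $(a,a^kb)$ instead.
\end{itemize}

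Several justifications need tightening.
\begin{itemize}
\item That the $\ell$th power of the Dehn twist fixes $(a,b)$ only shows that the image $g$ of a loop around $s=0$ through the thick part centralizes $a$ and $b$. You need $Z(\Gp)=1$ to conclude $g=1$. This is precisely what makes $Y_s$ agree with your plumbed family, whose monodromy along that loop is trivial by construction.
\item The passage between the complex disc and $K_1[[s]]$ is not GAGA. Write $\cY_{\cD}$ as the base change of the normalization of a model over a number field $M$, along the regular morphism from $\Spec K_1[[s]]$ to a model of $\overline V$ over $M$; normalization commutes with regular base change. Then (a)--(c) become statements about an algebraic surface over $M$ near the fiber over $c'$. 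These can be checked analytically, since analytification commutes with normalization and preserves completed local rings.
\item Over $\bC$ your $s$ is a holomorphic coordinate with $q=s^{\ell}$. This comes from $\ell$ being the ramification index of $j_{\overline V}$ at $c'$, not from \eqref{nm:j-and-s}, which lives over $R_1[[s]]$.
\item The vanishing cycle should be a circle $|u|=r$ with $|q|<r<1$ (say $|u|=|q|^{1/2}$), not $|u|=1$, which passes through $O$.
\end{itemize}
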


\begin{proof}
All objects are defined over a number field $M\subset K_1$ that
contains $\sqrt{-3}$: the curves $V$ and $\overline V$, the cusp $c'$,
the cover $\cY_V$ and the generic fiber of $\cE_{\overline{\cV}}$,
which is the pullback of the Weierstrass family over
$\bP^1_M\setminus\{0,1728\}$. Let $\overline V_M$ be the model of
$\overline V$ over $M$, let $\overline V_M'$ be the complement of
$j_{\overline V}^{-1}(\{0,1728\})$ in it, and let
$\cE_M\to\overline V_M'$ be the model of the Weierstrass family. Let
$\cY_M$ be the normalization of $\cE_M$ in the function field of the
model of $\cY_V$ over $M$, a normal surface over $M$, finite over
$\cE_M$. The morphism $\Spec\cD\to\overline V_M'$ is regular: it is
composed of a completion, a localization,
and the base change $M\to K_1$ in characteristic 0.
Normalization commutes with regular base change \cite{Stacks}*{0C22},
so $\cY_{\cD}=\cY_M\times_{\overline V_M'}\Spec\cD$, and the three
assertions are statements about the algebraic surface $\cY_M$ near the
fiber over $c'$ that may be checked after extending the constant field.
Embed $M\subset\Qbar$ in $\bC$ by $\iota$, through which the Nielsen
pair representing $c$ is defined; the inclusion $M\subset K_1$ is the
one induced by $v$. Analytification commutes with normalization, and
the completed local rings of an algebraic surface and of its
analytification agree, so it suffices to describe the normalization of
the analytic family defined by $\cE_M$ over a small disc
$\Delta\subset\overline V(\bC)$ around $c'$.

\emph{The analytic Tate family.} By
Lemma~\ref{nm:weierstrass-infinity}(d), after shrinking $\Delta$ the
analytic family over $\Delta$ is isomorphic to the pullback of the Tate
curve along the holomorphic function $q$ on $\Delta$ with $j(q)=j$.
Since $1/j(q)=q-744q^2+\cdots$ and $\ell$ is the ramification index of
$j_{\overline V}$ at $c'$, this function has a zero of order $\ell$ at
$c'$. In this proof $s$ denotes a holomorphic coordinate on $\Delta$
with $q=s^{\ell}$; the assertions of the lemma do not depend on the
choice of a coordinate at $c'$. The Tate curve is uniformized by two
explicit series $X(u,q)$, $Y(u,q)$ \cite{SilvermanAdvanced}*{Theorem~V.1.1}.
For $s\in\Delta^*:=\Delta\setminus\{0\}$ they identify the fiber with
$E_s(\bC)=\bC^{\times}/q^{\bZ}$, the origin $O$ being the class of
$u=1$. At $s=0$ they are the uniformization of
Lemma~\ref{nm:weierstrass-infinity}(d), so $u$ is the coordinate on
$\widetilde E_c$ in which $0_c=0$, $O_c=1$ and $\infty_c=\infty$. In
terms of $(x,y):=(u,q/u)$ the two series extend holomorphically to a
neighborhood of the origin, where their linear terms are $x+y$ and
$-y$, and the Tate surface projects isomorphically to the plane of its
Weierstrass coordinates near its node, because the derivative of its
equation with respect to $q$ equals $1$ there. So a neighborhood of the
node in the total space is
\[
 N:=\{(x,y,s):\ xy=s^{\ell},\ |x|,|y|<\delta\},
 \qquad x=u,\quad y=q/u,
\]
and its fiber $N_s$ over $s\ne0$ is the annulus
$\{|q|/\delta<|u|<\delta\}$, whose core circle
$C_s:=\{|u|=|q|^{1/2}\}$ is the vanishing cycle. The complement
$A_s:=E_s\setminus N_s$ is the image of the compact annulus
$A:=\{\delta\le|u|\le\delta^{-1}\}$, which contains $O$. Thus
$\{A_s\}_{s\in\Delta}$ is the trivial family $\Delta\times A$, whose
fiber at $s=0$ is $\widetilde E_c$ minus two small discs around $0_c$
and $\infty_c$.

\emph{Monodromy.} Fix $s_0\in\Delta^*$ and a base point $u_0$ of
$E_{s_0}^\circ$. Let $(\alpha,\beta)$ be the basis of
$\pi_1(E_{s_0}^\circ)$ formed by the class $\alpha$ of $C_{s_0}$ and the
class $\beta$ of a path from $u_0$ to $qu_0$. By the definition of a
Nielsen pair representing $c$, the monodromy homomorphism of the cover
of $E_{s_0}^\circ$ sends $(\alpha,\beta)$ to such a pair $(a,b)$. As
$s$ runs once
around $0$, the parameter $q$ runs $\ell$ times around $0$, and the
monodromy of the family $\{E_s^\circ\}$ on $E_{s_0}^\circ$
is the $\ell$th power of the Dehn twist about $C_{s_0}$, which sends
$(\alpha,\beta)$ to $(\alpha,\alpha^{\ell}\beta)$; the images of these
elements in $\Gp$ are $(a,a^{\ell}b)=(a,b)$, because $h\mid\ell$. The covers
$Y_s$ form a $\Gp$-torsor over the family
$\{E_s^\circ\}_{s\in\Delta^*}$. The fundamental group of the total
space of this family is generated by $\pi_1(E_{s_0}^\circ)$ and a loop
$t$ around $s=0$ through a base point in $A_{s_0}$, which acts on
$\pi_1(E_{s_0}^\circ)$ by this power of the Dehn twist. The image
$g\in\Gp$ of $t$ under the monodromy representation of the torsor
therefore satisfies $gag^{-1}=a$ and $gbg^{-1}=a^{\ell}b=b$. As $(a,b)$
generates $\Gp$ and $Z(\Gp)=1$, we get $g=1$. Consequently the
restriction of the torsor to the complement of the annuli $N_s$, where
the Dehn twist can be taken to be the identity and the family is the
product $\Delta^*\times(A\setminus O)$, is the pullback of its
restriction to one fiber.

\emph{Over the complement of the node.} By the last remark, the
restriction of the family of covers to $\Delta^*\times(A\setminus O)$
is the pullback of a single finite \'etale cover of $A\setminus O$,
namely the restriction of the cover $\widetilde Y_c\to\widetilde E_c$:
cutting $E_{s_0}$ along the vanishing cycle turns the punctured torus
into the annulus $A\setminus O$, whose fundamental group is free on
the loop around the inner boundary circle, which is $\alpha$, and the
loop around $O$; the loop around the outer boundary circle, transported
to the base point along the path $\beta$ across the cut, is
$\beta\alpha^{-1}\beta^{-1}$; and the product of the three loops around
$0_c$, $\infty_c$ and $O_c$ is one. Their images in $\Gp$ form the
triple \eqref{nm:cusp-triple}. Hence the
normalization of $\Delta\times A$ in the family of covers is
$\Delta\times(\widetilde Y_c|_A)$. It is \'etale over
$\Delta\times(A\setminus O)$ and tamely ramified of index $e$ along
$\Delta\times O$, with the Kummer form $z=v^e$ in a coordinate $z$
vanishing on $O$. This proves (a).

\emph{At the node.} Over $N\setminus\{s=0\}$, the family of covers
restricts to a finite \'etale cover of the family of annuli
$\{N_s\}_{s\in\Delta^*}$ whose monodromy group is $\langle a\rangle$,
of order $h$. Its connected components are indexed by
$\Gp/\langle a\rangle$, and each is a connected $h$-fold cyclic cover
of the annulus family. The connected $h$-fold cyclic cover of the
annulus $\{xy=t\}$ is the annulus $\{UV=t^{1/h}\}$ with $x=U^h$ and
$y=V^h$, and over $\Delta^*$ the root $t^{1/h}=s^{\ell/h}$ is
single-valued because $h\mid\ell$. Hence the normalization of $N$ in
the family of covers is the disjoint union of $|\Gp|/h$ copies of the
normal surface $\{UV=s^{\ell/h}\}$, each mapping by
$(U,V,s)\mapsto(U^h,V^h,s)$. Its completed local rings at the points
$U=V=s=0$ are $\bC[[s,U,V]]/(UV-s^{\ell/h})$, mapping to the
completed local ring $\bC[[s,x,y]]/(xy-s^{\ell})$ of the node by
$x=U^h$, $y=V^h$. This proves (b). There are $|\Gp|/h$ points over the
node. The generic points of the two branches of the cuspidal fiber
through the node are the primes $(s,x)$ and $(s,y)$, and at $(s,x)$
the element $s$ remains a uniformizer while the residue field
extension, generated by a $h$th root of $y$, is separable; the same
holds at $(s,y)$.

\emph{The fiber at $s=0$.} The normalization of the total space
restricts over $s=0$ to $\widetilde Y_c|_A$ together with the
$|\Gp|/h$ nodal fibers $\{UV=0\}$ of the surfaces just described. Each
surface $\{UV=s^{\ell/h}\}$ joins, for $s\ne0$, a boundary circle of
$\widetilde Y_c|_A$ over $0_c$ to the boundary circle over $\infty_c$
that the path $\beta$ carries it to; at $s=0$ the two branches $U=0$ and
$V=0$ attach the corresponding points of $\widetilde Y_c$ over $0_c$
and $\infty_c$ to a node. The fiber is therefore the nodal curve
obtained from $\widetilde Y_c$ by identifying each point over $0_c$
with its image under $\gamma_b$, with its $\Gp$-action: this is $Y_c\to E_c$,
which proves (c).
\end{proof}

\subsection{Proof of Proposition~\ref{nm:bad-cusp}}
\label{nm:sec-proof-bad-cusp}

We keep the notation of \S\ref{nm:sec-modular} and
\S\ref{nm:sec-degeneration}, with $h\mid\ell$.

\emph{Step 1: good reduction at the generic point of the special
fiber.}
Let $\eta$ be the generic point of the irreducible special fiber
$\overline{\cV}_k$ and let
\[
 F:=k(\overline{\cV}_k)
\]
be its residue field. Since $\overline{\cV}_k\to\bP^1_{j,k}$
is finite and separable, $F/k(\bar j)$ is finite separable, where
$\bar j$ denotes the image of $j$ in $F$, an element transcendental
over $k$. Let $R_\eta$ be the completion of the local ring of
$\overline{\cV}$ at $\eta$, a complete discrete valuation ring
with uniformizer $\pi$ and residue field $F$, and let $K_\eta$ be its
fraction field, a completion of the function field $K_1(V)$. Put
\[
 E_\eta:=E_V\otimes_{K_1(V)}K_\eta,\qquad
 Y_\eta:=\cY_V\otimes_{K_1(V)}K_\eta .
\]
Then $Y_\eta\to E_\eta$ is a geometrically connected $\Gp$-cover, branched only over the origin with
ramification index $e$, which is nontrivial and prime to $p$. The
elements $j$ and $j-1728$ of $K_\eta$ are units of $R_\eta$ with
nonzero residues, so the Weierstrass equation of $E_j$, whose
discriminant is $1728^2j^2/(j-1728)^3$, defines a smooth model
$\cE_\eta$ of $E_\eta$ over $R_\eta$, namely the base change of
$\cE_{\overline{\cV}}$ to $R_\eta$, with special fiber
$E_0:=\cE_\eta\otimes_{R_\eta}F$ and $j(E_0)=\bar j$. The
standing hypotheses of \S\ref{sec:generic} hold for
$(K,R,F,E,Y)=(K_\eta,R_\eta,F,E_\eta,Y_\eta)$, and since
$F/k(\bar j)$ is separable, Corollary~\ref{gen:generic-good} gives
good reduction of $Y_\eta\to E_\eta$: there are a finite
extension $K'/K_\eta$ with valuation ring $R'$ and a good model
\[
 \cY'\longrightarrow\cE_\eta\otimes_{R_\eta}R'
\]
of $Y_\eta\otimes_{K_\eta}K'\to E_\eta\otimes_{K_\eta}K'$, that is, a
finite $\Gp$-cover of smooth proper $R'$-curves which is \'etale outside
the origin section, along which it is tamely ramified with index $e$
(Lemma~\ref{bg:good-model}). Raynaud's descent theorem
\cite{Raynaud1999}*{Proposition~4.1.2 and Corollary~4.1.3}
now gives good tame reduction over $R_\eta$
itself: the normalization $\cY_\eta$ of $\cE_\eta$ in
$K_\eta(Y_\eta)$ is smooth over $R_\eta$, and
\begin{equation}\label{nm:special-good}
 \cY_\eta\longrightarrow\cE_\eta
 \quad\text{is a tame $\Gp$-cover, \'etale over }
 \cE_\eta^\circ:=\cE_\eta\setminus O.
\end{equation}

\emph{Step 2: transport to the formal neighborhood of the cusp.}
The inclusion of the local ring of $\overline{\cV}$ at
$\bar c'$ into its completion $R_1[[s]]$ induces a local homomorphism
of discrete valuation rings from the local ring at $\eta$ to
$R_1[[s]]_{(\pi)}$, both with uniformizer $\pi$, whose residue
extension $F\to k((s))$ is the completion of $F$ at $\bar c'$.
Let $R_s$ be the completion of $R_1[[s]]_{(\pi)}$, a complete discrete
valuation ring with residue field $k((s))$; the homomorphism above
extends to a local homomorphism $R_\eta\to R_s$. By
Lemma~\ref{bg:good-model}(e), the base change
$\cY_\eta\otimes_{R_\eta}R_s\to\cE_\eta\otimes_{R_\eta}R_s$ is a good
model of the base change of $Y_\eta\to E_\eta$ to the fraction field
of $R_s$: it is \'etale over $\cE_\eta^\circ\otimes_{R_\eta}R_s$. The curves
$\cE_\eta\otimes_{R_\eta}R_s$ and $\cE_s\otimes_{R_1[[s]]}R_s$ are both
the base change of $\cE_{\overline{\cV}}$ along the same morphism
$\Spec R_s\to\overline{\cV}$, so they coincide. Thus the good
reduction obtained over $F$ in Step~1 is available over $k((s))$.

\emph{Step 3: purity along the cuspidal fiber.}
Let $\Omega:=\operatorname{Frac}R_1[[s]]$, let $Y_\Omega\to E_\Omega$
be the base change of $\cY_V\to E_V$ along
$K_1(V)\to\Omega$, and let $\cY_s$ be the normalization of
$\cE_s$ in the function field of $Y_\Omega$. Normalization commutes with
regular base change
\cite{Stacks}*{0C22}, so the base change
of $\cY_s$ along $R_1[[s]]\to\cD=K_1[[s]]$ is $\cY_{\cD}$, and its
base change along $R_1[[s]]\to R_s$ is the normalization of
Step~2.

We describe $\cY_s$ along the cuspidal fiber $s=0$ through three
charts of $\cE_s$, the completed local rings $A$ at the points of the
fiber above the closed point $(\pi,s) = (0,0)$ of $\Spec R_1[[s]]$.
Each $A$ is a normal complete local domain
of dimension three. It is regular in the smooth chart and in the
origin chart, but not in the node chart when $\ell>1$.
In each chart, we either work over the chart itself or pull back along a finite morphism $\Spec B\to\Spec A$ with $B$ regular.
We then show that the normalization of the pullback is \'{e}tale over $\Spec B$ by checking unramifiedness at every height-one prime of $B$ and applying purity of the branch locus \cite{Stacks}*{0BMB}.
A height-one prime $\mathfrak P$ of $A$ cannot contain both $\pi$ and
$s$, because the closed fiber of $A$ over $R_1[[s]]$ has dimension
one. There are three cases.
\begin{itemize}
\item $\pi\in\mathfrak P$. Then $\mathfrak P\cap R_1[[s]]=(\pi)$. After
base change to $R_s$, Step~2 shows that the cover is \'etale over
$\cE_\eta^\circ\otimes_{R_\eta}R_s$, so it is unramified at
$\mathfrak P$.
\item $s\in\mathfrak P$. Then $\mathfrak P\cap R_1[[s]]=(s)$, and the
completion of $R_1[[s]]$ at $(s)$ is $\cD$. After base change to $\cD$
the cover is $\cY_{\cD}\to\cE_{\cD}$, which is described by
Lemma~\ref{nm:cuspidal-degeneration}. In the smooth chart and in the
origin chart $\mathfrak P=(s)$, and the cover is unramified there by
part (a) of the lemma. In the node chart $\mathfrak P$ is $(s,x)$ or
$(s,y)$, the generic point of one of the two branches of the cuspidal
fiber through the node, and the cover is unramified there by part (b).
\item $\pi\notin\mathfrak P$ and $s\notin\mathfrak P$. Put
$W:=\Spec R_1[[s]][1/(\pi s)]$. The morphism
$\Spec A_{\mathfrak P}\to\Spec R_1[[s]]$ factors through $W$, and the
composite $W\to\overline{\cV}$ lands in $V$: it avoids the special
fiber because $\pi$ is invertible on $W$, and it avoids the points of
$\overline V\setminus V$, which lie over $j\in\{0,1728,\infty\}$,
because by \eqref{nm:j-and-s} the functions $j^{-1}=s^{\ell}\epsilon_0$,
$j$ and $1-1728j^{-1}$ are units on $W$. Over $V$ the curve
$\cE_{\overline{\cV}}$ is $E_V$, so $\cE_s\times_{\Spec R_1[[s]]}W
=E_V\times_VW$. The finite cover $\cY_V\times_VW$ of this curve has
the function field of $Y_\Omega$, and it is normal: $\cY_V\to E_V$ is
\'etale over $E_V^\circ$ and is, locally for the \'etale topology along
$O$, a disjoint union of Kummer covers $v^e=z$ for a local equation $z$
of $O$ \cite{Stacks}*{0EYG}; both descriptions persist after the base
change $W\to V$, and these Kummer covers of the regular scheme
$E_V\times_VW$ are regular. Hence
$\cY_s\times_{\Spec R_1[[s]]}W=\cY_V\times_VW$, which is \'etale away
from $O$ and tamely ramified of index $e$ along $O$.
\end{itemize}
So $\cY_s$ is unramified at every height-one prime of every chart,
except at the prime of the origin chart that defines $O$, where it is
tamely ramified of index $e$.

\emph{Smooth chart.} At a point of the cuspidal fiber away from $O_c$
and the node, the completed local ring of $\cE_s$ is regular, and
the cover is unramified at every height-one prime by the case
analysis; so $\cY_s$ is \'etale over this chart.

\emph{Origin chart.} The completed local ring of $\cE_s$ at the
origin of the cuspidal fiber is $R_1[[s,z]]$ with $O=\{z=0\}$, along
which the cover is tamely ramified of index $e$. Pull back along
$z=v^e$ to the regular ring $B_O:=R_1[[s,v]]$. By Abhyankar's lemma
\cite{Stacks}*{0EYG} the normalized pullback is unramified at $(v)$,
the only height-one prime of $B_O$ over $O$, and it is unramified at
the other height-one primes by the case analysis; so it is \'etale
over $B_O$.

\emph{Node chart.} The completed local ring of $\cE_s$ at the
node of the cuspidal fiber is
$A_{\rm node}:=R_1[[s,x,y]]/(xy-s^{\ell})$, and we use the finite map
\begin{equation}\label{nm:purity-chart}
 A_{\rm node}\longrightarrow B_{\rm node}:=R_1[[u,v]],
 \qquad x=u^{\ell},\quad y=v^{\ell},\quad s=uv,
\end{equation}
which identifies $A_{\rm node}$ with the ring of invariants of the
balanced action $u\mapsto\zeta u$, $v\mapsto\zeta^{-1}v$ of
$\mu_{\ell}$ on the regular ring $B_{\rm node}$. Let
$\cY_{\rm node}$ be the normalization of $B_{\rm node}$ in the
finite \'etale algebra obtained from the function field of
$Y_\Omega$ by base change to $\operatorname{Frac}B_{\rm node}$. Let
$\mathfrak Q$ be a height-one prime of $B_{\rm node}$ and
$\mathfrak P$ the prime of $A_{\rm node}$ below it, which has height
one because $A_{\rm node}$ is normal and $B_{\rm node}$ is finite over
it. The node chart does not meet $O$, so by the case analysis $\cY_s$
is unramified, hence \'etale, over the discrete valuation ring
$(A_{\rm node})_{\mathfrak P}$. Its base change to
$(B_{\rm node})_{\mathfrak Q}$ is \'etale over a normal ring, hence
normal, and so it is the localization of $\cY_{\rm node}$ at
$\mathfrak Q$. Hence $\cY_{\rm node}$
is finite, normal and unramified in codimension one over the regular
ring $B_{\rm node}$, so it is finite \'etale by purity. Finite \'etale
covers of the complete local ring $B_{\rm node}$ correspond to finite
\'etale algebras over its residue field $k$, so after a finite
unramified extension of $K_1$ the cover $\cY_{\rm node}$ splits
into copies of $B_{\rm node}$. The normalization of $A_{\rm node}$ in
the cover is the quotient of $\cY_{\rm node}$ by $\mu_{\ell}$;
a copy of $B_{\rm node}$ with stabilizer $\mu_r\subseteq\mu_{\ell}$
contributes the invariant ring
\[
 B_{\rm node}^{\mu_r}=R_1[[U,V,s]]/(UV-s^{r}),
 \qquad U=u^{r},\quad V=v^{r},
\]
which has degree $\ell/r$ over $A_{\rm node}$. The cover has degree
$|\Gp|$ and, by Lemma~\ref{nm:cuspidal-degeneration}(b), $|\Gp|/h$ points
over the node, permuted transitively by $\Gp$; hence $\ell/r=h$.

\emph{The cuspidal fiber.} Let
$\cY_{s=0}:=\cY_s\otimes_{R_1[[s]]}R_1$ be the fiber of
$\cY_s$ over $s=0$. By Lemma~\ref{nm:cuspidal-degeneration}(c)
its generic fiber is the cuspidal cover $Y_c\otimes K_1$. Let
$\widetilde{\cY}_c$ be the normalization of $\cY_{s=0}$.
By the three charts, $\widetilde{\cY}_c$ is \'etale over the
smooth locus of $E_c\otimes R_1$ away from $O_c$, it is the tame
Kummer cover $z=v^e$ near $O_c$, and near the node it is the
normalization of $R_1[[U,V]]/(UV)$, that is, the disjoint union of
the two smooth branches $R_1[[U]]$ and $R_1[[V]]$, on which the points
over the node branches are the sections $U=0$ and $V=0$. Hence
$\widetilde{\cY}_c$ is a smooth proper $R_1$-curve, and
\[
 \widetilde{\cY}_c\longrightarrow
 \widetilde{\cE}_c:=\bP^1_{R_1}
\]
is a finite tame $\Gp$-cover of the normalization of $E_c\otimes R_1$,
branched only along the three disjoint sections $0_c$, $O_c$,
$\infty_c$, with generic fiber the normalized cuspidal cover
$\widetilde Y_c\otimes K_1\to\widetilde E_c\otimes K_1$. After a
finite extension of $K_1$, its connected components are geometrically
connected and the points over the three branch values are
$K_1$-rational. The component
$\widetilde{\cY}_c^{1}$ with generic fiber
$\widetilde Y_c^{1}\otimes K_1$ is smooth over $R_1$ and finite over
$\widetilde{\cE}_c$, hence is its normalization in that generic fiber.
It is therefore a good model of $\nu_c$ over
$R_1$, for the marking of $\widetilde E_c$ at $0_c$, $\infty_c$ and
$O_c$ fixed in \S\ref{bg:reduction}: $\nu_c$ has good reduction at
$v$, which proves Proposition~\ref{nm:bad-cusp}. \qed

\section{Existence of an irreducible cusp}
\label{nm:full-cusp}

To apply Theorem~\ref{thm:nm-arithmetic} to a normalized cuspidal
cover we need a cusp $c$ with $H_a=\Gp$, so that
$\widetilde Y_c\to\widetilde E_c$ is connected
(\S\ref{nm:cuspidal-notation}). This section proves that, for
$p>7$, every geometric connected component of $\Mabs{\Gp}$ has such a
cusp, and then assembles the
proof of Theorem~\ref{thm:nm-degree}. The argument
is group-theoretic: some primitive word in the generating pair has
image of order greater than five (Lemma~\ref{nm:large-primitive}), and
Lemma~\ref{nm:proper-subgroup} turns such an image into a cusp with
$H_a=\Gp$ (Lemma~\ref{nm:full-normalization}). We also compare the
pinched order with the cusp width (Lemma~\ref{nm:width}), which settles
the degree bound directly when that order is $p$.

\subsection{Primitive words and cusp widths}

A word in the free group $F_2$ is \emph{primitive} if it belongs to
some free basis. For a generating pair $(a,b)$ of a finite group
$G$ with associated surjection $\varphi_{a,b} : F_2\twoheadrightarrow G$ which sends the standard basis to $a,b$, the
\emph{primitive images} of $(a,b)$ are the images of the primitive words
under $\varphi_{a,b}$. Each primitive image can be placed first in an oriented free
basis and hence is the pinched generator at a cusp of the same
component. The \emph{absolute cusp width} of that cusp is the length
of the corresponding Dehn-twist cycle on the absolute sheets; it is
the ramification index of the $j$-map $j_X$ at the cusp.

Choose lifts $A,B\in\SL_2(\bF_p)$ of $a,b$ and write
\[
 x=\tr(A),\quad y=\tr(B),\quad z=\tr(AB),\quad
 \tau=\tr([A,B]).
\]
By \eqref{bg:fricke} the Fricke relation is $x^2+y^2+z^2-xyz=\tau+2$,
and $\tau\ne2$ because the pair is absolutely irreducible. By
Lemma~\ref{bg:absolute-sheets}, the absolute sheets over a general
$j$ are the sign classes \eqref{bg:signs} of trace triples. The oriented Nielsen transformations
$(a,b)\mapsto(b,a^{-1})$ and $(a,b)\mapsto(a,ab)$ act on trace
triples by
\begin{equation}\label{nm:nielsen}
 S(x,y,z)=(y,x,xy-z),\qquad
 T(x,y,z)=(x,z,xz-y),
\end{equation}
as follows from $\tr(A^{-1}B)=xy-z$ and $\tr(A^2B)=xz-y$. Thus the
orbit of a trace triple under $S$ and $T$, taken modulo the sign
changes of \eqref{bg:signs}, has cardinality $d_X$. The transformation $T$ is
the Dehn twist about the first generator.

\begin{lemma}\label{nm:width}
If a primitive image has order $h>2$, its absolute cusp width is
$h$. In particular a primitive image of order $p$ gives $d_X\ge p$.
\end{lemma}

\begin{proof}
Let $a$ be the primitive image, placed first. Since $h>2$, its trace
satisfies $x\ne0$. The twist $T$ fixes $x$ and acts on $(y,z)$ by
the matrix
$M_x=\left(\begin{smallmatrix}0&1\\-1&x\end{smallmatrix}\right)$,
so $T^r$ returns the triple to its sign class exactly when
$M_x^r(y,z)^{\mathsf t}=\pm(y,z)^{\mathsf t}$. The vector $(y,z)$ is
nonzero, since $y=z=0$ would make $b$ and $ab$ involutions and the
image dihedral. The matrix $M_x$ has the same characteristic
polynomial as $A$. If $a$ is semisimple with lifted eigenvalues
$\lambda,\lambda^{-1}$, the least $r>0$ with
$M_x^r(y,z)^{\mathsf t}=\pm(y,z)^{\mathsf t}$ is the least $r$ with
$\lambda^r=\pm1$, namely $h$. If $a$ is unipotent, then $x=\pm2$ and
$M_x$ has a nontrivial Jordan block; on its eigenline the Fricke
relation gives $\tau=2$, contrary to $\tau\ne2$, and off that line the projective
period is $p$. Since every cusp cycle has length at most $d_X$, the
last assertion follows.
\end{proof}

For $h=2$ one has $x=0$, the sign $\epsilon_1$ in \eqref{bg:signs}
is free, and the width is one or two.

\subsection{A primitive image of ``large'' order}

\begin{lemma}\label{nm:large-primitive}
Let $p>7$. Every generating pair of $\Gp$ has a primitive
image of order greater than five.
\end{lemma}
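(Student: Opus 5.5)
Argue by contradiction in trace coordinates. Suppose $(a,b)$ is a generating pair of $\Gp$, $p>7$, all of whose primitive images have order at most five. Choose lifts $A,B$ and let $(x,y,z)$ be the trace triple. Every primitive image, in particular the first coordinate of every triple in the orbit of $(x,y,z)$ under $S$ and $T$ of \eqref{nm:nielsen}, then has order in $\{1,2,3,4,5\}$ in $\Gp$. Its lifted trace therefore lies in the finite set
\[
 \Sigma:=\{\pm2,\ 0,\ \pm1,\ \pm\sqrt2,\ \pm\tfrac{-1\pm\sqrt5}{2}\}\subset\bF_{p^2},
\]
the traces of elements of $\SL_2(\bF_p)$ of order dividing $4$, $6$, $8$ or $10$ that map to orders $\le5$. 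Since the orbit is stable under $S$ and $T$, every coordinate of every triple in the orbit lies in $\Sigma$. Order $1$ is excluded for a primitive image that generates together with a partner, and order $p$ is excluded because $p>7>5$.

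Next use the twist $T$. Fix a triple $(x,y,z)$ in the orbit with $x\ne0$, i.e.\ $\ord(a)\in\{3,4,5\}$. The vectors $M_x^r(y,z)^{\mathsf t}$ from the proof of Lemma~\ref{nm:width} all have coordinates in $\Sigma$. They are $y\lambda^r c_1+\dots$, i.e.\ $y_r=\alpha\lambda^r+\beta\lambda^{-r}$ with $\lambda$ an eigenvalue of $A$, of order $\le10$. Each Markoff-type relation
\[
 x^2+y_r^2+y_{r+1}^2-xy_ry_{r+1}=\tau+2
\]
then forces $\tau+2$ into the finite set $\{s_1^2+s_2^2+s_3^2-s_1s_2s_3: s_i\in\Sigma\}$. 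More usefully, the pair $(A,B)$ is determined up to conjugacy by $(x,y,z)$ (Lemma~\ref{nm:trace-rigidity}). The key point is that all the triples are specializations of finitely many algebraic triples with coordinates of the form $2\cos(2\pi k/n)$, $n\mid 8,10,12$, whose orbit under $S,T$ is finite.

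So lift: each trace in $\Sigma$ is the reduction of a unique algebraic integer $2\cos(2\pi k/n)$. Applying the lift to a single triple gives a characteristic-zero pair $(\widetilde A,\widetilde B)$ in $\SL_2(\overline{\bQ})$, built explicitly as in the proof of Lemma~\ref{nm:indefinite-place}. The goal is to show that its Nielsen orbit consists of triples with all coordinates in the lifted set $\widetilde\Sigma$, hence is finite. This requires that the recursions $S$, $T$ commute with reduction and that the reduction map $\widetilde\Sigma\to\Sigma$ is injective, which holds for $p>7$ after checking that the differences of elements of $\widetilde\Sigma$ are units away from $2,3,5$. A finite orbit in characteristic zero is one of the Dubrovin--Mazzocco finite orbits, or gives a finite image group. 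The cleaner route is to use the bounded-trace argument: the triples with coordinates in $\widetilde\Sigma$ at every complex embedding lie in a finite set, so the group $\langle\widetilde A,\widetilde B\rangle$ has only finitely many traces of primitive elements. This should force it to be finite, and hence a finite subgroup of $\SL_2(\bC)$ surjecting onto $\Gp$, which contradicts Lemma~\ref{bg:finite-subgroups}.

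The main obstacle is the last implication, namely that finitely many primitive traces force a finite group. An alternative that avoids it is a direct combinatorial check. The orbit of $(x,y,z)$ modulo signs consists of triples in $\Sigma^3$ satisfying a fixed Fricke relation. One then shows, by a short case analysis on $\ord(a)\in\{3,4,5\}$ and the $T$-periodicity of Lemma~\ref{nm:width}, that the subgroup generated by $a,b$ has all elements $a^ib$ of order $\le5$. Combined with Lemma~\ref{nm:proper-subgroup} and Dickson's list, this places $\langle a,b\rangle$ in $A_4$, $S_4$ or $A_5$, which is proper for $p>7$ and contradicts generation. I would try the combinatorial route first, since $|\Sigma|\le 11$ makes the enumeration finite and independent of $p$ once the reduction injectivity for $p>7$ is established.
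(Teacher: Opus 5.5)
Your first reduction, which places every coordinate of every triple in the orbit in a fixed finite set of cyclotomic traces and lifts to characteristic zero, is also the paper's starting point. The two steps that carry the argument, however, do not go through.

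\emph{The lifting criterion is wrong.} Injectivity of reduction on $\widetilde\Sigma$ does not let you lift the orbit. After a move $T$ the new coordinate is $\tilde x\tilde z-\tilde y$, an algebraic integer whose reduction lies in $\Sigma$. To conclude that it lies in $\widetilde\Sigma$ you need $p\nmid N_{L/\bQ}(xy-z-t)$, with $L=\bQ(\sqrt2,\sqrt5)$, for every nonzero $xy-z-t$ with $x,y,z,t\in\cT$. Checking only the differences $t-t'$ covers just the case $x=0$. The nonzero norms of these expressions include $31$, $41$, $49$ and $121$, so your lifting does not cover $p=11,31,41$. The paper treats those three primes by a separate computation over $\bF_p$.

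\emph{You never show the generated group is small.} This is the more serious gap. You leave ``finitely many primitive traces $\Rightarrow$ finite group'' open, and it is not a formality: the triples $(0,0,z)$ have finite $S,T$-orbits for every $z$, with infinite image when $z$ is not the trace of a finite-order element. Your combinatorial fallback is circular. Each $a^ib$ is already a primitive image, so ``all $a^ib$ have order $\le5$'' is just the hypothesis. Lemma~\ref{nm:proper-subgroup} presupposes a proper subgroup, and Dickson's list does not exclude $\langle a,b\rangle=\Gp$ itself, which is exactly what must be proved.

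\emph{The missing ingredient is an explicit enumeration combined with triangle-group relations.}
\begin{enumerate}
\item Iteratively delete from $\cT^3$ every triple with an $S$- or $T$-image outside the current set. The survivors, $229$ triples in ten orbits, are exactly the confined orbits.
\item Each confined orbit contains a basis whose images $a,b,ab$ have projective orders $(r,s,t)$ with $1/r+1/s+1/t>1$. For one orbit this holds only after passing to the generators $A$ and $AC^2$ with $C=AB$, which give type $(3,2,5)$.
\item These relations follow from Cayley--Hamilton over any commutative ring, so they hold in $\Gp$. Then $\Gp$ is a quotient of a spherical triangle group of order at most $60<|\Gp|$, a contradiction that needs no characteristic-zero finiteness theorem.
\end{enumerate}
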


\begin{proof}
The traces of elements of projective order two, three, four and five
form a fixed finite set in characteristic zero. We enumerate all
Nielsen orbits confined to that set and then isolate the primes at
which reduction can create an additional confined orbit. The finite
calculations and their executable certificates are supplied in
\cite{NonMarkoffCert}.

Suppose all primitive images had order at most five. A primitive
image is never the identity, since it belongs to a generating pair.
So every coordinate of every triple in the orbit \eqref{nm:nielsen}
would be the reduction of an element of
\[
 \cT=\left\{0,\ \pm1,\ \pm\sqrt2,\
 \pm\frac{1+\sqrt5}{2},\ \pm\frac{1-\sqrt5}{2}\right\},
\]
the nine roots of $t(t^2-1)(t^2-2)(t^4-3t^2+1)$ in
$L=\bQ(\sqrt2,\sqrt5)$.

For $x,y,z,t\in\cT$, the nonzero integers
$|N_{L/\bQ}(xy-z-t)|$ are exactly
\[
 1,4,9,16,25,31,36,41,49,64,81,100,121,256,324.
\]
Let $p\notin\{2,3,5,7,11,31,41\}$, and let $\rho$ be reduction at a
prime of $L$ above $p$. If $x,y,z,t\in\cT$ satisfy $\rho(xy-z)=\rho(t)$,
then $xy-z=t$, since otherwise $p$ would divide the norm of $xy-z-t$;
with $x=0$ this shows that $\rho$ is injective on $\cT$, as $\cT=-\cT$.
The maps $S$, $T$ and their inverses replace one coordinate of a triple
by an expression $xy-z$ in the other two and commute with $\rho$. Hence
a triple in $\rho(\cT)^3$ whose orbit under $S$ and $T$ stays in
$\rho(\cT)^3$ is the reduction of a unique triple in $\cT^3$, the orbit
of that triple stays in $\cT^3$, and $\rho$ maps the one orbit
bijectively onto the other. It therefore suffices to find the orbits in
$\cT^3$ that stay in $\cT^3$, which we call \emph{confined}.

To find every confined orbit, start with the $9^3=729$ triples in
$\cT^3$; delete each triple for which an $S$- or $T$-image
lies outside the current set, and repeat until no deletion occurs. A
confined orbit survives every deletion. Conversely, the final set is
preserved by the injective maps $S$ and $T$, which therefore permute
it, so its orbits are exactly the confined orbits. The two nonempty
deletion rounds remove $464$ and $36$ triples, leaving $229$ triples
in ten orbits.

For nine of these orbits the calculation finds a triple of orders
$(r,s,t)$ with $1/r+1/s+1/t>1$; the generated projective group is
then a quotient of the finite spherical triangle group with these
orders. The tenth orbit requires a change of generators, given below.
\begin{center}
\begin{tabular}{rll}
Number of orbits&Orbit sizes&Spherical type\\\hline
1&16&$(2,3,3)$\\
1&36&$(2,3,4)$\\
2&40,40&$(2,3,5)$\\
1&6&$(2,2,3)$\\
1&6&$(2,2,4)$\\
2&6,6&$(2,2,5)$\\
1&1&$(2,2,2)$\\
1&72&$(3,2,5)$, by the word below
\end{tabular}
\end{center}
For the last orbit take $\tr(A)=\tr(B)=-1$ and $z^2-z-1=0$. Put
$C=AB$ and $V=AC^2$. Cayley--Hamilton gives
\[
 \tr(AC)=1-z,\qquad \tr(V)=0,\qquad \tr(AV)=1-z,
\]
so projectively $A^3=V^2=(AV)^5=1$. Since $C$ has order five and
$C^2=A^{-1}V$, one has $C=(C^2)^3$, so $A$ and $V$ generate the
original image. Every image in the table is consequently a quotient
of a spherical triangle group of order at most $60$. The three trace
identities follow from the Cayley--Hamilton theorem over any
commutative ring, so these relations hold in $\Gp$.

It remains to treat $p=11,31,41$. Form all triples of roots of the
same polynomial in $\bF_p$, perform the same deletions, and choose one
representative of each remaining orbit. Triples with $\tau=2$ are
reducible, and triples with two zero coordinates generate a dihedral
group, so neither can generate $\Gp$. For every other orbit, realize
its representative by two matrices of determinant one and close the
generated subgroup under multiplication, identifying opposite
matrices; this computes its projective order. The exact outputs are
\begin{center}
\begin{tabular}{rrl}
$p$&Surviving triples&Non-skipped subgroup orders\\\hline
11&187&$12,60$\\
31&229&$12,24,60$\\
41&229&$12,24,60$
\end{tabular}
\end{center}
{\sloppy The ancillary files \texttt{small\_primitive\_orders.py} and
\texttt{small\_primitive\_orders.json} record the norm tests, the
orbit deletion and the full matrix closures. The independent
\texttt{small\_order\_magma\_check.m}, run in Magma V2.29-9,
reproduces them with a different number-field representation and
with the faithful action on $\bP^1(\bF_p)$; its output is
included. Since $|\Gp|>60$ for $p>5$, no orbit in either
table contains a generating pair, which completes the finite
reduction and proves the lemma.\par}
\end{proof}

\subsection{Existence of an irreducible cusp}

\begin{lemma}\label{nm:full-normalization}
Let $p\ge5$ and let $(a,b)$ be a generating pair of $\Gp$ whose
absolute structure lies in $X$. If the largest order $M$ of a primitive
image of $(a,b)$ is greater than five, then $X$ has a cusp $c$,
represented by a Nielsen pair $(a',b')$ in the $\SL_2(\bZ)$-orbit of
$(a,b)$, with pinched order $\ord(a')>5$ and $H_{a'}=\Gp$; its
normalized cuspidal cover $\widetilde Y_c\to\widetilde E_c$
is then connected with group $\Gp$.
\end{lemma}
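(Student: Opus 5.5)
We start from a primitive image $a'$ of order $M>5$, placed first in an oriented free basis, so that $(a',b')$ is a Nielsen pair in the $\SL_2(\bZ)$-orbit of $(a,b)$; it represents a cusp $c$ of $X$ with pinched order $\ord(a')=M$. Among all second basis elements we then look for one with $H_{a'}=\langle a',[a',b']\rangle=\Gp$. The freedom available is the Dehn twist $b'\mapsto a'^kb'$, but this only conjugates $H_{a'}$ by $a'^k$. So we also allow changing the first element while keeping order $>5$, e.g.\ replacing the basis by $(a',b'a'^{k})$-type moves, or choosing among several primitive images of large order.

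The main step is to show that $H_{a'}$ cannot be proper. Suppose it is. It contains $a'$ of order $>5$, so Lemma~\ref{nm:proper-subgroup} puts it in a Borel subgroup or a torus normalizer. In the Borel case $a'$ and $b'a'^{-1}b'^{-1}$ fix points $P$ and $b'(P)$ of $\bP^1(\bF_p)$ (or the same fixed pair if $a'$ is semisimple). The subgroup $H_{a'}$ fixes a common point $Q$. We then argue that $b'$ maps a fixed point of $a'$ to a fixed point of $a'$ lying in the same structure, so $\langle a',b'\rangle$ preserves a point or a pair of points, contradicting generation of $\Gp$. Concretely: if $a'$ is semisimple, with fixed pair $\{P_1,P_2\}$, then $H_{a'}$ lying in the stabilizer of a point or of a pair forces $b'\{P_1,P_2\}$ to meet $\{P_1,P_2\}$ or to coincide with it. If it coincides, $\langle a',b'\rangle$ lies in a torus normalizer, which is a contradiction. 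The torus-normalizer case is handled in the same way, because an element of order $>5$ in a dihedral group lies in its cyclic part, so $ba^{-1}b^{-1}$ has the same fixed pair as $a'$.

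The hard residual case is when $b'$ moves exactly one fixed point of $a'$ to the other, or when $a'$ is unipotent. For this case we plan to use trace coordinates. $H_{a'}$ is reducible exactly when $\tr[a',[a',b']]=2$, and by the Fricke identity applied to the pair $(a',[a',b'])$ with traces $(x,\tau,\tr(a'[a',b']))$ this becomes a polynomial condition in $(x,y,z)$. The twist $T$ changes $(y,z)$ by $M_x$ while fixing $x$ and $\tau$. We then show this condition cannot hold on the whole $T$-cycle, or else we pass to a different large primitive word, such as $a'b'$ when it has order $>5$. This is the step I expect to be the obstacle. If the direct argument fails, the fallback is to use the maximality of $M$: the reducibility condition should force the product elements $a'b'^{\pm1}$ to have order larger than $M$, contradicting the choice of $a'$ as a largest-order primitive image.

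Once $H_{a'}=\Gp$ is established, connectedness of $\widetilde Y_c\to\widetilde E_c$ with group $\Gp$ follows from the description in \S\ref{nm:cuspidal-notation}.
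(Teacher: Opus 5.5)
\textbf{There is a genuine gap, at exactly the step you flagged, and neither of your two plans for it can work.} The parts you handle are fine. The torus-normalizer case is excluded correctly. The unipotent case is in fact easy by your own fixed-point argument: $a'$ has a single fixed point, which $b'$ would then have to fix.

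The trouble is the configuration you call residual: $H_{a'}$ lies in a Borel subgroup and $b'$ carries one fixed point of the split semisimple $a'$ to the other. This configuration really occurs for generating pairs, so it cannot be ruled out, only avoided.

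Your first plan is impossible. The group $H_{a'}=\langle a',b'a'^{-1}b'^{-1}\rangle$ is generated by a pair with trace triple $(x,x,\tau)$. Its reducibility criterion is $2x^2+\tau^2-x^2\tau-4=(\tau-2)(\tau+2-x^2)=0$, that is, $x^2=\tau+2$. This depends only on $x$ and $\tau$, so it holds on the whole $T$-cycle or nowhere on it. That is consistent with your own remark that the twist only conjugates $H_{a'}$. So ``showing it cannot hold on the whole $T$-cycle'' cannot succeed.

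Your fallback via maximality of $M$ is false. Take an eigenbasis with $A=\diag(\lambda,\lambda^{-1})$. Under $x^2=\tau+2$ the Fricke relation factors as $(z-\lambda y)(z-\lambda^{-1}y)=0$. After choosing $\lambda$ suitably, $z=\lambda y$, which means the lower-right entry of $B$ vanishes. Then $\tr(AB)=\lambda y$ and $\tr(AB^{-1})=\lambda^{-1}y$, and the condition constrains only their ratio. For instance, $y=\lambda^{-1}$ makes $a'b'$ of projective order $3$, yet such a pair still generates $\Gp$ by Lemma~\ref{nm:nongenerating}. So nothing forces $a'b'^{\pm1}$ to have order larger than $M$.

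\textbf{The missing idea is to use this normal form to choose a new first basis element explicitly.} Because the lower-right entry of $B$ is zero, $\tr(A^rB)=\lambda^r y$ for every $r$. Note that $y\ne0$: if $y=0$ then $z=0$ and the image is dihedral. Since $\lambda^2$ has order $M$, the trace squares $\lambda^{2r}y^2$ for $0\le r<M$ are $M$ distinct nonzero values.

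A nonidentity element of projective order at most five has trace square $0$, $1$, $2$, or a root of $q^2-3q+1$. Discard these four nonzero values and the value $\tau+2$. Because $M>5$, some $r$ remains for which the primitive image $a'^rb'$ has order greater than five and trace square different from $\tau+2$.

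Now place $a'^rb'$ first in an oriented basis. The criterion $x^2\ne\tau+2$, together with Lemma~\ref{nm:proper-subgroup} and your exclusion of the torus normalizer (which only needs order $>2$), gives $H=\Gp$ at the corresponding cusp. Your proposal lacks this explicit selection of $r$, and without it the argument does not produce the required cusp.
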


\begin{proof}
First suppose that $h=\ord(a)>2$ and $\max(h,e)>5$, where
$e=\ord([a,b])$, and consider $H_a=\langle a,ba^{-1}b^{-1}\rangle$.
If $H_a$ is proper, Lemma~\ref{nm:proper-subgroup} puts it in a
Borel subgroup or in a torus normalizer, since it contains $a$ or
$[a,b]$ of order greater than five. In a torus normalizer, every
element outside the torus has order two, so the two conjugate
generators $a$ and $ba^{-1}b^{-1}$, of order $h>2$, lie in the torus;
that torus is the unique one containing $a$, and $bab^{-1}$ lies in it
as well, so $b$ normalizes it, contradicting
$\langle a,b\rangle=\Gp$. Thus a proper $H_a$ lies in a Borel subgroup,
and the pair $(a,ba^{-1}b^{-1})$, with trace triple $(x,x,\tau)$, is
reducible. The reducibility criterion for this triple is
\[
 2x^2+\tau^2-x^2\tau-4=(\tau-2)(\tau+2-x^2)=0,
\]
which, as $\tau\ne2$, says $x^2=\tau+2$. Conversely $x^2=\tau+2$
makes the pair reducible, and its common invariant line is then
defined over $\bF_p$: otherwise the Frobenius-conjugate line would be
invariant too, and the two generators would lie in a torus, which was
excluded. Hence, under the present assumptions, $H_a=\Gp$ if and only
if $x^2\ne\tau+2$.

Now choose $a$ of order $M$. If $x^2\ne\tau+2$, we are done.
Otherwise $a$ is split semisimple: a unipotent $a$ would give
$\tau+2=x^2=4$ and hence $\tau=2$, and the invariant line just found
is rational. Write $\lambda,\lambda^{-1}\in\bF_p$ for the eigenvalues
of $A$. Since $x^2=\tau+2$, the Fricke relation becomes
\[
 y^2+z^2-xyz=(z-\lambda y)(z-\lambda^{-1}y)=0.
\]
Neither $y$ nor $z$ vanishes: if one did, so would the other, and
the pair would be dihedral. Choose $\lambda$ so that $z=\lambda y$.
Then, in a basis diagonalizing $A$, the matrix $B$ has lower right
entry zero, and the primitive images $a^rb$ have lifted traces
$\tr(A^rB)=\lambda^ry$. Their squares $\lambda^{2r}y^2$ take $M$
distinct nonzero values as $r$ ranges over $0,\ldots,M-1$, because
$\lambda^2$ has order $M$. A nonidentity element of projective order
at most five has trace square $0$, $1$, $2$, or a root of
$q^2-3q+1$. The value $0$ does not occur among the $\lambda^{2r}y^2$,
so excluding the remaining four values and $\tau+2$ removes at most
five of the $M>5$ values, and some $a^rb$ has order greater than five and
trace square different from $\tau+2$. Apply the first paragraph to an
oriented basis whose first entry is $a':=a^rb$: the cusp it
represents has pinched order greater than five and $H_{a'}=\Gp$.
\end{proof}

\section{Collation}
\label{nm:collation}

This section assembles the proofs of Theorem~\ref{thm:nm-degree} and of
Corollaries~\ref{cor_MW} and~\ref{cor_connected} from the results of the
preceding sections.

\subsection{Proof of Theorem~\ref{thm:nm-degree}}
\label{nm:proof-main}

\begin{proof}[Proof of Theorem~\ref{thm:nm-degree}]
\emph{Markoff components.} By \cite{Chen}*{Theorem~1.2.5}, for
$p\ge3$ every $\SL_2(\bZ)$-orbit of absolute
$\SL_2(\bF_p)$-structures with $\tau=-2$ has cardinality divisible by
$p$. Such an orbit maps equivariantly onto the orbit of the
corresponding absolute $\Gp$-structures, with fibers of
constant cardinality at most four, the number of sign changes of a
lifted pair. For $p\ge5$ the degree $d_X$ of a Markoff component is
therefore divisible by $p$.

\emph{Small primes.} For $p\le 7$, the fact that $d_X\ge p$ can be checked directly. For $p = 2$, $G_2$ is the dihedral group of order 6, and inner and absolute structures coincide. The components are described in \cite{ChenModuli}*{Theorem~4.2.2}, and satisfy $d_X = 3 \ge 2$. For $p = 3$, $G_3\cong A_4$. Since $A_4$ surjects onto $C_3$, any component $X\subset M(A_4)^\abs$ surjects onto a component of $M(C_3)^\abs$ which is the congruence modular curve for $\Gamma_0(3) := \{\spmatrix{a}{b}{c}{d}\in\SL_2(\bZ)\;|\;c\equiv 0\mod 3\}$, of degree $4$ over $\bP^1_j$; hence $d_X\ge4$.

For $p=5,7$, a direct enumeration of the generating pairs up to automorphisms and inversion, and of the Nielsen moves on them (Lemma~\ref{bg:absolute-sheets}), shows that the components of $M(\Gp)^\abs$ have degrees $9,10$ for $p=5$ and $7,16,16,18$ for $p=7$; so $d_X\ge p$ in these cases too. This is consistent with the tables of \cite{ChenModuli}*{Appendix~B.2}, which list the components of $M(\Gp)$ with the indices $10,10,18$ and $7,7,32,32,36$ of their stabilizers in $\SL_2(\bZ)$: the $j$-map of a component of $M(\Gp)$ has degree the index or half of it, according as $-I$ lies in the stabilizer or not, and the map $M(\Gp)\to M(\Gp)^\abs$ has degree one or two on each component.

\emph{The remaining case.} Let $p>7$ and let $X$ be a non-Markoff
component. By Lemmas~\ref{nm:large-primitive} and
\ref{nm:full-normalization}, $X$ has a cusp $c$ with pinched order
$h>5$ and $H_a=\Gp$, so that
its normalized cuspidal cover
$\nu_c\colon\widetilde Y_c\to\widetilde E_c$ is connected with group
$\Gp$. If $h=p$, Lemma~\ref{nm:width}
gives $d_X\ge p$. Otherwise $h$ is prime to $p$, since the order of an
element of $\Gp$ is $p$ or divides $(p\pm1)/2$. By
Theorem~\ref{thm:nm-arithmetic} there is a place $v$ above $p$ at which
$\nu_c$ has bad reduction, so by Proposition~\ref{nm:bad-cusp} the
modular curve $j_X\colon\overline X\to\bP^1_j$ has bad reduction at
$v$. Suppose that $d_X<p$. The monodromy group of $j_X$ embeds in the
symmetric group $S_{d_X}$, so its order is prime to $p$, and
Theorem~\ref{bg:grothendieck} gives good reduction of $j_X$ at every
place above $p$, a contradiction. Hence $d_X\ge p$.
\end{proof}

\begin{corollary}[Bad reduction of the $j$-map]\label{nm:bad-reduction-jmap}
Let $p>7$ and let $X$ be a non-Markoff component of $\Mabs{\Gp}$. Then
the modular curve $j_X\colon\overline X\to\bP^1_j$ has bad reduction at
some place of $\Qbar$ above $p$. In particular, the monodromy group of
$j_X$ has order divisible by $p$.
\end{corollary}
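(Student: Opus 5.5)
The plan is to rerun the argument in the proof of Theorem~\ref{thm:nm-degree}, with the conclusion ``$d_X\ge p$'' replaced by the bad reduction that drove it. By Lemmas~\ref{nm:large-primitive} and~\ref{nm:full-normalization}, $X$ has a cusp $c$ with pinched order $h>5$ and $H_a=\Gp$. There are two cases, according to whether $h$ is prime to $p$ or equal to $p$; these are the only options, since element orders in $\Gp$ are $p$ or divide $(p\pm1)/2$.

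\emph{Case $h$ prime to $p$.} The normalized cuspidal cover $\nu_c$ is a connected three-point $\Gp$-cover with ramification indices $(h,h,e)$, all prime to $p$. Theorem~\ref{thm:nm-arithmetic} then supplies a place $v\mid p$ at which $\nu_c$ has bad reduction. By the contrapositive of Proposition~\ref{nm:bad-cusp}, whose hypotheses ($p>7$, $X$ non-Markoff, $h>2$ prime to $p$, $H_a=\Gp$) are exactly what we have, $j_X$ has bad reduction at $v$.

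\emph{Case $h=p$.} By Lemma~\ref{nm:width}, the ramification index of $j_X$ at $c$ is $p$. Any good model of $j_X$ would have ramification indices prime to $p$, by Lemma~\ref{bg:good-model}(c). So $j_X$ has bad reduction at every place above $p$.

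For the final assertion, suppose the monodromy group of $j_X$ had order prime to $p$. Then Theorem~\ref{bg:grothendieck} would give good reduction of $j_X$ at every place above $p$; here we use that $(\bP^1_j;0,1728,\infty)$ has good reduction for $p\ge5$. This contradicts the first part, so the order is divisible by $p$.

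No step is a real obstacle, since the substance lies in Proposition~\ref{nm:bad-cusp} and Theorem~\ref{thm:nm-arithmetic}. The only points to check are that the case $h=p$ is handled by Lemma~\ref{bg:good-model}(c) rather than by the degree bound, and that the width equals $h$ because $h>2$.
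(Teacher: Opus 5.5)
Your proposal is correct and is essentially the paper's proof. It uses the same irreducible cusp from Lemmas~\ref{nm:large-primitive} and~\ref{nm:full-normalization}, the same dichotomy $h=p$ (width $p$ by Lemma~\ref{nm:width}, then no good model by Lemma~\ref{bg:good-model}(c)) versus $p\nmid h$ (Theorem~\ref{thm:nm-arithmetic} followed by Proposition~\ref{nm:bad-cusp}), and the same appeal to Theorem~\ref{bg:grothendieck} for the monodromy assertion; the paper only adds, via Remark~\ref{nm:definite-remark}, that in the second case the bad reduction occurs at every place where $\nu_c$ is indefinite.
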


\begin{proof}
Let $c$ be a cusp of $X$ with pinched order $h>5$ and $H_a=\Gp$, as in
the proof of Theorem~\ref{thm:nm-degree}. If $h=p$, the width of $c$ is
$p$ by Lemma~\ref{nm:width}, so $j_X$ has a ramification index divisible
by $p$ and no good model at any place above $p$
(Lemma~\ref{bg:good-model}(c)). Otherwise $h$ is prime to $p$,
Theorem~\ref{thm:nm-arithmetic} gives a place $v$ above $p$ at which
$\nu_c$ has bad reduction, and Proposition~\ref{nm:bad-cusp} gives bad
reduction of $j_X$ at $v$; by Remark~\ref{nm:definite-remark} this
happens at every place at which $\nu_c$ is indefinite. The last
assertion follows from Theorem~\ref{bg:grothendieck}.
\end{proof}

\subsection{Proof of Corollaries~\ref{cor_MW} and~\ref{cor_connected}}
\label{nm:proof-corollaries}

We use the notation of \S\ref{intro:markoff}: $\Pi$ is the free group
on $\alpha,\beta$, and $F(p)_\tau$ is the set of
$\GL_2(\bF_p)$-conjugacy classes of surjections
$\varphi\colon\Pi\to\SL_2(\bF_p)$ with
$\tr[\varphi(\alpha),\varphi(\beta)]=\tau$, on which $\Out(\Pi)$ acts.
By \cite{Chen}*{Theorem~5.2.10}, the trace triple
$(\tr\varphi(\alpha),\tr\varphi(\beta),\tr\varphi(\alpha\beta))$
determines the class of an absolutely irreducible $\varphi$, and every
triple $(x,y,z)\in\bF_p^3$ with $x^2+y^2+z^2-xyz\ne4$ is the trace
triple of an absolutely irreducible representation.

\begin{lemma}[Nongenerating triples]\label{nm:nongenerating}
Let $p\ge5$ and $\tau\in\bF_p\setminus\{2\}$. Let $(x,y,z)\in\bF_p^3$
satisfy $x^2+y^2+z^2-xyz=\tau+2$, and let $(A,B)$ be a pair in
$\SL_2(\bF_p)$ with trace triple $(x,y,z)$. If two of $x,y,z$ are
zero, then $(A,B)$ does not generate $\SL_2(\bF_p)$. Conversely, if
$(A,B)$ does not generate $\SL_2(\bF_p)$, then two of $x,y,z$ are
zero, or $x$, $y$ and $z$ are roots of $t(t^2-1)(t^2-2)(t^4-3t^2+1)$.
In particular, at most $9^3+6$ points of the surface
$x^2+y^2+z^2-xyz=\tau+2$ are not trace triples of generating pairs.
\end{lemma}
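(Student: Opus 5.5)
The plan is to handle the two directions separately and then count exceptional points.

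For the first assertion, suppose two of $x,y,z$ vanish. A trace-zero element of $\SL_2(\bF_p)$ has order four and squares to $-I$, so its image in $\Gp$ is an involution. If $y=z=0$, the images $b$ and $ab$ in $\Gp$ are involutions. Then $a=(ab)b^{-1}$ is a product of two involutions, and $\langle a,b\rangle=\langle ab,b\rangle$ is dihedral. The cases $x=y=0$ and $x=z=0$ reduce to this one by the Nielsen moves \eqref{nm:nielsen}, which permute which coordinate is distinguished. By Lemma~\ref{bg:finite-subgroups} a dihedral group is solvable, so for $p\ge5$ the image cannot be $\Gp$. Hence $(A,B)$ cannot generate $\SL_2(\bF_p)$, since it would then generate $\Gp$ modulo $\pm I$.

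For the converse, suppose $(A,B)$ does not generate $\SL_2(\bF_p)$. By Lemma~\ref{bg:sl2-generation}, the image $H=\langle a,b\rangle$ in $\Gp$ is then proper. Since $\tau\ne2$, the pair is absolutely irreducible (Lemma~\ref{nm:trace-rigidity}, \cite{Chen}*{Lemma~5.2.9}), so $H$ fixes no point of $\bP^1(\overline\bF_p)$. In particular $H$ does not lie in a Borel subgroup. By Lemma~\ref{nm:proper-subgroup}, either $H$ lies in a torus normalizer, or every element of $H$ has order at most five.

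In the torus-normalizer case, with torus $C$, three possibilities arise.
\begin{itemize}
\item If $a,b\in C$, the pair is abelian and hence reducible over $\overline\bF_p$, which is excluded.
\item If exactly one of $a,b$ lies in $C$, say $a$, then $b$ and $ab$ lie outside $C$. Elements outside $C$ are involutions, so $y=z=0$.
\item If neither lies in $C$, then $x=y=0$.
\end{itemize}
In every subcase two traces vanish.

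In the remaining case $H$ has all element orders at most five. The elements $a$, $b$, $ab$ are nontrivial, since otherwise the pair would be reducible. Their projective orders therefore lie in $\{2,3,4,5\}$, and the traces $\pm\tr$ of such elements are exactly the roots of $t(t^2-1)(t^2-2)(t^4-3t^2+1)$. This is the same observation used in Lemma~\ref{nm:large-primitive}. The one point needing care is that order five in $\Gp$ may lift to order five or ten in $\SL_2$, but both choices give traces satisfying $t^4-3t^2+1=0$ up to sign, and that factor is even. So $x,y,z$ are all roots of this degree-nine polynomial.

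For the count, the polynomial has at most $9$ roots in $\bF_p$, which bounds the second family by $9^3$. For the first family, the triples with two zero coordinates lying on the surface $x^2+y^2+z^2-xyz=\tau+2$ are $(\pm s,0,0)$, $(0,\pm s,0)$ and $(0,0,\pm s)$ with $s^2=\tau+2$. That is at most $6$ points, which gives the bound $9^3+6$.

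I expect the main subtlety to be the use of absolute irreducibility to exclude the Borel and torus-abelian cases. This step has to invoke the criterion $\tau\ne2$ correctly; the rest is bookkeeping.
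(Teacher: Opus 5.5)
Your proof is correct and is essentially the paper's argument: the first assertion is the same dihedral observation (the paper writes it as $BAB^{-1}=A^{-1}$, so $|\langle A,B\rangle|\le 8p$). For the converse you exclude the Borel case by absolute irreducibility and then split into a torus-normalizer case and a small-order case; you get this split in one step from Lemma~\ref{nm:proper-subgroup}, whereas the paper derives it directly from the Sylow argument and the classification in Lemma~\ref{bg:finite-subgroups}.

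The only thing you leave tacit is that every point of the surface is the trace triple of some pair, which the final count needs. This is \cite{Chen}*{Theorem~5.2.10}, applicable since $\tau+2\ne4$, and the paper records it just before the lemma.
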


\begin{proof}
The pairs $(B,A)$ and $(A^{-1},AB)$ generate the same group as $(A,B)$
and have trace triples $(y,x,z)$ and $(x,z,y)$, so the coordinates may
be permuted. If $y=z=0$, then $B^2=(AB)^2=-I$, hence $BAB^{-1}=A^{-1}$,
and $\langle A,B\rangle=\langle A\rangle\langle B\rangle$ has at most
$8p$ elements, fewer than $|\SL_2(\bF_p)|$.

Conversely, let $H:=\langle A,B\rangle$ be a proper subgroup. The pair
is absolutely irreducible because $\tr[A,B]=\tau\ne2$
\cite{Chen}*{Lemma~5.2.9 and Theorem~5.2.10}. If $p$ divides $|H|$,
then $H$ contains a single Sylow $p$-subgroup, since two distinct ones
generate $\SL_2(\bF_p)$ by Lemma~\ref{bg:sl2-generation}; so $H$
normalizes it and lies in a Borel subgroup, which contradicts absolute
irreducibility. Hence $p\nmid|H|$, and the image $\bar H$ of $H$ in
$\PGL_2(\overline{\bF}_p)$ is cyclic, dihedral, $A_4$, $S_4$ or $A_5$
by Lemma~\ref{bg:finite-subgroups}. If $\bar H$ is cyclic, then $H$ is
abelian, hence diagonalizable over $\overline{\bF}_p$, which again
contradicts absolute irreducibility. If $\bar H$ is dihedral, then $H$
lies in the normalizer of a torus $T$ of $\SL_2(\overline{\bF}_p)$,
and the elements of $H$ outside $T$ have trace zero; not both of $A$
and $B$ lie in $T$, and if exactly one of them does then $AB\notin T$,
so two of $x,y,z$ are zero. In the remaining cases $A$, $B$ and $AB$
are noncentral, because a pair in which $A$, $B$ or $AB$ is central is
reducible, and their images in $\bar H$ have order $2$, $3$, $4$ or
$5$. A noncentral semisimple element of $\SL_2(\bF_p)$ whose image in
$\PGL_2(\overline{\bF}_p)$ has order $k$ has eigenvalues
$\lambda^{\pm1}$ with $\lambda^k=\pm1$ and $\lambda^j\ne\pm1$ for
$0<j<k$, so its trace $\lambda+\lambda^{-1}$ is a root of $t$,
$t^2-1$, $t^2-2$ or $t^4-3t^2+1$ according as $k=2,3,4,5$. Hence
$x$, $y$ and $z$ are among the nine roots of
$t(t^2-1)(t^2-2)(t^4-3t^2+1)$. The last assertion follows: a triple
with two zero coordinates has its third coordinate a square root of
$\tau+2$.
\end{proof}

\begin{lemma}[Symmetric generating pairs]\label{nm:symmetric-pair}
Let $p\ge41$ be prime and let $\tau\in\bF_p$, $\tau\ne2$. There is a
generating pair $(A,B)$ of $\SL_2(\bF_p)$ with $\tr A=\tr B$ and
$\tr[A,B]=\tau$.
\end{lemma}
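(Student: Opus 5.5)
We reduce to a point count on a conic. Since every triple with $x^2+y^2+z^2-xyz\ne4$ is the trace triple of an absolutely irreducible pair, we only need a triple $(x,x,z)\in\bF_p^3$ with
\[
 2x^2+z^2-x^2z=\tau+2
\]
whose associated pair generates $\SL_2(\bF_p)$. The condition $x^2+y^2+z^2-xyz-4=\tau-2\ne0$ holds automatically, so a suitable pair $(A,B)$ exists by \cite{Chen}*{Theorem~5.2.10}. Lemma~\ref{nm:nongenerating} then reduces generation to two avoidance conditions: the triple must not have two zero coordinates, and the coordinates must not all lie among the at most nine roots of $t(t^2-1)(t^2-2)(t^4-3t^2+1)$ in $\bF_p$.

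Fix $x\in\bF_p$ and regard the equation as a quadratic in $z$:
\[
 z^2-x^2z+(2x^2-\tau-2)=0,
\]
with discriminant
\[
 \delta(x)=x^4-8x^2+4\tau+8.
\]
This is solvable in $z$ exactly when $\delta(x)$ is a square in $\bF_p$. So we must count $x$ for which $\delta(x)$ is a square, equivalently $\bF_p$-points of the curve $w^2=\delta(x)$.

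\begin{itemize}
\item If $\delta$ is not a constant times a square in $\bF_p[x]$, the curve $w^2=\delta(x)$ has genus at most one, and Weil (or Hasse) gives at least $p-2\sqrt p-O(1)$ affine points. Hence at least about $(p-2\sqrt p)/2-2$ values of $x$ admit a root $z$.
\item Write $\delta(x)=(x^2-4)^2+4\tau-8$. This is a square of a polynomial only when $\tau=2$, which is excluded. So the first case always applies.
\end{itemize}

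We then discard the bad values of $x$. Two zero coordinates force $x=0$. The condition that $x$ and $z$ both lie among the nine special roots excludes at most nine values of $x$. So at most about ten values of $x$ are excluded, and we need
\[
 (p-2\sqrt p)/2-c>10
\]
for an explicit constant $c$. This should hold for $p\ge41$, possibly after a finite check of a few primes near $41$.

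The main obstacle is making the constants explicit enough to reach exactly $p\ge41$. I would first try bounding the genus-one curve by Hasse, $|N-p|\le2\sqrt p$, accounting carefully for points at infinity and for the roots of $\delta$. If the bound falls slightly short for small $p$, I would either sharpen the count by also using the second root $z'=x^2-z$, or verify the finitely many remaining pairs $(p,\tau)$ by a direct computation in the style of \cite{NonMarkoffCert}.
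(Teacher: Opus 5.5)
Your proposal is correct and is essentially the paper's argument. The paper substitutes $w=2z-x^2$ to reach the same curve $w^2=x^4-8x^2+4(\tau+2)$, counts solutions $(x,z)$ with Hasse (treating the degenerate case $\tau=-2$ through an explicit conic), and discards at most $18$ solutions via Lemma~\ref{nm:nongenerating}; this is equivalent to your discarding at most nine values of $x$, and no finite check is needed, since Hasse already gives at least $(p-1-2\sqrt p)/2>9$ admissible values of $x$ for $p\ge41$.
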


\begin{proof}
A pair $(A,B)$ with $\tr A=\tr B=x$ and $\tr AB=z$ has $\tr[A,B]=\tau$
exactly when
\begin{equation}\label{nm:symmetric-equation}
 2x^2+z^2-x^2z=\tau+2,
\end{equation}
by the Fricke identity \eqref{bg:fricke}. We first count the solutions
of \eqref{nm:symmetric-equation} in $\bF_p^2$. Since $p$ is odd, the
substitution $w=2z-x^2$ identifies them with the solutions of
\[
 w^2=D(x):=x^4-8x^2+4(\tau+2).
\]
If $\tau\ne-2$, the roots of $D$ are the square roots of
$4\pm2\sqrt{2-\tau}$, which are four distinct elements of
$\overline{\bF}_p$ because $\tau\ne2,-2$. The smooth projective model
of the curve $w^2=D(x)$ is then a double cover of $\bP^1$ branched at
four points, hence of genus one, and its two points at infinity are
rational over $\bF_p$ because the leading coefficient of $D$ is a
square. It is an elliptic curve over $\bF_p$, so by Hasse's theorem
\cite{SilvermanAEC}*{Theorem~V.1.1} the affine curve has at least
$p+1-2\sqrt p-2$ points over $\bF_p$. If $\tau=-2$, then
$D=x^2(x^2-8)$, and the pairs $(x,xu)$ with $x\ne0$ and $u^2=x^2-8$
are solutions; the conic $u^2-x^2=-8$ has $p-1$ points, at most two of
them with $x=0$, so there are at least $p-3$ solutions. In both cases
\eqref{nm:symmetric-equation} has at least $p-1-2\sqrt p$ solutions.

Each solution $(x,z)$ is the trace triple $(x,x,z)$ of a pair $(A,B)$
in $\SL_2(\bF_p)$, which is absolutely irreducible because
$\tr[A,B]=\tau\ne2$ \cite{Chen}*{Lemma~5.2.9 and Theorem~5.2.10}. If $(A,B)$ does not generate $\SL_2(\bF_p)$, then by
Lemma~\ref{nm:nongenerating} two of $x$, $x$, $z$ are zero, so $x=0$,
or $x$ is one of the nine roots of $t(t^2-1)(t^2-2)(t^4-3t^2+1)$,
among which is $0$; for each of these nine values of $x$,
\eqref{nm:symmetric-equation} has at most two solutions $z$. In total
at most $18$ solutions fail to come from a generating pair. Since
$p-1-2\sqrt p>18$ for $p\ge41$, a generating pair exists.
\end{proof}

\begin{proof}[Proof of Corollary~\ref{cor_MW}]
For $\tau=2$ the set $F(p)_\tau$ is empty by definition. Let
$\tau\ne2$. Composing a surjection $\Pi\to\SL_2(\bF_p)$ with
$\SL_2(\bF_p)\to\Gp$ gives a surjection onto $\Gp$, and the
$\Out^+(\Pi)$-orbit of the latter is the set of absolute
$\Gp$-structures of a component $X$ of $\Mabs{\Gp}$, which has $d_X$
elements (Lemma~\ref{bg:absolute-sheets}). Hence every
$\Out^+(\Pi)$-orbit on $F(p)$, and a fortiori every
$\Out(\Pi)$-orbit, has at least $d_X\ge p$ elements by
Theorem~\ref{thm:nm-degree}. Let $C$ be the implied constant of
Theorem~\ref{thm:NKN} for $\epsilon=\frac12$ and put $N:=C^2$. For
$p>N$, every $\Out(\Pi)$-orbit in $F(p)_\tau$ other than
$\cC(p)_\tau$ would lie in a set with fewer than $C\sqrt p<p$ elements.
So $F(p)_\tau=\cC(p)_\tau$.
\end{proof}

\begin{proof}[Proof of Corollary~\ref{cor_connected}]
The trace invariant is locally constant on $M(\SL_2(\bF_p))^\abs$
\cite{BBCL}*{\S2.5} and does not take the value $2$
\cite{Chen}*{Lemma~5.2.9}, which gives the decomposition. Fix
$\tau\ne2$. The components of $M(\SL_2(\bF_p))^\abs_\tau$ correspond
to the $\Out^+(\Pi)$-orbits on $F(p)_\tau$ (\S\ref{intro:markoff}),
and $F(p)_\tau$ is nonempty for $p\ge13$
\cite{McCulloughWanderley2011}*{Theorem~2.1}. Let $N$ be as in
Corollary~\ref{cor_MW} and let $p>\max(N,41)$. Then $F(p)_\tau$ is a
single $\Out(\Pi)$-orbit. Since $\Out^+(\Pi)$ has index two in
$\Out(\Pi)$, the set $F(p)_\tau$ consists of one or two
$\Out^+(\Pi)$-orbits, and in the second case every element of
$\Out(\Pi)\setminus\Out^+(\Pi)$ exchanges the two orbits and acts on
$F(p)_\tau$ without fixed points. The automorphism $\sigma$ of $\Pi$
exchanging $\alpha$ and $\beta$ acts on $\Pi^{\ab}$ with determinant
$-1$, so its class lies in $\Out(\Pi)\setminus\Out^+(\Pi)$, and it
sends the class of $\varphi$ to the class of $\varphi\circ\sigma$,
whose trace triple is $(y,x,z)$ if that of $\varphi$ is $(x,y,z)$.
Since the class of $\varphi$ is determined by its trace triple,
$\sigma$ fixes the class of every $\varphi\in F(p)_\tau$ with
$\tr\varphi(\alpha)=\tr\varphi(\beta)$, and such classes exist by
Lemma~\ref{nm:symmetric-pair}. Hence $F(p)_\tau$ is a single
$\Out^+(\Pi)$-orbit, and $M(\SL_2(\bF_p))^\abs_\tau$ is connected.
\end{proof}

\section{Genus asymptotics}
\label{nm:genus}

In this section $p\ge5$ and $G$ is $\SL_2(\bF_p)$ or $\Gp$.
We extend the method of \cite{Chen}*{\S5.6} to establish the genus asymptotics stated in
Corollary~\ref{cor_genus}.

\subsection{The genus formula}

For a geometric connected component $X$ of $\Mabs{G}$, let $\Omega_X$
be the set of its absolute sheets over a general point of the $j$-line:
for $G=\Gp$ the sign classes \eqref{bg:signs} of the trace triples in
the $\SL_2(\bZ)$-orbit defining $X$ (Lemma~\ref{bg:absolute-sheets}),
and for $G=\SL_2(\bF_p)$ the trace triples themselves
(\S\ref{intro:markoff}). In both cases $d_X=|\Omega_X|$. The
transformations $S$ and $T$ of \eqref{nm:nielsen} and
\begin{equation}\label{nm:order-three}
 R(x,y,z)=(xy-z,\;x,\;x^2y-xz-y),
\end{equation}
the action of $(a,b)\mapsto(ab^{-1},a)$ on trace triples, permute
$\Omega_X$.

\begin{lemma}[Genus formula]\label{nm:genus-formula}
Let $X$ be a geometric connected component of $\Mabs{G}$. Let $c$ be
the number of cycles of $T$ on $\Omega_X$, and let $e_2$ and $e_3$ be
the numbers of fixed points of $S$ and of $R$ on $\Omega_X$. Then
\[
 g(\overline X)=1+\frac{d_X}{12}-\frac{e_2}{4}-\frac{e_3}{3}-\frac c2 .
\]
\end{lemma}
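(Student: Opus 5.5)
The plan is to apply Riemann--Hurwitz to the $j$-map $j_X\colon\overline X\to\bP^1_j$. This map has degree $d_X$ and is unramified outside $j=0,1728,\infty$. Over $\bC$, the component $X$ is a quotient $\bH/\Gamma$. The monodromy of $j_X$ is the permutation action of $\SL_2(\bZ)$ on the fiber $\Omega_X$, and it factors through $\PSL_2(\bZ)$: by Lemma~\ref{bg:absolute-sheets}(c) for $\Gp$, and by the footnote in \S\ref{intro:markoff} for $\SL_2(\bF_p)$, the element $-I$ acts trivially on sheets.

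The first step is to fix generators of $\PSL_2(\bZ)\cong C_2*C_3$ and match them with loops around the three branch points. The element $S\colon(a,b)\mapsto(b,a^{-1})$ has order four in $\SL_2(\bZ)$, hence order two in $\PSL_2(\bZ)$, and corresponds to a loop around $j=1728$. The element $R\colon(a,b)\mapsto(ab^{-1},a)$ has order three projectively and corresponds to a loop around $j=0$. The twist $T$ is the loop around $\infty$, as already noted in \S\ref{nm:cuspidal-notation}. I will check from \eqref{nm:nielsen} and \eqref{nm:order-three} that $R$, $S$, $T$ on trace triples are the actions of the corresponding oriented Nielsen moves, with $R$ realized as a composite of $S$ and $T$. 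I will also confirm, up to conjugation and inversion, the relations $S^2=R^3=1$ and $T = SR^{\pm1}$ in $\PSL_2(\bZ)$, which are the standard ones.

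The second step computes the ramification contributions. Over $\infty$, the points of $\overline X$ correspond to the cycles of $T$ on $\Omega_X$, so the contribution is $d_X-c$. Over $1728$, every point has ramification index $1$ or $2$, because $S$ has order dividing $2$ on $\Omega_X$. The number of points is $e_2+(d_X-e_2)/2$, so the contribution is $(d_X-e_2)/2$. Over $0$, ramification indices are $1$ or $3$, and the contribution is $2(d_X-e_3)/3$.

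Riemann--Hurwitz then gives
\[
2g-2=-2d_X+(d_X-c)+\tfrac{d_X-e_2}{2}+\tfrac{2(d_X-e_3)}{3}.
\]
Simplifying, $g=1+\frac{d_X}{12}-\frac{e_2}{4}-\frac{e_3}{3}-\frac c2$.

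The main obstacle is the identification in the first step. I need that the local monodromies of $j_X$ at $1728$ and $0$ are conjugate to the permutations induced by $S$ and $R$ on $\Omega_X$, and not by some other elements of the same orders. For this I would cite the correspondence between components and orbits from \cite{Chen}*{\S2,\S5} and \cite{BBCL}*{\S2--3}. Elliptic elements of order $2$ and $3$ in $\PSL_2(\bZ)$ form single conjugacy classes up to inversion, so fixed-point counts on $\Omega_X$ depend only on the class. It therefore suffices to check that $S$ and $R$ have the right orders in $\PSL_2(\bZ)$ and that their product with $T$ is trivial projectively.
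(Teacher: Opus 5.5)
Your proposal is correct and follows essentially the paper's route: identify the fibers of $j_X$ over $1728$, $0$, $\infty$ with the orbits of $S$, $R$, $T$ on $\Omega_X$ (of sizes $1$ or $2$ and $1$ or $3$ for the first two, since $-I$ acts trivially), then apply Hurwitz's formula, and your counts $(d_X+e_2)/2$, $(d_X+2e_3)/3$, $c$ match. The only difference is in how the fibers are identified: the paper reads them directly off \cite{Chen}*{Theorem~2.5.2(4)}, which names the automorphisms $(a,b)\mapsto(ab^{-1},a)$, $(b^{-1},a)$, $(a,ab)$ governing the fibers over $0$, $1728$, $\infty$ on $M(G)$, and then passes to $\Mabs{G}=M(G)/\Out(G)$. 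Your conjugacy-class argument in $\PSL_2(\bZ)$ is therefore not needed there, though it is valid.
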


\begin{proof}
Let $\gamma_0$, $\gamma_{1728}$, $\gamma_\infty$ and $\gamma_{-I}$ be
the automorphisms $(a,b)\mapsto(ab^{-1},a)$, $(b^{-1},a)$, $(a,ab)$
and $(a^{-1},b^{-1})$ of $\Pi$; they act on trace triples by $R$, $S$,
$T$ and the identity. Let $\Epi^{\ext}(\Pi,G)$ be the set of
surjections $\Pi\to G$ up to conjugation in $G$, the set of inner
$G$-structures on a general elliptic curve. By
\cite{Chen}*{Theorem~2.5.2(4)}, the smooth compactification of the
coarse scheme $M(G)$ is \'etale over $U_j$, its fiber over a point of
$U_j$ is $\Epi^{\ext}(\Pi,G)/\langle\gamma_{-I}\rangle$, and its fibers
over $0$, $1728$ and $\infty$ are the sets of orbits of $\gamma_0$,
$\gamma_{1728}$ and $\langle\gamma_{-I},\gamma_\infty\rangle$ on
$\Epi^{\ext}(\Pi,G)$. The group $\Out(G)$ acts on $M(G)$ over $M(1)$
through its action on $\Epi^{\ext}(\Pi,G)$, which commutes with that of
$\Aut(\Pi)$, and $\Mabs{G}=M(G)/\Out(G)$
\cite{Chen}*{Definition~2.5.13}. So the fibers of $j_X$ over $0$,
$1728$ and $\infty$ are the sets of orbits of $R$, $S$ and $T$ on the
absolute structures in $X$, that is, on $\Omega_X$, where
$\gamma_{-I}$ acts trivially: by Lemma~\ref{bg:absolute-sheets}(c) for
$\Gp$, and because inversion preserves traces for $\SL_2(\bF_p)$.
Since $\gamma_{1728}^2=\gamma_{-I}$, and $\gamma_0^3$ is $\gamma_{-I}$
followed by conjugation by $ab^{-1}$, the orbits of $S$ and of $R$ on
$\Omega_X$ have one or two, respectively one or three, elements. Hence
\[
 |j_X^{-1}(1728)|=\frac{d_X+e_2}{2},\qquad
 |j_X^{-1}(0)|=\frac{d_X+2e_3}{3},\qquad
 |j_X^{-1}(\infty)|=c,
\]
and Hurwitz's formula \cite{Liu}*{Theorem~7.4.16} for $j_X$, which has
degree $d_X$ and is \'etale over $U_j$, reads
\[
 2g(\overline X)-2
 =-2d_X+\sum_{b\in\{0,1728,\infty\}}\bigl(d_X-|j_X^{-1}(b)|\bigr).
\]
The display follows.
\end{proof}

\subsection{Counting in trace coordinates}

For $\tau\in\bF_p\setminus\{2\}$, the absolute sheets of
$M(\SL_2(\bF_p))^\abs_\tau$ are the generating trace triples
$F(p)_\tau$, and those of $M(\Gp)^\abs_\tau$ are their sign classes.
For $a\in\bF_p$, let $n(a)$ be the order of a noncentral element of
$\SL_2(\bF_p)$ of trace $a$: for $a\ne\pm2$ such an element is
semisimple with eigenvalues $\lambda^{\pm1}$ determined by $a$, and
$n(a)\ge3$ is the order of $\lambda$; further $n(2)=p$ and
$n(-2)=2p$. Let $h(a):=n(a)/\gcd(n(a),2)$ be the order of the image of
such an element in $\Gp$. For an integer $n\ge1$ let $D(n)$ be its
number of divisors and, with $\phi$ Euler's function,
\[
 \Phi(n):=\sum_{m\mid n}\frac{\phi(m)}{m}
 =\prod_{q^k\,\|\,n}\Bigl(1+k\Bigl(1-\frac1q\Bigr)\Bigr)\le D(n),
\]
the product over the prime powers exactly dividing $n$.

\begin{proposition}[Degree, cusps and genus]\label{nm:genus-fiber}
Let $p\ge5$ be prime and let $\tau\in\bF_p\setminus\{2\}$. The implied
constants below are independent of $p$ and $\tau$.
\begin{enumerate}
\item $|F(p)_\tau|=p^2+O(p)$. The degrees of $M(\SL_2(\bF_p))^\abs_\tau$
and $M(\Gp)^\abs_\tau$ over the $j$-line are $|F(p)_\tau|$ and
$|F(p)_\tau|/4$.
\item The number $c_\tau$ of cusps of $M(\SL_2(\bF_p))^\abs_\tau$
satisfies
\[
 c_\tau\le p\,\bigl(D(p-1)+D(p+1)\bigr)+3\le 4p\sqrt{p+1}+3,
\]
and every geometric connected component $X$ of
$M(\SL_2(\bF_p))^\abs_\tau$ or of $M(\Gp)^\abs_\tau$ satisfies
$|g(\overline X)-d_X/12|\le c_\tau/2+3$.
\item If $M(\SL_2(\bF_p))^\abs_\tau$ is nonempty and connected, then
so is $M(\Gp)^\abs_\tau$, and the genera of their smooth
compactifications are
\[
 \frac{p^2}{12}-\frac p4\bigl(\Phi(p-1)+\Phi(p+1)\bigr)+O(p)
 \qquad\text{and}\qquad
 \frac{p^2}{48}-\frac p8\Bigl(\Phi\Bigl(\frac{p-1}2\Bigr)
 +\Phi\Bigl(\frac{p+1}2\Bigr)\Bigr)+O(p).
\]
\end{enumerate}
\end{proposition}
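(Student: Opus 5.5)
The plan is to count in trace coordinates throughout, and to feed the counts into the genus formula of Lemma~\ref{nm:genus-formula}.

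\emph{Part (a).} For $\tau\ne2$, the $\bF_p$-points of the cubic $x^2+y^2+z^2-xyz=\tau+2$ number $p^2+O(p)$. To see this, fix $x$ and count points on the conic in $(y,z)$. The conic is nondegenerate unless $x=\pm2$ or the discriminant in $(y,z)$ vanishes, and each nondegenerate conic has $p\pm1$ affine points; the $O(1)$ exceptional values of $x$ contribute $O(p)$ each. By Lemma~\ref{nm:nongenerating}, at most $9^3+6$ of these points are not generating triples, so $|F(p)_\tau|=p^2+O(p)$. The degree of $M(\SL_2(\bF_p))^\abs_\tau$ is the number of trace triples. For $M(\Gp)^\abs_\tau$ we divide by the size of the sign classes \eqref{bg:signs}. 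A sign class has exactly four elements unless two coordinates vanish, and such triples are nongenerating by Lemma~\ref{nm:nongenerating}. Since $\tau$ is preserved by even sign changes, the degree is exactly $|F(p)_\tau|/4$.

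\emph{Part (b).} Count the cycles of $T$ by their first coordinate $x$. By the proof of Lemma~\ref{nm:width}, $T$ acts on $(y,z)$ linearly through $M_x$. On the conic with fixed $x$, which has $O(p)$ points, every cycle has length $n(x)$ when $x\ne\pm2$ (with sign-class length $h(x)$). So the number of cycles with first coordinate $x$ is $O(p)/n(x)$, up to $O(1)$ corrections for degenerate conics. Now sum over $x$. The traces $x$ with $n(x)=m$ number $\phi(m)/2$ for $m\mid p\pm1$, which gives
\[
c_\tau\approx \sum_{m\mid p-1}\frac{\phi(m)}{2}\cdot\frac{p-1}{m}+\sum_{m\mid p+1}\frac{\phi(m)}{2}\cdot\frac{p+1}{m}.
\]
This is bounded by $p(D(p-1)+D(p+1))+3$, with the $x=\pm2$ terms accounted for separately; the divisor bound $D(n)\le2\sqrt n$ gives the second inequality. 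The fixed points of $S$ and $R$ lie on curves ($x=y$, $z$ determined) and number $O(p)$. Plugging these counts into Lemma~\ref{nm:genus-formula} gives $|g-d_X/12|\le c/2+e_2/4+e_3/3+1$, and a component's cusp count is at most $c_\tau$. The main obstacle is making $e_2$ and $e_3$ fit inside the stated bound $c_\tau/2+3$. I would first try to bound $e_2/4+e_3/3$ by something like $O(p)$ already absorbed into $c_\tau$, or otherwise sharpen the cycle estimate to create that room.

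\emph{Part (c).} Connectedness of $M(\Gp)^\abs_\tau$ follows because it is the image of the connected curve $M(\SL_2(\bF_p))^\abs_\tau$. For the genus we need the exact leading terms. The cycle count is
\[
c=\frac p2\Bigl(\sum_{m\mid p-1}\frac{\phi(m)}m+\sum_{m\mid p+1}\frac{\phi(m)}m\Bigr)+O(p),
\]
which equals $\frac p2(\Phi(p-1)+\Phi(p+1))+O(p)$ because each conic has $p+O(1)$ points. Substituting into the genus formula with $e_2,e_3=O(p)$ gives the first genus expression. For $\Gp$, apply the same count to sign classes with orders $h(x)$, divided by $4$; the relevant divisors are those of $(p\pm1)/2$. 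This yields $\frac p4\bigl(\Phi(\frac{p-1}2)+\Phi(\frac{p+1}2)\bigr)$ cusps up to $O(p)$, and hence the second genus expression.
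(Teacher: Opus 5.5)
Your plan follows the paper's proof closely. You count triples row by row in the first coordinate and use that $T$ acts on each row through $M_a$, with every cycle of length $n(a)$ (resp.\ $h(a)$ on sign classes). You then turn $\sum_a 1/n(a)$ into $\Phi(p-1)+\Phi(p+1)$ and insert everything into Lemma~\ref{nm:genus-formula}. Parts (a) and (c) go through as you describe, because an $O(p)$ error in $e_2,e_3$ is harmless there. The genuine gap is the second assertion of (b), which you flag yourself, and neither of your fallbacks can close it. Lemma~\ref{nm:genus-formula} gives exactly $g(\overline X)-d_X/12=1-e_2/4-e_3/3-c_X/2$, where $c_X$ is the number of cusps of $X$. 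Take a component with $c_X=c_\tau$, for instance when $M(\SL_2(\bF_p))^\abs_\tau$ is connected. For it, $g(\overline X)-d_X/12\ge -c_\tau/2-3$ is equivalent to $e_2/4+e_3/3\le 4$. So there is no slack in $c_\tau$ to absorb $e_2,e_3$. Improving the upper estimate for $c_\tau$ does not help either, since the bound is stated in terms of $c_\tau$ itself. You need $e_2$ and $e_3$ bounded by absolute constants.

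The missing observation is that the fixed-point loci $\{y=x,\ z=x^2/2\}$ of $S$ and $\{y=x,\ z=x^2-x\}$ of $R$ are curves, but they meet the surface $x^2+y^2+z^2-xyz=\tau+2$ in boundedly many points. Substituting leaves the monic equations $x^4-8x^2+4(\tau+2)=0$ and $x^3-3x^2+\tau+2=0$ in $x$ alone, so $e_2\le4$ and $e_3\le3$ on trace triples. For sign classes, solve $S(x,y,z)=(\epsilon_1x,\epsilon_2y,\epsilon_1\epsilon_2z)$. If $\epsilon_1\ne\epsilon_2$, this forces $x=y=0$, which Lemma~\ref{nm:nongenerating} excludes. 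If $\epsilon_1=\epsilon_2$, you get the same quartic, hence at most $8$ triples and $e_2\le2$. The analogous equations for $R$ give $\epsilon_1\epsilon_2x^3-3x^2+\tau+2=0$, hence at most $12$ triples and $e_3\le3$. Therefore $|1-e_2/4-e_3/3|\le1$ and $|g(\overline X)-d_X/12|\le c_X/2+1$.

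Two smaller points:
\begin{enumerate}
\item For components of $M(\Gp)^\abs_\tau$ you should justify $c_X\le c_\tau$. It holds because the finite surjection $M(\SL_2(\bF_p))^\abs_\tau\to M(\Gp)^\abs_\tau$ maps cusps onto cusps.
\item Your ``$\approx$'' for the first inequality of (b) should be replaced by a rigorous bound. Let $N_a$ be the number of triples of $F(p)_\tau$ with first coordinate $a$; every row has $N_a\le 2p$. This gives directly $c_\tau=\sum_a N_a/n(a)\le 2p\sum_a 1/n(a)=p(\Phi(p-1)+\Phi(p+1))-3p+3$.
\end{enumerate}
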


For $\tau=-2$ the counts in the proof are carried out exactly in
\cite{Chen}*{Propositions~5.6.1, 5.6.2 and Theorem~5.6.3}.

\begin{proof}
\emph{The degree.} For $a\in\bF_p$ let $N_a$ be the number of triples
in $F(p)_\tau$ with first coordinate $a$. The points of the surface
$x^2+y^2+z^2-xyz=\tau+2$ with $x=a$ form the conic
$y^2-ayz+z^2=\tau+2-a^2$. If $a\ne\pm2$ and $a^2\ne\tau+2$, its
quadratic form is nondegenerate and its constant term is nonzero, so
it has $p-\chi(a^2-4)$ points, where $\chi$ is the quadratic character
of $\bF_p$; each of the at most four remaining rows has at most $2p$
points. Hence the surface has $p^2+O(p)$ points. Every point of the
surface is the trace triple of an absolutely irreducible pair
(\S\ref{intro:markoff}), and by Lemma~\ref{nm:nongenerating} at most
$9^3+6$ of them are not trace triples of generating pairs. So
$|F(p)_\tau|=p^2+O(p)$, and $N_a\le2p$ for every $a$. The degree of
$M(\SL_2(\bF_p))^\abs_\tau$ is $|F(p)_\tau|$ (\S\ref{intro:markoff}),
and that of $M(\Gp)^\abs_\tau$ is the number of sign classes in
$F(p)_\tau$ (Lemma~\ref{bg:absolute-sheets}), which is
$|F(p)_\tau|/4$: a nontrivial sign change fixes a triple only if two
of its coordinates vanish, and no triple in $F(p)_\tau$ has two zero
coordinates (Lemma~\ref{nm:nongenerating}).

\emph{Cuspidal $\SL_2(\bF_p)$-covers.} The cusps of
$M(\SL_2(\bF_p))^\abs_\tau$ are the cycles of $T$ on $F(p)_\tau$
(Lemma~\ref{nm:genus-formula}). The twist $T$ fixes the first
coordinate $a$ and acts on $(y,z)$ by the matrix
$M_a=\left(\begin{smallmatrix}0&1\\-1&a\end{smallmatrix}\right)$ of
the proof of Lemma~\ref{nm:width}, whose characteristic polynomial is
that of an element of trace $a$. For $a\ne\pm2$ the vector $(y,z)$ is
nonzero, since $y=z=0$ is excluded by Lemma~\ref{nm:nongenerating},
and its period under $M_a$ is $n(a)$, whether or not it is an
eigenvector. For $a=\pm2$ the matrix $M_a$ is $\pm$ a nontrivial
unipotent matrix; on its eigenline the Fricke relation gives $\tau=2$,
and off it the period is $n(a)$, that is, $p$ or $2p$. So every cycle
of $T$ in the row $x=a$ has length $n(a)$, and
\begin{equation}\label{nm:cusp-sum}
 c_\tau=\sum_{a\in\bF_p}\frac{N_a}{n(a)}
 \le 2p\sum_{a\in\bF_p}\frac1{n(a)}.
\end{equation}
For $a\ne\pm2$ the eigenvalues $\lambda^{\pm1}$ lie in $\bF_p^\times$
or in the group of norm-one elements of $\bF_{p^2}^\times$, cyclic of
orders $p-1$ and $p+1$, and each unordered pair
$\{\lambda,\lambda^{-1}\}$ of elements of order $m\ge3$ in one of these
groups arises from exactly one $a$. Since an order $m\ge3$ divides at
most one of $p\pm1$,
\begin{equation}\label{nm:H}
 H_p:=\sum_{a\in\bF_p}\frac1{n(a)}
 =\sum_{\substack{m\mid p-1\\ m\ge3}}\frac{\phi(m)}{2m}
 +\sum_{\substack{m\mid p+1\\ m\ge3}}\frac{\phi(m)}{2m}
 +\frac1p+\frac1{2p}
 =\frac{\Phi(p-1)+\Phi(p+1)}{2}-\frac32+\frac{3}{2p}.
\end{equation}
With $\Phi(n)\le D(n)\le2\sqrt n$, this gives the bounds on $c_\tau$
in (b).

\emph{Cuspidal $\PSL_2(\bF_p)$-covers.} The cusps of $M(\Gp)^\abs_\tau$
are the cycles of $T$ on the sign classes in $F(p)_\tau$, and their
lengths were determined in the proof of Lemma~\ref{nm:width}: for
$a\ne0$, the class of $(a,y,z)$ returns to itself after $r$ steps
exactly when $M_a^r(y,z)^{\mathsf t}=\pm(y,z)^{\mathsf t}$, so its
cycle has length $h(a)$. A class with first coordinate $0$ consists of
the four triples $(0,\pm y,\pm z)$, on which $T(0,y,z)=(0,z,-y)$ has
period $2$, or period $1$ when $z=\pm y$; the latter happens for at
most one class, as then $2y^2=\tau+2$. The $2N_a$ triples with first
coordinate $\pm a\ne0$ form $N_a/2$ classes and the $N_0$ triples with
first coordinate $0$ form $N_0/4$ classes, so the number $c'_\tau$ of
cusps of $M(\Gp)^\abs_\tau$ is
\begin{equation}\label{nm:cusp-sum-proj}
 c'_\tau=\frac14\sum_{a\in\bF_p}\frac{N_a}{h(a)}+\frac\delta2,
 \qquad\delta\in\{0,1\},
\end{equation}
using $h(0)=2$. For $a\ne\pm2$, the image of $\lambda$ in the quotient
of its cyclic group by $\{\pm1\}$ has order $h(a)$. These quotients
are cyclic of orders $(p-1)/2$ and $(p+1)/2$; an unordered pair of
elements of order $h\ge3$ in one of them arises from the two values
$\pm a$, and the element of order $2$, which lies in exactly one of
the two quotients, from $a=0$. Hence
\begin{equation}\label{nm:H-proj}
 H'_p:=\sum_{a\in\bF_p}\frac1{h(a)}
 =\sum_{\substack{h\mid (p-1)/2\\ h\ge3}}\frac{\phi(h)}{h}
 +\sum_{\substack{h\mid (p+1)/2\\ h\ge3}}\frac{\phi(h)}{h}
 +\frac12+\frac2p
 =\Phi\Bigl(\frac{p-1}2\Bigr)+\Phi\Bigl(\frac{p+1}2\Bigr)-2+\frac2p.
\end{equation}

\emph{Elliptic points.} A trace triple fixed by $S$ has $y=x$,
$z=x^2/2$ and $x^4-8x^2+4(\tau+2)=0$; one fixed by $R$ has $y=x$,
$z=x^2-x$ and $x^3-3x^2+\tau+2=0$. So $e_2\le4$ and $e_3\le3$ for
the special linear curve. A sign class is fixed by $S$ exactly when
$S(x,y,z)=(\epsilon_1x,\epsilon_2y,\epsilon_1\epsilon_2z)$ for some
signs $\epsilon_1,\epsilon_2$. Here $\epsilon_1\ne\epsilon_2$ forces
$x=y=0$, which does not occur in $F(p)_\tau$, and
$\epsilon_1=\epsilon_2$ gives $y=\epsilon_1x$, $z=\epsilon_1x^2/2$ and
the same quartic; so at most $8$ triples lie in fixed classes, and
$e_2\le2$ for the projective curve. Likewise
$R(x,y,z)=(\epsilon_1x,\epsilon_2y,\epsilon_1\epsilon_2z)$ gives
$y=\epsilon_2x$, $z=\epsilon_2x^2-\epsilon_1x$ and
$\epsilon_1\epsilon_2x^3-3x^2+\tau+2=0$, so at most $12$ triples and
$e_3\le3$.

\emph{Conclusion.} Let $X$ be a component of either curve. Its cusps
number at most $c_\tau$, because the finite surjective map
$M(\SL_2(\bF_p))^\abs_\tau\to M(\Gp)^\abs_\tau$ of
\S\ref{intro:markoff} sends cusps onto cusps. So
Lemma~\ref{nm:genus-formula} and the bounds on $e_2$ and $e_3$ give
$|g(\overline X)-d_X/12|\le c_\tau/2+3$, which is (b). For (c),
$M(\Gp)^\abs_\tau$ is connected as the image of
$M(\SL_2(\bF_p))^\abs_\tau$, and the degrees are
$|F(p)_\tau|=p^2+O(p)$ and $|F(p)_\tau|/4$. In \eqref{nm:cusp-sum},
the rows with $a\ne\pm2$ and $a^2\ne\tau+2$ have
$N_a=p-\chi(a^2-4)-\nu_a$, where $\sum_a\nu_a\le9^3+6$ counts the
points that are not trace triples of generating pairs, and
$n(a)\ge3$; the remaining at most four rows contribute $O(p)$. Hence
$c_\tau=pH_p+O(p)$, and in the same way
\eqref{nm:cusp-sum-proj} gives $c'_\tau=\frac p4H'_p+O(p)$.
Lemma~\ref{nm:genus-formula} with \eqref{nm:H} and \eqref{nm:H-proj}
gives the two genera.
\end{proof}

\begin{proof}[Proof of Corollary~\ref{cor_genus}]
For $p>N$ the curve $M(\SL_2(\bF_p))^\abs_\tau$ is nonempty and
connected (Corollary~\ref{cor_connected}), hence so is
$M(\Gp)^\abs_\tau$, and the degrees are those of
Proposition~\ref{nm:genus-fiber}(a). Since $\Phi(n)\le D(n)$ and
$D(n)=O_\epsilon(n^\epsilon)$ for every $\epsilon>0$
\cite{HardyWright}*{Theorem~315}, the second terms in
Proposition~\ref{nm:genus-fiber}(c) are $O_\epsilon(p^{1+\epsilon})$.
\end{proof}

\appendix
\section{The converse of the criterion of Bouw and Wewers}
\label{app:converse}

Throughout this appendix $p>5$, and the cover $f$ and its lifts $f_M$
are as in \S\ref{sec:arithmetic}. Theorem~\ref{nm:bw-theorem}(b) gives a
sufficient condition for bad reduction of a lift. We show that the
condition is also necessary once it is tested on the four lifts.

\begin{theorem}\label{app:corrected}
Suppose that the ramification indices of $f$ are prime to $p$, and let $v$ be
a place of $\Qbar$ above $p$. The following are equivalent.
\begin{enumerate}
\item The cover $f$ has bad reduction at $v$.
\item The four lifts of $f$ have bad reduction at $v$.
\item One of the four lifts of $f$ satisfies the hypothesis
\eqref{nm:bw-hypothesis} at $v$.
\item The cover $f$ is indefinite at $v$.
\end{enumerate}
\end{theorem}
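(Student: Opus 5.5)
The equivalences (a)$\Leftrightarrow$(b), (c)$\Leftrightarrow$(d), and (d)$\Rightarrow$(a) are already available; the new content is (a)$\Rightarrow$(d).

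\emph{Available implications.} (a)$\Leftrightarrow$(b) is Lemma~\ref{nm:lifts}. (c)$\Leftrightarrow$(d) is Lemma~\ref{nm:dictionary}(b),(c). (d)$\Rightarrow$(a) is the last assertion of Lemma~\ref{nm:dictionary} combined with Lemma~\ref{nm:lifts}.

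\emph{The remaining implication.} We prove the contrapositive: if $f$ is definite at $v$, then it has good reduction at $v$. By Lemma~\ref{bg:ascent}(c) we may work over a complete field $K$ with algebraically closed residue field containing $\sqrt{(-1)^{(p-1)/2}p}$. By Lemma~\ref{nm:lifts} it suffices to show that one lift $f_M$ has good reduction.

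Suppose, for contradiction, that $f_M$ has bad reduction. The stable reduction structure of Bouw--Wewers for three-point $\SL_2(\bF_p)$-covers, recalled in \S\ref{gen:sec-tails}, then applies:
\begin{enumerate}
\item the original component $\bar X_0$ is a $p$-component;
\item the tails are étale;
\item a primitive tail attached at $x_i$ carries ramification invariant $\sigma_{x_i}=a(C_i)/(p-1)$ or $a(-C_i)/(p-1)$, according to the canonical generator (the footnote citing Lemma~\ref{app:pointed});
\item every new tail has $\sigma_b=1+1/m$ (Proposition~\ref{gen:stable-inputs}(c) and \cite{BouwWewers}*{Proposition~6.3}).
\end{enumerate}

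Apply the degree identity of Proposition~\ref{gen:stable-inputs}(c) on the original component, with the tree analysis of Lemma~\ref{gen:tails} adapted to three branch points on $\bP^1$. This gives
\[
\sum_{i=1}^3 \sigma_i + \sum_{\rm new}(\sigma_b-1) = 1,
\qquad
\sigma_i = \frac{a_i'}{p-1},
\]
where $a_i'\in\{a_i,\ p\pm1-a_i\}$. The pattern of choices of $a_i'$ is exactly the one produced by an even sign change, up to the $\pm1$ shift coming from nonsplit classes.

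Hence $\sum_i a_i' \le p-1$ for a suitable lift $M'$. With the nonsplit correction this lands exactly in hypothesis \eqref{nm:bw-hypothesis} for $M'$. Lemma~\ref{nm:dictionary}(c),(b) then makes $f$ indefinite, contradicting definiteness.

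\emph{Main obstacle.} The delicate step is the bookkeeping of signs. The canonical generator at each branch point decides whether $C$ or $-C$ enters $\sigma_{x_i}$, and we need that the resulting choices $(a_1',a_2',a_3')$ always correspond to one of the four lifts (an even sign change) rather than an odd one. An odd number of sign flips would be exactly the failure mode of \cite{BouwWewers}*{Theorem~6.6(b)}. I would attack this via Lemma~\ref{app:pointed}, comparing the product relation $M_1M_2M_3=I$ with the product of the canonical generators on the original component, which forces the number of flips to be even. The boundary case $\sum a_i'=p-1$ with all classes split also needs checking. There I would argue that the equality forces $N=0$ with each $\sigma_i<1$. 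That configuration is realized only by a multiplicative deformation datum $\omega=dg/g$ on $\bar X_0$, whose residues are incompatible with all classes being split.
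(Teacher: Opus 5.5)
Your outline is the paper's: take a lift $f_M$ with bad reduction, read off $a_i=(p-1)\sigma_i$ from the primitive tails, use Lemma~\ref{app:pointed} to get $a(\epsilon_iC_i)=a_i$, and use the vanishing cycle formula $a_1+a_2+a_3+2N=p-1$ to land in \eqref{nm:bw-hypothesis} for the triple $(\epsilon_iM_i)$. However, the two steps that carry the new content are not established, and the routes you sketch for them would not work.

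\textbf{The parity $\epsilon_1\epsilon_2\epsilon_3=1$.} This is exactly the point that the published statement gets wrong. The relation $M_1M_2M_3=I$ gives no information about it. The signs are fixed by the canonical generators $\gamma_i$ of the tail covers at their \emph{wild} points, and the generic-fiber monodromy does not see these. What is needed is a computation on the central component, in three steps.
\begin{enumerate}
\item By Lemma~\ref{app:local}(e), the Kummer cover $\bar Y_0'\to\bar X_0$ has canonical generator $\gamma_i^{-1}$ above the attaching point of the $i$th primitive tail.
\item By Lemma~\ref{app:local}(d), with Remark~\ref{app:unaffected}(a) identifying the new tails, it has canonical generator $\zeta_{p-1}$ above each of the $N$ new attaching points, and it is unramified elsewhere.
\item The product rule of Lemma~\ref{app:local}(a) then gives $\gamma_1\gamma_2\gamma_3=\zeta_{p-1}^{N}$.
\end{enumerate}
Now substitute $\gamma_i=-\epsilon_i\zeta_{p-1}^{-a_i/2}$ (Lemma~\ref{app:local}(b)) and $a_1+a_2+a_3+2N=p-1$. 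This yields $\epsilon_1\epsilon_2\epsilon_3=-\zeta_{p-1}^{(p-1)/2}=1$. So the parity comes from the new tails' factor $\zeta_{p-1}^N$ combined with the vanishing cycle count. A product over the three primitive points alone does not close up, and you never account for the new tails' generators.

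\textbf{The boundary case $N=0$ with all $\epsilon_iC_i$ split.} A residue argument on $\bar X_0$ cannot detect this. No branch index is divisible by $p$, so every critical point on the central component has $h_x>0$; the differential therefore has no poles, and there are no residues to compare. More fundamentally, the central data ($H$, the $\gamma_i^{-1}$, the $\sigma_i$) depend on the classes only through $a_i$ and $\epsilon_i$. In this case each $a_i\le p-5$, and such values are attained by split and nonsplit classes alike.

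The obstruction is global. With labels $l_i$, the equation $\sum_i(p-1-2l_i)=p-1$ gives $l_1+l_2+l_3=p-1$. Hence the eigenvalues $\delta_i=\zeta_{p-1}^{l_i}$ of the matrices $\epsilon_iM_i$ satisfy $\delta_1\delta_2\delta_3=1$. This forces $\Delta$ in \eqref{nm:fricke-delta} to vanish, contradicting the fact that the lift generates $\SL_2(\bF_p)$.
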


Statements (a) and (b) are equivalent by Lemma~\ref{nm:lifts},
statements (c) and (d) by
Lemma~\ref{nm:dictionary}, and (c) implies (b) by
Lemmas~\ref{nm:dictionary} and~\ref{nm:lifts}. The new implication,
from (b) to (c), is proved in \S\ref{app:parity}.

\subsection{A counterexample to the published statement}

After a finite extension of $K$, every connected $\SL_2(\bF_p)$-cover of
$\bP^1_K$ branched at $0,1,\infty$, with $K$ as in
\S\ref{nm:bw-statement}, comes from a cover over $\Qbar$, and an
embedding of $\Qbar$ into an algebraic closure of $K$ determines a place
$v$ at which the roots of unity of Theorem~\ref{nm:bw-theorem} are the
reductions of those of \S\ref{nm:class-vectors}. Writing $-C$ for the
class of the negatives of the elements of a class $C$,
Theorem~\ref{app:corrected} therefore says that an $\SL_2(\bF_p)$-cover
whose class vector $(C_1,C_2,C_3)$ consists of noncentral classes of
elements of order prime to $p$ has bad reduction if and only if one of
the four class vectors $(\epsilon_1C_1,\epsilon_2C_2,\epsilon_3C_3)$ with
$\epsilon_i=\pm1$ and $\epsilon_1\epsilon_2\epsilon_3=1$ satisfies
\eqref{nm:bw-hypothesis}, and, with Remark~\ref{nm:bw-scope}(a) and
Lemma~\ref{nm:lifts}, that the number of covers with class vector
$(C_1,C_2,C_3)$ and bad reduction is two in this case and zero
otherwise. For $a_i:=a(C_i)$, \cite{BouwWewers}*{Theorem~6.6(b)} states
instead that this number is
\[
 \begin{cases}
 2&\text{if }a_1+a_2+a_3<p-1,\\
 2&\text{if }a_1+a_2+a_3=p-1\text{ and some $C_i$ is nonsplit},\\
 0&\text{otherwise.}
 \end{cases}
\]
The first two cases are Theorem~\ref{nm:bw-theorem}(b). The third case
fails for $p=13$. Let $v$ be a place of $\Qbar$ above $13$ at which
$\widetilde\zeta_{12}$ reduces to $2$. The matrices
\[
 M_1=\begin{pmatrix}6&0\\0&11\end{pmatrix},\qquad
 M_2=\begin{pmatrix}8&1\\6&9\end{pmatrix},\qquad
 M_3=\begin{pmatrix}8&7\\12&9\end{pmatrix}
\]
in $\SL_2(\bF_{13})$ have product $I$ and trace $4=2^5+2^{-5}$, and
$M_1$ and $M_2$ generate $\SL_2(\bF_{13})$ by a direct calculation.
So $M=(M_1,M_2,M_3)$ is a lift of a connected
three-point $G_{13}$-cover, and the class vector of $f_M$ at $v$ is
$(C_{\rm sp}(5),C_{\rm sp}(5),C_{\rm sp}(5))$. Here
$(a_1,a_2,a_3)=(2,2,2)$, so $f_M$ has bad reduction at $v$ by
Theorem~\ref{nm:bw-theorem}(b). The even sign change
$M'=(-M_1,-M_2,M_3)$ has the class vector
$(C_{\rm sp}(1),C_{\rm sp}(1),C_{\rm sp}(5))$ at $v$, for which
$(a_1,a_2,a_3)=(10,10,2)$ and $a_1+a_2+a_3=22>12$. The third case
predicts good reduction for $f_{M'}$, whereas $f_{M'}$ has bad
reduction at $v$ by Lemma~\ref{nm:lifts}.

The count in \cite{BouwWewers} is obtained in two steps: the structure
of the stable reduction, through the vanishing cycle formula, restricts
the ramification invariants of the primitive tails of a cover with bad
reduction, and with them its integers $a_i$, to the cases with
$a_1+a_2+a_3\le p-1$; then each remaining case is realized by gluing a
hypergeometric deformation datum, whose differential form comes from a
polynomial solution of a hypergeometric differential equation in
characteristic $p$, to tail covers, and lifting to characteristic zero.
The third case rests on the first step, namely on
\cite{BouwWewers}*{Proposition~6.5(i)}, which states that a primitive
tail cover whose canonical generators above the tame branch point lie in
a class $D$ has the ramification invariant $a(D)/(p-1)$. Its proof glues
the tail cover to a deformation datum by the lifting theorem
\cite{BouwWewers}*{Proposition~2.3}, whose condition (3) compares inertia
groups only, whereas the gluing also requires the canonical generator of
the tail cover at its wild point to be inverse to that of the deformation
datum (Lemma~\ref{app:lifting}). A tail cover and its twist by the
central quadratic character have the same ramification invariant, but
their canonical generators differ by $-I$; accordingly the invariant is
$a(D)/(p-1)$ or $a(-D)/(p-1)$, depending on the canonical generator at
the wild point (Lemma~\ref{app:pointed}).

\subsection{Canonical generators}
\label{app:conventions}

In the rest of the appendix $k$ is an algebraically closed field of
characteristic $p$, and $\zeta_{p-1}\in\bF_p$ and
$\zeta_{p+1}\in\bF_{p^2}$ are primitive roots of unity in $k$, as in
Theorem~\ref{nm:bw-theorem}. For a divisor $n$ of $p-1$ or of $p+1$ we
put $\zeta_n:=\zeta_{p-1}^{(p-1)/n}$ or
$\zeta_n:=\zeta_{p+1}^{(p+1)/n}$; the two definitions agree for $n=2$.

Let a finite group $G$ act on a smooth projective curve $W$ over $k$,
possibly disconnected, such that no element of $G$ other than the
identity acts trivially on a component of $W$, and let $y$ be a point
of $W$ with stabilizer $I_y$. The Sylow $p$-subgroup $P_y$ of $I_y$ is
normal, and the quotient $I_y/P_y$ is cyclic of order $n$ prime to $p$
and acts faithfully on the cotangent space of $W$ at $y$. We assume
throughout that $n$ divides $p-1$ or $p+1$, as is the case for
$G=\SL_2(\bF_p)$. The \emph{canonical generator} at $y$ is the element
$\gamma_y$ of $I_y/P_y$ that acts on this cotangent space through
$\zeta_n$; here a group element acts on functions by composition. Two
facts about canonical generators are used repeatedly.
\begin{enumerate}
\item[(N)] \emph{Wild points.} If $n$ divides $p-1$, then $\gamma_y$
acts through $\zeta_n$ on a uniformizer of the subfield of the completed
local field at $y$ fixed by $P_y$: for $g\in I_y$ of order $n$ mapping
to $\gamma_y$ and a uniformizer $\varrho$ at $y$ with
$g^*\varrho=\zeta_n\varrho$, the norm
$N(\varrho)=\prod_{\alpha\in P_y}\alpha^*\varrho$ is a uniformizer of
the fixed field, and $g^*N(\varrho)=\zeta_n^{|P_y|}N(\varrho)=\zeta_nN(\varrho)$
because $g$ normalizes $P_y$.
\item[(F)] \emph{Inseparable components.} Let a subgroup $H$ of $T$ act
on a smooth projective connected curve $\bar Y_0$ over $k$, and let
$\bar Y_0\to\bar Y_0'$ be the relative Frobenius. An element of $H$
fixing a point $y$ of $\bar Y_0$ acts on the cotangent spaces of
$\bar Y_0$ at $y$ and of $\bar Y_0'$ at the image of $y$ through the
same character, because a uniformizer of $\bar Y_0'$ there is the $p$th
power of one of $\bar Y_0$ at $y$, and $\lambda^p=\lambda$ for
$\lambda\in\bF_p^\times$.
\end{enumerate}

Following \cite{BouwWewers}*{Definition~2.2}, a \emph{$G$-tail cover}
is a $G$-cover $\phi\colon W\to\bP^1_k$, with $W$ possibly
disconnected, that is wildly ramified above $\infty$, with inertia
groups of order $pn$, $p\nmid n$, and tamely ramified above at most
one other point; it is \emph{primitive} if there is such a point,
which we take to be $0$, and \emph{new} otherwise. Its
\emph{ramification invariant} is $\sigma(\phi):=h/n$, where $h$ is the
conductor at a point $y$ above $\infty$, that is, the largest
integer such that the $h$th lower ramification group of
$I_{y}$ is not trivial. For deformation data we use the terminology of
\cite{BouwWewers}*{\S\S2--3}: special deformation data of a given type,
their critical points with the invariants $\sigma$, which are called
wild, primitive or new according to whether $\sigma=0$,
$0<\sigma\le1$ or $\sigma>1$, and hypergeometric deformation data with
their signature.

Let $B\subset\SL_2(\bF_p)$ be the subgroup of upper triangular
matrices, $P$ its subgroup of unipotent matrices, and $T$ the subgroup
of diagonal matrices. We identify $T$ with $\bF_p^\times$ by
$\diag(\beta,\beta^{-1})\mapsto\beta$, and we write $\beta$ for the
matrix as well; thus $-1$ is the matrix $-I$. The group $T$ acts on
$P$ by conjugation through the character $\beta\mapsto\beta^2$, the
group $P$ is the only subgroup of order $p$ of $B$, and $B$ is the
normalizer of $P$. In particular, if the stabilizer $I_y$ of a point of
an $\SL_2(\bF_p)$-cover contains $P$, then $I_y\subset B$ and
$\gamma_y\in I_y/P\subset B/P=T$.

\begin{lemma}\label{app:local}\leavevmode
\begin{enumerate}
\item\emph{Kummer covers.} Let $n$ be a divisor of $p-1$, let
$\nu\in k(x)^\times$, and let $W$ be the normalization of $\bP^1_x$ in
the algebra $k(x)[z]/(z^n-\nu)$, on which the group of $n$th roots of
unity acts by multiplication on $z$. At every point of $W$ above a point
$t$ of $\bP^1_x$ the canonical generator is $\zeta_n^{\,\ord_t\nu}$. In
particular the product of these canonical generators over all points $t$
of $\bP^1_x$, which is $1$ for all but finitely many $t$, equals $1$,
because $\sum_t\ord_t\nu=0$.
\item\emph{Wild points.} Let $y$ be a point of an
$\SL_2(\bF_p)$-cover of $\bP^1_k$, possibly disconnected, whose
stabilizer $I_y$ contains $P$. Let $n$ be the order of $I_y/P$, let $h$
be the conductor at $y$, and put $a:=(p-1)h/n$. If
$\gamma_y=\zeta_{p-1}^{\,c}$, then
\[
 2c\equiv-a\pmod{p-1}.
\]
In particular $a$ is an even integer, and $\gamma_y$ is one of
$\pm\zeta_{p-1}^{-a/2}$.
\item\emph{Central twist.} Let $\phi\colon W\to\bP^1_x$ be an
$\SL_2(\bF_p)$-cover, possibly disconnected, which is branched at most
above $0$ and $\infty$ and tamely ramified above $0$. There is an
$\SL_2(\bF_p)$-cover $\phi^-\colon W^-\to\bP^1_x$, the twist of $\phi$
by the quadratic character of $k(\sqrt x)/k(x)$ with values in
$\{\pm I\}$, with the following properties. It is branched at most
above $0$ and $\infty$; for every point $y$ of $W$ above $0$ or
$\infty$ there is a point $y^-$ of $W^-$ above the same point with
$P_{y^-}=P_y$ and $\gamma_{y^-}=-\gamma_y$; and if $\phi$ is a tail
cover, then so is $\phi^-$, with $\sigma(\phi^-)=\sigma(\phi)$.
\item\emph{The new tail cover.} The quotient of the curve
$s^{p+1}=x^p-x$ by the action of $\SL_2(\bF_p)$ of
\cite{BouwWewers}*{Lemma~4.3}, in which $\SL_2(\bF_p)$ acts on $x$ by
fractional linear transformations and $\beta\in T$ acts by
$(x,s)\mapsto(\beta^2x,\beta s)$, is a new tail cover. It has exactly
one point with stabilizer $B$, and the canonical generator at this
point is $\zeta_{p-1}^{-1}$.
\item\emph{Nodes.} Let $\cW$ be a semistable curve over a complete
discrete valuation ring $R$ with residue field $k$, with smooth
generic fiber, and let $g$ be an automorphism of $\cW$ over $R$ of
finite order that fixes a node $\eta$ of the special fiber and
preserves the two branches at $\eta$. Then $g$ acts on the cotangent
spaces of the two branches at $\eta$ through characters that are
inverse to each other. In particular, let $\cW$ be the stable model of a
connected $\SL_2(\bF_p)$-cover of $\bP^1_K$ with its ramification points
marked, let $\bar Y_0$ be a component of its special fiber with generic
inertia group $P$, decomposition group $G_0\subset B$ and $H:=G_0/P$, so
that $\bar Y_0\to\bar Y_0'\to\bar X_0$ is the relative Frobenius followed
by a Galois cover with group $H$ onto the component $\bar X_0$ of the
special fiber of $\cW/\SL_2(\bF_p)$ below $\bar Y_0$
\cite{WewersBad}*{Lemma~1.4(2)}, and let $\eta$ be a node at which a
component of a tail cover meets $\bar Y_0$. Then the canonical
generators of the tail cover at $\eta$ and of $\bar Y_0'\to\bar X_0$
above the image of $\eta$ are inverse to each other in $H$.
\end{enumerate}
\end{lemma}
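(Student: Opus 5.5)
The lemma has five independent parts, and I will prove each by a direct local computation with uniformizers. Throughout, a group element acts on functions by composition, and canonical generators are read off from the character on the cotangent space.

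For (a), I work at a point $t$ with $\ord_t\nu=r$ and local uniformizer $w$. After extracting the gcd $d=\gcd(r,n)$, the normalization near $t$ is a disjoint union of Kummer covers $z'^{n/d}=w\cdot(\text{unit})$. Here $z'$ is a suitable monomial $z^{\alpha}w^{\beta}$ with $\alpha r/d+\beta n/d$ adjusted, and $\alpha\equiv (r/d)^{-1}$ modulo $n/d$. The stabilizer is $\mu_{n/d}$, and the element acting on $z'$ by $\zeta_{n/d}$ is $\zeta_n^{r}$; this step is just Bézout bookkeeping. The product statement then follows from $\sum_t\ord_t\nu=0$.

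For (b), I choose $g\in I_y$ of order $n$ lifting $\gamma_y$, so $g$ is conjugate into $T$ and equals $\zeta_{p-1}^c$ up to $P$. I use the standard fact that $g\sigma g^{-1}=\sigma^{\chi(g)}$ for $\sigma\in P$. Comparing the action on $(\sigma^*\varrho-\varrho)/\varrho$, which has valuation $h$, gives $\chi(g)=\theta(g)^{h}$, where $\theta$ is the cotangent character. Since the cotangent character of $g$ is $\zeta_n$ and $\chi(\beta)=\beta^2$ with $\beta=\zeta_{p-1}^c$, this yields $\zeta_{p-1}^{2c}=\zeta_n^{-h}$ or $\zeta_n^{h}$ depending on conventions. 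I will fix the sign using the paper's convention that functions transform by pullback, and then write $\zeta_n=\zeta_{p-1}^{(p-1)/n}$ to get $2c\equiv -a$. Evenness of $a$ and the two choices $\pm\zeta_{p-1}^{-a/2}$ follow. I expect this sign convention to be the main place to be careful.

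For (c), I form $W^-$ as the $\{\pm I\}$-twist: take the quotient of $W\times_{\bP^1}\bP^1_{\sqrt x}$ by the diagonal $\bZ/2$, acting by $-I$ on $W$ and by $\sqrt x\mapsto-\sqrt x$. Over $0$ and $\infty$ the extra tame quadratic ramification changes the canonical generator by the central element $-I$, as in the Kummer computation of (a) applied to $\sqrt x$. Since $p$ is odd, the wild filtration and the conductor are unchanged up to the tame base change, so $P$ and $\sigma$ are preserved.

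For (d), I compute directly on $s^{p+1}=x^p-x$. The unique point with stabilizer $B$ lies over $x=\infty$, where $\ord s=-p$ and $\ord x=-(p+1)$, so a uniformizer is $x/s$ up to a unit. Then $\beta$ acts on $x/s$ by $\beta$; pulling back to the quotient by $\pm I$ and $P$ and passing through (N), the canonical generator is $\zeta_{p-1}^{-1}$. The inverse appears because the cotangent space is dual to the coordinate.

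For (e), I use the completed local ring $R[[u,v]]/(uv-\varpi^k)$. The automorphism $g$ fixes $\varpi^k$, so $g^*u\cdot g^*v=uv$. Reducing modulo $\fm$ shows that the linear characters on $u$ and $v$ multiply to $1$. For the particular case, fact (F) transports the character from $\bar Y_0$ to $\bar Y_0'$, and this gives the inverse relation for the tail cover at $\eta$.
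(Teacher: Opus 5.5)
Your part-by-part plan matches the paper's, and (a) and (c) are fine at this level. In (c), note that it is the upper jump $\sigma=h/n$ that is preserved, while $h$ and $n$ can both change. You should also treat the case where $(-I,\jmath)$ fixes the point above $y$ on the normalized fiber product.

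The gap is that the steps fixing the \emph{signs} of canonical generators are not established, and signs are all this lemma is used for in the appendix. In (b) the sign is not a convention once the paper's conventions are in force: $g^*f=f\circ g$, hence $(gh)^*=h^*g^*$, and $\gamma_y$ acts on $\fm_y/\fm_y^2$ through $\zeta_n$. Under these conventions your relation $\chi(g)=\theta(g)^{h}$ is false. With $g^*\varrho=\zeta_n\varrho$ and $\alpha^*\varrho=\varrho+\lambda_\alpha\varrho^{h+1}+\cdots$ one gets $(g\alpha g^{-1})^*\varrho=(g^{-1})^*\alpha^*g^*\varrho=\varrho+\zeta_n^{-h}\lambda_\alpha\varrho^{h+1}+\cdots$. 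Comparing with $g\alpha g^{-1}=\alpha^{\beta^2}$ and $\lambda_{\alpha^i}=i\lambda_\alpha$ gives $\beta^2=\zeta_n^{-h}$. Your formula would give $2c\equiv a$, and you then replace it by $2c\equiv -a$ without justification; this computation is the missing step.

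In (d), $x/s$ is not a uniformizer: $\ord x=-(p+1)$ and $\ord s=-p$, so it has a simple pole. The uniformizer is $s/x$, and $\beta^*(s/x)=\beta^{-1}s/x$, so the element acting through $\zeta_{p-1}$ is $\zeta_{p-1}^{-1}$ directly. The canonical generator is read off from $\fm_y/\fm_y^2$ itself, so ``the cotangent space is dual to the coordinate'' is not a valid reason to invert. Quotients by $\pm I$ and $P$, or fact (N), play no role here. You reach the correct answer only because two sign errors cancel.

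In (e) you never use that $g$ preserves the branches, and reducing modulo $\fm$ does not by itself produce the characters. The needed step runs as follows. On the normal ring $R[[u,v]]/(uv-\varpi^k)$ the divisor of $u$ is supported on one branch, which $g$ preserves, so $g^*u=u_1u$ and $g^*v=u_2v$ with units $u_1,u_2$. Invariance of the nonzerodivisor $uv=\varpi^k$ then gives $u_1u_2=1$, and the characters are the residues of $u_1$ and $u_2$. In the application you must also explain why $g\in I_\eta$ preserves the branches at $\eta$: they lie over distinct components of $\cW/\SL_2(\bF_p)$, on which $g$ acts trivially.
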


\begin{proof}
(a) Let $x_t$ be a local parameter at $t$, and write $\nu=x_t^{\,b}q$
with $b:=\ord_t\nu$ and $q$ a rational function with neither a zero nor
a pole at $t$. Let $d:=\gcd(n,b)$, with $d:=n$ if $b=0$, and write
$n=dn'$ and $b=db'$. At a point $y$ of $W$ above $t$ the function
$u:=z^{n'}/x_t^{\,b'}$ is a unit, because its $d$th power is $q$. The
stabilizer of $y$ fixes the value $u(y)\ne0$ and acts on $u$ through
$\xi\mapsto\xi^{n'}$, so it lies in the group of $n'$th roots of unity;
on the other hand the ramification index $e$ at $y$ satisfies
$n\ord_yz=be$, so $n'$ divides $e$. Hence $e=n'$, the stabilizer is the
group of $n'$th roots of unity, and the orders of $x_t$ and $z$ at $y$
are $n'$ and $b'$. Choose integers $i_1$ and $i_2$ with $i_1b'+i_2n'=1$.
Then $z^{i_1}x_t^{\,i_2}$ is a uniformizer, on which an $n'$th root of
unity $\xi$ acts through $\xi^{i_1}$. So the canonical generator is the element $\xi$ with
$\xi^{i_1}=\zeta_{n'}=\zeta_n^{\,d}$, which is
$\xi=\zeta_n^{\,db'}=\zeta_n^{\,b}$, because $i_1b'\equiv1$ modulo $n'$.
The last assertion follows, because the divisor of $\nu$ has degree
zero.

(b) Let $g\in I_y$ be an element of order $n$ that maps to $\gamma_y$.
Since $n$ is prime to $p$, there is a uniformizer $\varrho$ at $y$ with
$g^*\varrho=\zeta_n\varrho$. For $\alpha\in P$ different from the
identity we have
$\alpha^*\varrho=\varrho+\lambda_\alpha\varrho^{h+1}+\cdots$ with
$\lambda_\alpha\ne0$, by the definition of the conductor, and
$\lambda_{\alpha^i}=i\lambda_\alpha$. The automorphism
$g\alpha g^{-1}$ acts on functions by $(g^{-1})^*\alpha^*g^*$, so
\[
 (g\alpha g^{-1})^*\varrho
 =\zeta_n\,(g^{-1})^*\bigl(\varrho+\lambda_\alpha\varrho^{h+1}+\cdots\bigr)
 =\varrho+\zeta_n^{-h}\lambda_\alpha\varrho^{h+1}+\cdots.
\]
On the other hand $g\alpha g^{-1}=\alpha^{\beta^2}$ if $g$ maps to
$\beta\in T$. Hence $\beta^2=\zeta_n^{-h}$, which for
$\beta=\zeta_{p-1}^{\,c}$ and $\zeta_n=\zeta_{p-1}^{(p-1)/n}$ is the
congruence $2c\equiv-a$ modulo $p-1$. The last assertion follows,
because $\zeta_{p-1}^{(p-1)/2}=-1$.

(c) Let $W_2$ be the normalization of the pullback of $\phi$ along
$r\mapsto x=r^2$, and let $\jmath$ be the automorphism of $W_2$ induced
by $r\mapsto-r$. The group $\SL_2(\bF_p)\times\langle\jmath\rangle$
acts on $W_2$ with quotient $\bP^1_x$. The element $(-I,\jmath)$ is
central of order two, and the quotient of the group by it is
$\SL_2(\bF_p)$ through $(g,\jmath^i)\mapsto(-I)^ig$. Put
$W^-:=W_2/\langle(-I,\jmath)\rangle$, with the induced action. Let
$y$ be a point of $W$ above $0$ or $\infty$, with local coordinate $x$
or $x^{-1}$ on the line, which we denote by $x$ again, let $n$ be the
order of $I_y/P_y$, and let $y_2$ be a point of $W_2$ above $y$. The
fixed field of $P_y$ in the local field at $y$ is $k((z))$ with
$z^n=x$, and $\gamma_y$ acts on $z$ through $\zeta_n$ by (N). The tame part of
the local extension at $y_2$ over $k((x))$ is the compositum of
$k((z))$ and $k((x^{1/2}))$, a Kummer extension of degree
$n_2:=\operatorname{lcm}(n,2)$ with a uniformizer $\varrho$ such that
$\varrho^{n_2/n}$ and $\varrho^{n_2/2}$ are constant multiples of $z$
and $x^{1/2}$. The element acting on $\varrho$ through $\zeta_{n_2}$
acts on $z$ through $\zeta_n$ and on $x^{1/2}$ through $-1$, so the
canonical generator at $y_2$ is $(\gamma_y,\jmath)$. Let $y^-$ be the
image of $y_2$. If $(-I,\jmath)$ does not fix $y_2$, then $\varrho$ is
a uniformizer of the tame part at $y^-$, and the canonical generator
there is the image $-\gamma_y$ of $(\gamma_y,\jmath)$. Otherwise
$\varrho^2$ is such a uniformizer, the inertia group has tame quotient
of order $n_2/2$, and the image of $(\gamma_y,\jmath)$ acts on
$\varrho^2$ through $\zeta_{n_2}^2=\zeta_{n_2/2}$; again
$\gamma_{y^-}=-\gamma_y$. The Sylow $p$-subgroups are unchanged because
$p$ is odd. The ramification invariant of a tail cover is the jump of
the upper ramification filtration at a wild point, which is compatible
with quotients \cite{SerreLocalFields}*{IV, \S3}; the local extensions
at $y$ and at $y^-$ are quotients of the one at $y_2$ by subgroups of
order prime to $p$, so the three have the same wild upper jump.

(d) There is one point above $x=\infty$, and its stabilizer is $B$, the
stabilizer of $\infty$ in $\bP^1(\bF_p)$. The stabilizers of the other
wild points are the other conjugates of $B$, because $B$ is its own
normalizer. The orders of $x$ and $s$ at this point are $-(p+1)$ and
$-p$, so $s/x$ is a uniformizer, on which $\beta\in T$ acts through
$\beta\cdot\beta^{-2}=\beta^{-1}$. The canonical generator is therefore
$\zeta_{p-1}^{-1}$.

(e) The completed local ring of $\cW$ at $\eta$ is
$R[[s_1,s_2]]/(s_1s_2-\varpi)$ with $\varpi\in R$ neither zero nor a
unit, and the two branches are $s_2=0$ and $s_1=0$ in the special
fiber. The divisor of $s_1$ is a multiple of the branch $s_1=0$, which
$g$ preserves. So $g^*s_1=u_1s_1$ for a unit $u_1$ of this normal ring,
and likewise $g^*s_2=u_2s_2$. Since $s_1s_2=\varpi$ is fixed by $g$, we
have $u_1u_2=1$. The residues of $u_1$ and $u_2$ are the two
characters.

For the second assertion let $g\in I_\eta$ have order $n:=|I_\eta/P|$
and map to the canonical generator of the tail cover at $\eta$. The two
branches at $\eta$ lie above different components of the special fiber
of $\cW/\SL_2(\bF_p)$, on which the group acts trivially, so $g$
preserves them, and $P\subset I_\eta\subset G_0$. By the first assertion
$g^{-1}$ acts on the cotangent space of $\bar Y_0$ at $\eta$ through
$\zeta_n$, hence so does its image in $H$ on that of $\bar Y_0'$ at the
image of $\eta$, by (F); this image is the canonical generator there.
\end{proof}

\subsection{Lifting and pointed primitive tail covers}

The lifting theorem \cite{WewersBad}*{Corollary~4.6}, in the form
\cite{BouwWewers}*{Proposition~2.3}, produces a three-point cover from
a special deformation datum and tail covers whose inertia groups satisfy
the condition \cite{BouwWewers}*{Proposition~2.3, (3)}. For
$\SL_2(\bF_p)$ this condition does not determine the canonical
generators at the attaching points, and we state the theorem with the
hypotheses that its proof uses. We also restrict the invariants to the
range of \cite{WewersBad}*{Definition~2.7}, which is where the lifting
theorem is proved; the deformation data used below have primitive
invariants in $(0,1)$ and new invariants $(p+1)/(p-1)$, so they lie in
that range.

\begin{lemma}[Lifting]\label{app:lifting}
Let $H\subset T$ be a subgroup, let $Z_0\to\bP^1_k$ be a connected
Galois cover with group $H$, and let $\omega$ be a logarithmic
differential on $Z_0$ such that $(Z_0,\omega)$ is a special deformation
datum for the group $H$ acting on $P$ through $\beta\mapsto\beta^2$,
whose invariants $\sigma$ at the critical points satisfy $\sigma<2$ and
$\sigma\ne1$, so that it is special also in the sense of
\cite{WewersBad}*{Definition~2.7}, and whose critical points with
$\sigma<1$ are $0$, $1$ and $\infty$. Put $G_0:=P\rtimes H\subset B$.
For every critical point $t_j$ with $\sigma_j>0$ let $\phi_j$ be an
$\SL_2(\bF_p)$-tail cover, primitive if $\sigma_j<1$ and new if
$\sigma_j>1$, whose ramification invariant is $\sigma_j$, let $y_j$ be a
wild point of $\phi_j$ whose stabilizer contains $P$, and let $G_j$ be
the decomposition group of the component of $\phi_j$ through $y_j$.
Suppose that
\begin{enumerate}
\item for every $j$ the canonical generator of $\phi_j$ at $y_j$ is
inverse to the canonical generator of $Z_0\to\bP^1_k$ above $t_j$, and
\item the group $\SL_2(\bF_p)$ is generated by $G_0$ and the groups
$G_j$.
\end{enumerate}
Then there is a connected $\SL_2(\bF_p)$-cover of $\bP^1_K$ branched at
$0,1,\infty$, for a field $K$ as in \S\ref{nm:bw-statement}, whose
stable reduction gives rise to the deformation datum $(Z_0,\omega)$ and
to the tail covers $\phi_j$, attached at the points $y_j$.
\end{lemma}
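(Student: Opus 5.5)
This lemma is a version of the lifting theorem of Wewers \cite{WewersBad}*{Corollary~4.6}, \cite{BouwWewers}*{Proposition~2.3}. My plan is to rerun that proof and check that hypotheses (a) and (b) are exactly what its gluing and connectedness steps use.

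\emph{Step 1: the stable reduction in characteristic $p$.} Build a semistable curve $\bar Y$ over $k$ with $\SL_2(\bF_p)$-action. Its original part is the induced cover $\mathrm{Ind}_{G_0}^{\SL_2(\bF_p)}\bar Y_0$, where $\bar Y_0\to Z_0$ is the inseparable degree-$p$ map with generic inertia $P$, as in \eqref{gen:factorization}. For each $j$, glue the induced tail cover $\mathrm{Ind}_{G_j}^{\SL_2(\bF_p)}$ of the component of $\phi_j$ through $y_j$, identifying the $\SL_2(\bF_p)$-orbit of $y_j$ with the orbit of points of $\bar Y_0$ above $t_j$. This identification must be $\SL_2(\bF_p)$-equivariant, so the stabilizers have to match. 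Both stabilizers contain $P$ and lie in $B$. Their tame parts are cyclic subgroups of $T$ of equal order, because the invariant $\sigma_j=h/n$ at $t_j$ computed from $\omega$ agrees with that of $\phi_j$. A subgroup of the cyclic group $T$ is determined by its order, so the stabilizers coincide. The gluing is thus well defined on orbits, and hypothesis (b) makes the glued curve $\bar Y$ connected.

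\emph{Step 2: the local condition at the nodes.} Condition (a) is what makes each node deformable. By Lemma~\ref{app:local}(e), the local deformation $s_1s_2=\varpi$ of an equivariant node requires the tame inertia generator to act on the two branches through inverse characters. By (F), the canonical generator of $Z_0\to\bP^1_k$ above $t_j$ gives the character on the branch of $\bar Y_0$. So (a) is precisely the compatibility needed at that node. Combined with the differential conditions of the special deformation datum (the invariants $\sigma_j$ match the conductors, and $\sigma<2$, $\sigma\neq1$ as in \cite{WewersBad}*{Definition~2.7}), this yields the local lifting of each wild point, as in Wewers' local analysis (\cite{WewersBad}*{\S\S3--4}). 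The tame branch points of the primitive tails lift by Abhyankar's lemma.

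\emph{Step 3: global lifting.} Equivariant deformations of a curve with $G$-action are unobstructed once the local deformation functors at the special points are. I will use the local-to-global principle for the $G$-equivariant deformation functor (Bertin--Mézard, as invoked in \cite{WewersBad}*{\S4}) to lift $\bar Y$ together with its action over some $R$. The quotient of the lift is a smoothing of $\bar X$ whose generic fiber is a $\bP^1_K$ branched at three points, namely the specializations of the primitive tails at $0,1,\infty$. After a Möbius transformation these points are $0,1,\infty$. The generic fiber is connected because $\bar Y$ is.

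The main obstacle is Step 2. The local lifting statement has to be checked with the canonical-generator condition replacing the inertia-only condition (3) of \cite{BouwWewers}*{Proposition~2.3}. I would first verify directly that the Kummer parameter of the tame part of the deformation datum at $t_j$ transforms under $\gamma$ by the inverse character to that on the tail, using (N) and Lemma~\ref{app:local}(b), and then quote Wewers' existence of the local lift for matching data.
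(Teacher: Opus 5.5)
Your Step~3 does not work, and Step~2 inherits the same framework. The curve $\bar Y$ you glue does not carry a faithful action on its components. Since $P$ is the generic inertia of $\bar Y_0$, it acts trivially on $\bar Y_0$; this is exactly why $\bar Y_0\to Z_0$ is inseparable. The Bertin--M\'ezard local-to-global principle assumes a faithful action on a smooth curve with finitely many points of nontrivial stabilizer. Here the fixed locus of $P$ is a whole component, so the equivariant deformation problem does not split into finitely many local ones, and there is no vanishing of global obstructions to invoke. More basically, an abstract deformation of $\bar Y$ gives no reason for the lift to realize the prescribed datum $\omega$. That datum records how the $P$-action on the generic fiber degenerates to the trivial action on $\bar Y_0$, and producing a characteristic-zero cover with that degeneration is the real content of the lemma.

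The paper does not deform $\bar Y$ at all. It checks that $(Z_0,\omega)$, with the points $\xi_j$, the groups $G_j$ and the components of the $\phi_j$ pointed at $y_j$, is a special $\SL_2(\bF_p)$-deformation datum in the sense of \cite{WewersBad}*{Definition~2.13}. Condition~(1) there is (b), and condition~(2), that the stabilizer of $y_j$ is $P\rtimes H_j$, follows from (a). It then applies \cite{WewersBad}*{Corollary~4.6}. That proof builds from $(Z_0,\omega)$ an auxiliary $G_0$-cover in characteristic zero, and then replaces its tails by the $\phi_j$ through formal patching.

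The patching step is where (a) actually enters, and it shows why your Step~2 is not enough: Lemma~\ref{app:local}(e) only gives a necessary condition at a node. Patching needs, for one fixed identification $\lambda$ of the auxiliary group with $G_0$, equivariant isomorphisms between the completion of $\phi_j$ at $y_j$ and that of the auxiliary cover at its tail over $t_j$. Apply Lemma~\ref{app:local}(e) to the auxiliary cover: its tail has canonical generator inverse to that of $Z_0\to\bP^1_k$ above $t_j$. So (a) says the two local covers have the same canonical generator. In Wewers' normal form $z^m=x$, $y^p-y=z^h$, the allowed coordinate changes scale $z$ by a unit and translate $y$. Hence equality of the character on $z$, together with the conductor, is exactly the condition for such an isomorphism to exist. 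Wewers' existence statement compares only inertia groups. For $\SL_2(\bF_p)$ this leaves the sign of the canonical generator open, because $-I$ lies in the kernel of the conjugation character; this is the point the lemma corrects. Your closing plan to compare Kummer parameters aims at this, but it has to be carried out as this patching comparison inside Wewers' construction.

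Finally, your matching of stabilizers in Step~1 is argued incorrectly. The invariant $\sigma=h/n$ does not determine $n$, because $\gcd(h,n)$ can be $2$ when $-I$ lies in the inertia. For example, a point of $Z_0$ with $H_j\cong C_3$ and $h=2$, and a tail with tame inertia $C_6$ at $y_j$ and conductor $4$, both have $\sigma=2/3$. The equality of stabilizers comes from (a) instead, since mutually inverse elements of $T$ generate the same subgroup.
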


\begin{proof}
Choose points $\xi_j$ of $Z_0$ above the $t_j$; their stabilizers $H_j$
do not depend on the choice, since $H$ is cyclic. The datum
$(Z_0,\omega)$ with the $\xi_j$, the groups $G_j$ and the components of
the $\phi_j$ through the $y_j$, pointed at $y_j$, is a special
$\SL_2(\bF_p)$-deformation datum \cite{WewersBad}*{Definition~2.13}: its
condition (1) is hypothesis (b), and its condition (2), that the
stabilizer of $y_j$ be $P\rtimes H_j$ with the ramification type of
$t_j$, follows from hypothesis (a), since the stabilizer of $y_j$
contains $P$ and is generated by $P$ and the subgroup of $T$ generated by
its canonical generator. The cover is given by
\cite{WewersBad}*{Corollary~4.6}, proved for the special $G$-maps of
\cite{WewersBad}*{Definition~2.15}; this is where the hypothesis on the
invariants enters.

Hypothesis (a) enters as follows; the condition
\cite{BouwWewers}*{Proposition~2.3, (3)} compares inertia groups only.
The lift is constructed from a patching datum \cite{WewersBad}*{\S4.2.2}:
one isomorphism $\lambda$, the identity on $H$, from the group of an
auxiliary cover built from $(Z_0,\omega)$ onto $G_0$, and for every $j$
an equivariant isomorphism between the completions of $\phi_j$ at $y_j$
and of the auxiliary cover at its tail over $t_j$.
\cite{WewersBad}*{Proposition~4.4} asserts that these exist, by
Lemma~2.12 there; the normal form in the proof of that lemma shows when
they exist: locally at a wild point a tail cover with
tame inertia of order $m$ and conductor $h$ is $z^m=x$, $y^p-y=z^h$,
where a generator of $P$ acts by $y\mapsto y+1$ and the tame inertia
acts on $z$ through a character $\psi$ and on $y$ through the
conjugation character, and the changes of coordinates allowed there,
\cite{WewersBad}*{(11)}, multiply $z$ by a unit and translate $y$. So
for a fixed $\lambda$ two such local covers are equivariantly
isomorphic exactly when their characters $\psi$, that is, their
canonical generators, agree, and since the changes of coordinates are
made tail by tail, one $\lambda$ serves all $j$. The tail of the
auxiliary cover over $t_j$ belongs to a stable reduction whose central
component carries $(Z_0,\omega)$, so its canonical generator at the
attaching point is inverse to that of $Z_0\to\bP^1_k$ above $t_j$ by
Lemma~\ref{app:local}(e). Hence hypothesis (a) is exactly the existence
of the patching data. When the conjugation character is injective on
$H_j$, it determines $\psi$ together with $h$, and hypothesis (a)
follows from the equality of inertia groups; for $\SL_2(\bF_p)$ its
kernel $\{\pm I\}$ leaves the sign of the canonical generator open.
\end{proof}

\begin{lemma}\label{app:pointed}
Let $\phi\colon W\to\bP^1_k$ be a primitive $\SL_2(\bF_p)$-tail cover
whose ramification invariant satisfies $0<\sigma(\phi)<1$ and whose
canonical generators above $0$ lie in a noncentral class $D$. Let
$y_\infty$ be a point of $W$ above $\infty$ whose stabilizer contains
$P$, and put $a:=(p-1)\sigma(\phi)$. Then $a$ is an even integer, and
there is a unique sign $\epsilon\in\{\pm1\}$ with
\[
 \gamma_{y_\infty}=-\epsilon\,\zeta_{p-1}^{-a/2}.
\]
For this sign, $a(\epsilon D)=a$.
\end{lemma}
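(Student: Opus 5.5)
The first assertion should follow directly from Lemma~\ref{app:local}(b). Let $n$ be the order of $I_{y_\infty}/P$ and $h$ the conductor at $y_\infty$. Then $\sigma(\phi)=h/n$, so $a=(p-1)h/n$ is exactly the integer of that lemma. It is therefore even, and $\gamma_{y_\infty}\in\{\pm\zeta_{p-1}^{-a/2}\}$. Since $p$ is odd we have $-1\ne1$ in $T$, so exactly one sign $\epsilon$ satisfies $\gamma_{y_\infty}=-\epsilon\,\zeta_{p-1}^{-a/2}$. The substance of the lemma is the identity $a(\epsilon D)=a$, which ties the tame class above $0$ to the wild canonical generator above $\infty$. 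Note that $0<a<p-1$ because $0<\sigma(\phi)<1$.

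I would split into cases according to the decomposition group $G_1$ of the component $W_1$ through $y_\infty$. It contains $P$.

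\emph{Case $G_1\ne\SL_2(\bF_p)$.} Arguing as in Lemma~\ref{nm:proper-subgroup} for $\SL_2(\bF_p)$, $G_1$ lies in $B$, and $W_1/P\to\bP^1_k$ is a connected cyclic cover with group $G_1/P\subset T$, branched only at $0$ and $\infty$. Such a cover is a Kummer cover $z^{n}=x$. By Lemma~\ref{app:local}(a) its canonical generators above $0$ and $\infty$ multiply to $1$. By fact (N), the canonical generator above $\infty$ is $\gamma_{y_\infty}$; above $0$ the inertia is tame, so the generator there is $\gamma_0\in D$. Hence
\[
 \gamma_0=\gamma_{y_\infty}^{-1}=-\epsilon\,\zeta_{p-1}^{a/2},
 \qquad\text{so}\qquad
 \epsilon\gamma_0=\zeta_{p-1}^{(p-1)/2+a/2}.
\]
Thus $\epsilon D$ is split, and its label in $(0,\tfrac{p-1}2)$ is $l=\tfrac{p-1}2-\tfrac a2$, obtained after inverting the exponent. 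This gives $a(\epsilon D)=p-1-2l=a$. This case is routine bookkeeping of the conventions in \eqref{nm:bw-a}.

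\emph{Case $G_1=\SL_2(\bF_p)$.} This is the main obstacle: the Kummer product formula is no longer available on $W_1$ itself, and $D$ may be nonsplit. My plan has three steps.
\begin{enumerate}
\item Use the unsigned information of \cite{BouwWewers}*{Proposition~6.5(i)}, whose Riemann--Hurwitz and deformation-theoretic computation fixes $\sigma$. Read only up to the ambiguity identified above, it says that $a\in\{a(D),a(-D)\}$. I would recheck this from their proof rather than its statement.
\item Use the central twist of Lemma~\ref{app:local}(c) to pass between the two signs. The twist $\phi^-$ has the same $\sigma$, its class above $0$ is $-D$, and $\gamma_{y_\infty^-}=-\gamma_{y_\infty}$, so its sign is $-\epsilon$. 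Hence it suffices to prove the implication for one member of each pair $\{\phi,\phi^-\}$.
\item Pin down which member has $a(D)=a$ by a local computation at $\infty$ that reproduces the Kummer argument of the first case. Restrict to the decomposition group $B\cap G_1$ of $y_\infty$. Compare the normal form $z^n=x$, $y^p-y=z^h$ at the wild point with the tame generator above $0$, transported along the quotient of $W_1$ by a Borel subgroup, whose image is rational. Then apply Lemma~\ref{app:local}(a) on that rational quotient.
\end{enumerate}
If the transport in step~3 fails, the fallback is to realize $\phi$ in a lift via Lemma~\ref{app:lifting}, pairing it with a Kummer deformation datum on $B$. The sign is then forced by hypothesis~(a) of that lemma together with Lemma~\ref{app:local}(a),(e) on the central component.
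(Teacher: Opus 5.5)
Your proof of the first assertion and of the uniqueness of $\epsilon$ is fine. Your case $G_1\ne\SL_2(\bF_p)$ is also correct, and more elementary than anything in the paper: there $G_1\subset B$, $D$ is split, and the product formula for the Kummer cover $W_1/P\to\bP^1_k$ together with (N) gives $a(\epsilon D)=a$ directly. The paper's argument is uniform and needs no case split.

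The gap is the case $G_1=\SL_2(\bF_p)$, which contains every nonsplit $D$ and is the real content of the lemma.
\begin{itemize}
\item Your Step~1 rests on \cite{BouwWewers}*{Proposition~6.5(i)}. The appendix shows that this proof is defective: its gluing checks inertia groups only, not the inverse canonical generators required by Lemma~\ref{app:lifting}(a). The paper deliberately does not use it.
\item Your Step~3 cannot succeed in its natural form. Consider the connected $T$-cover $W_1/P\to W_1/B$. Its two points above $\infty$ come from the $I_{y_\infty}$-orbits $\{\infty\}$ and $\bF_p$ on $\bP^1(\bF_p)$. Their canonical generators are $\gamma_{y_\infty}$ and $\gamma_{y_\infty}^{-1}$, which cancel in the product formula.
\item Above $0$ the situation is no better. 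A nonsplit tame inertia group meets $B$ only in $\{\pm I\}$. A split one contributes $\beta$ and $\beta^{-1}$ at its two fixed points.
\end{itemize}
So that formula carries no information about $\epsilon$ or $D$.

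Your fallback has the right set-up but the wrong mechanism. Hypothesis~(a) of Lemma~\ref{app:lifting} and Lemma~\ref{app:local}(a),(e) on the central component only decide \emph{whether} $\phi$ can be glued. Take the hypergeometric datum of signature $(0,0,a)$, pulled back to $z^{p-1}=u(x)$ with $\deg u=N=(p-1-a)/2\ge1$, and attach new tails from Lemma~\ref{app:local}(d) at the roots of $u$. Its canonical generator above $\infty$ is $\zeta_{p-1}^{-N}=-\zeta_{p-1}^{a/2}$, so gluing works exactly when $\epsilon=1$. None of this involves $D$, which sits at the tame point of the tail, away from the central component.

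What ties $D$ to $a$ is a global input you never invoke. The glued characteristic-zero cover has ramification indices $p$ at $0$ and $1$, and class $D$ at $\infty$, since canonical generators survive specialization. By \cite{BouwWewers}*{Lemma~6.1}, the primitive tail of such a cover has invariant $a(D)/(p-1)$. Hence $a(D)=a$ when $\epsilon=1$. When $\epsilon=-1$, the central twist of Lemma~\ref{app:local}(c) gives $a(-D)=a$.

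That is the paper's proof. It handles all decomposition groups at once and makes both your Step~1 and your case split unnecessary.
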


\begin{proof}
The first assertion is Lemma~\ref{app:local}(b). Suppose that
$\epsilon=1$, and put $N:=(p-1-a)/2$, a positive integer. By
\cite{BouwWewers}*{Proposition~3.2(ii)} there is a hypergeometric
deformation datum of signature $(0,0,a)$. As in the proof of
\cite{BouwWewers}*{Proposition~6.5}, we pull it back to the cyclic cover
$z^{p-1}=u(x)$ of $\bP^1_x$, on which $\beta\in T$ acts by
$z\mapsto\beta z$; here $u$ is a monic polynomial of degree $N$ with
simple roots different from $0$ and $1$. This cover is connected,
because the class of $u$ in $k(x)^\times/k(x)^{\times(p-1)}$ has order
$p-1$, as $u$ has a simple root. The result is a special deformation
datum for the group $T$ acting on $P$ through $\beta\mapsto\beta^2$. Its
critical points are $0$ and $1$, which are wild, the point $\infty$,
which is primitive with invariant $a/(p-1)$, and the roots of $u$, which
are new. By Lemma~\ref{app:local}(a) the canonical generator of the
cyclic cover is $\zeta_{p-1}^{-N}=-\zeta_{p-1}^{\,a/2}$ above $\infty$
and $\zeta_{p-1}$ above the roots of $u$.

We attach $\phi$ at $\infty$ through the point $y_\infty$, and a copy
of the new tail cover of Lemma~\ref{app:local}(d) at every root of $u$
through its point with stabilizer $B$. The canonical generators at the
attaching points are inverse to each other, by the hypothesis
$\gamma_{y_\infty}=-\zeta_{p-1}^{-a/2}$ and by Lemma~\ref{app:local}(d),
and $G_0=B$ and the group $\SL_2(\bF_p)$ of a new tail cover generate
$\SL_2(\bF_p)$. Lemma~\ref{app:lifting} provides a connected
$\SL_2(\bF_p)$-cover of $\bP^1_K$ branched at $0,1,\infty$ whose stable
reduction has the primitive tail cover $\phi$. Its ramification indices
at $0$ and $1$ are equal to $p$, because these points specialize to the
central component and the cyclic cover is unramified above them
\cite{WewersBad}*{Lemma~1.4(3),(4)}, and its third class
is $D$, because the canonical generators of a cover at a branch point
and those of its stable reduction at the specialization are the same
elements (footnote~\ref{app:specialization} in \S\ref{app:parity}). By
\cite{BouwWewers}*{Lemma~6.1}, whose proof is given there for split
classes and indicated for nonsplit classes, the primitive tail of such
a cover has the ramification invariant $a(D)/(p-1)$. Hence $a(D)=a$.

Suppose that $\epsilon=-1$. The twist $\phi^-$ of
Lemma~\ref{app:local}(c) is a primitive tail cover with the same
ramification invariant, its canonical generator at $y_\infty^-$ is
$-\gamma_{y_\infty}=-\zeta_{p-1}^{-a/2}$, and its canonical generators
above $0$ lie in $-D$. The first case gives $a(-D)=a$.
\end{proof}

\begin{remark}\label{app:unaffected}
Among the results of \cite{BouwWewers} used in this paper,
Proposition~6.3 (in \S\ref{sec:generic} for $\Gp$ and in
\S\ref{app:parity} for $\SL_2(\bF_p)$), Lemma~6.2, on which it rests,
and the first two cases of Theorem~6.6(b), which are
Theorem~\ref{nm:bw-theorem}(b), are proved by gluing with
\cite{BouwWewers}*{Proposition~2.3}; Proposition~6.5 is not used, and
Theorem~6.6(a), Propositions~3.2(ii) and~4.1, Lemma~4.3 and Lemma~6.1,
proved there for split classes and indicated for nonsplit ones, are used
as stated. We check the hypotheses of Lemma~\ref{app:lifting} in the
proofs that glue.

(a) \emph{Proposition~6.3.} We use it for connected new tail covers
with $1<\sigma<2$, in \S\ref{app:parity} and, for $\Gp$, in the proof
of Lemma~\ref{gen:tails}. The new tail covers of a stable reduction with
$1<\sigma<2$ are connected: a component with a proper decomposition
group $D$ has $p\mid|D|$, so $D$ lies in a Borel subgroup
(Lemma~\ref{nm:proper-subgroup}, applied to the image of $D$ in $\Gp$,
which is proper because a subgroup of $\SL_2(\bF_p)$ mapping onto $\Gp$
contains $-I$, the square of a lift of an element of order two); the
quotient of the component by $P$ is a connected tame cover of $\bP^1_k$
branched at one point at most, hence trivial by the Riemann--Hurwitz
formula; so $D=P$, and $\sigma$ is an integer. For $\Gp$ the conjugation
character of $T/\{\pm I\}$ on $P$ is injective, so the inertia groups
and conductors determine the canonical generators, as noted in the proof
of Lemma~\ref{app:lifting}, and the proof of the proposition stands with
Lemma~6.2 applied to $G=\Gp$, the generation condition being met by the
connected new tail cover with the full group. For $\SL_2(\bF_p)$, the quotient $\phi/\{\pm I\}$ of a
connected new tail cover $\phi$ with $1<\sigma(\phi)<2$ is a connected
new $\Gp$-tail cover with the same invariant, the upper ramification
jump being compatible with quotients \cite{SerreLocalFields}*{IV, \S3},
hence the quotient of the cover of Lemma~\ref{app:local}(d), with
$\sigma(\phi)=(p+1)/(p-1)$; and $\phi$ is determined by its quotient,
because two $\SL_2(\bF_p)$-covers of $\bA^1_k$ with the same quotient by
$\{\pm I\}$ differ by a twist with an \'etale double cover of $\bA^1_k$,
of which there is none for $p$ odd.

(b) \emph{Theorem~6.6(b), first two cases.} For a class vector
$(C_1,C_2,C_3)$ satisfying \eqref{nm:bw-hypothesis}, with $a_i:=a(C_i)$
and $N:=(p-1-a_1-a_2-a_3)/2$, the proof glues the hypergeometric datum
of signature $(a_1,a_2,a_3)$, pulled back to the cyclic cover
\[
 z^{p-1}=x^{b_1}(x-1)^{b_2}\prod_{j=1}^N(x-t_j'),\qquad
 b_i:=\frac{a_i}2+\frac{p-1}2,
\]
to copies of the cover of Lemma~\ref{app:local}(d) at the $t_j'$ and, at
$0$, $1$, $\infty$, to the primitive tails of covers with class vectors
$(pA,pA,C_i)$ or $(pA,pB,C_i)$, where $pA$, $pB$ are the classes of
order $p$; these tails have the invariants $a_i/(p-1)$ by Lemma~6.1 and
the classes $C_i$ at their tame points. By Lemma~\ref{app:local}(a) the
canonical generators of the cyclic cover are $\zeta_{p-1}^{\,b_i}$ at
$0$, $1$, $\infty$, the last since $b_1+b_2+b_3+N\equiv0$ modulo $p-1$,
and $\zeta_{p-1}$ at the $t_j'$. A tail of such a cover, with class $C$
and canonical generator $\gamma$ at its wild point, has
$\gamma=\zeta_{p-1}^{-b}$ for $b:=a(C)/2+(p-1)/2$: in the stable
reduction the cover $\bar Y_0'\to\bar X_0$ of Lemma~\ref{app:local}(e)
is unramified above the two wild points \cite{WewersBad}*{Lemma~1.4(4)},
its canonical generators are $\gamma^{-1}$ above the primitive point
(Lemma~\ref{app:local}(e)) and $\zeta_{p-1}$ above each of the $N'$ new
points ((a) and Lemma~\ref{app:local}(d)), where $a(C)+2N'=p-1$ by the
vanishing cycle formula \cite{WewersBad}*{Corollary~1.11}, and the
product rule of Lemma~\ref{app:local}(a) gives
$\gamma=\zeta_{p-1}^{\,N'}=-\zeta_{p-1}^{-a(C)/2}$, the case $\epsilon=1$
of Lemma~\ref{app:pointed}. Thus hypothesis (a) of
Lemma~\ref{app:lifting} holds. Hypothesis (b) holds through a new tail
cover if $N\ge1$, and for $N=0$ through the tail cover of a nonsplit
class $C_i$, present by the second case of \eqref{nm:bw-hypothesis}:
the decomposition group of its component through the wild point
contains $P$ and an element of $C_i$, which lies in no Borel subgroup,
so its image in $\Gp$ is $\Gp$ by Lemma~\ref{nm:proper-subgroup}, and
it is $\SL_2(\bF_p)$ by the argument of (a). If the cyclic cover is
disconnected, the construction applies to a connected component with
its stabilizer $H\subset T$ in place of $T$, with the same canonical
generators.
\end{remark}

\subsection{The parity of the signs}
\label{app:parity}

\begin{proof}[Proof that (b) implies (c) in Theorem~\ref{app:corrected}]
Let $f_M$ be a lift of $f$ with bad reduction at $v$, let $K$ be as in
\S\ref{nm:bw-statement}, chosen inside the completion of $\Qbar$ at
$v$, and let $(C_1,C_2,C_3)$ be the class vector of $f_M$ at $v$, for
the roots of unity of \S\ref{nm:class-vectors} and their reductions at
$v$. By Lemma~\ref{bg:ascent}(c), $f_M\otimes K$ has bad reduction,
that is, no good model over any finite extension of $K$, which is the
hypothesis of the structure theorem \cite{WewersBad}*{Theorem~2}. We use
the description of the stable reduction $\bar Y\to\bar X$ of
$f_M\otimes K$ in \cite{BouwWewers}*{\S2}, which is taken from
\cite{WewersBad}*{\S\S1--2}. The curve $\bar X$ consists of a central
component $\bar X_0=\bP^1_k$ and of tails attached to it. Let
$\bar Y_0$ be a component of $\bar Y$ above $\bar X_0$. The inertia
group of its generic point has order $p$; we choose $\bar Y_0$ such
that it is $P$. The decomposition group $G_0$ of $\bar Y_0$ normalizes
$P$, so $G_0\subset B$; put $H:=G_0/P\subset T$, and let
$\bar Y_0\to\bar Y_0'\to\bar X_0$ be the factorization of
Lemma~\ref{app:local}(e).

Because the ramification indices of $f_M$ are prime to $p$, the three
branch points specialize to three primitive tails
\cite{WewersBad}*{Lemma~1.4(3)}; in particular the reduction is not
exceptional in the sense of \cite{WewersBad}*{\S2.1}, which would leave
no room for a primitive tail. Let $\sigma_1,\sigma_2,\sigma_3$ be the
ramification invariants of the primitive tails and let $N$ be the
number of new tails. The vanishing cycle formula
\cite{WewersBad}*{Corollary~1.11}, together with the positivity of the
$\sigma_i$ and the inequality $\sigma_b>1$ for the invariant of a new
tail, gives
\[
 0<\sigma_i<1,\qquad 1<\sigma_b<2
\]
\cite{WewersBad}*{\S2.1.2, (6)}. By Remark~\ref{app:unaffected}(a) the
new tail covers are copies of the cover of Lemma~\ref{app:local}(d),
with $\sigma_b=(p+1)/(p-1)$, so the vanishing cycle formula reads
$\sigma_1+\sigma_2+\sigma_3+N\bigl(\frac{p+1}{p-1}-1\bigr)=1$, and the
numbers $a_i:=(p-1)\sigma_i$ satisfy
\begin{equation}\label{app:sum}
 a_1+a_2+a_3+2N=p-1.
\end{equation}

Let $\bar f_i$ be the tail cover over the $i$th primitive tail. Its
canonical generators above the specialization of the $i$th branch point
lie in $C_i$.\footnote{\label{app:specialization}The ramification points of
$f_M\otimes K$ above a branch point are marked points of the stable
marked model (Definition~\ref{bg:stable-model}), so they extend to
sections through its smooth locus with distinct specializations. The
inertia group of such a point fixes its section and acts on the conormal
module of the section, free of rank one over the valuation ring, through
a root of unity; on the generic fiber this is its action on the
cotangent space at the point, which defines the class through the roots
of unity of \S\ref{nm:class-vectors}, and on the special fiber its
action on the cotangent space of $\bar Y$ at the specialization, which
defines the canonical generator of the tail cover through their
reductions. As the specializations are distinct, the stabilizer of the
specialization is the inertia group of the point; so the two canonical
generators coincide, and the ramification indices agree
\cite{WewersBad}*{Lemma~1.4(3)}.} It has a wild point
$\eta_i$ that lies on $\bar Y_0$ and above the point $t_i$ of $\bar X_0$
at which the tail is attached. The stabilizer of $\eta_i$ is contained
in $G_0$ and contains $P$; let $\gamma_i\in T$ be the canonical
generator of $\bar f_i$ at $\eta_i$. By Lemma~\ref{app:local}(b) the
integer $a_i$ is even and $\gamma_i=-\epsilon_i\zeta_{p-1}^{-a_i/2}$ for
a sign $\epsilon_i\in\{\pm1\}$, and Lemma~\ref{app:pointed}, applied to
$\bar f_i$ with its tame branch point at $0$ and $\eta_i$ above
$\infty$, gives
\[
 a(\epsilon_iC_i)=a_i\qquad(i=1,2,3).
\]

By Lemma~\ref{app:local}(e) the canonical generator of
$\bar Y_0'\to\bar X_0$ above $t_i$ is $\gamma_i^{-1}$, and above the
point $t_j'$ at which the $j$th new tail is attached it is
$\zeta_{p-1}$, by Lemma~\ref{app:local}(d); the cover is unramified
elsewhere \cite{WewersBad}*{Lemma~1.4(2)}. If $N\ge1$, then
$\zeta_{p-1}\in H$ and $H=T$. The connected cover $\bar Y_0'\to\bar X_0$
with the cyclic group $H$ of order $n$ is a Kummer cover $z^n=\nu(x)$
for a suitable identification of $H$ with the group of $n$th roots of
unity, so the product rule of Lemma~\ref{app:local}(a) gives
$\gamma_1^{-1}\gamma_2^{-1}\gamma_3^{-1}\zeta_{p-1}^{\,N}=1$. Hence, by
\eqref{app:sum},
\[
 \epsilon_1\epsilon_2\epsilon_3
 =-\zeta_{p-1}^{\,(a_1+a_2+a_3)/2+N}
 =-\zeta_{p-1}^{\,(p-1)/2}=1.
\]

Let $M'$ be the triple of the matrices $\epsilon_iM_i$. It is a lift
of $f$, the class vector of $f_{M'}$ at $v$ consists of the classes
$\epsilon_iC_i$, and its integers are $a_1,a_2,a_3$. By
\eqref{app:sum} they satisfy $a_1+a_2+a_3\le p-1$, with equality only
if $N=0$. Suppose that $N=0$ and that the three classes
$\epsilon_iC_i$ are split, with labels $l_i$. Then
$\sum_i(p-1-2l_i)=p-1$, that is, $l_1+l_2+l_3=p-1$, and the eigenvalues
$\delta_i:=\zeta_{p-1}^{\,l_i}$ of the matrices of $M'$ satisfy
$\delta_1\delta_2\delta_3=1$. For traces
$x_i=\delta_i+\delta_i^{-1}$ with $\delta_1\delta_2\delta_3=1$ the
expression \eqref{nm:fricke-delta} vanishes, which contradicts
$\Delta\ne0$. So $M'$ satisfies \eqref{nm:bw-hypothesis} at $v$.
\end{proof}

\end{document}